\numberwithin{equation}{section}
\newtheorem{thm}{Theorem}[section]
\newaliascnt{prp}{thm}
\newtheorem{prp}[prp]{Proposition}
\newaliascnt{cor}{thm}
\newtheorem{cor}[cor]{Corollary}
\newaliascnt{lem}{thm}
\newtheorem{lem}[lem]{Lemma}
\theoremstyle{definition}
\newaliascnt{dfn}{thm}
\newtheorem{dfn}[dfn]{Definition}
\newaliascnt{xpl}{thm}
\newtheorem{xpl}[xpl]{Example}
\newaliascnt{rmk}{thm}
\newtheorem*{rmk*}{\textnormal{\emph{Remark}}}
\author{Tristan Bice}
\thanks{The author is supported by the GA\v{C}R project 22-07833K and RVO: 67985840.}
\address{Institute of Mathematics of the Czech Academy of Sciences, \v{Z}itn\'a 25, Prague}
\email{bice@math.cas.cz}
\keywords{\'etale groupoid, Fell bundle, reduced C*-algebra}
\subjclass[2020]{22A22, 46L05, 46L85}
\title{Sections of Fell Bundles over \'Etale Groupoids}
\begin{document}

\begin{abstract}
We construct the reduced and essential C*-algebra of a Fell bundle over an \'etale groupoid (in full generality, without any second countability, local compactness or Hausdorff assumptions, even on the unit space) directly from sections under convolution.  This eliminates the $j$-map commonly seen in the groupoid and Fell bundle C*-algebra literature.  We further show how multiplier algebras and other Banach *-bimodules can again be constructed directly from sections.  Finally, we extend Varela's morphisms from C*-bundles to Fell bundles, thus making the reduced C*-algebra construction functorial.
\end{abstract}

\maketitle

\section*{Introduction}

\subsection*{Background}

C*-algebras defined from \'etale groupoids have been playing an increasingly prominent role in C*-algebra theory since their inception in \cite{Renault1980}.  Twisted groupoids have likewise come to the fore, particularly since the advent of Kumjian-Renault's Weyl groupoid construction in \cite{Kumjian1986} and \cite{Renault2008}, one of the most promising noncommutative extensions of the classic Gelfand representation to be developed in the past century.  In recent years, attention has also been directed to more general Fell bundles $\rho:B\twoheadrightarrow\Gamma$ over \'etale groupoids $\Gamma$ and their resulting reduced C*-algebras $\mathcal{C}^*_\mathsf{r}(\Gamma)$ and noncommutative Cartan subalgebras $\mathcal{C}_0(\Gamma^0)$ (e.g. see \cite{Exel2011}, \cite{KwasniewskiMeyer2020}) with the potential of producing yet more powerful versions of Kumjian-Renault's Weyl groupoid construction.

Nevertheless, there is one basic issue with the construction of the reduced C*-algebra which has never been completely resolved, namely whether $\mathcal{C}^*_\mathsf{r}(\Gamma)$ really can be identified with a concrete algebra $\mathcal{C}^\rho_\mathsf{r}$ of sections under convolution.  To be sure, by definition, $\mathcal{C}^*_\mathsf{r}(\Gamma)$ contains a dense subalgebra identified with $\mathcal{C}^\rho_\mathsf{c}=\mathcal{C}_\mathsf{c}(\Gamma)$, an algebra consisting of compactly supported sections.  However, general elements of $\mathcal{C}^*_\mathsf{r}(\Gamma)$ are merely operators on some Hilbert module.  After the fact, one can at least identify $\mathcal{C}^*_\mathsf{r}(\Gamma)$ with a subspace of sections under a canonical `$j$-map'.  But even so, it is not always clear that the product of operators in $\mathcal{C}^*_\mathsf{r}(\Gamma)$ can be identified with the convolution of the corresponding sections (see the related work mentioned below for further discussion on this point).  As a result, precise arguments about $\mathcal{C}^*_\mathsf{r}(\Gamma)$ have inevitably involved an uncomfortable little dance, first restricting attention to $\mathcal{C}^\rho_\mathsf{c}$ under convolution and then using more technical arguments involving the $j$-map to extend to the entire reduced C*-algebra $\mathcal{C}^*_\mathsf{r}(\Gamma)$.

The primary goal of the present paper is to show this state of affairs is entirely avoidable by constructing the reduced C*-algebra directly from concrete sections.  Moreover, our construction is very elementary, using nothing more sophisticated than the Cauchy-Schwarz inequality for Hilbert modules.  Specifically, in our approach, we first define a `$\mathsf{b}$-norm' on arbitrary sections of the Fell bundle.  This uses the convolution product, but only when one of the sections is finitely supported, which is always well-defined.  Then we show that the sections with finite $\mathsf{b}$-norm form a Banach space including $\mathcal{C}^\rho_\mathsf{c}$.  The reduced C*-algebra $\mathcal{C}^\rho_\mathsf{r}$ is then defined to be the closure of $\mathcal{C}^\rho_\mathsf{c}$ within this Banach space -- it is then just a matter of showing convolution is still well-defined on the closure.  In fact, we will show that convolution is well-defined on even larger algebras and modules of sections.  For example, this allows even the multiplier algebra of $\mathcal{C}^\rho_\mathsf{r}$ to be constructed directly from concrete sections, at least when the total space of the bundle has a full collection of units.

Another advantage of our approach is that we can avoid unnecessary topological restrictions on the groupoids that are often seen elsewhere.  Indeed, traditionally the focus has been on second countable locally compact Hausdorff \'etale groupoids.  However, recent work shows that second countability is unnecessary and only the unit space needs to be Hausdorff for much of the same theory to apply.  But as soon as we start dealing with bundles where the norm is only upper semicontinuous, there are non-trivial examples where even the unit space fails to be Hausdorff.

For example, given any $\mathsf{T}_1$ space $X$, we can form a C*-bundle $\rho:X\times\mathbb{C}\twoheadrightarrow X$ from the usual projection but the (unusual) topology generated by the basis
\[O^{a,\varepsilon}_x=(\{x\}\times B^{a,\varepsilon})\cup((O\setminus\{x\})\times B^{0,\varepsilon}),\]
where $O\subseteq X$ is open, $a\in\mathbb{C}$, $\varepsilon>0$, $x\in X$ and $B^{a,\varepsilon}=\{b\in\mathbb{C}:|a-b|<\varepsilon\}$.  Then we have continuous sections $\delta_x$ with $\delta_x(x)=(x,1)$ and $\delta_x(y)=(y,0)$ for $y\neq x$.  In particular, the norm functions of continuous sections always distinguish points of $X$, even when $X$ is highly non-Hausdorff, e.g. the cofinite topology on an infinite set.  This contrasts with continuous sections of trivial bundles or even bundles where the norm is continuous, as continuous sections must then have the same norm on pairs of points that can not be separated by disjoint neighbourhoods.

As we shall see, C*-algebras can still be constructed from Fell bundles like this in much the same way.  We just have to be careful to replace the compact bisections considered in the Hausdorff case with `bicompact' $K$ in the general case -- bicompactness just means that $K$ is a biclosed compact subset of an open bisection (biclosed meaning that $\mathsf{s}[K]$ and $\mathsf{r}[K]$ are both closed in the unit space).

\subsection*{Related Work}

The present paper is a thoroughly revised and extended version of the first part of \cite{Bice2021DHKR}.  Duwenig, Williams and Zimmerman achieved much the same thing in \cite{DuwenigWilliamsZimmerman2022}, at least for the reduced C*-algebra of saturated Fell bundles over second countable locally compact Hausdorff \'etale groupoids.  Their approach is more traditional, still relying on the left regular representation to construct $\mathcal{C}^*_\mathsf{r}(\Gamma)$, but then showing that the canonical $j$-map really does respect convolution.  This can be viewed as a generalisation of \cite[Proposition 2.8]{BrownFullerPittsReznikoff2021} for twisted groupoids (even ones that are not second countable), as these correspond to continuous Fell line bundles.  The special case of cocycle twists over second countable groupoids was even dealt with in Renault's foundational work in \cite[Proposition II.4.2]{Renault1980}.  Our approach seems more suitable for multiplier algebras although it would certainly be interesting to know if there might be an analogous  $j$-map approach for these too.

Recently in \cite{Taylor2023}, Taylor has also presented his own approach to the theory developed in \cite{Bice2021DHKR}.  The last section of the present paper on morphisms is thus analogous to \S2 and \S4 of \cite{Taylor2023}.  While \cite{Bice2021DHKR} only dealt with Hausdorff groupoids, in \cite{Taylor2023} only the unit space is required to be Hausdorff.  The present paper is more general still, making no Hausdorff assumption even on the unit space.


\subsection*{Motivation}

To motivate our approach, let us first consider a much simplified situation where we just want to define a C*-algebra from a topological space $X$, which for simplicity we may also assume is locally compact and Hausdorff.  The usual way of doing this would be to just consider $\mathcal{C}_0(X)$, the continuous complex-valued functions on $X$ vanishing at infinity, which is immediately seen to form a commutative C*-algebra under pointwise operations and the supremum norm.

Another way of doing this would be as follows.  Start with the smaller normed *-algebra $\mathcal{C}_\mathsf{c}(X)$ consisting of continuous complex-valued functions with compact supports.  Next note that, for each $x\in X$, we have a representation of this algebra on $\mathbb{C}$, considered as a one-dimensional Hilbert space, given by $f\cdot\lambda=f(x)\lambda$, for all $f\in\mathcal{C}_\mathsf{c}(X)$ and $\lambda\in\mathbb{C}$.  Combining these representations yields a faithful representation of $\mathcal{C}_\mathsf{c}(X)$ as a *-algebra of bounded operators on $\ell^2(X)$.  The closure of these operators in $\mathsf{B}(\ell^2)$ then forms a C*-algebra which we call the reduced C*-algebra of $X$, denoted by $\mathcal{C}^*_\mathsf{r}(X)$.  We can then define a map $j$ taking operators $a\in\mathcal{C}^*_\mathsf{r}(X)$ back to concrete functions on $X$, specifically where $a\delta_x=j(a)(x)\delta_x$, for all $x\in X$.  Finally, we can further verify that $j$ turns the usual operator sums, products, etc. into pointwise operations on functions so in the end we get a C*-algebra $j[\mathcal{C}^*_\mathsf{r}(X)]$ containing the original $\mathcal{C}_\mathsf{c}(X)$ as a dense subalgebra.

Anyone familiar with C*-algebras will quickly recognise this as nothing more than a ridiculously complicated way of defining $\mathcal{C}_0(X)$!  And yet this process is exactly what the standard reduced groupoid C*-algebra construction amounts to, at least in the special case of topological spaces.  Could there not be a better way?

Certainly, for an \'etale groupoid $G$, we can not simply define the reduced C*-algebra as $\mathcal{C}_0(G)$.   The convolution product may not even be well-defined on all pairs in $\mathcal{C}_0(G)$.  Indeed, the range of $\mathcal{C}^*_\mathsf{r}(G)$ under the $j$-map can be a proper subset of $\mathcal{C}_0(G)$ which may not admit any simple topological characterisation.  With this in mind, it definitely makes sense to build up the reduced C*-algebra of $G$ starting from $\mathcal{C}_\mathsf{c}(G)$ instead.  However, this can be done in an elementary way without the need to pass back and forth between functions and operators.

To see how this would work again in the special case of a topological space $X$ as above, we proceed as follows.  To begin with, consider the big C*-algebra $\ell^\infty(X)$ consisting of all bounded complex-valued functions, even the discontinuous ones.  Next note that compactly supported continuous functions are bounded so $\mathcal{C}_\mathsf{c}(X)\subseteq\ell^\infty(X)$.  Now just define $\mathcal{C}_0(X)$ to be the closure of $\mathcal{C}_\mathsf{c}(X)$ in $\ell^\infty(X)$.

As we are working in the big C*-algebra $\ell^\infty(X)$ here, we are also free to consider larger C*-subalgebras which may have some important relationship to $\mathcal{C}_0(X)$.  For example, we can consider all multipliers of $\mathcal{C}_0(X)$ within $\ell^\infty(X)$, i.e. those functions $f\in\ell^\infty(X)$ such that $fg\in\mathcal{C}_0(X)$, for all $g\in\mathcal{C}_0(X)$.  These turn out to be precisely the continuous bounded functions $\mathcal{C}_\mathsf{b}(X)$ which can indeed be identified with the usual multiplier algebra of $\mathcal{C}_0(X)$.

To do this with more general \'etale groupoids $G$ (and even more general Fell bundles), it is just a matter of first identifying the appropriate analog of the big C*-algebra $\ell^\infty(X)$.  For this we will have to consider several different norms and their relationship to the convolution product.  This takes a not insignificant amount of work, as we will see in the following sections, but we believe the pay-off is worth it.  For example, this again allows us to identify the multiplier algebra with a C*-algebra of functions on $G$ under convolution.

\section{Preliminaries}

First we review the necessary background on groupoids and their Fell bundles.

\subsection{Semigroupoids}\label{Semigroupoids}

Most of the material here is fairly standard, the only difference being that with Fell bundles it is convenient to work with object-free notions of semigroupoids, categories, groupoids, etc..

\begin{dfn}\label{Semigroupoids}
A \emph{semigroupoid} is a set $S$ together with a partial associative binary operation $(a,b)\mapsto ab$, i.e. $(ab)c$ is defined iff $a(bc)$ is, in which case they are equal.

A \emph{semigroup} is a semigroupoid $S$ where the product $ab$ is defined for all $a,b\in S$.
\end{dfn}

Semigroupoids are sometimes called `partial semigroups' (e.g. see \cite{BergelsonBlassHindman1994}).

The pairs for which the product is defined are denoted by
\[S^2=\{(a,b)\in S\times S:ab\text{ is defined}\}.\]
We call $x\in S$ a \emph{unit} if $xa=a$ when $(a,x)\in S^2$ and $ax=a$ when $(x,a)\in S^2$.  Let
\[S^0=\{x\in S:x\text{ is a unit}\}.\]

\begin{prp}
For any $a\in S$, there is at most one $x\in S^0$ with $(a,x)\in S^2$.
\end{prp}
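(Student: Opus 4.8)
The plan is to suppose $x,y\in S^0$ both satisfy $(a,x),(a,y)\in S^2$ and to deduce $x=y$ by forming a single product of $x$ and $y$ and showing, through the two unit axioms, that it equals each of them. To start, I would feed the pairs $(a,x)$ and $(a,y)$ into the unit property: since $(a,x)\in S^2$ and $x$ is a unit we get $xa=a$, and symmetrically $(a,y)\in S^2$ gives $ya=a$.

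The heart of the argument is to certify that a product of $x$ and $y$ is genuinely defined, which is the one place where the partiality of the operation has to be handled with care. Using $xa=a$, I would re-express the defined product $ya$ as $y(xa)$, which is legitimate because the two are literally equal. The associativity axiom then says that $(yx)a$ is defined exactly when $y(xa)$ is, so $(yx)a$ is defined; in particular $yx$ itself must be defined, that is, $(y,x)\in S^2$.

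With $(y,x)\in S^2$ in hand, I would finish by playing the two unit properties against each other on this one product: the unit axiom for $x$ yields $xy=y$, while the unit axiom for $y$ yields $xy=x$, so that $x=xy=y$, which is exactly the claimed uniqueness.

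The main obstacle is precisely the partiality of the operation: one cannot simply form $xy$ and compare it, since products exist only on $S^2$. The crux is therefore the middle step, where the identity $xa=a$ allows the defined product $ya$ to be rewritten as $y(xa)$ so that the associativity biconditional can be invoked to place $(y,x)$ in $S^2$. Once that pair is known to be composable, everything else is a direct application of the unit axioms.
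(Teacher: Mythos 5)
Your argument has the same skeleton as the paper's proof --- use associativity to show that some product of the two units $x,y$ is defined, then play the two unit laws against each other on that product --- but it is run on the wrong side, and this matters. You open by deducing $xa=a$ from $(a,x)\in S^2$. That does match the letter of the paper's definition of unit (``$xa=a$ when $(a,x)\in S^2$''), but that sentence is evidently a misprint with the two products transposed: the intended axiom is $ax=a$ when $(a,x)\in S^2$, and $xa=a$ when $(x,a)\in S^2$. This is how the paper itself uses the notion: its own proof of this proposition begins ``$a=ax=ay$'', the lines immediately after it characterise the source unit by $a\mathsf{s}(a)=a$, and, decisively, under the literal reading the identity morphisms of a category would fail to be units --- if $a\,\mathrm{id}_X$ is defined, i.e.\ $\mathsf{s}(a)=X$, there is no reason for $\mathrm{id}_X\,a$ to be defined, which needs $\mathsf{r}(a)=X$ --- so the paper's definition of a category (a semigroupoid in which $\mathsf{s}(a)$ and $\mathsf{r}(a)$ always exist) would be unsatisfiable by ordinary categories.

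Under the intended axiom your proof has a genuine gap at the very first step: $(a,x)\in S^2$ only gives $ax=a$; the product $xa$ need not be defined at all, so the rewriting $ya=y(xa)$ and the associativity step extracting $(y,x)\in S^2$ have nothing to stand on. The repair is simply to reflect your argument to the right: from $a=ax=ay$ one gets $a=ay=(ax)y=a(xy)$, so $(x,y)\in S^2$ by associativity, and then $x=xy=y$ by the two unit laws --- which is exactly the paper's proof. Your mirror-image argument, with the sides consistently corrected, is instead precisely the proof of the dual statement that there is at most one unit $x$ with $(x,a)\in S^2$, which the paper dispatches with ``likewise''.
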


\begin{proof}
Say we have $x,y\in S^0$ with $a=ax=ay$.  Then $a=ay=(ax)y=a(xy)$, by associativity.  In particular, $(x,y)\in S^2$ so $x=xy=y$, as $x,y\in S^0$.
\end{proof}

Likewise, there can be at most one $x\in S^0$ with $(x,a)\in S^2$.  When they exist, we call these the \emph{source unit} $\mathsf{s}(a)$ and \emph{range unit} $\mathsf{r}(a)$ respectively, i.e.
\[a\mathsf{s}(a)=a=\mathsf{r}(a)a.\]
For any $(a,b)\in S^0$, if $b$ has a source unit $\mathsf{s}(b)$ then $ab=a(b\mathsf{s}(b))=(ab)\mathsf{s}(b)$, showing that $\mathsf{s}(ab)=\mathsf{s}(b)$.  Likewise, if $a$ has a range unit then $\mathsf{r}(ab)=\mathsf{r}(a)$.

\begin{prp}
If $\mathsf{s}(a)$ and $\mathsf{r}(b)$ exist then
\[(a,b)\in S^2\qquad\Rightarrow\qquad\mathsf{s}(a)=\mathsf{r}(b).\]
\end{prp}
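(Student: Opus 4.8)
The plan is to exploit associativity to show that $\mathsf{s}(a)$ itself acts as a left unit on $b$, and then to invoke the uniqueness of the range unit already noted after the previous proposition.

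First I would substitute the defining relation $a=a\mathsf{s}(a)$ into the product $ab$. Since $(a,b)\in S^2$, the product $(a\mathsf{s}(a))b$ is defined and equals $ab$. Associativity then guarantees that $a(\mathsf{s}(a)b)$ is also defined; in particular the inner product $\mathsf{s}(a)b$ must itself be defined, i.e.\ $(\mathsf{s}(a),b)\in S^2$. Next, because $\mathsf{s}(a)\in S^0$ is a unit and $(\mathsf{s}(a),b)\in S^2$, the defining property of a unit forces $\mathsf{s}(a)b=b$. Thus $\mathsf{s}(a)$ is a unit satisfying $\mathsf{s}(a)b=b$ with $(\mathsf{s}(a),b)\in S^2$, which is precisely the characterising property of the range unit $\mathsf{r}(b)$.

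Finally, I would appeal to the uniqueness established just above, namely that there is at most one $x\in S^0$ with $(x,b)\in S^2$. Since both $\mathsf{s}(a)$ and $\mathsf{r}(b)$ qualify, they must coincide, giving $\mathsf{s}(a)=\mathsf{r}(b)$.

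The only delicate step is the associativity argument: one must use the biconditional in the definition of a semigroupoid to pass from ``$(a\mathsf{s}(a))b$ is defined'' to ``$\mathsf{s}(a)b$ is defined'', rather than merely asserting equality of products known in advance to exist. Once that is in hand, everything else is a direct application of the unit definition and the uniqueness proposition.
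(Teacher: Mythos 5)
Your proof is correct and takes essentially the same approach as the paper: both arguments hinge on the associativity biconditional to pass from ``$(a\mathsf{s}(a))b$ is defined'' to $(\mathsf{s}(a),b)\in S^2$. The only difference is cosmetic: where you conclude by citing the uniqueness of left units (at most one $x\in S^0$ with $(x,b)\in S^2$), the paper re-runs that same uniqueness argument inline, using a second associativity step to get $(\mathsf{s}(a),\mathsf{r}(b))\in S^2$ and then $\mathsf{s}(a)=\mathsf{s}(a)\mathsf{r}(b)=\mathsf{r}(b)$.
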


\begin{proof}
Note that $(a,b)\in S^2$ implies $ab=(a\mathsf{s}(a))b=a(\mathsf{s}(a)b)$, i.e. $(\mathsf{s}(a),b)\in S^2$, which then in turn implies $\mathsf{s}(a)b=\mathsf{s}(a)(\mathsf{r}(b)b)=(\mathsf{s}(a)\mathsf{r}(b))b$, i.e. $(\mathsf{s}(a),\mathsf{r}(b))\in S^2$, and hence $\mathsf{s}(a)=\mathsf{s}(a)\mathsf{r}(b)=\mathsf{r}(b)$.
\end{proof}

Categories are semigroupoids where the converse also holds.

\begin{dfn}
A \emph{category} is a semigroupoid where $\mathsf{s}(a)$ and $\mathsf{r}(a)$ always exist and
\begin{equation}\label{CategoryDef}
\mathsf{s}(a)=\mathsf{r}(b)\qquad\Rightarrow\qquad(a,b)\in S^2.
\end{equation}
\end{dfn}

In Fell bundles and C*-algebras, involutions also play a key role.

\begin{dfn}\label{*Semigroupoid}
A \emph{*-semigroupoid} is a semigroupoid $S$ with an involution, i.e. a map $a\mapsto a^*$ on $S$ such that $a^{**}=a$ and $(ab)^*=b^*a^*$, for all $(a,b)\in S^2$.

Likewise, a \emph{*-category}/\emph{*-semigroup} is a category/semigroup with an involution.
\end{dfn}

If $S$ is a semigroupoid, an \emph{inverse} of $a\in S$ is an element $a^{-1}\in S$ with $aa^{-1},a^{-1}a\in S^0$ (in particular, $(a,a^{-1}),(a^{-1},a)\in S^2$).  The \emph{core} of $S$ is
\[S^\times=\{a\in S:a\text{ has an inverse}\}.\]

\begin{dfn}
A \emph{groupoid} $G$ is a semigroupoid of invertibles, i.e. $G=G^\times$.
\end{dfn}

\begin{prp}
Every groupoid is a *-category when we take $g^*=g^{-1}$.
\end{prp}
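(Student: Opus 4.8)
The plan is to check, in turn, that a groupoid $G$ qualifies as a category (existence of $\mathsf{s}(a)$ and $\mathsf{r}(a)$, plus the composability condition \eqref{CategoryDef}) and that $g\mapsto g^{-1}$ is a well-defined involution with $(gh)^{-1}=h^{-1}g^{-1}$ whenever $(g,h)\in G^2$. The one genuinely delicate point is the existence of source and range units, so I would isolate first the small but crucial observation that \emph{every unit is self-composable and idempotent}: given $x\in G^0$, its inverse $x^{-1}$ exists, so $(x,x^{-1})\in G^2$; using $x$ as a left identity gives $xx^{-1}=x^{-1}$, whence $x^{-1}\in G^0$ since $xx^{-1}\in G^0$, and then using $x^{-1}$ as a right identity gives $xx^{-1}=x$. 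Thus $x=x^{-1}$, so $(x,x)\in G^2$ and $xx=x$.

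With this in hand I would show that, for any $a\in G$ and any inverse $a^{-1}$, the units $v=a^{-1}a$ and $u=aa^{-1}$ are precisely $\mathsf{s}(a)$ and $\mathsf{r}(a)$. For the source, $(v,v)\in G^2$ means $(a^{-1}a)v=vv$ is defined, so associativity applied to the triple $(a^{-1},a,v)$ forces $a^{-1}(av)$ to be defined; in particular $av$ is defined, i.e. $(a,v)\in G^2$, and the unit law then gives $av=a$, so $\mathsf{s}(a)=v$. The range is symmetric, using that $(ua)a^{-1}$ is defined because $u(aa^{-1})=uu$ is, so that $\mathsf{r}(a)=u$. Since $\mathsf{s}(a)$ and $\mathsf{r}(a)$ are unique by the earlier Propositions, any two inverses $b,c$ of $a$ satisfy $ba=ca=\mathsf{s}(a)$ and $ab=ac=\mathsf{r}(a)$, and then $b=\mathsf{r}(b)\,b=\mathsf{s}(a)\,b=(ca)b=c(ab)=c\,\mathsf{r}(a)=c\,\mathsf{s}(c)=c$, so inverses are unique and $g\mapsto g^{-1}$ is a genuine map. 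For \eqref{CategoryDef}, suppose $\mathsf{s}(a)=\mathsf{r}(b)$, i.e. $a^{-1}a=bb^{-1}$. Then $a(bb^{-1})=a(a^{-1}a)=a\,\mathsf{s}(a)$ is defined, so associativity applied to $(a,b,b^{-1})$ forces $(ab)b^{-1}$ to be defined; in particular $(a,b)\in G^2$, as required, completing the verification that $G$ is a category.

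For the involution, $(g^{-1})^{-1}=g$ is immediate from uniqueness, since $g$ is itself an inverse of $g^{-1}$. For $(gh)^{-1}=h^{-1}g^{-1}$ with $(g,h)\in G^2$, I would first note $\mathsf{s}(g)=\mathsf{r}(h)$ by the preceding Proposition, so $\mathsf{r}(g^{-1})=g^{-1}g=\mathsf{s}(g)=\mathsf{r}(h)=hh^{-1}=\mathsf{s}(h^{-1})$, whence $(h^{-1},g^{-1})\in G^2$ by \eqref{CategoryDef} and $h^{-1}g^{-1}$ is defined. A routine associativity computation then collapses $(gh)(h^{-1}g^{-1})$ to $gg^{-1}=\mathsf{r}(g)$ and $(h^{-1}g^{-1})(gh)$ to $h^{-1}h=\mathsf{s}(h)$, both units, so by uniqueness of inverses $(gh)^{-1}=h^{-1}g^{-1}=h^*g^*$, giving the anti-multiplicativity of $*$.

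The main obstacle is the circularity lurking in the existence of $\mathsf{s}(a)$ and in \eqref{CategoryDef}: associativity only asserts that $(xy)z$ is defined iff $x(yz)$ is, which by itself never establishes that \emph{either} side is defined. The purpose of the idempotent-composability of units is exactly to supply one product that is manifestly defined, namely $vv$ or $uu$, which associativity can then transport to the product one actually wants. Once that device is in place, everything else is bookkeeping with the unit laws.
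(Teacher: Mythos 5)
Your proof is correct and takes essentially the same route as the paper's: you show units are self-inverse and idempotent, use associativity to transport definedness of $(g^{-1}g)(g^{-1}g)$ to $g(g^{-1}g)$ so that $\mathsf{s}(g)=g^{-1}g$ and $\mathsf{r}(g)=gg^{-1}$, and derive composability from $g=g\mathsf{r}(h)=g(hh^{-1})=(gh)h^{-1}$, exactly as in the paper. The only difference is that you explicitly verify uniqueness of inverses and the involution identities $(g^{-1})^{-1}=g$ and $(gh)^{-1}=h^{-1}g^{-1}$, details which the paper compresses into the single sentence ``Now it follows that $g\mapsto g^{-1}$ is an involution''; this extra care is welcome rather than a deviation.
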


\begin{proof}
Say $G$ is a groupoid, so every $g\in G$ has an inverse $g^{-1}$.  In particular, every $x\in G^0$ has an inverse $x^{-1}=xx^{-1}=x(xx^{-1})=x$.  Taking $x=g^{-1}g$, for some $g\in G$, it follows that $g^{-1}g=(g^{-1}g)(g^{-1}g)=g^{-1}(g(g^{-1}g))$ and hence $\mathsf{s}(g)=g^{-1}g$.  Likewise $\mathsf{r}(g)=gg^{-1}$ so, in particular, all elements have source and range units.  Moreover, if $\mathsf{s}(g)=\mathsf{r}(h)$ then $g=g\mathsf{s}(g)=g\mathsf{r}(h)=g(hh^{-1})=(gh)h^{-1}$ so $(g,h)\in G^2$, showing that $G$ is a category.  Now it follows that $g\mapsto g^{-1}$ is an involution which means $G$ is also a *-category when we take $g^*=g^{-1}$.
\end{proof}

A map $\rho:S\rightarrow T$ between semigroupoids $S$ and $T$ is a \emph{homomorphism} if
\[\tag{Homomorphism}(a,b)\in S^2\qquad\Rightarrow\qquad\rho(ab)=\rho(a)\rho(b).\]
In particular, $(a,b)\in S^2$ implies $(\rho(a),\rho(b))\in T^2$.  An \emph{isocofibration} is a homomorphism where the converse also holds, i.e. $(a,b)\in S^2$ whenever $(\rho(a),\rho(b))\in T^2$.  If $S$ and $T$ are *-semigroupoids and $\rho$ also respects the involution, i.e. $\rho(a^*)=\rho(a)^*$, then $\rho$ is a \emph{*-isocofibration}.  A \emph{functor} is a unital homomorphism, i.e. $\rho[S^0]\subseteq T^0$.

\begin{prp}\label{HomoFun}
Every homomorphism to a groupoid is automatically a functor.
\end{prp}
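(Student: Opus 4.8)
The plan is to reduce everything to a single fact about the codomain: \emph{every idempotent in a groupoid is a unit}. Granting this, the proposition is immediate. Fix a homomorphism $\rho\colon S\to T$ with $T$ a groupoid, and let $x\in S^0$. Since $x$ is a unit it is idempotent, $xx=x$, so applying $\rho$ gives $\rho(x)=\rho(xx)=\rho(x)\rho(x)$; thus $\rho(x)$ is an idempotent of $T$, hence (by the reduced fact) a unit. As $x\in S^0$ was arbitrary, this shows $\rho[S^0]\subseteq T^0$, which is exactly the statement that $\rho$ is a functor.

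It remains to establish the reduced fact, and this is the only place the groupoid hypothesis on $T$ is used. Let $e\in T$ satisfy $ee=e$. Because $T$ is a groupoid, $e$ has an inverse with $\mathsf{s}(e)=e^{-1}e$ and $\mathsf{r}(e)=ee^{-1}$, exactly as extracted in the proof that every groupoid is a *-category. Since the product $ee$ is defined, the earlier proposition on compatibility of source and range units forces $\mathsf{s}(e)=\mathsf{r}(e)$. I would then multiply $e=ee$ on the left by $e^{-1}$ and push the parentheses through associativity:
\[\mathsf{s}(e)=e^{-1}e=e^{-1}(ee)=(e^{-1}e)e=\mathsf{s}(e)\,e=\mathsf{r}(e)\,e=e,\]
where the final equality is the defining property of the range unit. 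Hence $e=\mathsf{s}(e)\in T^0$, as required.

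The argument is short, so there is no serious obstacle; the one point needing care is the transition from idempotent to unit, where one must remember to invoke $\mathsf{s}(e)=\mathsf{r}(e)$ before concluding, so that $\mathsf{s}(e)\,e=\mathsf{r}(e)\,e=e$. The essential content is simply that the invertibility available in $T$ promotes idempotents to units. This also clarifies why the groupoid hypothesis cannot be dropped: for a homomorphism into a general semigroupoid or even a category, idempotents (for instance idempotent endomorphisms) need not be units, so the image of a unit need not be one.
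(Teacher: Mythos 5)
Your proof is correct and takes essentially the same route as the paper: both reduce the statement to the fact that a homomorphism sends the idempotent $x=xx$ to an idempotent $\rho(x)=\rho(x)\rho(x)$, and then use the inverse available in the groupoid to promote that idempotent to a unit. The only cosmetic difference is that you multiply on the left by the inverse (identifying $e$ with $\mathsf{s}(e)$, after the extra observation that $\mathsf{s}(e)=\mathsf{r}(e)$), whereas the paper multiplies on the right to get $\rho(x)=\rho(x)\rho(x)^{-1}=\mathsf{r}(\rho(x))$ in a single line.
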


\begin{proof}
Say $S$ is a semigroupoid, $G$ is a groupoid and $\rho:S\rightarrow G$ is a homomorphism.  For any $a\in S^0$ or even any idempotent $a=aa$, we see that $\rho(a)=\rho(a)\rho(a)$ and hence $\rho(a)\rho(a)^{-1}=\rho(a)\rho(a)\rho(a)^{-1}=\rho(a)$, showing that $\rho(a)\in\Gamma^0$.
\end{proof}

\begin{cor}
Every homomorphism between groupoids is a *-functor.
\end{cor}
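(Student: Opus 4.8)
The plan is to lean on the preceding \autoref{HomoFun} so that the unital (functor) part comes for free, leaving only compatibility with the involution to check. So let $\rho:G\to H$ be a homomorphism between groupoids. By \autoref{HomoFun}, $\rho$ is already a functor, i.e. $\rho[G^0]\subseteq H^0$. Since the involution on a groupoid is given by $g^*=g^{-1}$, it remains only to show that $\rho$ preserves inverses, i.e. $\rho(g^{-1})=\rho(g)^{-1}$ for all $g\in G$.

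First I would apply $\rho$ to the units $gg^{-1}\in G^0$ and $g^{-1}g\in G^0$. Because $\rho$ is a homomorphism we have $\rho(g)\rho(g^{-1})=\rho(gg^{-1})$ and $\rho(g^{-1})\rho(g)=\rho(g^{-1}g)$, and because $\rho$ is a functor both images lie in $H^0$. By the definition of an inverse, this says precisely that $\rho(g^{-1})$ is \emph{an} inverse of $\rho(g)$ in $H$.

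The only genuinely substantive step is then to conclude that this inverse must coincide with \emph{the} inverse $\rho(g)^{-1}$ used to define the involution on $H$; this rests on the uniqueness of inverses in a groupoid, the very fact that makes $h\mapsto h^{-1}$ a well-defined involution. I expect this to be the main (indeed the only) obstacle, and I would settle it using the earlier propositions: for any inverse $h$ of $g$, the pairs $(g,h),(h,g)\in G^2$ together with $\mathsf{r}(gh)=\mathsf{r}(g)$, $\mathsf{s}(hg)=\mathsf{s}(g)$ and the fact that units are their own range and source units force $gh=\mathsf{r}(g)$ and $hg=\mathsf{s}(g)$; two inverses $h,k$ then satisfy $h=h\,\mathsf{s}(h)=h(gk)=(hg)k=\mathsf{s}(g)k=k$. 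Applying this with $h=\rho(g^{-1})$ yields $\rho(g^{-1})=\rho(g)^{-1}=\rho(g)^*$, so $\rho$ respects the involution and is therefore a *-functor, with everything but the uniqueness being a direct unwinding of the definitions.
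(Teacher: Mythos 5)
Your proposal is correct and is essentially the paper's own argument: the paper likewise invokes \autoref{HomoFun} to get $\rho(g^{-1})\rho(g)=\rho(g^{-1}g)\in H^0$ and then concludes by the single associativity computation $\rho(g^{-1})=\rho(g^{-1})\rho(g)\rho(g)^{-1}=\rho(g)^{-1}$, which is precisely your uniqueness-of-inverses chain $h=h(gk)=(hg)k=k$ specialised to $h=\rho(g^{-1})$ and $k=\rho(g)^{-1}$. Your version merely packages that computation as a standalone uniqueness lemma, whose remaining details (e.g. $\mathsf{s}(h)=\mathsf{r}(g)=gk$) follow from the paper's earlier propositions on source and range units.
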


\begin{proof}
Say $G$ and $H$ are groupoids and $\rho:G\rightarrow H$ is a homomorphism.  For any $g\in G$, we have $\rho(g^{-1})\rho(g)=\rho(g^{-1}g)\in H^0$, by \autoref{HomoFun}, and hence
\[\rho(g^{-1})=\rho(g^{-1})\rho(g)\rho(g)^{-1}=\rho(g)^{-1}.\qedhere\]
\end{proof}

To define Fell bundles, we will also need the following topological notions.

\begin{dfn}\label{TopologicalGroupoids}
A \emph{topological category} is a category carrying a topology making the source $\mathsf{s}$, range $\mathsf{r}$ and the product (on $S^2$ as a subspace of $S\times S$) continuous.

A \emph{topological *-semigroupoid} is a *-semigroupoid $S$ carrying a topology making both the involution and product continuous.

A \emph{topological groupoid} is a groupoid that is topological as a *-semigroupoid with $a^*=a^{-1}$, i.e. where both the inverse and product are continuous.

An \emph{\'etale groupoid} is a topological groupoid where $\mathsf{s}$ (or $\mathsf{r}$) is an open map.
\end{dfn}

Recall that a \emph{slice} of a groupoid $\Gamma$ is a subset $S\subseteq\Gamma$ on which $\mathsf{s}$ and $\mathsf{r}$ are injective or, equivalently, such that $SS^{-1}\cup S^{-1}S\subseteq\Gamma^0$.  \'Etale groupoids are precisely those with a basis of open slices which is closed under pointwise products and inverses.

The following results will be needed later to define the singular ideal of the reduced C*-algebra (which is quotiented out to define the essential C*-algebra).  First recall that a subset $N$ of a topological space $X$ is \emph{nowhere dense} if $\overline{N}^\circ=\emptyset$.

\begin{prp}
If $\Gamma$ is an \'etale groupoid, $K$ is compact and $N\subseteq K\subseteq\Gamma$ then 
\begin{equation}\label{sNvsN}
\mathsf{s}[N]\text{ is nowhere dense}\qquad\Leftrightarrow\qquad N\text{ is nowhere dense}.
\end{equation}
\end{prp}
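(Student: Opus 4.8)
The plan is to localise the problem to open slices, on each of which $\mathsf{s}$ restricts to a homeomorphism, and to use the compactness of $K$ to make this localisation finite. The general-topology ingredient I would isolate first is the following, valid in \emph{any} topological space (no separation axiom is needed, which matters here since $\Gamma^0$ need not be Hausdorff): if $U$ is open and $A\subseteq U$, then $A$ is nowhere dense in $U$ if and only if $A$ is nowhere dense in the ambient space. To see this, note that openness of $U$ gives $\mathsf{int}_U(B)=\mathsf{int}(B)$ for every $B\subseteq U$, while the closure of $A$ in $U$ is $\overline{A}\cap U$; hence $A$ is nowhere dense in $U$ exactly when $\overline{A}^\circ\cap U=\emptyset$. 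The backward implication is then trivial, and for the forward one observe that if $\overline{A}^\circ$ were nonempty it would be an open subset of $\overline{A}$ disjoint from $A$ (since $\overline{A}^\circ\cap U=\emptyset$ and $A\subseteq U$), contradicting $\overline{A}^\circ\subseteq\overline{A}$.

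Next I would invoke compactness. As $\Gamma$ is \'etale it has a basis of open slices, and these cover the compact set $K$, so finitely many of them, say $U_1,\dots,U_n$, already cover $K\supseteq N$. Putting $N_i=N\cap U_i$ gives $N=\bigcup_{i=1}^nN_i$ and $\mathsf{s}[N]=\bigcup_{i=1}^n\mathsf{s}[N_i]$. Since any subset of a nowhere dense set is nowhere dense, and a finite union of nowhere dense sets is nowhere dense, $N$ is nowhere dense if and only if every $N_i$ is, and likewise $\mathsf{s}[N]$ is nowhere dense if and only if every $\mathsf{s}[N_i]$ is. It therefore suffices to prove the equivalence for each $N_i$ separately. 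This finiteness is precisely where compactness of $K$ is used; for infinite unions the equivalence would fail, as $\mathbb{Q}\subseteq\mathbb{R}$ shows.

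Finally I would fix $i$ and work inside the open slice $U_i$. Because $\mathsf{s}$ is an open continuous map that is injective on the slice $U_i$, its restriction is a homeomorphism of $U_i$ onto the open subset $\mathsf{s}[U_i]$ of $\Gamma^0$. Homeomorphisms preserve nowhere density, so $N_i$ is nowhere dense in $U_i$ if and only if $\mathsf{s}[N_i]$ is nowhere dense in $\mathsf{s}[U_i]$. Applying the opening fact on both sides---to $N_i\subseteq U_i$ open in $\Gamma$, and to $\mathsf{s}[N_i]\subseteq\mathsf{s}[U_i]$ open in $\Gamma^0$---upgrades these relative statements to nowhere density in $\Gamma$ and in $\Gamma^0$ respectively, which yields \eqref{sNvsN}. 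The only genuine obstacle is the first paragraph: keeping straight the passage between nowhere density in a subspace and in the ambient space, and verifying that it survives the absence of any Hausdorffness; once that lemma is in hand, the remainder is routine bookkeeping with the finite slice cover.
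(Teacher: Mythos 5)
Your proof is correct, and it differs from the paper's in a meaningful way on one of the two implications. For the direction ``$N$ nowhere dense $\Rightarrow$ $\mathsf{s}[N]$ nowhere dense'' you and the paper do essentially the same thing: cover $K\supseteq N$ by finitely many open slices (this is where compactness enters), use that $\mathsf{s}$ restricts to a homeomorphism from each slice onto an open subset of $\Gamma^0$, and pass between relative and ambient nowhere density via the open-subspace lemma -- which the paper merely asserts (``being nowhere dense is the same as being nowhere dense in any open subspace'') and which you prove carefully, with correct attention to the absence of separation axioms. Where you diverge is the converse direction: you route it through the same finite slice decomposition, arguing $\mathsf{s}[N]$ nowhere dense $\Rightarrow$ each $\mathsf{s}[N_i]$ nowhere dense $\Rightarrow$ each $N_i$ nowhere dense $\Rightarrow$ $N=\bigcup_i N_i$ nowhere dense. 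The paper instead proves the contrapositive directly and globally: if $N$ is not nowhere dense then $\mathsf{s}[\overline{N}^\circ]$ is a nonempty open set (as $\mathsf{s}$ is open) contained in $\overline{\mathsf{s}[N]}$ (as $\mathsf{s}$ is continuous), so $\mathsf{s}[N]$ is not nowhere dense. The paper's argument for this direction uses neither compactness nor slices, so it establishes the implication $\mathsf{s}[N]$ nowhere dense $\Rightarrow$ $N$ nowhere dense for \emph{arbitrary} $N\subseteq\Gamma$; your version is symmetric and uniform but consumes the compactness hypothesis in both directions. Since the proposition assumes $N\subseteq K$ compact anyway, nothing is lost for the statement as given, and your counterexample remark (the analogue of $\mathbb{Q}\subseteq\mathbb{R}$, realised by a nowhere dense transversal in a group bundle with dense source image) correctly identifies that the direction you localise genuinely needs some finiteness.
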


\begin{proof}
If $N$ is not nowhere dense in $\Gamma$ then $\mathsf{s}[\overline{N}^\circ]\neq\emptyset$.  As $\Gamma$ is \'etale, $\mathsf{s}$ is an open map and hence $\mathsf{s}[\overline{N}^\circ]$ is an open subset.  Also $\mathsf{s}$ is continuous so $\mathsf{s}[\overline{N}^\circ]\subseteq\mathsf{s}[\overline{N}]\subseteq\overline{\mathsf{s}[N]}$ and hence $\overline{\mathsf{s}[N]}^\circ\neq\emptyset$, showing that $\mathsf{s}[N]$ also fails to be nowhere dense.

Conversely, say $N$ is nowhere dense.  As $\Gamma$ is \'etale, we have a finite family of open slices $O_1,\ldots,O_n\subseteq\Gamma$ covering $N$.  As $\mathsf{s}$ is a homeomorphism on each $O_k$, it follows that $\mathsf{s}[O_k\cap N]$ is also nowhere dense in $\mathsf{s}[O_k]$ and hence in $\Gamma$, as being nowhere dense is the same as being nowhere dense in any open subspace.  Thus $\mathsf{s}[N]=\bigcup_k\mathsf{s}[O_k\cap N]$ is also nowhere dense.
\end{proof}

Actually, we will really be interested in the following corollary for \emph{meagre} subsets, i.e. countable (increasing) unions of nowhere dense subsets.  Recall that a space $X$ is \emph{$\sigma$-compact} if it is a countable union of compact subsets.

\begin{cor}
If $\Gamma$ is an \'etale groupoid, $K$ is $\sigma$-compact and $M\subseteq K\subseteq\Gamma$ then
\begin{equation}\label{sMeagre}
M\text{ is meagre}\qquad\Leftrightarrow\qquad\mathsf{s}[M]\text{ is meagre}.
\end{equation}
\end{cor}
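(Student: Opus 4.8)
The plan is to reduce everything to the compact case already settled in \eqref{sNvsN}, using $\sigma$-compactness to break $K$ into a countable union of compact sets $K=\bigcup_n K_n$ (which we may take to be increasing by replacing $K_n$ with $K_1\cup\dots\cup K_n$). The key observation is that both meagreness and its image characterisation under $\mathsf{s}$ are preserved by countable unions, so as soon as we can intersect an arbitrary nowhere dense piece with some $K_n$ to land inside a single compact set, the previous proposition applies verbatim.

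First I would treat the forward implication. Assuming $M$ is meagre, write $M=\bigcup_n N_n$ with each $N_n$ nowhere dense; since $N_n\subseteq M\subseteq K=\bigcup_m K_m$, we have $N_n=\bigcup_m(N_n\cap K_m)$ with each $N_n\cap K_m$ nowhere dense and contained in the compact set $K_m$. Applying the nowhere-dense direction of \eqref{sNvsN} to each $N_n\cap K_m$ shows $\mathsf{s}[N_n\cap K_m]$ is nowhere dense, whence $\mathsf{s}[M]=\bigcup_{n,m}\mathsf{s}[N_n\cap K_m]$ is a countable union of nowhere dense sets, i.e.\ meagre.

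For the converse, the natural move is to pull the decomposition of $\mathsf{s}[M]$ back through $\mathsf{s}$. Writing $\mathsf{s}[M]=\bigcup_n A_n$ with each $A_n$ nowhere dense, set $M_n=M\cap\mathsf{s}^{-1}[A_n]$, so that $M=\bigcup_n M_n$ because every point of $M$ has source lying in some $A_n$. Then $\mathsf{s}[M_n\cap K_m]\subseteq A_n$ is nowhere dense while $M_n\cap K_m\subseteq K_m$ is contained in a compact set, so the other direction of \eqref{sNvsN} gives that each $M_n\cap K_m$ is nowhere dense; hence $M=\bigcup_{n,m}(M_n\cap K_m)$ is meagre.

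The only real obstacle is this localisation step: \eqref{sNvsN} is available only for subsets of a single compact set (its converse genuinely uses a finite subcover by slices), so a general nowhere dense piece of $M$, or the preimage of a nowhere dense piece of $\mathsf{s}[M]$, must first be cut down by the $K_m$ before the proposition can be invoked. Once that is arranged, the remainder is bookkeeping with countable unions, and if one prefers the parenthetical `increasing' form of meagreness, the resulting double-indexed families can be reindexed into increasing unions using that finite unions of nowhere dense sets are nowhere dense.
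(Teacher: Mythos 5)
Your proof is correct and takes essentially the same approach as the paper: both directions reduce to \eqref{sNvsN} via countable decomposition, intersecting with the compact pieces $K_m$ when passing from $M$ to $\mathsf{s}[M]$, and pulling the nowhere dense pieces of $\mathsf{s}[M]$ back through $\mathsf{s}$ for the reverse direction. The only difference is cosmetic: in that reverse direction the paper applies the open-map argument from the \emph{proof} of \eqref{sNvsN} directly to the preimages $\mathsf{s}^{-1}[A_n]$ (where no compactness is needed), whereas you first cut down by the $K_m$ so that the proposition can be invoked exactly as stated---both are fine.
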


\begin{proof}
If $\mathsf{s}[M]$ is meagre then it is a countable union of nowhere dense subsets, each of which has nowhere dense $\mathsf{s}$-preimage, as in the proof of \eqref{sNvsN} above.  As these preimages cover $M$, it follows that $M$ is meagre.

Conversely, if $M$ is meagre then $M$ is a countable union of nowhere dense subsets contained in compact subsets of $\Gamma$.  By \eqref{sNvsN}, the $\mathsf{s}$-image of each of these is nowhere dense and hence $\mathsf{s}[M]$ is meagre.
\end{proof}

Of course, the above results are also valid with $\mathsf{r}$ in place of $\mathsf{s}$.

In a general \'etale groupoid, arbitrary compact subsets need not be closed under taking products.  However, this will be true for particular kinds of compact subsets.

\begin{dfn}
Let $\Gamma$ be an \'etale groupoid.  We call $B\subseteq\Gamma$
\begin{enumerate}
\item \emph{biclosed} if both $\mathsf{s}[B]$ and $\mathsf{r}[B]$ are closed in $\Gamma^0$.
\item \emph{bicompact} if $B$ is a biclosed compact subset of an open slice.
\item \emph{$\sigma$-bicompact} if $B$ is a countable union of bicompact subsets.
\end{enumerate}
\end{dfn}

Note that if $\Gamma^0$ is Hausdorff then every compact subset is automatically biclosed and every compact slice is automatically bicompact, by \cite[Proposition 6.3]{BiceStarling2018} -- we just need bicompactness so our results remain valid for non-Hausdorff unit spaces.

\begin{prp}\label{CompactProducts}
If $\Gamma$ is an \'etale groupoid and $K,L\subseteq\Gamma$ are bicompact, so is
\[KL=\{kl:(k,l)\in(K\times L)\cap\Gamma^2\}.\]
\end{prp}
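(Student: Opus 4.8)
The plan is to verify the three requirements of bicompactness for $KL$ in turn. Fix open slices $U\supseteq K$ and $V\supseteq L$, recalling that on an open slice both $\mathsf{s}$ and $\mathsf{r}$ restrict to homeomorphisms onto their (open) images; in particular $\mathsf{s}|_K,\mathsf{r}|_K,\mathsf{s}|_L,\mathsf{r}|_L$ are homeomorphisms onto $\mathsf{s}[K],\mathsf{r}[K],\mathsf{s}[L],\mathsf{r}[L]$. The containment in a slice is then immediate: since open slices are closed under pointwise products in an \'etale groupoid, $UV$ is an open slice, and clearly $KL\subseteq UV$.

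For compactness, I would observe that $KL$ is the image of the set of composable pairs $(K\times L)\cap\Gamma^2$ under the product map, which is continuous on $\Gamma^2$, so it suffices to show $(K\times L)\cap\Gamma^2$ is compact. This is where biclosedness is needed to get around the fact that $\Gamma^2$ need not be closed in $\Gamma\times\Gamma$ when $\Gamma^0$ is non-Hausdorff. A pair $(k,l)\in K\times L$ is composable precisely when $\mathsf{s}(k)=\mathsf{r}(l)$, so $(K\times L)\cap\Gamma^2$ is exactly the image of $C:=\mathsf{s}[K]\cap\mathsf{r}[L]$ under the continuous map $x\mapsto((\mathsf{s}|_K)^{-1}(x),(\mathsf{r}|_L)^{-1}(x))$. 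As $\mathsf{s}[K]$ is compact and $\mathsf{r}[L]$ is closed, $C$ is a closed subset of a compact set and hence compact; its continuous image $(K\times L)\cap\Gamma^2$ is therefore compact, and so is $KL$.

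The main obstacle is biclosedness, namely showing $\mathsf{s}[KL]$ and $\mathsf{r}[KL]$ are \emph{closed} in $\Gamma^0$ and not merely compact, since compactness alone does not force closedness without a Hausdorff hypothesis. I would treat $\mathsf{s}[KL]$, the case of $\mathsf{r}[KL]$ being symmetric. From $\mathsf{s}(kl)=\mathsf{s}(l)$ we get $\mathsf{s}[KL]\subseteq\mathsf{s}[L]$, and $\mathsf{s}[L]$ is closed by biclosedness of $L$, so it is enough to show $\mathsf{s}[KL]$ is closed in the open set $\mathsf{s}[V]\supseteq\mathsf{s}[L]$. Composing the homeomorphisms on $V$ gives a homeomorphism $\theta:=\mathsf{s}|_V\circ(\mathsf{r}|_V)^{-1}$ of $\mathsf{r}[V]$ onto $\mathsf{s}[V]$ sending $\mathsf{r}(v)$ to $\mathsf{s}(v)$, and a short computation identifies $\mathsf{s}[KL]$ with $\theta[C]$ for the same closed set $C=\mathsf{s}[K]\cap\mathsf{r}[L]$. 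Since $C$ is closed in $\mathsf{r}[V]$ and $\theta$ is a homeomorphism, $\theta[C]$ is closed in $\mathsf{s}[V]$; being contained in $\mathsf{s}[L]$, it is then closed in $\mathsf{s}[L]$, which in turn is closed in $\Gamma^0$, so $\mathsf{s}[KL]=\theta[C]$ is closed in $\Gamma^0$. This establishes biclosedness of $KL$ and completes the plan.
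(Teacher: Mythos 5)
Your proof is correct and follows essentially the same route as the paper's: both arguments turn on the set $C=\mathsf{s}[K]\cap\mathsf{r}[L]$ being compact (as a closed subset of a compact set) and closed, and on transporting it along the homeomorphisms that $\mathsf{s}$ and $\mathsf{r}$ induce on open slices to get biclosedness of $KL$. The only cosmetic difference is in how compactness of $KL$ itself is obtained: you realise $KL$ as the continuous image of the composable-pair set $(K\times L)\cap\Gamma^2$ under multiplication, whereas the paper first notes that $\mathsf{r}[KL]=\mathsf{r}[\mathsf{s}|_O^{-1}[C]]$ is compact and then recovers $KL$ as $\mathsf{r}|_{ON}^{-1}[\mathsf{r}[KL]]$, a homeomorphic preimage within the slice $ON$.
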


\begin{proof}
Take open slices $O,N\subseteq\Gamma$ with $K\subseteq O$ and $L\subseteq N$.  Then $\mathsf{s}[K]$ and $\mathsf{r}[L]$ are closed in $\Gamma^0$ and compact, as $\mathsf{s}$ and $\mathsf{r}$ are continuous.  It follows that $\mathsf{s}[K]\cap\mathsf{r}[L]$ is also compact and closed in $\Gamma^0$.  Thus $\mathsf{r}[KL]=\mathsf{r}[\mathsf{s}|_O^{-1}[\mathsf{s}[K]\cap\mathsf{r}[L]]]$ is also compact and closed in $\mathsf{r}[K]$ and hence in $\Gamma^0$, as $\mathsf{s}$ and $\mathsf{r}$ are homeomorphisms on $O$.  Likewise, $\mathsf{s}[KL]$ is closed in $\Gamma^0$ and $KL=\mathsf{r}|^{-1}_{ON}[\mathsf{r}[KL]]$ is compact, as $\mathsf{r}$ is a homeomorphism on $ON$, showing that $KL$ is a biclosed compact subset of the open slice $ON$.
\end{proof}

This leads to the following result, which we will use (only once, in the proof of \autoref{ComeagreEssentialNorm} below) to show that the essential seminorm of a section in the reduced C*-algebra can be calculated from the $\mathsf{b}$-norm of $a$ restricted to a subgroupoid on which $a$ is continuous.  Basically it says that, given any any meagre $M\subseteq\Gamma$ and any $S\subseteq\Gamma$ contained in a $\sigma$-bicompact subset of $\Gamma$, we can cover all except possibly a meagre subset of $S$ with a subgroupoid $G$ which avoids $M$.

\begin{lem}\label{ComeagreSubgroupoids}
If $\Gamma$ is an \'etale groupoid, $K$ is $\sigma$-bicompact, $S\subseteq K\subseteq\Gamma$ and $M\subseteq\Gamma$ is meagre then we have $\sigma$-bicompact $L$ and subgroupoids $G,H\subseteq L\subseteq\Gamma$ such that $S\subseteq G\cup H$, $G\cap M=\emptyset$, $H$ is meagre and $(\mathsf{s}[G]\cup\mathsf{r}[G])\cap(\mathsf{s}[H]\cup\mathsf{r}[H])=\emptyset$.
\end{lem}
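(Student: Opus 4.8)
The plan is to push the entire problem down to the unit space $\Gamma^0$ via the source and range maps. The observation driving everything is that the subgroupoid structure of $G$ and $H$, together with the disjointness of their unit sets, comes for free if we simply split an ambient subgroupoid $L \supseteq S$ according to an $L$-invariant partition $\Gamma^0 = (\Gamma^0\setminus V)\sqcup V$ of the units, where $V$ is a \emph{meagre} set of units chosen large enough to absorb every unit arising from $M$. Here invariance means $\mathsf{s}(g)\in V\Leftrightarrow\mathsf{r}(g)\in V$ for all $g\in L$, and this is exactly what prevents the appearance of ``cross'' elements with one unit inside $V$ and the other outside, which would otherwise land in neither $G$ nor $H$.

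First I would take $L$ to be the subgroupoid generated by $K$, namely $L=\bigcup_{n\geq1}(K\cup K^{-1})^n$. Since inversion is a homeomorphism swapping $\mathsf{s}$ and $\mathsf{r}$, the set $K^{-1}$ is again $\sigma$-bicompact, and by \autoref{CompactProducts} (together with the fact that finite products and countable unions of $\sigma$-bicompact sets are $\sigma$-bicompact) each $(K\cup K^{-1})^n$ is $\sigma$-bicompact, so $L$ is $\sigma$-bicompact; the units $\mathsf{s}(k)=k^{-1}k$ and $\mathsf{r}(k)=kk^{-1}$ are automatically captured, so $L$ really is a subgroupoid and $S\subseteq K\subseteq L$. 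Next I would set $W=\mathsf{s}[M\cap L]\cup\mathsf{r}[M\cap L]$, which is meagre in $\Gamma^0$ by \eqref{sMeagre} since $M\cap L$ is meagre and contained in the $\sigma$-compact set $L$. I then take $V$ to be the $L$-saturation $V=\mathsf{s}[L\cap\mathsf{r}^{-1}[W]]$, the union of all $L$-orbits meeting $W$. Because the orbit relation of a groupoid is an equivalence relation, $V$ is $L$-invariant in a single step: if $x\in V$ and $h\in L$ satisfy $\mathsf{r}(h)=x$, composing shows $\mathsf{s}(h)\in V$, and applying this to $h^{-1}$ yields the reverse implication. Moreover $V$ is still meagre, since $L\cap\mathsf{r}^{-1}[W]$ has $\mathsf{r}$-image inside the meagre $W$ and is hence meagre by \eqref{sMeagre}, so its $\mathsf{s}$-image $V$ is meagre by \eqref{sMeagre} once more.

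Finally I would define $G=\{g\in L:\mathsf{s}(g)\notin V\}$ and $H=\{h\in L:\mathsf{s}(h)\in V\}$. Invariance of $V$ makes each of these closed under products and inverses, hence subgroupoids, and forces $\mathsf{s}[G]\cup\mathsf{r}[G]\subseteq\Gamma^0\setminus V$ together with $\mathsf{s}[H]\cup\mathsf{r}[H]\subseteq V$, delivering the required disjointness of unit sets. Since $V$ partitions the units, $G$ and $H$ partition $L$, so $S\subseteq L=G\cup H$. As $W\subseteq V$, every $m\in M\cap L$ has $\mathsf{s}(m)\in V$, whence $M\cap L\subseteq H$ and $G\cap M=\emptyset$. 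Lastly $H=L\cap\mathsf{s}^{-1}[V]$ has $\mathsf{s}$-image inside the meagre $V$, so $H$ is meagre by \eqref{sMeagre}.

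The main obstacle is the construction of $V$: one needs a set of units that is simultaneously meagre, large enough to swallow the units of $M$, and saturated under the $L$-action, and the crux is verifying that saturation does not destroy meagreness. This is precisely where \eqref{sMeagre} does the real work, converting back and forth between meagreness of a subset of $L$ and meagreness of its source/range image. The point that a single saturation suffices, rather than a countable iteration, hinges on the orbit relation of a groupoid being an equivalence relation, which is what ultimately keeps $V$ within the reach of a finite number of applications of \eqref{sMeagre}.
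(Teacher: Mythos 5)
Your proof is correct and takes essentially the same approach as the paper: pass to the $\sigma$-bicompact subgroupoid generated by $K$ (via \autoref{CompactProducts}), build a meagre, groupoid-invariant set of units absorbing those of $M$, split the subgroupoid into $G$ and $H$ accordingly, and use \eqref{sMeagre} repeatedly to transfer meagreness between subsets and their source/range images. The only cosmetic difference is that the paper constructs the invariant saturation by a countable recursion $(M_n,X_n)$, whereas you exploit transitivity of the orbit relation to obtain it in a single step; both are valid.
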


\begin{proof}
By \autoref{CompactProducts}, the subgroupoid generated by any $\sigma$-bicompact subset of $\Gamma$ is again $\sigma$-bicompact.  In particular, if $\Delta$ is the subgroupoid of $G$ generated by $S$ then it is contained in the $\sigma$-bicompact subgroupoid $L$ generated by $K$.  Let $M_0=M\cap\Delta$ and recursively define $(X_n)$ and $(M_n)$ by
\[X_n=\mathsf{r}[M_n]\cup\mathsf{s}[M_n]\qquad\text{and}\qquad M_{n+1}=\Delta\cap(\mathsf{s}^{-1}[X_n]\cup\mathsf{r}^{-1}[X_n])\]
By \eqref{sMeagre}, $X_n$ and $M_n$ are meagre, for all $n\in\mathbb{N}$.  Thus $H=\bigcup_nM_n$ is also meagre and, for all $\delta\in\Delta$,
\[\delta\in H\qquad\Leftrightarrow\qquad\mathsf{s}(\delta)\in H\qquad\Leftrightarrow\qquad\mathsf{r}(\delta)\in H.\]
It follows that both $H$ and $G=\Delta\setminus H$ are subgroupoids of $\Delta$ and hence $\Gamma$ such that $(\mathsf{s}[G]\cup\mathsf{r}[G])\cap(\mathsf{s}[H]\cup\mathsf{r}[H])=\emptyset$.  Also certainly $S\subseteq\Delta=G\cup H$, as well as $G\cap M=M\cap\Delta\setminus H=M_0\setminus H=\emptyset$.
\end{proof}

\subsection{Banach Bundles}\label{BanachBundles}

In its most general sense, a \emph{bundle} is simply an open continuous surjection $\rho:B\twoheadrightarrow X$ between topological spaces $B$ and $X$.  Here $X$ is called the \emph{base} while $B$ is called the \emph{total space} of the bundle.  The \emph{fibres} of the bundle are the preimages of single points $B_x=\rho^{-1}\{x\}$.

We will be interested in the following special kinds of bundles.

\begin{dfn}
A \emph{Banach bundle} is a bundle $\rho:B\twoheadrightarrow X$ where
\begin{enumerate}
\item Each fibre is a (complex) Banach space.
\item For each $\lambda\in\mathbb{C}$, the map $x\mapsto\lambda x$ is continuous on $B$.
\item Addition is continuous on $B\times_\rho B=\{(a,b):\rho(a)=\rho(b)\}$.
\item The norm is upper semicontinuous on $B$.
\item\label{TopologicalCompatibility} $\rho(b_\lambda)\rightarrow x$ and $\|b_\lambda\|\rightarrow0$ implies $b_\lambda\rightarrow0_x$.
\end{enumerate}
\end{dfn}

Our primary reference here is \cite{DoranFell1988}, even though the Banach bundles in \cite{DoranFell1988} are assumed to have a Hausdorff base and continuous norm.  However, most of the basic results pass to more general Banach bundles without any significant change.  For example, one can check that the proofs of \cite[Proposition 13.10 and 13.11]{DoranFell1988} do not use the Hausdorff property and are still valid even when the norm is only upper semicontinuous.  These results thus imply that $(\lambda,b)\rightarrow\lambda b$ is automatically (jointly) continuous on $\mathbb{C}\times B$ and that the subspace topology on each fibre coincides with the norm topology.  We will be careful to indicate whenever a result really does require the norm to be continuous or the base space to be Hausdorff.

We will be particularly concerned with sections of Banach bundles $\rho:B\twoheadrightarrow X$.  First let $B^X$ denote the set of functions from $X$ to $B$, i.e. the subsets of $B\times X$ containing exactly one element of $B\times\{x\}$, for each $x\in X$.  Further let
\[\rho^{-1}=\{(b,x):(x,b)\in\rho\}=\{(b,x):\rho(b)=x\}.\]
The arbitrary sections of $\rho$ are then given by
\[\mathcal{S}^\rho=\{a\in B^X:a\subseteq\rho^{-1}\}=\{a\in B^X:\rho\circ a=\mathrm{id}_X\}.\]
Note that these sections $\mathcal{S}^\rho$ form a vector space with respect to pointwise operations.

Let us denote the continuity points of any function $a$ on $X$ by
\[\mathcal{C}(a)=\{x\in X:a\text{ is continuous at }x\}.\]
The continuity of sums and scalar products means that, for any $a,b\in\mathcal{S}^\rho$ and $z\in\mathbb{C}$,
\[\mathcal{C}(a)\cap\mathcal{C}(b)\subseteq\mathcal{C}(a+b)\qquad\text{and}\qquad\mathcal{C}(a)\subseteq\mathcal{C}(za).\]
Consequently, for any $Y\subseteq X$, the sections that are supported in $Y$ and continuous at every point of $Y$ form a subspace of $\mathcal{S}^\rho$, which we denote by
\[\mathcal{C}^\rho(Y)=\{a\in\mathcal{S}^\rho:\mathrm{supp}(a)\subseteq Y\subseteq\mathcal{C}(a)\},\]
where $\mathrm{supp}(a)=\{x\in X:a(x)\neq0_x\}$.  Note $Y\subseteq\mathcal{C}(a)$ implies that the restriction $a|_Y$ is a continuous function on $Y$, but the converse only holds when $Y$ is open.  Also, we omit $Y$ whenever $Y=X$, i.e.
\[\mathcal{C}^\rho=\mathcal{C}^\rho(X)=\{a\in\mathcal{S}^\rho:a\text{ is continuous everywhere}\}.\]

Even though the norm of a Banach bundle is only required to be upper semicontinuous, we can always ensure that the norm on the range of some continuous section is continuous on some comeagre(=complement of meagre) subset.

For any $a\in\mathcal{S}^\rho$, define its norm function $a_\infty:X\rightarrow[0,\infty)$ by $a_\infty(x)=\|a(x)\|$.

\begin{prp}\label{ComeagreNormContinuity}
For any Banach bundle $\rho:B\twoheadrightarrow X$ and continuous section $a\in\mathcal{C}^\rho$, the continuity points $\mathcal{C}(a_\infty)$ of the norm function $a_\infty$ are comeagre.
\end{prp}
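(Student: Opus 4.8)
The plan is to reduce the assertion to a completely general fact: that any upper semicontinuous real-valued function on any topological space is continuous off a meagre set, with no local compactness, Hausdorff or Baire hypotheses needed. First I would observe that, since $a\in\mathcal{C}^\rho$, the section $a\colon X\to B$ is continuous, while the norm $\|\cdot\|\colon B\to[0,\infty)$ is upper semicontinuous by axiom (4) of a Banach bundle. As $a_\infty=\|\cdot\|\circ a$ and continuous preimages of open sets are open, $\{x:a_\infty(x)<t\}=a^{-1}[\{b:\|b\|<t\}]$ is open for every $t$; equivalently each $\{a_\infty\ge t\}$ is closed. Thus $a_\infty$ is upper semicontinuous and it suffices to prove the general fact for $f:=a_\infty$.

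Next I would pin down the discontinuity set $D=X\setminus\mathcal{C}(f)$ in terms of the topology. Since $f$ is already upper semicontinuous, it is continuous at $x$ precisely when it is also lower semicontinuous there, i.e. when $\liminf_{y\to x}f(y)=f(x)$. Spelling this out with rational thresholds, I claim
\[
D=\bigcup_{\substack{p,q\in\mathbb{Q}\\ p<q}}D_{p,q},\qquad D_{p,q}:=\{f\ge q\}\cap\overline{\{f\le p\}}.
\]
Indeed, if $x\in D_{p,q}$ then $f(x)\ge q$ while every neighbourhood of $x$ meets $\{f\le p\}$, forcing $\liminf_{y\to x}f(y)\le p<q\le f(x)$, so $f$ is discontinuous at $x$; conversely, a discontinuity point has $\liminf_{y\to x}f(y)<f(x)$, and choosing rationals $p<q$ strictly between these two values places $x$ in the corresponding $D_{p,q}$. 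Crucially, upper semicontinuity makes $\{f\ge q\}$ closed, so each $D_{p,q}$ is closed and $D$ is an $F_\sigma$.

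It then remains to show each $D_{p,q}$ is nowhere dense, which (being closed) just means it has empty interior. This is the one genuinely load-bearing step, and it is disarmingly short: if some nonempty open $W$ were contained in $D_{p,q}$, then $W\subseteq\{f\ge q\}$ gives $f\ge q$ throughout $W$, while $W\subseteq\overline{\{f\le p\}}$ forces the open set $W$ to meet $\{f\le p\}$; any point in $W\cap\{f\le p\}$ would then satisfy both $f\ge q$ and $f\le p<q$, an impossibility. Hence $D=\bigcup_{p<q}D_{p,q}$ is a countable union of nowhere dense sets, i.e. meagre, so $\mathcal{C}(a_\infty)=X\setminus D$ is comeagre. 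I expect the only real subtlety to be recognising the right decomposition $D_{p,q}$: once the sets $\{f\ge q\}$ and $\overline{\{f\le p\}}$ are paired off, the nowhere-density contradiction needs nothing beyond the fact that an open set meeting the closure of $A$ must meet $A$, which is precisely why no completeness or Baire assumption on $X$ enters.
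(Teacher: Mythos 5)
Your proof is correct, and it takes a genuinely different route from the paper's. The paper argues via oscillation: it sets $O_n=\bigcup\{O\text{ open}:\mathrm{osc}(O)\leq\tfrac1n\}$, notes $\mathcal{C}(a_\infty)\supseteq\bigcap_nO_n$, and proves each $O_n$ is dense by a descent argument \textemdash\ if $O_n$ misses an open set, upper semicontinuity forces points whose norm values drop by at least $\tfrac{1}{2n}$ at each step, eventually contradicting $a_\infty\geq0$. That argument leans essentially on the fact that the norm function is bounded below; your decomposition $D=\bigcup_{p<q}\bigl(\{f\geq q\}\cap\overline{\{f\leq p\}}\bigr)$ needs no such bound and so proves the stronger, purely general statement that \emph{any} upper semicontinuous real-valued function on \emph{any} topological space has comeagre continuity points, with the nowhere-density of each $D_{p,q}$ reduced to the triviality that an open set meeting $\overline{\{f\leq p\}}$ meets $\{f\leq p\}$. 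Both proofs correctly avoid Baire, Hausdorff and local compactness hypotheses (comeagreness, unlike density of the continuity points, needs no Baire property), and both ultimately exhibit $\mathcal{C}(a_\infty)$ as containing a countable intersection of dense open sets; yours additionally identifies the discontinuity set explicitly as an $F_\sigma$, while the paper's descent trick is self-contained but tied to nonnegativity. One small point worth making explicit if you write this up: the equivalence ``discontinuity at $x$ iff $\liminf_{y\to x}f(y)<f(x)$'' is exactly where upper semicontinuity enters, as you note, and your set-theoretic characterisation of $D$ is then insensitive to whether the $\liminf$ is taken over deleted or full neighbourhoods.
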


\begin{proof}
As $a$ is continuous and the norm is upper semicontinuous, $a_\infty$ is upper semicontinuous.  Define the oscillation of any $O\subseteq X$ by
\[\mathrm{osc}(O)=\sup_{x\in O}\|a(x)\|-\inf_{x\in O}\|a(x)\|\]
(in particular, $\mathrm{osc}(O)=-\infty$ if $O=\emptyset$, while $\mathrm{osc}(O)=\infty$ if $a$ is unbounded on $O$).  For any $n\in\mathbb{N}$, let
\[O_n=\bigcup\{O:O\subseteq X\text{ is open and }\mathrm{osc}(O)\leq\tfrac{1}{n}\}.\]
We claim that $O_n$ is dense in $X$.  If not, take $x\in X\setminus\mathrm{cl}(O_n)$.  As $a_\infty$ is upper semicontinuous, we have an open neighbourhood $O\subseteq X\setminus\mathrm{cl}(O_n)$ of $x$ such that $\|a(y)\|<\|a(x)\|+\frac{1}{2n}$, for all $y\in O$.  As $\mathrm{osc}(O)>1/n$ (because $O\nsubseteq O_n$) we must then have $y\in O\subseteq X\setminus\mathrm{cl}(O_n)$ with $\|a(y)\|<\|a(x)\|-\frac{1}{2n}$.  We can then repeat this procedure with $y$ etc. to show that $\|a(z)\|<0$, for some $z\in X\setminus\mathrm{cl}(O_n)$, contradicting the fact that $\mathrm{ran}(a_\infty)\subseteq[0,\infty)$.  This proves the claim and hence $Y=\bigcap_nO_n$ is a comeagre subset of points at which $a_\infty$ is continuous.
\end{proof}

Let us consider the uniform norm on $\mathcal{S}^\rho$ with values in $[0,\infty]$ defined by
\[\|a\|_\infty=\sup_{x\in X}a_\infty(x)=\sup_{x\in X}\|a(x)\|.\]
As usual, this defines a metric $\|a-b\|_\infty$ again with values in $[0,\infty]$.  As each fibre is complete, $\mathcal{S}^\rho$ is uniformly complete and hence the subspace of sections
\[\mathcal{S}^\rho_\infty=\{a\in\mathcal{S}^\rho:\|a\|_\infty<\infty\}\]
with finite uniform norm forms a Banach space.

For any $\delta\geq0$, we define the \emph{$\delta$-support} of $a\in\mathcal{S}^\rho$ by
\[\mathrm{supp}_\delta(a)=\{x\in X:\|a(x)\|\geq\delta\}.\]
We denote the sections in $\mathcal{C}^\rho(Y)$ \emph{vanishing at infinity} by
\[\mathcal{C}_0^\rho(Y)=\{a\in\mathcal{C}^\rho(Y):\forall\delta>0\ (\mathrm{supp}_\delta(a)\text{ is compact})\}.\]
Let us show these form uniformly closed subspaces of continuous sections which can often be approximated by increasing sequences of compactly supported ones.

\begin{prp}\label{C0}
Take any Banach bundle $\rho:B\twoheadrightarrow X$, $Y\subseteq X$ and $a\in\mathcal{C}_0^\rho(Y)$.
\begin{enumerate}
\item\label{C01} $\mathcal{C}_0^\rho(Y)$ is always a uniformly closed subspace of $\mathcal{S}^\rho_\infty$.
\item\label{C02} If $Y$ is also locally compact and Hausdorff then we have $(a_n)\subseteq\mathcal{C}^\rho(Y)$ converging uniformly to $a$ and subcompacta $(K_n)$ of $\Gamma$ such that
\[Y\supseteq K_n\supseteq\mathrm{supp}(a_n)\subseteq\mathrm{supp}(a_{n+1})\subseteq\mathrm{supp}(a),\text{ for all }n\in\mathbb{N}.\]
\item If the norm is also continuous on $B$ then above we even ensure that
\[\mathrm{supp}(a_n)\subseteq K_n\subseteq\mathrm{supp}(a_{n+1}).\]
\end{enumerate}
\end{prp}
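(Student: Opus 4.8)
The plan is to treat the three parts separately. For part~(1), I would first verify the inclusion $\mathcal{C}_0^\rho(Y)\subseteq\mathcal{S}^\rho_\infty$: since $a$ is continuous on $Y\supseteq\mathrm{supp}(a)$ and the norm is upper semicontinuous, $a_\infty$ is upper semicontinuous, hence bounded, on the compactum $\mathrm{supp}_1(a)$, while $a_\infty<1$ off $\mathrm{supp}_1(a)$, so $\|a\|_\infty<\infty$. As $\mathcal{C}^\rho(Y)$ is already known to be a subspace, linearity of $\mathcal{C}_0^\rho(Y)$ reduces to preservation of the vanishing-at-infinity condition, via the identities $\mathrm{supp}_\delta(za)=\mathrm{supp}_{\delta/|z|}(a)$ and $\mathrm{supp}_\delta(a+b)\subseteq\mathrm{supp}_{\delta/2}(a)\cup\mathrm{supp}_{\delta/2}(b)$, the right-hand side of the latter being a compactum in which $\mathrm{supp}_\delta(a+b)$ is closed (by upper semicontinuity of $(a+b)_\infty$ on $Y$) and hence compact. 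For uniform closedness, given $a_n\to a$ uniformly with $a_n\in\mathcal{C}_0^\rho(Y)$, the limit is again supported in $Y$ and continuous at each point of $Y$ --- the bundle analogue of the classical $\varepsilon/3$ argument, drawing on joint continuity of addition and the final Banach bundle axiom --- so that $\mathrm{supp}(a)\subseteq Y\subseteq\mathcal{C}(a)$; and for $\delta>0$, choosing $n$ with $\|a-a_n\|_\infty<\delta/2$ gives $\mathrm{supp}_\delta(a)\subseteq\mathrm{supp}_{\delta/2}(a_n)$, a compactum in which $\mathrm{supp}_\delta(a)$ is again closed, hence compact.

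The main obstacle lies in part~(2): the obvious recipe $a_n=\phi_n(a_\infty)\,a$ for a scalar cut-off $\phi_n$ is unavailable, because $a_\infty$ is merely upper semicontinuous and so $\phi_n\circ a_\infty$ need not be continuous. This is exactly why local compactness and Hausdorffness of $Y$ are assumed, letting me substitute Urysohn's lemma for the missing norm-continuity. Setting $K'_n=\mathrm{supp}_{1/n}(a)$ (an increasing sequence of compacta in $Y$), I would recursively build continuous $u_n:Y\to[0,1]$ with $u_n=1$ on $K'_n$, with $\overline{\{u_n\neq0\}}$ compact, and with $u_n\leq u_{n+1}$, taking $u_{n+1}=\max(u_n,v_n)$ for a compactly supported Urysohn function $v_n$ equal to $1$ on the compactum $K'_{n+1}\cup\overline{\{u_n\neq0\}}$. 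Then $a_n=u_n a$ is continuous on $Y$ by joint continuity of scalar multiplication, its support $\{u_n\neq0\}\cap\mathrm{supp}(a)$ is nested, lies in $\mathrm{supp}(a)$, and sits inside the compactum $K_n=\overline{\{u_n\neq0\}}\subseteq Y$; and since $u_n=1$ on $\mathrm{supp}_{1/n}(a)$, we get $\|a-a_n\|_\infty=\sup_x|1-u_n(x)|\,\|a(x)\|\leq 1/n\to0$.

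For part~(3), continuity of the norm removes precisely this obstacle, so I can truncate by the norm directly. With $a_\infty$ now continuous, fix continuous $\phi_n:[0,\infty)\to[0,1]$ vanishing precisely on $[0,\tfrac1{n+1}]$ and equal to $1$ on $[\tfrac1n,\infty)$, and put $a_n=\phi_n(a_\infty)\,a$, continuous on $Y$. Then $\mathrm{supp}(a_n)=\{a_\infty>\tfrac1{n+1}\}$, and the choice $K_n=\mathrm{supp}_{1/(n+1)}(a)=\{a_\infty\geq\tfrac1{n+1}\}$, compact by hypothesis, yields the sandwich $\mathrm{supp}(a_n)\subseteq K_n\subseteq\{a_\infty>\tfrac1{n+2}\}=\mathrm{supp}(a_{n+1})$ demanded in part~(3), with all the inclusions of part~(2) holding a fortiori; uniform convergence follows as before since $\phi_n(a_\infty)=1$ wherever $a_\infty\geq1/n$.
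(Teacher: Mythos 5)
Your proposal is correct and follows essentially the same route as the paper: in part (1) boundedness comes from upper semicontinuity of the norm on the compact $\delta$-supports and closedness from the Doran--Fell-style net-lifting $\varepsilon/3$ argument using continuity of addition and \eqref{TopologicalCompatibility}, while in part (2) you use exactly the paper's device of Urysohn cut-offs made nested by pointwise maxima (the paper takes $v_n=\bigvee_{k\leq n}f_k$, you recursively take $u_{n+1}=\max(u_n,v_n)$). The only divergence is part (3), where you truncate by composing the now-continuous norm function with scalar cut-offs $\phi_n$, whereas the paper instead uses norm continuity to choose Urysohn functions supported inside $\mathrm{int}(\mathrm{supp}_{1/(n+1)}(a))$; both exploit norm continuity in the same way and both work, though when you assert $u_n\cdot a\in\mathcal{C}^\rho(Y)$ you should, as the paper does around \eqref{fdota}, check continuity at points of $Y$ as a section on all of $X$ (i.e.\ also along nets from $X\setminus Y$), not merely continuity of the restriction to $Y$.
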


\begin{proof}\
\begin{enumerate}
\item By the definition of $\mathcal{C}_0^\rho(Y)$, for any $\varepsilon>0$, $K=\mathrm{supp}_\varepsilon(a)$ is compact and hence $a[K]$ is a compact subset of $B$, as $a$ is continuous on $Y$.  By the upper semicontinuity of the norm, $a[K]$ is then norm-bounded and hence so too is $a$, i.e. $a\in\mathcal{S}^\rho_\infty$.  This shows that $\mathcal{C}_0^\rho(Y)$ is indeed a subspace of $\mathcal{S}^\rho_\infty$.

Now take any $c\in\mathrm{cl}_\infty(\mathcal{C}^\rho(Y))$.  To see that $c$ is also continuous at each $y\in Y$, we repeat the argument in \cite[Chapter II Proposition 13.12/Corollary 13.13]{DoranFell1988} but for general (potentially non-continuous) Banach bundles.  Specifically, take $(x_\lambda)\subseteq X$ with $x_\lambda\rightarrow y\in Y$.  As $\rho$ is an open map, we may revert to a subnet if necessary and assume that we also have $(b_\lambda)\subseteq B$ with $b_\lambda\rightarrow c(y)$ and $\rho(b_\lambda)=x_\lambda$, for all $\lambda$ (see \cite[Proposition 13.2]{DoranFell1988}).  For any $\delta>0$, we have $a\in\mathcal{C}^\rho(Y)$ with $\|a-c\|_\infty<\delta$.  In particular, $\|a(y)-c(y)\|<\delta$ and hence $\limsup\|a(x_\lambda)-b_\lambda\|<\delta$, as the norm is upper semicontinuous.  As $\|a(x_\lambda)-c(x_\lambda)\|<\delta$ too, for all $\lambda$, it follows that $\limsup\|c(x_\lambda)-b_\lambda\|<2\delta$.  As $\delta>0$ was arbitrary, $\|c(x_\lambda)-b_\lambda\|\rightarrow0$ and hence $c(x_\lambda)-b_\lambda\rightarrow 0_x$, by \eqref{TopologicalCompatibility} above.  As $b_\lambda\rightarrow c(y)$ and addition is continuous, $c(x_\lambda)\rightarrow c(y)$.  This shows $c$ is continuous at each $y\in Y$ and hence $c\in\mathcal{C}^\rho(Y)$, which in turn shows that $\mathrm{cl}_\infty(\mathcal{C}^\rho(Y))=\mathcal{C}^\rho(Y)$.

If $c\in\mathrm{cl}_\infty(\mathcal{C}_0^\rho(Y))$ then again, for any $\delta>0$, we have $a\in\mathcal{C}_0^\rho(Y)$ with $\|a-c\|_\infty<\delta$ and hence $\mathrm{supp}_{2\delta}(c)\subseteq\mathrm{supp}_{\delta}(a)\subseteq Y$.  As $c$ is continuous on $Y$ and the norm is upper semicontinuous, $\mathrm{supp}_{2\delta}(c)$ is then a closed subset of the compact set $\mathrm{supp}_{\delta}(a)$ and thus itself compact.  As $\delta>0$ was arbitrary, it follows $c$ vanishes at $\infty$, showing that $\mathrm{cl}_\infty(\mathcal{C}^\rho_0(Y))=\mathcal{C}^\rho_0(Y)$.

\item Note $f\cdot a\in\mathcal{C}_0^\rho(Y)$ when $a\in\mathcal{C}_0^\rho(Y)$ and $f:Y\rightarrow[0,1]$ is continuous, where
\begin{equation}\label{fdota}
f\cdot a(x)=\begin{cases}f(x)a(x)&\text{ if }x\in Y\\0_x&\text{if }x\in X\setminus Y.\end{cases}
\end{equation}
Indeed, if we have a net $(y_\lambda)\subseteq Y$ converging to $y\in Y$ then the continuity of $f$ and $a$ on $Y$ and the (joint) continuity of scalar multiplication yields $f\cdot a(y_\lambda)=f(y_\lambda)a(y_\lambda)\rightarrow f(y)a(y)$.  On the other hand, if $(x_\lambda)\subseteq X\setminus Y$ converges to $y\in Y$ then $f\cdot a(x_\lambda)=0_{x_\lambda}=f(y)a(x_\lambda)\rightarrow f(y)a(y)$, by the continuity of $a$ at $y$ and the continuity of multiplication by the scalar $f(y)$.  Thus $f\cdot a$ is continuous at each $y\in Y$ so $\mathrm{supp}_\delta(f\cdot a)$ is a closed and hence compact subset of $\mathrm{supp}_\delta(a)$, for all $\delta>0$, showing that $f\cdot a\in\mathcal{C}_0^\rho(Y)$.

Now if $Y$ is locally compact and Hausdorff, for any $a\in\mathcal{C}_0^\rho(Y)$ and $n\in\mathbb{N}$, Urysohn yields continuous $f_n:Y\rightarrow[0,1]$ and compact $K_n\subseteq Y$ with $\mathrm{supp}_{1/n}(a)\subseteq f_n^{-1}\{1\}$ and $\mathrm{supp}(f_n)\subseteq K_n$.  For each $n\in\mathbb{N}$, this also holds for the pointwise supremum $v_n=\bigvee_{k=1}^nf_k$.  Then $a_n=v_n\cdot a\in\mathcal{C}^\rho(Y)$ converges uniformly to $a$ and, for all $n\in\mathbb{N}$,
\[Y\supseteq\bigcup_{k=1}^nK_k\supseteq\mathrm{supp}(v_n)\supseteq\mathrm{supp}(a_n)\subseteq\mathrm{supp}(a_{n+1})\subseteq\mathrm{supp}(a).\]

\item When the norm is also continuous, $\mathrm{supp}_{1/n}(a)\subseteq\mathrm{int}(\mathrm{supp}_{1/(n+1)}(a))$ so we can choose each $f_n$ to have support in compact $K_n\subseteq\mathrm{supp}_{1/(n+1)}(a)$.  Then again $f_n\cdot a\in\mathcal{C}^\rho(Y)$ and $\mathrm{supp}(f_n\cdot a)=\mathrm{supp}(f_n)\subseteq K_n\subseteq\mathrm{supp}(f_{n+1})$. \qedhere
\end{enumerate}
\end{proof}

For any Hausdorff subspaces $Y,Z\subseteq X$, we see that
\[Y\subseteq Z\qquad\Rightarrow\qquad\mathcal{C}_0^\rho(Y)\subseteq\mathcal{C}_0^\rho(Z).\]
Indeed, if $a\in\mathcal{C}_0^\rho(Y)$ then certainly $a$ is continuous at each point of $Y$.  On the other hand, for each $\delta>0$, $\mathrm{supp}_\delta(a)$ is compact and hence closed in $Z$, as $Z$ is Hausdorff.  This shows that $a$ is also continuous at each point of $Z\setminus Y$ and hence $a\in\mathcal{C}_0^\rho(Z)$.

However, this can fail when $Z$ is not Hausdorff.  To `correct' this, we do the usual thing and take linear spans.  Noting $\mathcal{C}^\rho(K)=\mathcal{C}^\rho_0(K)$, for compact $K$, we define
\begin{equation}\label{Ch}
\mathcal{C}_\mathsf{h}^\rho=\mathrm{span}\{a\in\mathcal{C}^\rho(K):K\subseteq X\text{ is compact Hausdorff}\}.
\end{equation}
In general, elements of $\mathcal{C}_0^\rho(K)$ can be discontinuous at points of $\mathrm{cl}(K)\setminus K$ and so it is certainly possible that $\mathcal{C}_\mathsf{h}^\rho\nsubseteq\mathcal{C}^\rho_0$, where $\mathcal{C}^\rho_0=\mathcal{C}^\rho_0(X)$.  When $X$ is locally compact and Hausdorff, however, $\mathcal{C}_\mathsf{h}^\rho$ is precisely the uniform closure of $\mathcal{C}^\rho_0$.

\begin{prp}
For any Banach bundle $\rho:B\twoheadrightarrow X$ with locally compact Hausdorff base $X$,
\[\mathrm{cl}_\infty(\mathcal{C}_\mathsf{h}^\rho)=\mathcal{C}^\rho_0.\]
\end{prp}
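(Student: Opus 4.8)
The plan is to prove the two inclusions separately, with both reducing almost immediately to \autoref{C0}. For the inclusion $\mathrm{cl}_\infty(\mathcal{C}_\mathsf{h}^\rho)\subseteq\mathcal{C}^\rho_0$, I would first observe that since $X$ is Hausdorff, every compact $K\subseteq X$ is automatically compact Hausdorff, so the generating set of $\mathcal{C}_\mathsf{h}^\rho$ really consists of the spaces $\mathcal{C}^\rho(K)=\mathcal{C}^\rho_0(K)$ over all compact $K$. Each such $K$ is a Hausdorff subspace of the Hausdorff space $X$, so the monotonicity noted just before \eqref{Ch} gives $\mathcal{C}^\rho_0(K)\subseteq\mathcal{C}^\rho_0(X)=\mathcal{C}^\rho_0$. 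As $\mathcal{C}^\rho_0$ is a subspace of $\mathcal{S}^\rho$, its linear span over these generators still lies in $\mathcal{C}^\rho_0$, i.e. $\mathcal{C}_\mathsf{h}^\rho\subseteq\mathcal{C}^\rho_0$. Since $\mathcal{C}^\rho_0$ is uniformly closed by \autoref{C0}, taking uniform closures preserves this inclusion and yields $\mathrm{cl}_\infty(\mathcal{C}_\mathsf{h}^\rho)\subseteq\mathcal{C}^\rho_0$.

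For the reverse inclusion $\mathcal{C}^\rho_0\subseteq\mathrm{cl}_\infty(\mathcal{C}_\mathsf{h}^\rho)$, I would take an arbitrary $a\in\mathcal{C}^\rho_0=\mathcal{C}^\rho_0(X)$ and apply the approximation in \autoref{C0} with $Y=X$, which is permissible precisely because $X$ is locally compact and Hausdorff. This produces globally continuous sections $a_n\in\mathcal{C}^\rho(X)$ converging uniformly to $a$, together with compact $K_n\subseteq X$ with $\mathrm{supp}(a_n)\subseteq K_n$. The key observation is then that each $a_n$ already belongs to $\mathcal{C}_\mathsf{h}^\rho$: since $a_n$ is continuous at every point of $X$ and is supported in the compact (hence compact Hausdorff) set $K_n$, we have $\mathrm{supp}(a_n)\subseteq K_n\subseteq\mathcal{C}(a_n)$, so $a_n\in\mathcal{C}^\rho(K_n)$, which is one of the generating subspaces of $\mathcal{C}_\mathsf{h}^\rho$. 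Thus $a$ is a uniform limit of elements of $\mathcal{C}_\mathsf{h}^\rho$, as required.

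There is no genuine obstacle here; the real content has already been absorbed into \autoref{C0}. The only point demanding a little care is the trap in the easy inclusion: one must note that $\mathcal{C}_\mathsf{h}^\rho$ itself need not be contained in $\mathcal{C}^\rho_0$ in general (as remarked after \eqref{Ch}, elements of $\mathcal{C}^\rho_0(K)$ may be discontinuous on $\mathrm{cl}(K)\setminus K$), so the Hausdorff hypothesis on $X$ is exactly what is used to upgrade membership to $\mathcal{C}^\rho_0$ and to make $K$ compact Hausdorff on both sides of the argument. Once that is in place, the two inclusions combine to give $\mathrm{cl}_\infty(\mathcal{C}_\mathsf{h}^\rho)=\mathcal{C}^\rho_0$.
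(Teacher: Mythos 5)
Your proposal is correct and takes essentially the same route as the paper's proof: both inclusions reduce to \autoref{C0}, with the Hausdorff hypothesis yielding $\mathcal{C}_\mathsf{h}^\rho\subseteq\mathcal{C}^\rho_0$ (via the monotonicity remark before \eqref{Ch} together with the uniform closedness in \autoref{C0} \eqref{C01}) and local compactness yielding $\mathcal{C}^\rho_0\subseteq\mathrm{cl}_\infty(\mathcal{C}_\mathsf{h}^\rho)$ via \autoref{C0} \eqref{C02}. You have simply spelled out the details that the paper's two-sentence proof leaves implicit.
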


\begin{proof}
If $X$ is Hausdorff then $\mathcal{C}_\mathsf{h}^\rho\subseteq\mathcal{C}^\rho_0$ so $\mathrm{cl}_\infty(\mathcal{C}_\mathsf{h}^\rho)\subseteq\mathcal{C}^\rho_0$, by \autoref{C0} \eqref{C01}.  If $X$ is also locally compact then $\mathcal{C}^\rho_0\subseteq\mathrm{cl}_\infty(\mathcal{C}_\mathsf{h}^\rho)$, by \autoref{C0} \eqref{C02}.
\end{proof}

Our primary interest will actually be in a slightly different analog of $\mathcal{C}_\mathsf{h}^\rho$ for Fell bundles over \'etale groupoids -- see $\mathcal{C}^\rho_\mathsf{c}$ defined below in \eqref{CcDef}.  To get an appropriate analog of $\mathcal{C}^\rho_0$, we will take the closure of $\mathcal{C}^\rho_\mathsf{c}$, again with respect to a different $\mathsf{b}$-norm which is better suited to the convolution product -- see $\mathcal{C}^\rho_\mathsf{r}$ defined below in \eqref{ReducedDefinition}.

\subsection{Fell Bundles}

We are primarily interested in Banach bundles where both the base and the total space have some compatible *-semigroupoid structure.

\begin{dfn}
A \emph{Fell bundle} is a Banach bundle $\rho:B\twoheadrightarrow\Gamma$ such that, moreover,
\begin{enumerate}
\item\label{Fell1} $B$ is a topological *-semigroupoid with bilinear products and antilinear *.
\item\label{Fell2} $\|\cdot\|$ is submultiplicative, $\Gamma$ is an \'etale groupoid and $\rho$ is a *-isocofibration.
\item\label{Fell3} For all $b\in B$, $\|b^*b\|=\|b\|^2$ and we have $a\in B_{\rho(b^*b)}$ with $a^*a=b^*b$.
\end{enumerate}
\end{dfn}

Here our primary reference is \cite{Kumjian1998}, although the Fell bundles in \cite{Kumjian1998} are again assumed to have continuous norm and locally compact Hausdorff base.  Even when the base is Hausdorff, the total space need not be, as in the following.

\begin{xpl}
Let $\rho:B\twoheadrightarrow\mathbb{N}\cup\{\infty\}$ be the Fell bundle whose base is the one-point compactification of $\mathbb{N}$ (considered as groupoid with trivial product) with $1$-dimensional fibres on $\mathbb{N}$ and a $2$-dimensional fibre at $\infty$, i.e.
\[B=(\mathbb{N}\times\mathbb{C})\cup(\{\infty\}\times(\mathbb{C}\oplus\mathbb{C})),\]
where $\mathbb{N}\times\mathbb{C}$ has the usual product topology but where, for any sequence $(z_n)\subseteq\mathbb{C}$,
\[(n,z_n)\rightarrow(\infty,(y,z))\qquad\Leftrightarrow\qquad z_n\rightarrow z.\]
In particular, $(n,0)\rightarrow(\infty,(z,0))$, for all $z\in\mathbb{C}$, so $B$ is certainly not Hausdorff.  Taking $z\neq0$, we also see that the norm is not continuous, only upper semicontinuous.  This is no accident, as the total space is automatically Hausdorff when the base is Hausdorff and the norm is continuous \textendash\, see \cite[Proposition 16.4]{Gierz1982}.
\end{xpl}

Conditions \eqref{Fell1} and \eqref{Fell2} essentially say that all the various structures on a Fell bundle are compatible.  Condition \eqref{Fell3} is needed to ensure that Fell bundles have a more C*-algebraic character.  Indeed, it follows from the C*-norm condition $\|b^*b\|=\|b\|^2$ that each unit fibre $B_x$, for $x\in\Gamma^0$, is a C*-algebra in its own right.  While this is not true for non-unit fibres, there are still certain C*-algebraic things we can say about arbitrary $b\in B$.  For example, submultiplicativity and the C*-norm condition yield $\|b\|^2=\|b^*b\|\leq\|b\|\|b^*\|$ so $\|b\|\leq\|b^*\|$ and hence $\|b^*\|\leq\|b^{**}\|=\|b\|$ too, i.e. the norm is $*$-invariant on the whole of $B$.  Similarly, we have the following result, which will be useful in the next section.

\begin{prp}\label{BelowNorm}
For any Fell bundle $\rho:B\twoheadrightarrow\Gamma$ and $(a,b)\in B^2$,
\begin{equation}\label{MidBelow}
b^*a^*ab\leq\|a\|^2b^*b.
\end{equation}
\end{prp}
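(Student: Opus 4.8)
The plan is to read the claimed inequality as an inequality in the unit fibre C*-algebra $A=B_{\mathsf{s}(\rho(b))}$, and to exhibit the difference $\|a\|^2b^*b-b^*a^*ab$ explicitly as a single element of the form $h^*h$, which is positive by the Fell axioms. So first I would pin down where everything lives. Since $(a,b)\in B^2$ and $\rho$ is a \emph{*}-isocofibration, we have $(\rho(a),\rho(b))\in\Gamma^2$, whence $\mathsf{s}(\rho(a))=\mathsf{r}(\rho(b))=:x$. Thus $a^*a$ lies in the unit fibre $C=B_x$, which is a C*-algebra, while $b^*b$ and $b^*a^*ab=(ab)^*(ab)$ both lie in $A=B_{\mathsf{s}(\rho(b))}$; the order $\leq$ in \eqref{MidBelow} is the usual one on the C*-algebra $A$.

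Next I would produce a \emph{self-adjoint element $g$ of $C$ itself} — crucially not merely of its unitisation — implementing a square root of $\|a\|^2-a^*a$. By \eqref{Fell3}, $a^*a$ is positive in $C$ with $\|a^*a\|=\|a\|^2$, so its spectrum is contained in $[0,\|a\|^2]$. Applying the continuous functional calculus in $C$ to the function $\psi(s)=\sqrt{\|a\|^2-s}-\|a\|$, which vanishes at $0$, yields a self-adjoint $g=\psi(a^*a)\in C$ satisfying $g^2+2\|a\|g=-a^*a$ in $C$. Using a function that vanishes at $0$ is precisely what keeps $g$ inside the possibly non-unital $C$, so that the bundle product $gb$ appearing below is a legitimate element of the total space.

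Now set $h=\|a\|b+gb$. Since $g\in B_x$ with $x=\mathsf{r}(\rho(b))$, the product $gb$ is defined (again by the isocofibration property), so $h\in B_{\rho(b)}$ and hence $h^*h\in B_{\mathsf{s}(\rho(b))}=A$. Expanding $h^*h$ using bilinearity of the product, antilinearity of the involution and $g^*=g$, the two cross terms combine to $2\|a\|b^*gb$ and the final term to $b^*g^2b$, giving $h^*h=\|a\|^2b^*b+b^*(2\|a\|g+g^2)b=\|a\|^2b^*b-b^*a^*ab$ by the identity of the previous step. Since $h^*h$ is positive in $A$ by \eqref{Fell3} applied to $h$, rearranging yields $b^*a^*ab=\|a\|^2b^*b-h^*h\leq\|a\|^2b^*b$, as required; the degenerate case $\|a\|=0$ forces $a=0$ and is trivial.

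The main obstacle is the one flagged in the second step: keeping the square-root element $g$ inside the genuine fibre $C$ rather than in an abstract unitisation, since only then is $gb$ a bona fide element of $B$ that can be manipulated with the semigroupoid product and the involution. The functional-calculus-with-a-vanishing-function device circumvents this cleanly, and everything after the identity $g^2+2\|a\|g=-a^*a$ is a purely algebraic bilinear computation carried out inside $B$.
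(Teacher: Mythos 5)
Your proof is correct, but it takes a genuinely different route from the paper's. The paper transports the problem into the unit fibre $B_x$, $x=\mathsf{s}(\rho(a))=\mathsf{r}(\rho(b))$: it first notes that conjugation by $b$ preserves order (writing $b^*(d-c)b$ as a starred square via a square root of $d-c$ and the positivity axiom), then takes an approximate unit $(u_\lambda)\subseteq B_x$, uses that $B_{\rho(b)}$ is a left Hilbert $B_x$-module so that $u_\lambda b\rightarrow b$, applies the standard C*-inequality $u_\lambda^*a^*au_\lambda\leq\|a\|^2u_\lambda^*u_\lambda$ inside $B_x$, and passes to the limit. You instead complete the square directly at the bundle level: your $g=\psi(a^*a)$ with $\psi(s)=\sqrt{\|a\|^2-s}-\|a\|$ is exactly the element $\sqrt{\|a\|^2 1-a^*a}-\|a\|1$ shifted so as to land in the possibly non-unital fibre $B_x$ rather than its unitisation, and then $h=\|a\|b+gb$ satisfies $h^*h=\|a\|^2b^*b-b^*a^*ab$, with positivity of $h^*h$ coming from the Fell axiom \eqref{Fell3}. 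Your identity $g^2+2\|a\|g=-a^*a$ checks out (both sides are images under the functional calculus of functions vanishing at $0$), the products $gb$, $b^*gb$, $b^*g^2b$ are legitimate by the isocofibration property, and the expansion of $h^*h$ uses only bilinearity, associativity and $g^*=g$. What each approach buys: yours is purely algebraic once the functional calculus is invoked \textendash\ no approximate units, no limits, and no appeal to the Hilbert-module structure of the fibres (whose nondegeneracy is what gives $u_\lambda b\rightarrow b$ in the paper's argument) \textendash\ and it even exhibits the defect $\|a\|^2b^*b-b^*a^*ab$ explicitly as a single positive element; the paper's argument is shorter given standard machinery and isolates a reusable monotonicity fact (conjugation preserves order), which is in the same spirit as its Cauchy\textendash Schwarz manipulations elsewhere.
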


\begin{proof}
Let $x=\mathsf{s}(\rho(a))=\mathsf{r}(\rho(b))$.  Note that if $c,d\in B_x$ and $c\leq d$ then $b^*cb\leq b^*db$, as $b^*(d-c)b=(b\sqrt{d-c})^*b\sqrt{d-c}\geq0$, by the last defining property of Fell bundles.  Next note that $B_{\rho(b)}$ is a left Hilbert $B_x$-module and hence $u_\lambda b\rightarrow b$, for any approximate unit $(u_\lambda)\subseteq B_x$.  As $a^*a$ lies in the self-adjoint part of $B_x$, it follows that $u_\lambda^*a^*au_\lambda\leq\|a^*a\|u_\lambda^*u_\lambda=\|a\|^2u_\lambda^*u_\lambda$ and hence
\[b^*a^*ab=\lim_\lambda b^*u_\lambda^*a^*au_\lambda b\leq\lim_\lambda\|a\|^2b^*u_\lambda^*u_\lambda b=\|a\|^2b^*b.\qedhere\]
\end{proof}

Just as some results only apply to unital C*-algebras, it is sometimes necessary to restrict to Fell bundles with a nice collection of units (e.g. when we want to identify multipliers of the reduced C*-algebra with `locally reduced' sections, as in \autoref{LeftLocallyReduced} below).  Specifically, let us call a Fell bundle $\rho:B\twoheadrightarrow\Gamma$ \emph{categorical} if $B$ is a topological category.  This means that each unit fibre contains a unit, i.e. $\rho[B^0]=\Gamma^0$, and, moreover, the source and range maps on $B$ are continuous.

\begin{prp}\label{CategoricalChars}
For any Fell bundle $\rho:B\twoheadrightarrow\Gamma$, the following are equivalent.
\begin{enumerate}
\item\label{TopCat} $\rho$ is categorical.
\item\label{Homeo} $\rho|_{B^0}$ is a homeomorphism onto $\Gamma^0$.
\item\label{CBundle} $\rho|_{\mathbb{C}B^0}$ is a trivial C*-bundle over $\Gamma^0$, i.e.
\begin{equation}\label{CB0}
\lambda b\mapsto(\lambda,\rho(b)),
\end{equation}
for $\lambda\in\mathbb{C}$ and $b\in B^0$, is an isomorphism from $\mathbb{C}B^0$ onto $\mathbb{C}\times\Gamma^0$.
\end{enumerate}
\end{prp}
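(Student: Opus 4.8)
The plan is to prove the two equivalences $(\ref{TopCat})\Leftrightarrow(\ref{Homeo})$ and $(\ref{Homeo})\Leftrightarrow(\ref{CBundle})$. Write $e_x\in B^0$ for the unit of the (nonzero) C*-algebra $B_x$, so $\|e_x\|=1$, and write $\mathsf{s}_\Gamma,\mathsf{r}_\Gamma$ for the source and range maps of $\Gamma$. Whenever the source and range units of $B$ exist we have $\mathsf{s}(a)=e_{\mathsf{s}_\Gamma(\rho(a))}$ and $\mathsf{r}(a)=e_{\mathsf{r}_\Gamma(\rho(a))}$, since $\rho$ is a functor by \autoref{HomoFun}; in particular $\rho|_{B^0}$ is injective, as each fibre has a unique unit. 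The whole argument then turns on the continuity of the assignment $u\colon x\mapsto e_x$, i.e.\ of $(\rho|_{B^0})^{-1}$, together with joint continuity of scalar multiplication (from \autoref{BanachBundles}).

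For $(\ref{TopCat})\Rightarrow(\ref{Homeo})$, the hypothesis $\rho[B^0]=\Gamma^0$ gives surjectivity, so $\rho|_{B^0}$ is a continuous bijection and only continuity of $u$ is at issue. Given $x_\lambda\to x$ in $\Gamma^0$, I would use that $\rho$ is an open map to lift, after passing to a subnet, to $b_\kappa\to e_x$ in $B$ with $\rho(b_\kappa)=x_\kappa$ (as in \cite[Proposition 13.2]{DoranFell1988}, already invoked in \autoref{C0}). Continuity of $\mathsf{s}$ from $(\ref{TopCat})$ then gives $e_{x_\kappa}=\mathsf{s}(b_\kappa)\to\mathsf{s}(e_x)=e_x$; as every subnet of $(e_{x_\lambda})$ admits such a further subnet converging to $e_x$, the net itself converges and $u$ is continuous. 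Conversely, for $(\ref{Homeo})\Rightarrow(\ref{TopCat})$, surjectivity of $\rho|_{B^0}$ yields $\rho[B^0]=\Gamma^0$, so all source and range units exist, and then $\mathsf{s}=u\circ\mathsf{s}_\Gamma\circ\rho$ and $\mathsf{r}=u\circ\mathsf{r}_\Gamma\circ\rho$ are continuous, being composites of continuous maps now that $u=(\rho|_{B^0})^{-1}$ is continuous. These are exactly the two conditions defining $(\ref{TopCat})$.

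For $(\ref{Homeo})\Leftrightarrow(\ref{CBundle})$, note first that the fibre of $\mathbb{C}B^0$ over $x$ is $\mathbb{C}e_x$ and that \eqref{CB0} restricts on it to the isometric *-isomorphism $\lambda e_x\mapsto\lambda$, so the only content is bicontinuity. Its inverse $(\lambda,x)\mapsto\lambda e_x=\lambda\,u(x)$ is continuous precisely when $u$ is, by joint continuity of scalar multiplication. This already gives $(\ref{CBundle})\Rightarrow(\ref{Homeo})$: surjectivity of \eqref{CB0} onto $\mathbb{C}\times\Gamma^0$ forces $\rho[B^0]=\Gamma^0$, and the continuous restriction of the inverse at $\lambda=1$ exhibits $u$ as continuous.

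The main obstacle is the remaining direction $(\ref{Homeo})\Rightarrow(\ref{CBundle})$: recovering the scalar $\lambda$ \emph{continuously} from $\lambda e_x$, which the merely upper semicontinuous norm cannot detect directly. Here I would take a net $\lambda_\nu e_{x_\nu}\to\lambda e_x$, observe $x_\nu=\rho(\lambda_\nu e_{x_\nu})\to x$ and hence $\lambda e_{x_\nu}=\lambda\,u(x_\nu)\to\lambda e_x$ by continuity of $u$, and then subtract: both nets lie over $x_\nu$ and converge to $\lambda e_x$, so continuity of addition on $B\times_\rho B$ gives $(\lambda_\nu-\lambda)e_{x_\nu}=\lambda_\nu e_{x_\nu}-\lambda e_{x_\nu}\to 0_x$. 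Upper semicontinuity of the norm then forces $\limsup_\nu|\lambda_\nu-\lambda|=\limsup_\nu\|(\lambda_\nu-\lambda)e_{x_\nu}\|\le\|0_x\|=0$, whence $\lambda_\nu\to\lambda$ and \eqref{CB0} is continuous. Combined with the continuity of its inverse, \eqref{CB0} is a homeomorphism and hence the desired C*-bundle isomorphism, completing the proof.
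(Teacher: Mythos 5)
Your proposal is correct, and its architecture---the two equivalences \eqref{TopCat}$\Leftrightarrow$\eqref{Homeo} and \eqref{Homeo}$\Leftrightarrow$\eqref{CBundle}, with \eqref{Homeo} as the hub---is the same as the paper's; indeed your arguments for \eqref{Homeo}$\Leftrightarrow$\eqref{CBundle} and for \eqref{Homeo}$\Rightarrow$\eqref{TopCat} essentially coincide with the paper's (fibrewise subtraction using continuity of addition on $B\times_\rho B$ followed by upper semicontinuity of the norm; joint continuity of scalar multiplication for the inverse map; $\mathsf{s}=u\circ\mathsf{s}_\Gamma\circ\rho$ for the converse). The one genuinely different step is \eqref{TopCat}$\Rightarrow$\eqref{Homeo}. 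The paper obtains continuity of $u=(\rho|_{B^0})^{-1}$ with no net lifting at all: by the last Banach bundle axiom \eqref{TopologicalCompatibility} the zero section $\gamma\mapsto0_\gamma$ is continuous, so $u(x)=\mathsf{s}(0_x)$ is continuous as a composite of the zero section with the source map of $B$, which is continuous by hypothesis \eqref{TopCat}. You instead lift convergent nets through the open map $\rho$ (the same lifting lemma of \cite[Proposition 13.2]{DoranFell1988} that the paper invokes in \autoref{C0}) and conclude via the ``every subnet has a further subnet converging to $e_x$'' criterion. Both are valid: the paper's zero-section trick is shorter and avoids subnet bookkeeping, while yours relies only on openness of $\rho$ rather than on the (easy to overlook) continuity of the zero section. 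One point you should make explicit in a final write-up: the existence of $\mathsf{s}(b)$ for \emph{every} $b\in B$ in \eqref{Homeo}$\Rightarrow$\eqref{TopCat} is not merely a consequence of $\rho[B^0]=\Gamma^0$; one also needs $(b,u(\mathsf{s}_\Gamma(\rho(b))))\in B^2$, which the paper deduces from $(\rho(b),\mathsf{s}_\Gamma(\rho(b)))\in\Gamma^2$ using that $\rho$ is an isocofibration---the same property that underlies your claim that each fibre contains at most one unit.
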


\begin{proof}\
\begin{itemize}
\item[\eqref{TopCat}$\Rightarrow$\eqref{Homeo}] By the last defining property of a Banach bundle, the map $\gamma\mapsto0_\gamma$ is continuous on $\Gamma$.  If $B$ is a topological category then the function $s$ defined by $s(\gamma)=\mathsf{s}(0_\gamma)$ is continuous on $\Gamma$.  As $\rho$ is a functor, for any $x\in\Gamma^0$,
\[\qquad\qquad x=\rho(0_x)=\rho(0_x\mathsf{s}(0_x))=\rho(0_x)\rho(\mathsf{s}(0_x))=x\rho(\mathsf{s}(0_x))=\rho(\mathsf{s}(0_x))=\rho(s(x)).\]
Also $b=\mathsf{s}(0_{\rho(b)})=s(\rho(b))$, for any $b\in B^0$, as $\rho$ is an isocofibration.  Thus $s|_{\Gamma^0}=\rho|_{B^0}^{-1}$ and hence $\rho|_{B^0}$ is a homeomorphism onto $\Gamma^0$.

\item[\eqref{Homeo}$\Rightarrow$\eqref{TopCat}] If $\rho$ restricted to $B^0$ is a homeomorphism onto $\Gamma^0$ then its inverse is a continuous map from $\Gamma^0$ onto $B^0$.  For any $b\in B$, note $(\rho(b),\mathsf{s}(\rho(b)))\in\Gamma^2$ so $(b,\rho|_{B^0}^{-1}(\mathsf{s}(\rho(b))))\in B^2$, as $\rho$ is an isocofibration, i.e. $\mathsf{s}(b)=\rho|_{B^0}^{-1}(\mathsf{s}(\rho(b)))$.  Likewise, $\mathsf{r}(b)=\rho|_{B^0}^{-1}(\mathsf{r}(\rho(b)))$, showing that $B$ has source and range maps that are defined and continuous everywhere, i.e. $B$ is a topological category.

\item[\eqref{Homeo}$\Rightarrow$\eqref{CBundle}] We immediately see that \eqref{CB0} is an algebraic isomorphism from $\mathbb{C}B^0$ to $\mathbb{C}\times\Gamma^0$.  To see that \eqref{CB0} is also a homeomorphism, take nets $(\lambda_n)\subseteq\mathbb{C}$ and $(b_n)\subseteq B^0$.  If $\lambda_nb_n\rightarrow\lambda b$, for some $\lambda\in\mathbb{C}$ and $b\in B^0$, then
\[\rho(b_n)=\rho(\lambda_nb_n)\rightarrow\rho(\lambda b)=\rho(b).\]
If $\rho|_{B^0}$ is a homeomorphism onto $\Gamma^0$ then this implies $b_n\rightarrow b$ and hence $(\lambda_n-\lambda)b_n\rightarrow\lambda b-\lambda b=0_{\rho(b)}$.  As the norm is upper semicontinuous, $|\lambda_n-\lambda|=\|(\lambda_n-\lambda)b_n\|\rightarrow0$, i.e. $\lambda_n\rightarrow\lambda$ so $(\lambda_n,\rho(b_n))\rightarrow(\lambda,\rho(b))$, showing that \eqref{CB0} is continuous.  On the other hand, if $\lambda_n\rightarrow\lambda$ and $\rho(b_n)\rightarrow x\in\Gamma^0$ then $b_n\rightarrow\rho|_{B^0}^{-1}(x)$ and hence $\lambda_nb_n\rightarrow\lambda\rho|_{B^0}^{-1}(x)$, showing that the inverse map is also continuous, i.e. \eqref{CB0} is a homeomorphism.

\item[\eqref{CBundle}$\Rightarrow$\eqref{Homeo}] If \eqref{CB0} is an isomorphism then, in particular, $b\mapsto(1,\rho(b))$ and hence $b\mapsto\rho(b)$ is a homeomorphism on $B^0$. \qedhere
\end{itemize}
\end{proof}

Note $\rho[B^0]=\Gamma^0$ alone is not enough to ensure that $\rho$ is categorical.  For example, consider the subbundle $\rho:B\rightarrow[0,1]$ of the trivial Fell bundle of $2\times2$ matrices over the unit interval $[0,1]$ where the fibre at $1$ is the $1$-dimensional subspace of matrices with $0$ entries in all but the top left corner, i.e.
\[B=\{(x,m)\in[0,1]\times M_2:x=1\Rightarrow m\in\mathbb{C}e_{11}\}.\]
Then $B$ still has a unit in the fibre at $1$, namely $(1,e_{11})$, so $\rho[B^0]=[0,1]$.  But $(1,e_{11})\neq\lim_{x\rightarrow1}(x,e_{11}+e_{22})$ so $\rho$ restricted to $B^0$ is not a homeomorphism.

With Fell bundles, we can restrict to slices in definition of $\mathcal{C}_\mathsf{h}^\rho$ in \eqref{Ch}.

\begin{prp}
If $\rho:B\twoheadrightarrow\Gamma$ is a Fell bundle then
\begin{equation}\label{CcSlices}
\mathcal{C}_\mathsf{h}^\rho=\mathrm{span}\{a\in\mathcal{C}^\rho(K):K\subseteq\Gamma\text{ is a compact Hausdorff slice}\}.
\end{equation}
\end{prp}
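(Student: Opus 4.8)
The plan is to prove the two inclusions separately, with the reverse inclusion being immediate. Since any slice is in particular a subset of $\Gamma$, and any subspace of a Hausdorff space is Hausdorff, every compact Hausdorff slice is a compact Hausdorff subset; hence the right-hand span is trivially contained in $\mathcal{C}_\mathsf{h}^\rho$. The work is therefore entirely in the forward inclusion: given a generator $a\in\mathcal{C}^\rho(K)$ with $K\subseteq\Gamma$ compact Hausdorff, I must express $a$ as a finite sum of sections each supported in, and continuous on, a compact Hausdorff slice.

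First I would use that $\Gamma$ is \'etale to cover the compact set $K$ by finitely many open slices $O_1,\dots,O_n$, so that $\{O_i\cap K\}_{i=1}^n$ is a finite open cover of $K$. Since $K$ is compact Hausdorff, hence normal, I would then take a partition of unity $\{g_i\}_{i=1}^n$ subordinate to this cover: continuous $g_i:K\rightarrow[0,1]$ with $\sum_ig_i=1$ on $K$ and $K_i:=\mathrm{cl}_K\{x\in K:g_i(x)\neq0\}\subseteq O_i\cap K$. The point of insisting on compactly supported pieces inside each $O_i$ is that each $K_i$ is then closed in $K$ (so compact), a subspace of $K$ (so Hausdorff), and contained in the slice $O_i$ (so itself a slice), i.e. each $K_i$ is a compact Hausdorff slice.

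With the partition of unity in hand, I would write $a=\sum_ig_i\cdot a$, where $g_i\cdot a$ is the product section from \eqref{fdota}; this identity holds because $\sum_ig_i=1$ on $K\supseteq\mathrm{supp}(a)$ while both sides vanish off $K$. Noting $\mathcal{C}^\rho(K)=\mathcal{C}_0^\rho(K)$ for compact $K$, the stability established in the proof of \autoref{C0} gives $g_i\cdot a\in\mathcal{C}_0^\rho(K)$, so each $g_i\cdot a$ is continuous at every point of $K$, in particular at every point of $K_i$. Since $\mathrm{supp}(g_i\cdot a)\subseteq\{x:g_i(x)\neq0\}\subseteq K_i$, this yields $g_i\cdot a\in\mathcal{C}^\rho(K_i)$ with $K_i$ a compact Hausdorff slice, exhibiting $a$ as an element of the right-hand span.

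The only delicate point is the interplay between continuity and the non-closed notion of support used here: because $\mathrm{supp}$ denotes the bare zero-set complement rather than its closure, I must pass to the closure $K_i$ to obtain a compact (hence closed) support set, and it is exactly the Hausdorffness of $K$, inherited by $K_i$, together with the $\mathcal{C}_0^\rho(K)$-stability of $g_i\cdot a$, that guarantees continuity persists at every point of $K_i$. No single step is hard; the main thing to get right is that the partition-of-unity pieces be chosen with supports genuinely inside the slices, for which the normality of compact Hausdorff $K$ (via a double shrinking of the cover) suffices.
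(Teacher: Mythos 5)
Your proposal is correct and follows essentially the same route as the paper's proof: cover the compact Hausdorff set $K$ by finitely many open slices using the \'etale property, take a partition of unity subordinate to this cover (using normality of $K$), and decompose $a=\sum_i g_i\cdot a$ with each piece lying in $\mathcal{C}^\rho(K_i)$ for a compact Hausdorff slice $K_i$. The only difference is cosmetic: you spell out the trivial reverse inclusion and justify the continuity of each $g_i\cdot a$ by explicitly citing the $f\cdot a$ stability argument from the proof of \autoref{C0}, steps the paper leaves implicit.
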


\begin{proof}
Take $a\in\mathcal{C}^\rho(K)$, for some compact Hausdorff $K\subseteq\Gamma$.  As $\Gamma$ is \'etale, we can cover $K$ with open slices $O_1,\ldots,O_n\subseteq\Gamma$.  As $K$ is compact Hausdorff, we have a subordinate partition of unity on $K$, i.e. we have continuous $f_1,\cdots,f_n:K\rightarrow[0,1]$ and compact $K_1,\ldots,K_n$ with $\mathrm{supp}(f_m)\subseteq K_m\subseteq K\cap O_m$, for all $m\leq n$, and $\sum_{m\leq n}f_m(\gamma)=1$, for all $\gamma\in K$.  Setting $a_m=f_m\cdot a$, as in \eqref{fdota}, we see that $a_m\in\mathcal{C}^\rho(K_m)$ and hence
\[a=\sum_{m\leq n}a_m\in\mathrm{span}\{a\in\mathcal{C}^\rho(K):K\subseteq\Gamma\text{ is a compact Hausdorff slice}\}.\qedhere\]
\end{proof}

When $\Gamma^0$ is not Hausdorff, we will want to instead work with subspace given by
\begin{equation}\label{CcDef}
\mathcal{C}^\rho_\mathsf{c}=\mathrm{span}\{a\in\mathcal{C}^\rho(K):K\subseteq\Gamma\text{ is bicompact}\}.
\end{equation}

\begin{prp}
If $\rho:B\twoheadrightarrow\Gamma$ is a Fell bundle and $\Gamma^0$ is Hausdorff then
\[\mathcal{C}_\mathsf{c}^\rho=\mathcal{C}_\mathsf{h}^\rho=\mathrm{span}\{a\in\mathcal{C}^\rho(K):K\subseteq\Gamma\text{ is a compact slice}\}.\]
\end{prp}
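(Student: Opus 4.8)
The plan is to verify that, when $\Gamma^0$ is Hausdorff, the three families of sections indexing the spans coincide as sets, so that the spans are literally equal. Concretely, three classes of subsets $K\subseteq\Gamma$ appear: the bicompact sets (indexing $\mathcal{C}_\mathsf{c}^\rho$ via \eqref{CcDef}), the compact Hausdorff slices (indexing $\mathcal{C}_\mathsf{h}^\rho$ via \eqref{CcSlices}), and the compact slices (indexing the rightmost span). I would show that all three classes describe the same subsets of $\Gamma$.

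First I would establish that, under the Hausdorff hypothesis, bicompact sets and compact slices coincide. Every bicompact set is by definition a compact subset of an open slice, and any subset of a slice is again a slice, so bicompact sets are compact slices. For the converse I invoke the remark following the definition of bicompactness: when $\Gamma^0$ is Hausdorff every compact subset is biclosed, since $\mathsf{s}[K]$ and $\mathsf{r}[K]$ are then compact and hence closed, and every compact slice lies inside an open slice by \cite[Proposition 6.3]{BiceStarling2018}; together these say precisely that compact slices are bicompact. This yields the equality of $\mathcal{C}_\mathsf{c}^\rho$ with the span over compact slices.

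Next I would check that every compact slice is automatically Hausdorff, so that ``compact slice'' and ``compact Hausdorff slice'' describe the same sets. Any compact slice $K$ is contained in some open slice $O$, and since $\Gamma$ is \'etale the source map restricts to a homeomorphism of $O$ onto an open subset of $\Gamma^0$; as $\Gamma^0$ is Hausdorff this image, and hence $O$ and its subspace $K$, is Hausdorff. Combining this with \eqref{CcSlices} identifies the span over compact slices with $\mathcal{C}_\mathsf{h}^\rho$, completing the chain of equalities.

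Since all three indexing families coincide, the proof reduces entirely to unwinding the definitions; the only nontrivial input is the containment of a compact slice in an open slice, which is already supplied by the cited result of Bice and Starling.
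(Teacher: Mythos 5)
Your proof is correct and takes essentially the same approach as the paper: both reduce the statement to \eqref{CcSlices} together with the fact, cited from \cite[Proposition 6.3]{BiceStarling2018}, that when $\Gamma^0$ is Hausdorff every compact slice lies in an open slice (hence, being biclosed, is bicompact) and is Hausdorff. The only difference is that you derive Hausdorffness of compact slices explicitly via the homeomorphism $\mathsf{s}|_O$ onto an open subset of $\Gamma^0$ rather than citing it, which is a harmless elaboration of the same argument.
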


\begin{proof}
This follows from \eqref{CcSlices} and the fact that, when $\Gamma^0$ is Hausdorff, every compact slice is Hausdorff and contained in an open slice, by \cite[Proposition 6.3]{BiceStarling2018}.
\end{proof}

\begin{prp}\label{ComeagreContinuity}
For any Fell bundle $\rho:B\twoheadrightarrow\Gamma$,
\[a\in\mathrm{cl}_\infty(\mathcal{C}_\mathsf{c}^\rho)\qquad\Rightarrow\qquad\mathcal{C}(a)\text{ is comeagre}.\]
\end{prp}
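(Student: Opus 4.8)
The plan is to peel off the definition of $\mathcal{C}^\rho_\mathsf{c}$ in \eqref{CcDef} in three stages: first for a single generator $c\in\mathcal{C}^\rho(K)$ with $K$ bicompact, then for finite linear combinations of such generators (i.e.\ arbitrary elements of $\mathcal{C}^\rho_\mathsf{c}$), and finally for uniform limits of these. At each stage I would track the discontinuity set $\Gamma\setminus\mathcal{C}(a)$, using the inclusions $\mathcal{C}(a)\cap\mathcal{C}(b)\subseteq\mathcal{C}(a+b)$ and $\mathcal{C}(a)\subseteq\mathcal{C}(za)$ already noted for sections. The gain in category matches the gain in cardinality of the union: a single generator has \emph{nowhere dense} discontinuity set, a finite sum has too (finite unions of nowhere dense sets are nowhere dense), and a uniform limit has \emph{meagre} discontinuity set (countable unions of nowhere dense sets are meagre), which is exactly the assertion that $\mathcal{C}(a)$ is comeagre.

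The heart of the matter is the single-generator step. Fix $c\in\mathcal{C}^\rho(K)$ with $K$ a biclosed compact subset of an open slice $O$. First I would observe that $K$ is in fact \emph{closed in $O$}: since $\mathsf{s}[K]$ is closed in $\Gamma^0$ and $\mathsf{s}|_O$ is a homeomorphism onto the open set $\mathsf{s}[O]$, any $x\in O\cap\overline{K}$ satisfies $\mathsf{s}(x)\in\overline{\mathsf{s}[K]}=\mathsf{s}[K]$, so $\mathsf{s}(x)=\mathsf{s}(k)$ for some $k\in K$, whence $x=k\in K$ by injectivity of $\mathsf{s}|_O$; thus $\overline{K}\cap O=K$. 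Consequently $\overline{K}\setminus K=\overline{K}\cap(\Gamma\setminus O)$ is \emph{closed}, and it is \emph{nowhere dense}, since any nonempty open $U\subseteq\overline{K}\setminus O$ would meet $K$ (as $U\subseteq\overline{K}$ forces $U\cap K\neq\emptyset$) yet be disjoint from $O\supseteq K$, a contradiction. It then remains to see that $c$ is continuous off $\overline{K}\setminus K$: indeed $c$ is continuous on $K$ by hypothesis, and is identically zero on the open sets $O\setminus K$ and $\Gamma\setminus\overline{K}$, hence continuous there by continuity of the zero section \eqref{TopologicalCompatibility}; and these three sets cover $\Gamma\setminus(\overline{K}\setminus K)$. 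Therefore $\Gamma\setminus\mathcal{C}(c)\subseteq\overline{K}\setminus K$ is nowhere dense.

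For an arbitrary $a\in\mathcal{C}^\rho_\mathsf{c}$, writing $a$ as a finite sum of such generators gives $\Gamma\setminus\mathcal{C}(a)$ contained in a finite union of nowhere dense sets, hence nowhere dense. Finally, for $a\in\mathrm{cl}_\infty(\mathcal{C}^\rho_\mathsf{c})$ I would choose a sequence $(a_n)\subseteq\mathcal{C}^\rho_\mathsf{c}$ with $\|a_n-a\|_\infty\rightarrow0$ (the uniform distance is a genuine metric on $\mathcal{S}^\rho_\infty\supseteq\mathcal{C}^\rho_\mathsf{c}$, so the closure is sequential), set $D=\bigcup_n(\Gamma\setminus\mathcal{C}(a_n))$, and show that $a$ is continuous at every $y\in\Gamma\setminus D$. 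This last point is exactly the uniform-limit argument already carried out in the proof of \autoref{C0} \eqref{C01}: given a net $x_\lambda\rightarrow y$, use openness of $\rho$ to pass to a subnet with lifts $b_\lambda\rightarrow a(y)$ along $\rho$, then compare a chosen $a_n$ (continuous at $y$, since $y\notin D$) against $a$ using upper semicontinuity of the norm and \eqref{TopologicalCompatibility} to conclude $a(x_\lambda)\rightarrow a(y)$. As $D$ is a countable union of nowhere dense sets it is meagre, so $\mathcal{C}(a)\supseteq\Gamma\setminus D$ is comeagre. The main obstacle is the single-generator step, and within it the observation that biclosedness forces $K$ to be closed in its ambient slice; this is precisely where the non-Hausdorff subtleties are absorbed, and it is what makes $\overline{K}\setminus K$ genuinely nowhere dense.
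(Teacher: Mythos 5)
Your proof is correct and follows essentially the same route as the paper's: show a single generator $c\in\mathcal{C}^\rho(K)$ is continuous off a nowhere dense set (the key point in both cases being that biclosedness forces $K$ to be closed in its ambient open slice), pass to finite sums, and then handle uniform limits by the argument of \autoref{C0} \eqref{C01}. The only cosmetic difference is that you bound the discontinuity set by $\overline{K}\setminus K$ where the paper uses $\mathrm{cl}(O)\setminus O$; both are nowhere dense for the same reason, and you spell out details the paper leaves implicit.
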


\begin{proof}
Any bicompact $K$ is, by defintion, contained in some open slice $O$.  As $K$ is also biclosed, any $a\in\mathcal{C}^\rho(K)$ is necessarily continuous on $O$, which implies $\Gamma\setminus\mathcal{C}(a)\subseteq\mathrm{cl}(O)\setminus O$ is nowhere dense.  Finite unions of nowhere dense sets are again nowhere dense so the same applies to any $a\in\mathcal{C}^\rho_\mathsf{c}$.  So if $(a_n)\subseteq\mathcal{C}^\rho_\mathsf{c}$ and $\|a-a_n\|_\infty\rightarrow a$ then $\mathcal{C}(a)\subseteq\bigcap\mathcal{C}(a_n)$ is comeagre, by (the proof of) \autoref{C0} \eqref{C01}.
\end{proof}

\section{Products and Norms}

To avoid repeating our basic hypotheses we make the following assumption. 
\begin{center}
\textbf{Throughout the rest of this section $\rho:B\twoheadrightarrow\Gamma$ is a Fell bundle.}
\end{center}
Here we mainly focus on the norm-algebraic structure of $\rho$, ignoring the topology.  Recall that $\mathcal{S}^\rho$ denotes the sections of $\rho$.
For any $a\in\mathcal{S}^\rho$, we define $a^*\in\mathcal{S}^\rho$ by
\[a^*(\gamma)=a(\gamma^{-1})^*.\]
For any $\Delta\subseteq\Gamma$, we also define the \emph{$0$-restriction} $a_\Delta\in\mathcal{S}^\rho$ by
\[a_\Delta(\gamma)=\begin{cases}a(\gamma)&\text{if }\gamma\in\Delta\\0_\gamma&\text{if }\gamma\notin\Delta.\end{cases}\]
So $a_\Delta$ is the same as the usual restriction $a|_\Delta$ but with $0$ values outside $\Delta$.  In particular, we let $\Phi$ denote the $0$-restriction of any $a\in\mathcal{S}^\rho$ to the diagonal, i.e.
\[\Phi(a)=a_{\Gamma^0}.\]

\subsection{Convolution}

Let us denote finite subsets by $\subset$, i.e.
\[F\subset\Lambda\qquad\Leftrightarrow\qquad F\subseteq\Lambda\text{ and }|F|<\infty\]
(which could also be viewed as compact containment w.r.t. the discrete topology).  Note $\{F:F\subset\Lambda\}$ is directed by inclusion $\subseteq$.  If $V$ is a normed space, $(v_\lambda)_{\lambda\in\Lambda}\subseteq V$ and the net of finite partial sums $(\sum_{\lambda\in F}v_\lambda)_{F\subset\Lambda}$ is norm-convergent then we denote the limit by $\sum_{\lambda\in\Lambda}v_\lambda$.  Note this is equivalent to unconditional convergence, i.e. there are only countably many non-zero $v_\lambda$'s and any enumeration of them has the same (well-defined/convergent) sum.

Whenever possible, we define the convolution product of $a,b\in\mathcal{S}^\rho$ by
\[ab(\gamma)=\sum_{\gamma=\alpha\beta}a(\alpha)b(\beta),\]
i.e. whenever the finite partial sums $ab_F(\gamma)=\sum_{\beta\in F}a(\gamma\beta^{-1})b(\beta)$, for $F\subset\Gamma\gamma$, converge in each fibre $B_\gamma=\rho^{-1}\{\gamma\}$.  Note $ab_\Delta$ denotes the product of $a$ with $b_\Delta$, while $(ab)_\Delta$ denotes the $0$-restriction of $ab$ to $\Delta$ (if these products are defined).

\begin{prp}
Take any $F\subset\Gamma$ and $a,b,c\in\mathcal{S}^\rho$.  If $ab$ is defined then
\begin{equation}\label{abcF}
(ab)c_F=a(bc_F).
\end{equation}
Likewise, if $bc$ is defined then $(a_Fb)c=a_F(bc)$.
\end{prp}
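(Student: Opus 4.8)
The plan is to reduce to the case where the finitely supported factor $c_F$ is concentrated at a single point and then reassemble by additivity. The essential content here is not the algebraic identity but the assertion that the right-hand side $a(bc_F)$ is even defined: although $c_F$ is finitely supported (so $bc_F$ is unconditionally well-defined), the section $bc_F$ itself need not be finitely supported, so the convergence of the convolution $a(bc_F)$ must be established rather than assumed.

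First I would treat $F=\{\beta_0\}$, writing $d=c(\beta_0)$. For $\gamma$ with $\mathsf{s}(\gamma)=\mathsf{s}(\beta_0)$ (both sides vanish otherwise) the left-hand side is simply $(ab)(\gamma\beta_0^{-1})\,d$, while $(bc_F)(\zeta)=b(\zeta\beta_0^{-1})d$ gives
\[a(bc_F)(\gamma)=\sum_{\mathsf{s}(\zeta)=\mathsf{s}(\gamma)}a(\gamma\zeta^{-1})\bigl(b(\zeta\beta_0^{-1})d\bigr).\]
Here I would substitute $\mu=\zeta\beta_0^{-1}$; since right multiplication by $\beta_0$ is a bijection of $\{\mu:\mathsf{s}(\mu)=\mathsf{r}(\beta_0)\}$ onto $\{\zeta:\mathsf{s}(\zeta)=\mathsf{s}(\beta_0)\}$, this merely reindexes the net of finite partial sums. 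Using associativity of the product on $B$ to rewrite each summand as $\bigl(a(\gamma\beta_0^{-1}\mu^{-1})b(\mu)\bigr)d$, and recognising that $\sum_\mu a(\gamma\beta_0^{-1}\mu^{-1})b(\mu)$ is precisely the value $(ab)(\gamma\beta_0^{-1})$, which converges by hypothesis, I would invoke that right multiplication $x\mapsto xd$ is a bounded linear map (by submultiplicativity of the norm) to pull $d$ through the sum. This simultaneously shows the right-hand series converges and equals $(ab)(\gamma\beta_0^{-1})d$, matching the left-hand side.

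Next I would extend to an arbitrary finite $F$ by additivity: decomposing $c_F=\sum_{\beta\in F}c_{\{\beta\}}$ gives $bc_F=\sum_{\beta\in F}bc_{\{\beta\}}$ and $(ab)c_F=\sum_{\beta\in F}(ab)c_{\{\beta\}}$ pointwise. Since each $a(bc_{\{\beta\}})$ converges by the single-point case, and a finite sum commutes with the net limit defining convolution by $a$, we obtain
\[a(bc_F)=\sum_{\beta\in F}a(bc_{\{\beta\}})=\sum_{\beta\in F}(ab)c_{\{\beta\}}=(ab)c_F.\]

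Finally, the ``Likewise'' statement follows by applying the involution rather than repeating the argument. One checks directly that $(a_F)^*=(a^*)_{F^{-1}}$ with $F^{-1}$ still finite, and that $(xy)^*=y^*x^*$ turns a convergent convolution into a convergent one, since the involution is norm-preserving. Applying the already-proved identity to the triple $(c^*,b^*,a^*)$ and the finite set $F^{-1}$ — whose hypothesis ``$c^*b^*$ is defined'' is exactly ``$bc$ is defined'' — and taking adjoints of both sides yields $(a_Fb)c=a_F(bc)$. The main obstacle throughout is the convergence issue flagged in the first paragraph; everything else is bookkeeping with the groupoid reparametrisation and the bilinearity and associativity of the bundle product.
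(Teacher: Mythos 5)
Your proof is correct, but it is organised differently from the paper's. The paper proves the identity for arbitrary finite $F$ in a single computation: it expands $(ab)c_F(\gamma)$ as the finite sum $\sum_{\beta\in F\cap\Gamma\gamma}ab(\gamma\beta^{-1})c(\beta)$, writes each term as a net limit of partial sums with $c(\beta)$ absorbed inside (the same continuity-of-right-multiplication fact that is the engine of your singleton case), interchanges the finite sum over $\beta$ with the limit over $G\subset\gamma\Gamma$, and recognises the resulting inner sum as $bc_F(\alpha^{-1}\gamma)$; the convergence of $a(bc_F)$ and the identity drop out simultaneously. Your singleton-plus-additivity decomposition repackages exactly these two ingredients (boundedness of $x\mapsto xd$ between fibres, and the fact that a finite sum of convergent nets converges to the sum of the limits), so that part is the same argument in different clothing, though your version isolates the analytic point more explicitly. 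The genuine divergence is in the second statement: the paper disposes of it with ``a dual argument'', i.e.\ by repeating the computation with left and right reversed, whereas you deduce it formally from the first identity via the involution, using $(a_F)^*=(a^*)_{F^{-1}}$, the $*$-invariance of the norm, and the fact that $(uv)^*=v^*u^*$ holds for convolutions (the partial sums of $v^*u^*(\gamma)$ are the adjoints of those of $uv(\gamma^{-1})$ under the reindexing $\beta\mapsto\beta^{-1}$, and the involution is additive and isometric, hence continuous, on fibres). This buys you the dual statement without redoing any estimates, at the cost of those routine verifications; both routes are sound.
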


\begin{proof}
As $F$ is finite and $ab$ is defined, so is $(ab)c_F$.  Specifically, for any $\gamma\in\Gamma$,
\begin{align*}
((ab)c_F)(\gamma)&=\sum_{\beta\in F\cap\Gamma\gamma}ab(\gamma\beta^{-1})c(\beta)\\
&=\sum_{\beta\in F\cap\Gamma\gamma}\lim_{G\subset\gamma\Gamma}\sum_{\alpha\in G}a(\alpha)b(\alpha^{-1}\gamma\beta^{-1})c(\beta)\\
&=\lim_{G\subset\gamma\Gamma}\sum_{\beta\in F\cap\Gamma\gamma}\sum_{\alpha\in G}a(\alpha)b(\alpha^{-1}\gamma\beta^{-1})c(\beta)\\
&=\lim_{G\subset\gamma\Gamma}\sum_{\alpha\in G}a(\alpha)\sum_{\beta\in F\cap\Gamma\gamma}b(\alpha^{-1}\gamma\beta^{-1})c(\beta)\\
&=\lim_{G\subset\gamma\Gamma}\sum_{\alpha\in G}a(\alpha)bc_F(\alpha^{-1}\gamma)\\
&=(a(bc_F))(\gamma)
\end{align*}
This shows that $a(bc_F)$ is also defined and equal to $(ab)c_F$ everywhere on $\Gamma$.  The second statement follows by a dual argument.
\end{proof}

Let us denote the finitely supported sections by
\[\mathcal{F}^\rho=\{f\in\mathcal{S}^\rho:|\mathrm{supp}(f)|<\infty\}.\]
The Cauchy-Schwarz inequality (see \cite[Lemma 2.5]{RaeburnWilliams1998}) yields the following.

\begin{lem}
For any $f,g\in\mathcal{F}^\rho$ and $\gamma\in\Gamma$.
\begin{equation}\label{gammaCS}
fg(\gamma)^*fg(\gamma)\leq\|fg(\gamma)\|\sqrt{\|ff^*(\mathsf{r}(\gamma))\|g^*g(\mathsf{s}(\gamma))},
\end{equation}
where $\leq$ denotes the usual ordering in the C*-algebra $B_{\mathsf{s}(\gamma)}$.
\end{lem}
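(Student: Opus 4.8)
The plan is to realise both sides of \eqref{gammaCS} inside the C*-algebra $B_{\mathsf{s}(\gamma)}$ and then to deduce the bound from the Hilbert-module Cauchy--Schwarz inequality \cite[Lemma 2.5]{RaeburnWilliams1998}, followed by the operator monotonicity of the square root. Write $p=fg(\gamma)\in B_\gamma$ and note that, since $f$ and $g$ are finitely supported, the decompositions $\gamma=\alpha\beta$ are parametrised by those $\beta$ with $\mathsf{s}(\beta)=\mathsf{s}(\gamma)$ and $\alpha=\gamma\beta^{-1}$, so $p=\sum_\beta f(\gamma\beta^{-1})g(\beta)$ is a finite sum. First I would record, by the same reindexing (using that $\mathsf{r}(\gamma)$ and $\mathsf{s}(\gamma)$ are units, so $\alpha\beta=\mathsf{r}(\gamma)$ forces $\beta=\alpha^{-1}$, etc.), the identities $H:=ff^*(\mathsf{r}(\gamma))=\sum_\beta f(\gamma\beta^{-1})f(\gamma\beta^{-1})^*$ and $S:=g^*g(\mathsf{s}(\gamma))=\sum_\beta g(\beta)^*g(\beta)$, which are manifestly positive elements of $B_{\mathsf{r}(\gamma)}$ and $B_{\mathsf{s}(\gamma)}$ respectively.

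Next I would assemble the finite internal direct sum $M=\bigoplus_\beta B_\beta$, over the finitely many $\beta$ with $\mathsf{s}(\beta)=\mathsf{s}(\gamma)$ at which $f(\gamma\beta^{-1})$ or $g(\beta)$ is nonzero; this is a right Hilbert $B_{\mathsf{s}(\gamma)}$-module under $\langle(u_\beta),(v_\beta)\rangle=\sum_\beta u_\beta^*v_\beta$, the summands being legitimate since $u_\beta^*v_\beta\in B_{\mathsf{s}(\beta)}=B_{\mathsf{s}(\gamma)}$. The key point — and the step requiring a little ingenuity — is the choice of the two vectors to feed into Cauchy--Schwarz: the obvious column $y=(g(\beta))_\beta$ only recovers $p$ after pairing with $f$, and $p$ itself does not factor as an inner product in $M$. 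I would circumvent this by \emph{building $p$ into the other vector}, taking $x=(f(\gamma\beta^{-1})^*p)_\beta$; one checks $f(\gamma\beta^{-1})^*p\in B_{\beta\gamma^{-1}\gamma}=B_\beta$, so indeed $x\in M$. Short computations then give the three inner products $\langle x,y\rangle=p^*p=\langle y,x\rangle$, $\langle x,x\rangle=p^*Hp$, and $\langle y,y\rangle=S$.

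With these in hand, \cite[Lemma 2.5]{RaeburnWilliams1998} applied to $x$ and $y$ yields
\[(p^*p)^2=\langle y,x\rangle\langle x,y\rangle\leq\|\langle x,x\rangle\|\,\langle y,y\rangle=\|p^*Hp\|\,S.\]
Since $H\geq0$ we may write $H=a^*a$ with $a=H^{1/2}$ and $\|a\|^2=\|H\|$, whence \autoref{BelowNorm} (applied with $b=p$, noting $(a,p)\in B^2$ as $\rho(a)=\mathsf{r}(\gamma)$ is a unit with $\mathsf{s}(\mathsf{r}(\gamma))=\mathsf{r}(\gamma)$) gives $p^*Hp=p^*a^*ap\leq\|a\|^2p^*p=\|H\|\,p^*p$, so $\|p^*Hp\|\leq\|H\|\,\|p\|^2=\|ff^*(\mathsf{r}(\gamma))\|\,\|p\|^2$. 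Substituting, $(p^*p)^2\leq\|p\|^2\,\|ff^*(\mathsf{r}(\gamma))\|\,S$.

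Finally I would take square roots. The square root is operator monotone on positive elements, and $((p^*p)^2)^{1/2}=p^*p$, so from $0\leq(p^*p)^2\leq\|p\|^2\|ff^*(\mathsf{r}(\gamma))\|\,S$ we obtain
\[p^*p\leq\bigl(\|p\|^2\|ff^*(\mathsf{r}(\gamma))\|\,S\bigr)^{1/2}=\|p\|\sqrt{\|ff^*(\mathsf{r}(\gamma))\|\,g^*g(\mathsf{s}(\gamma))},\]
which is exactly \eqref{gammaCS}. The only genuinely delicate step is the second paragraph — arranging for $p$ to appear on both sides of a single Cauchy--Schwarz estimate by absorbing it into $x$ — after which everything reduces to bookkeeping together with two standard facts, the module Cauchy--Schwarz inequality and the operator monotonicity of $t\mapsto t^{1/2}$.
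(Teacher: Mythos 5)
Your proof is correct and follows essentially the same route as the paper's: the paper likewise works in the finite Hilbert $B_{\mathsf{s}(\gamma)}$-module of sections supported on $\mathrm{supp}(g)\cap\Gamma\gamma$, absorbs $p=fg(\gamma)$ into one of the two vectors (pairing $\beta\mapsto f(\gamma\beta^{-1})^*fg(\gamma)$ against $g$), and combines the module Cauchy--Schwarz inequality with the fact that $a^2\leq b^2$ implies $a\leq b$ for positive elements. The only cosmetic differences are that the paper takes the square root before applying Cauchy--Schwarz rather than after, and bounds $\|p^*Hp\|\leq\|p\|^2\|H\|$ directly rather than via \autoref{BelowNorm}.
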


\begin{proof}
Cauchy-Schwarz says that, for any $j$ and $k$ in a Hilbert module $H$,
\[\langle j,k\rangle^*\langle j,k\rangle\leq\|\langle j,j\rangle\|\langle k,k\rangle.\]
As $a^2\leq b^2$ implies $a\leq b$, for any positive $a$ and $b$ in a C*-algebra (see \cite[II.3.1.10]{Blackadar2017}), it follows that, whenever $\langle j,k\rangle$ is positive,
\begin{equation}\label{CSsqrt}
\langle j,k\rangle\leq\sqrt{\|\langle j,j\rangle\|\langle k,k\rangle}.
\end{equation}
In particular, we can consider the Hilbert $B_{\mathsf{s}(\gamma)}$-module
\[H=\{h\in\mathcal{S}^\rho:\mathrm{supp}(h)\subseteq\mathrm{supp}(g)\cap\Gamma\gamma\}\]
where $\langle j,k\rangle=j^*k(\mathsf{s}(\gamma))$.  By \eqref{CSsqrt} applied in $H$,
\begin{align*}
fg(\gamma)^*fg(\gamma)&=fg(\gamma)^*\sum_{\alpha\beta=\gamma}f(\alpha)g(\beta).\\
&=\sum_{\alpha\beta=\gamma}fg(\gamma)^*f(\alpha)g(\beta).\\
&\leq\sqrt{\Big\|\sum_{\alpha\in\gamma\Gamma}fg(\gamma)^*f(\alpha)f(\alpha)^*fg(\gamma)\Big\|\sum_{\beta\in\Gamma\gamma}g(\beta)^*g(\beta)}.\\
&\leq\|fg(\gamma)\|\sqrt{\|ff^*(\mathsf{r}(\gamma))\|g^*g(\mathsf{s}(\gamma))}.\qedhere
\end{align*}
\end{proof}

Actually, what we really need is the norm inequality resulting from \eqref{gammaCS}, i.e.
\begin{equation}\label{gammaCSnorm}
\|fg(\gamma)\|\leq\sqrt{\|ff^*(\mathsf{r}(\gamma))\|\|g^*g(\mathsf{s}(\gamma))\|}.
\end{equation}

\subsection{The $2$-Norm}

The $[0,\infty]$-valued $2$-norm on $\mathcal{S}^\rho$ is given by
\[\|a\|_2=\sup_{x\in G^0}\sup_{F\subset\Gamma x}\sqrt{\Big\|\sum_{\gamma\in F}a(\gamma)^*a(\gamma)\Big\|}.\]
Alternatively, this can be expressed more concisely as
\begin{equation}\label{Concise2Defintion}
\|a\|_2=\sup_{F\subset\Gamma}\sqrt{\|\Phi(a^*a_F)\|}_\infty=\sup_{F\subset\Gamma}\|a_F\|_2.
\end{equation}
On the sections forming the Hilbert module associated to $\rho$ in \autoref{TheBigHilbertModule} below, the $2$-norm will be none other than the canonical Hilbert norm.  However, the $2$-norm can still be finite for sections lying outside this module as well.

First note that the $2$-norm dominates the $\infty$-norm, i.e. for all $a\in\mathcal{S}^\rho$,
\begin{equation}\label{InfinityBelow2}
\|a\|_\infty\leq\|a\|_2.
\end{equation}
Indeed, for all $\gamma\in\Gamma$, the C*-condition on Fell bundles yields
\[\|a(\gamma)\|^2=\|a(\gamma)^*a(\gamma)\|=\|\Phi(a^*a_\gamma)\|_\infty\leq\|a\|_2^2\]
so taking suprema yields \eqref{InfinityBelow2}.  This extends to products as follows.

\begin{prp}
For any $a,b\in\mathcal{S}^\rho$ such that $a^*b$ is defined,
\begin{equation}\label{infty22}
\|a^*b\|_\infty\leq\|a\|_2\|b\|_2.
\end{equation}
\end{prp}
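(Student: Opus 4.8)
The plan is to reduce the inequality, which concerns the possibly infinite convolution $a^*b$, to the finitely supported Cauchy--Schwarz norm estimate \eqref{gammaCSnorm} that is already available. Since $\|a^*b\|_\infty=\sup_{\gamma\in\Gamma}\|a^*b(\gamma)\|$, it suffices to fix $\gamma\in\Gamma$ and bound the single fibre norm $\|a^*b(\gamma)\|$ by $\|a\|_2\|b\|_2$.

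First I would unwind the definition of convolution. By hypothesis $a^*b(\gamma)$ is the norm-limit in the fibre $B_\gamma$ of the finite partial sums
\[a^*b_F(\gamma)=\sum_{\beta\in F\cap\Gamma\gamma}a^*(\gamma\beta^{-1})b(\beta),\]
taken over the finite subsets $F\subset\Gamma$. As the norm is continuous on $B_\gamma$, we get $\|a^*b(\gamma)\|=\lim_F\|a^*b_F(\gamma)\|$, so it is enough to bound each $\|a^*b_F(\gamma)\|$ uniformly by $\|a\|_2\|b\|_2$ and then pass to the limit.

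Now fix such a finite $F$. The partial sum $a^*b_F(\gamma)$ involves only finitely many values of $a$ and $b$, so I would pick a finite $H\subset\Gamma$ containing all the $\gamma\beta^{-1}$ for $\beta\in F\cap\Gamma\gamma$ and pass to the finitely supported restrictions $f=a_{H^{-1}}$ and $g=b_F$, both in $\mathcal{F}^\rho$. Using the identity $(a_\Delta)^*=(a^*)_{\Delta^{-1}}$ one checks that $f^*g(\gamma)=a^*b_F(\gamma)$. Applying \eqref{gammaCSnorm} to the product $f^*g$ (that is, with $f^*$ playing the role of the left factor, so that the right-hand side features $f^*(f^*)^*=f^*f$) gives
\[\|a^*b_F(\gamma)\|=\|f^*g(\gamma)\|\leq\sqrt{\|f^*f(\mathsf{r}(\gamma))\|\,\|g^*g(\mathsf{s}(\gamma))\|}.\]
Since $f$ and $g$ are finitely supported, a direct computation on the unit $\mathsf{r}(\gamma)$ yields $f^*f(\mathsf{r}(\gamma))=\sum_{\delta\in\Gamma\mathsf{r}(\gamma)}f(\delta)^*f(\delta)$, and likewise for $g$ at $\mathsf{s}(\gamma)$; so the very definition of the $2$-norm gives $\|f^*f(\mathsf{r}(\gamma))\|\leq\|f\|_2^2$ and $\|g^*g(\mathsf{s}(\gamma))\|\leq\|g\|_2^2$. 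Finally, because $f$ and $g$ are $0$-restrictions of $a$ and $b$, the formula $\|a\|_2=\sup_{F\subset\Gamma}\|a_F\|_2$ from \eqref{Concise2Defintion} gives $\|f\|_2\leq\|a\|_2$ and $\|g\|_2\leq\|b\|_2$. Combining these bounds shows $\|a^*b_F(\gamma)\|\leq\|a\|_2\|b\|_2$, and taking the limit over $F$ and then the supremum over $\gamma$ finishes the proof.

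The only genuinely delicate point is the passage from the infinite convolution to the finite partial sums: one must make sure the limit defining $a^*b(\gamma)$ is taken in the fibre norm (so the uniform bound on the partial sums survives in the limit) and that the restriction/adjoint bookkeeping --- the choice of $H$, the identities $f=a_{H^{-1}}$, $g=b_F$, and the evaluation of $f^*f$ and $g^*g$ on units --- is carried out consistently. Everything else is a direct application of \eqref{gammaCSnorm} together with the definition of $\|\cdot\|_2$.
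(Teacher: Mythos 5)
Your proof is correct and follows essentially the same route as the paper's: both fix $\gamma$, rewrite the finite partial sum $a^*b_F(\gamma)$ as a convolution of finitely supported restrictions (the paper uses exactly $(a_{F\gamma^{-1}})^*b_F(\gamma)$, which is your construction with the minimal choice of $H$), apply the Cauchy--Schwarz estimate \eqref{gammaCSnorm}, bound the resulting unit-fibre sums by $\|a\|_2^2$ and $\|b\|_2^2$, and pass to the limit over $F$. The extra bookkeeping you flag (choice of $H$, adjoints of restrictions, evaluation on units) is handled correctly and matches the paper's implicit steps.
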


\begin{proof}
For any $\gamma\in\Gamma$ and $F\subset\Gamma\gamma$, \eqref{gammaCSnorm} applied to $a^*b_F(\gamma)=(a_{F\gamma^{-1}})^*b_F(\gamma)$ yields
\[\|a^*b_F(\gamma)\|\leq\sqrt{\|a^*a_{F\gamma^{-1}}(\mathsf{r}(\gamma))\|\|b^*b_F(\mathsf{s}(\gamma))\|}\leq\|a\|_2\|b\|_2.\]
Thus $\|a^*b(\gamma)\|=\lim_{F\subset\Gamma\gamma}\|a^*b_F(\gamma)\|\leq\|a\|_2\|b\|_2$, for all $\gamma\in\Gamma$, proving \eqref{infty22}.
\end{proof}

In particular, $\|a^*a_F\|_\infty\leq\|a\|_2\|a_F\|_2\leq\|a\|_2^2$.  It follows that the $\Phi$ in \eqref{Concise2Defintion} is actually unnecessary, i.e.
\[\|a\|_2=\sup_{F\subset\Gamma}\sqrt{\|a^*a_F\|}_\infty.\]

Now we can show that the $2$-norm is indeed a ($[0,\infty]$-valued) norm.

\begin{cor}\label{2subadditive}
The $2$-norm is subadditive.
\end{cor}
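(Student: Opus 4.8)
The plan is to reduce the claimed inequality $\|a+b\|_2\le\|a\|_2+\|b\|_2$ to the ordinary triangle inequality for the norm of a single Hilbert $C^*$-module, handled one unit fibre and one finite set at a time. Fix $x\in\Gamma^0$ and a finite subset $F\subset\Gamma x$. For every $\gamma\in\Gamma x$ we have $\mathsf{s}(\gamma)=x$, so each $h(\gamma)^*h(\gamma)$ lands in the unit fibre $C^*$-algebra $B_x$, and the sections supported in $F$ form a right Hilbert $B_x$-module $H_F$ — a finite direct sum $\bigoplus_{\gamma\in F}B_\gamma$ of the right Hilbert $B_x$-modules $B_\gamma$ already used in the proof of \autoref{BelowNorm} — equipped with the $B_x$-valued inner product $\langle j,k\rangle=\sum_{\gamma\in F}j(\gamma)^*k(\gamma)$. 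By construction the Hilbert norm of the $0$-restriction $h_F$ is exactly $\sqrt{\|\sum_{\gamma\in F}h(\gamma)^*h(\gamma)\|}=\|h_F\|_2$.

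Since $h\mapsto h_F$ is linear and the Hilbert module norm on $H_F$ satisfies the triangle inequality (which follows from the Cauchy–Schwarz inequality already invoked in \eqref{gammaCS}), I would then write, for any $a,b\in\mathcal{S}^\rho$,
\[\|(a+b)_F\|_2=\|a_F+b_F\|_{H_F}\le\|a_F\|_{H_F}+\|b_F\|_{H_F}=\|a_F\|_2+\|b_F\|_2\le\|a\|_2+\|b\|_2,\]
where the last step uses $\|h_F\|_2\le\|h\|_2$ for every finite $F$, immediate from \eqref{Concise2Defintion}. Taking the supremum over all $x\in\Gamma^0$ and all finite $F\subset\Gamma x$ on the left-hand side, and recalling the defining expression for the $2$-norm, yields $\|a+b\|_2\le\|a\|_2+\|b\|_2$. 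If either $\|a\|_2$ or $\|b\|_2$ is infinite the inequality is trivial, so nothing is lost in assuming both are finite.

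The only genuine content lies in recognising that the fibrewise quantity defining $\|\cdot\|_2$ is literally a Hilbert module norm; once this is seen, subadditivity is inherited from the triangle inequality in $H_F$ with no estimation required, so there is no real obstacle to overcome. The mild point worth checking is that $H_F$ is a Hilbert $B_x$-module in the first place, i.e.\ that $\langle j,k\rangle=\sum_{\gamma\in F}j(\gamma)^*k(\gamma)$ really is a $B_x$-valued inner product; this is immediate from the Fell bundle axioms, since each $B_\gamma$ with $\mathsf{s}(\gamma)=x$ is a right Hilbert $B_x$-module and finite direct sums of Hilbert modules are again Hilbert modules. Completeness is not even needed, as the triangle inequality for $\sqrt{\|\langle\cdot,\cdot\rangle\|}$ already holds on any semi-inner-product module.
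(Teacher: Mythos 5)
Your proof is correct, and it reaches the same inequality by a slightly different route than the paper. The paper stays entirely at the level of sections: it notes that $\|\cdot\|_\infty$ is subadditive, expands $\|(a+b)_F\|_2^2=\|(a_F+b_F)^*(a_F+b_F)\|_\infty$ into four convolution terms for an arbitrary finite $F\subset\Gamma$, bounds the cross terms by $\|a_F\|_2\|b_F\|_2$ using \eqref{infty22}, and completes the square -- in effect it inlines the Hilbert-module triangle-inequality computation using machinery (\eqref{infty22}) it has already established and needs elsewhere. You instead work fibrewise: after splitting the supremum over $x\in\Gamma^0$ and $F\subset\Gamma x$ (which is harmless, since the two formulations of the $2$-norm agree), you identify the fibrewise quantity as the norm of the finite direct-sum right Hilbert $B_x$-module $\bigoplus_{\gamma\in F}B_\gamma$ and quote the abstract triangle inequality. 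This is legitimate: positivity of $\langle j,j\rangle$ follows from the Fell bundle axiom providing $b^*b=a^*a$ in the unit fibre, and the module you build is exactly the one the paper itself uses in the proof of its Cauchy--Schwarz lemma \eqref{gammaCS} (there the sections supported in the finite set $\mathrm{supp}(g)\cap\Gamma\gamma$ form precisely such a finite direct sum). What your version buys is conceptual transparency -- the $2$-norm is a supremum of Hilbert C*-module norms, so subadditivity is inherited rather than computed -- and independence from \eqref{infty22}; what the paper's version buys is brevity given its existing toolkit, and no need to set up or verify the direct-sum module structure. One small citation quibble: the proof of \autoref{BelowNorm} actually invokes the \emph{left} Hilbert $B_{\mathsf{r}(\rho(b))}$-module structure of the fibres, not the right $B_{\mathsf{s}}$-module structure you need; the right-module structure you want is the one implicit in the lemma giving \eqref{gammaCS}, but this is cosmetic and does not affect the argument.
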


\begin{proof}
Certainly $\|\cdot\|_\infty$ is subadditive, as the norm in each fibre $B_\gamma$ is subadditive.  For any $a,b\in S^\rho$ and $F\subset\Gamma$, \eqref{infty22} then yields
\begin{align*}
\|(a+b)_F\|_2^2&=\|(a_F+b_F)^*(a_F+b_F)\|_\infty\\
&\leq\|a_F^*a_F\|_\infty+\|a_F^*b_F\|_\infty+\|b_F^*a_F\|_\infty+\|b_F^*b_F\|_\infty\\
&\leq\|a_F\|_2^2+2\|a_F\|_2\|b_F\|_2+\|b_F\|_2^2\\
&=(\|a_F\|_2+\|b_F\|_2)^2.
\end{align*}
Taking suprema yields $\|a+b\|_2\leq\|a\|_2+\|b\|_2$.
\end{proof}

We can also show that the $2$-norm is dominated by the $1$-norm defined by
\begin{equation}\label{1Norm}
\|a\|_1=\sup_{x\in\Gamma^0}\sum_{\gamma\in\Gamma x}\|a(\gamma)\|.
\end{equation}

\begin{cor}\label{21}
For all $a\in\mathcal{S}^\rho$,
\begin{equation}\label{2below1}
\|a\|_2\leq\|a\|_1
\end{equation}
If $\mathsf{s}$ is injective on $\mathrm{supp}(a)$ then $\|a\|_1=\|a\|_2=\|a\|_\infty$.
\end{cor}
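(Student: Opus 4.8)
The plan is to prove the two claims in turn, relying on the C*-identity in each fibre and on the previously established bound \eqref{InfinityBelow2}.

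For \eqref{2below1}, I would fix $x\in\Gamma^0$ and a finite $F\subset\Gamma x$ and bound the single quantity $\big\|\sum_{\gamma\in F}a(\gamma)^*a(\gamma)\big\|$ appearing inside the supremum that defines $\|a\|_2$. Since every $\gamma\in F$ has $\mathsf{s}(\gamma)=x$, each product $a(\gamma)^*a(\gamma)$ is a positive element of the \emph{single} C*-algebra $B_x$, so the ordinary triangle inequality in $B_x$ together with the C*-identity $\|a(\gamma)^*a(\gamma)\|=\|a(\gamma)\|^2$ gives $\big\|\sum_{\gamma\in F}a(\gamma)^*a(\gamma)\big\|\leq\sum_{\gamma\in F}\|a(\gamma)\|^2$. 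Because the summands $\|a(\gamma)\|$ are non-negative reals, $\sum_{\gamma\in F}\|a(\gamma)\|^2\leq\big(\sum_{\gamma\in F}\|a(\gamma)\|\big)^2\leq\big(\sum_{\gamma\in\Gamma x}\|a(\gamma)\|\big)^2\leq\|a\|_1^2$. Taking square roots and then the supremum over $F$ and $x$ yields \eqref{2below1}.

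For the second assertion, suppose $\mathsf{s}$ is injective on $\mathrm{supp}(a)$. Then for each $x\in\Gamma^0$ there is at most one $\gamma\in\mathrm{supp}(a)\cap\Gamma x$, so the inner sum $\sum_{\gamma\in\Gamma x}\|a(\gamma)\|$ defining $\|a\|_1$ reduces to a single term (or to $0$). Hence $\|a\|_1=\sup_{x\in\Gamma^0}\sum_{\gamma\in\Gamma x}\|a(\gamma)\|=\sup_{\gamma\in\Gamma}\|a(\gamma)\|=\|a\|_\infty$. Combining this with the chain $\|a\|_\infty\leq\|a\|_2\leq\|a\|_1$ supplied by \eqref{InfinityBelow2} and the just-proved \eqref{2below1} forces all three norms to coincide.

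Both steps are short, so I anticipate no serious obstacle; the only points needing care are the observation that all the elements $a(\gamma)^*a(\gamma)$ (for $\gamma\in F$) lie in the common fibre algebra $B_x$, so that the scalar triangle inequality legitimately applies, and the verification that injectivity of $\mathsf{s}$ on $\mathrm{supp}(a)$ genuinely collapses the defining sum of the $1$-norm to at most one summand per base point.
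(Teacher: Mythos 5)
Your proof is correct, but your route to \eqref{2below1} is genuinely different from the paper's. The paper treats the inequality as a true corollary of \autoref{2subadditive}: it decomposes $a_F=\sum_{\gamma\in F}a_\gamma$ into single-point sections, applies subadditivity of the $2$-norm to get $\|a_F\|_2\leq\sum_{\gamma\in F}\|a_\gamma\|_2=\sum_{\gamma\in F}\|a(\gamma)\|=\|a_F\|_1\leq\|a\|_1$, and takes suprema; note that subadditivity itself rests on the Cauchy--Schwarz-based inequality \eqref{infty22}. You instead estimate the defining expression of the $2$-norm directly: the triangle inequality in the single fibre algebra $B_x$ together with the C*-identity $\|b^*b\|=\|b\|^2$ gives $\big\|\sum_{\gamma\in F}a(\gamma)^*a(\gamma)\big\|\leq\sum_{\gamma\in F}\|a(\gamma)\|^2\leq\big(\sum_{\gamma\in F}\|a(\gamma)\|\big)^2$, which requires no convolution machinery at all. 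Your argument is therefore more elementary and self-contained --- it shows \eqref{2below1} does not actually depend on \autoref{2subadditive} or on Cauchy--Schwarz --- while the paper's version is shorter on the page because it re-uses already-established results and the clean identity $\|a_\gamma\|_2=\|a(\gamma)\|$. For the second assertion (the collapse of the $1$-norm sum to at most one term per base point under injectivity of $\mathsf{s}$, followed by the squeeze $\|a\|_\infty\leq\|a\|_2\leq\|a\|_1\leq\|a\|_\infty$ via \eqref{InfinityBelow2}), your argument and the paper's are essentially identical.
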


\begin{proof}
For any $x\in\Gamma^0$ and $F\subset\Gamma x$, subadditivity yields
\[\|a_F\|_2\leq\sum_{\gamma\in F}\|a_\gamma\|_2=\sum_{\gamma\in F}\|a(\gamma)\|=\|a_F\|_1\leq\|a\|_1.\]
Taking suprema over thus yields \eqref{2below1}.  If $\mathsf{s}$ is injective on $\mathrm{supp}(a)$ then there can be at most one non-zero term in the above sums so $\|a_F\|_1=\sum_{\gamma\in F}\|a(\gamma)\|\leq\|a\|_\infty$.  Taking suprema then yields $\|a\|_1\leq\|a\|_\infty$ and hence $\|a\|_1=\|a\|_2=\|a\|_\infty$, by \eqref{InfinityBelow2} and the inequality \eqref{2below1} just proved.
\end{proof}

One situation when $ab$ is necessarily defined is when $\mathsf{r}$ is injective on $\mathrm{supp}(a)$ or $\mathsf{s}$ is injective on $\mathrm{supp}(b)$, in which case we can bound the $2$-norm of $ab$ as follows.

\begin{prp}
If $a,b\in\mathcal{S}^\rho$ and $\mathsf{s}$ is injective on $\mathrm{supp}(b)$ then $ab$ is defined and
\begin{equation}\label{2Infinity}
\|ab\|_2\leq\|a\|_2\|b\|_\infty.
\end{equation}
On the other hand, if $\mathrm{supp}(a)$ is a slice then $ab$ is again defined and
\begin{equation}\label{Infinity2}
\|ab\|_2\leq\|a\|_\infty\|b\|_2.
\end{equation}
\end{prp}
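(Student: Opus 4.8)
The plan is to exploit the fact that each hypothesis collapses the convolution to a \emph{single} nonzero term at every point, which makes $ab$ manifestly well-defined and reduces both norm bounds to the single-fibre estimate \eqref{MidBelow} and submultiplicativity. Concretely, for \eqref{2Infinity} the elements $\beta$ contributing to $ab(\gamma)=\sum_{\gamma=\alpha\beta}a(\alpha)b(\beta)$ all share the source $\mathsf{s}(\gamma)$, so injectivity of $\mathsf{s}$ on $\mathrm{supp}(b)$ leaves at most one such $\beta\in\mathrm{supp}(b)$ and $ab(\gamma)=a(\gamma\beta^{-1})b(\beta)$ (or $0$). Dually, for \eqref{Infinity2} the contributing $\alpha$ all share the range $\mathsf{r}(\gamma)$, so injectivity of $\mathsf{r}$ on the slice $\mathrm{supp}(a)$ leaves a single $\alpha\in\mathrm{supp}(a)$ and $ab(\gamma)=a(\alpha)b(\alpha^{-1}\gamma)$.

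For \eqref{2Infinity} I would fix $x\in\Gamma^0$ and a finite $F\subset\Gamma x$, and observe the key simplification: the relevant $\beta$ is the \emph{same} for every $\gamma\in F$, namely the unique element of $\mathrm{supp}(b)$ with $\mathsf{s}(\beta)=x$. This lets me factor it out of the entire partial sum, writing $\sum_{\gamma\in F}ab(\gamma)^*ab(\gamma)=b(\beta)^*c\,b(\beta)$ with $c=\sum_{\gamma\in F}a(\gamma\beta^{-1})^*a(\gamma\beta^{-1})$. Since right translation $\gamma\mapsto\gamma\beta^{-1}$ is a bijection of $F$ onto a finite subset of the source-fibre $\Gamma\mathsf{r}(\beta)$, the element $c$ is one of the partial sums defining $\|a\|_2$, so $\|c\|\le\|a\|_2^2$. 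Submultiplicativity of the Fell-bundle norm together with its $*$-invariance then gives $\|b(\beta)^*c\,b(\beta)\|\le\|b(\beta)\|^2\|c\|\le\|b\|_\infty^2\|a\|_2^2$, and taking the supremum over $x$ and $F$ and a square root yields \eqref{2Infinity}.

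For \eqref{Infinity2} the contributing $\alpha_\gamma\in\mathrm{supp}(a)$ now \emph{varies} with $\gamma$, so there is no common factor to pull out and I would instead argue termwise. Setting $\beta_\gamma=\alpha_\gamma^{-1}\gamma$, each nonzero $ab(\gamma)^*ab(\gamma)=b(\beta_\gamma)^*a(\alpha_\gamma)^*a(\alpha_\gamma)b(\beta_\gamma)$ is bounded, via \autoref{BelowNorm}, by $\|a(\alpha_\gamma)\|^2b(\beta_\gamma)^*b(\beta_\gamma)\le\|a\|_\infty^2\,b(\beta_\gamma)^*b(\beta_\gamma)$. Summing over $\gamma\in F$ and using monotonicity of the norm on positive elements of the C*-algebra $B_x$ reduces the problem to the bound $\|\sum_{\gamma\in F}b(\beta_\gamma)^*b(\beta_\gamma)\|\le\|b\|_2^2$; since $\mathsf{s}(\beta_\gamma)=\mathsf{s}(\gamma)=x$ for all $\gamma$, this holds as soon as the $\beta_\gamma$ are distinct, so that the left-hand side is again one of the partial sums defining $\|b\|_2$.

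The delicate point, and the place where the full slice hypothesis (rather than mere $\mathsf{r}$-injectivity, which already secured definedness) is needed, is exactly this distinctness of the $\beta_\gamma$. I would verify it by noting that $\beta_\gamma=\beta_{\gamma'}$ forces $\mathsf{s}(\alpha_\gamma)=\mathsf{r}(\beta_\gamma)=\mathsf{r}(\beta_{\gamma'})=\mathsf{s}(\alpha_{\gamma'})$, whence $\mathsf{s}$-injectivity on $\mathrm{supp}(a)$ gives $\alpha_\gamma=\alpha_{\gamma'}$ and then $\gamma=\alpha_\gamma\beta_\gamma=\alpha_{\gamma'}\beta_{\gamma'}=\gamma'$. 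Beyond this bookkeeping — together with the routine checks that the translated index sets each land in a single source-fibre, so that the relevant partial sums really are dominated by $\|a\|_2^2$ and $\|b\|_2^2$ — everything reduces to submultiplicativity and \eqref{MidBelow}, so I expect no further obstacle.
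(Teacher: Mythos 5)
Your proposal is correct and follows essentially the same route as the paper's proof: for \eqref{2Infinity} you factor the unique $b(\beta)$ out of each partial sum and bound the middle term by $\|a\|_2^2$ via submultiplicativity, and for \eqref{Infinity2} you bound termwise using \eqref{MidBelow} and then verify distinctness of the $\beta_\gamma=\alpha_\gamma^{-1}\gamma$ exactly as the paper does (via $\mathsf{s}$-injectivity on the slice $\mathrm{supp}(a)$). Even your identification of where the full slice hypothesis is needed, as opposed to mere $\mathsf{r}$-injectivity, matches the structure of the paper's argument.
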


\begin{proof}
Take any $\beta\in\mathsf{s}[\mathrm{supp}(b)]$.  If $\mathsf{s}$ is injective on $\mathrm{supp}(b)$ then $ab(\gamma)=a(\gamma\beta^{-1})b(\beta)$, for any $\gamma\in\Gamma\beta$.  For any $F\subset\Gamma\beta$, it follows that
\[\sum_{\gamma\in F}ab(\gamma)^*ab(\gamma)=b(\beta)^*\Big(\sum_{\gamma\in F}a(\gamma\beta^{-1})^*a(\gamma\beta^{-1})\Big)b(\beta)=b(\beta)^*(a^*a_{F\beta^{-1}})(\mathsf{s}(\beta))b(\beta).\]
Taking norms yields
\[\|((ab)^*(ab)_F)(\mathsf{s}(\beta))\|\leq\|a^*a_{F\beta^{-1}}(\mathsf{s}(\beta))\|\|b(\beta)\|^2\leq\|a\|_2^2\|b\|_\infty^2.\]
On the other hand, if $x\in\Gamma^0\setminus\mathsf{s}[\mathrm{supp}(b)]$ then $((ab)^*(ab))(x)=0_x$ and hence this yields the required inequality $\|ab\|_2\leq\|a\|_2\|b\|_\infty$.

Similarly, if $\mathsf{r}$ is injective on $\mathrm{supp}(a)$ then, for any $\gamma\in\mathsf{r}^{-1}[\mathsf{r}[\mathrm{supp}(a)]]$, there is precisely one $\alpha_\gamma\in\mathrm{supp}(a)$ with $\mathsf{r}(\alpha_\gamma)=\mathsf{r}(\gamma)$ and hence $ab(\gamma)=a(\alpha_\gamma)b(\alpha_\gamma^{-1}\gamma)$.  If $\mathsf{s}$ is also injective on $\mathrm{supp}(a)$ then the $\alpha_\gamma^{-1}\gamma$'s here are distinct for distinct $\gamma$'s.  Indeed, $\alpha_\beta^{-1}\beta=\alpha_\gamma^{-1}\gamma$ implies $\mathsf{s}(\alpha_\beta)=\mathsf{r}(\alpha_\beta^{-1}\beta)=\mathsf{r}(\alpha_\gamma^{-1}\gamma)=\mathsf{s}(\alpha_\gamma)$ and hence $\alpha_\gamma=\alpha_\beta$ so $\beta=\alpha_\beta\alpha_\beta^{-1}\beta=\alpha_\gamma\alpha_\gamma^{-1}\gamma=\gamma$.  Now, for any $x\in\Gamma^0$ and $F\subset\Gamma x\cap\mathsf{r}^{-1}[\mathsf{r}[\mathrm{supp}(a)]]$,
\begin{align*}
(ab)^*(ab)_F(x)&=\sum_{\gamma\in F}ab(\gamma)^*ab(\gamma)\\
&=\sum_{\gamma\in F}b(\alpha_\gamma^{-1}\gamma)^*a(\alpha_\gamma)^*a(\alpha_\gamma)b(\alpha_\gamma^{-1}\gamma)\\
&\leq\sum_{\gamma\in F}\|a(\alpha_\gamma)\|^2b(\alpha_\gamma^{-1}\gamma)^*b(\alpha_\gamma^{-1}\gamma),\text{ by \eqref{MidBelow}},\\
&\leq\|a\|_\infty^2(b^*b_G)(x),
\end{align*}
where $G=\{\alpha_\gamma^{-1}\gamma:\gamma\in F\}\subset\Gamma\gamma$, as the $\alpha_\gamma^{-1}\gamma$'s are distinct.  Taking norms then yields $\|(ab)^*(ab)_F(x))\|\leq\|a\|_\infty^2\|(b^*b_G)(x)\|\leq\|a\|_\infty^2\|b\|_2^2$.  For any $F\subset\Gamma x$, we immediately see that $(ab)_F=(ab)_{F\cap\mathsf{r}^{-1}[\mathsf{r}[\mathrm{supp}(a)]]}$ so taking suprema over $F\subset\Gamma x$ yields the other required inequality $\|a\|_2\leq\|a\|_\infty\|b\|_2$.
\end{proof}

Let us denote the sections with finite $2$-norm by
\[\mathcal{S}_2^\rho=\{a\in\mathcal{S}^\rho:\|a\|_2<\infty\}.\]

\begin{prp}\label{2Banach}
$(\mathcal{S}_2^\rho,\|\cdot\|_2)$ is a Banach space.
\end{prp}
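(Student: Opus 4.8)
My plan is to check first that $\|\cdot\|_2$ is a norm on $\mathcal{S}_2^\rho$ and then to deduce completeness by transferring the already-known completeness of the fibres (equivalently of $\mathcal{S}^\rho_\infty$) across the bound $\|\cdot\|_\infty\leq\|\cdot\|_2$ from \eqref{InfinityBelow2}. Subadditivity is exactly \autoref{2subadditive}; homogeneity is immediate from $(za)^*(za)_F=|z|^2a^*a_F$; and definiteness follows from \eqref{InfinityBelow2}, since $\|a\|_2=0$ forces $\|a\|_\infty=0$ and hence $a(\gamma)=0_\gamma$ for every $\gamma$. So $\mathcal{S}_2^\rho$ is a normed space and only completeness remains.

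For completeness I would take a $\|\cdot\|_2$-Cauchy sequence $(a_n)\subseteq\mathcal{S}_2^\rho$. By \eqref{InfinityBelow2} it is also $\|\cdot\|_\infty$-Cauchy, so for each $\gamma\in\Gamma$ the sequence $(a_n(\gamma))$ is Cauchy in the complete fibre $B_\gamma$ and converges to some $a(\gamma)$; this defines a section $a\in\mathcal{S}^\rho$ (indeed $a\in\mathcal{S}^\rho_\infty$, as the latter is a Banach space). The remaining task is to show $a\in\mathcal{S}_2^\rho$ together with $\|a_n-a\|_2\to0$, and the key device for this is the concise description $\|c\|_2=\sup_{F\subset\Gamma}\|c_F\|_2$ from \eqref{Concise2Defintion}, which localises the whole problem to finitely supported data.

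Concretely, fix $\varepsilon>0$ and $N$ with $\|a_n-a_m\|_2<\varepsilon$ for all $n,m\geq N$. For any finite $F\subset\Gamma$ and any $x\in\Gamma^0$, the quantity $\big\|\sum_{\gamma\in F\cap\Gamma x}(a_n-a_m)(\gamma)^*(a_n-a_m)(\gamma)\big\|$ is at most $\|a_n-a_m\|_2^2<\varepsilon^2$. This is a \emph{finite} sum of products landing in the single C*-algebra $B_x$, and each term $(a_n-a_m)(\gamma)^*(a_n-a_m)(\gamma)$ depends norm-continuously on $(a_n-a_m)(\gamma)$ by submultiplicativity and $*$-invariance of the Fell-bundle norm. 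Hence, letting $m\to\infty$ and using $a_m(\gamma)\to a(\gamma)$ in $B_\gamma$, the sum converges in $B_x$ and its norm is bounded by $\varepsilon^2$. Taking the supremum over $x$ gives $\|(a_n-a)_F\|_2\leq\varepsilon$, and then the supremum over $F$ gives $\|a_n-a\|_2\leq\varepsilon$ for all $n\geq N$. In particular $a_n-a\in\mathcal{S}_2^\rho$, whence $a\in\mathcal{S}_2^\rho$, and $\|a_n-a\|_2\to0$.

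The only genuine obstacle is the interchange of the limit $m\to\infty$ with the fibre norm and the supremum over $x$. I expect this to be the point requiring care, but the reduction to finite $F$ via \eqref{Concise2Defintion} makes it harmless: for fixed finite $F$ and fixed $x$ the relevant expression is a finite sum whose value lies in the fixed C*-algebra $B_x$, so fibrewise norm-convergence of $a_m$ to $a$ immediately yields convergence of that sum, and the bound $\varepsilon^2$ is preserved in the limit before the suprema are taken.
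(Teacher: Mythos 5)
Your proof is correct and takes essentially the same route as the paper's: both use $\|\cdot\|_\infty\leq\|\cdot\|_2$ (equation \eqref{InfinityBelow2}) to extract a uniform (fibrewise) limit of the $2$-Cauchy sequence, then localise to finite $F\subset\Gamma$ to transfer the Cauchy bound to that limit, concluding $a\in\mathcal{S}_2^\rho$ by subadditivity. The only cosmetic difference is how the finite-$F$ estimate is passed to the limit: the paper uses $\|(a-a_m)_F\|_2\leq\|(a-a_m)_F\|_1\leq|F|\,\|a-a_m\|_\infty$ from \autoref{21} together with the triangle inequality, while you invoke fibrewise norm-continuity of the finite sums in $B_x$, which amounts to the same thing.
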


\begin{proof}
We just need to show that $\mathcal{S}_2^\rho$ is 2-complete.  Accordingly, take any $2$-Cauchy $(a_n)\subseteq\mathcal{S}_2^\rho$.  By \eqref{infty2b}, $(a_n)$ is $\infty$-Cauchy and hence has an $\infty$-limit $a\in\mathcal{S}_\infty^\rho$, i.e. $\|a-a_n\|_\infty\rightarrow0$.  For any $F\subset\Gamma$ of size $|F|\in\mathbb{N}$, \autoref{21} then yields
\[\|(a-a_m)_F\|_2\leq\|(a-a_m)_F\|_1\leq|F|\|a-a_m\|_\infty\rightarrow0\]
 and hence
\[\|(a-a_n)_F\|_2=\lim_{m\rightarrow\infty}\|(a_m-a_n)_F\|_2\leq\lim_{m\rightarrow\infty}\|a_m-a_n\|_2.\]
Taking the supremum, it follows that $\|a-a_n\|_2\leq\lim_{m\rightarrow\infty}\|a_m-a_n\|_2$ and hence $\lim_{n\rightarrow\infty}\|a-a_n\|_2=0$, as $(a_n)$ is $2$-Cauchy.  In particular, $\|a-a_n\|_2<\infty$, for some $n$, so $\|a\|_2\leq\|a-a_n\|_2+\|a_n\|_2<\infty$, by \autoref{2subadditive}.  Thus $a\in\mathcal{S}_2^\rho$ is a $2$-limit of $(a_n)$, showing $\mathcal{S}_2^\rho$ is $2$-complete and hence a Banach space w.r.t. $\|\cdot\|_2$.
\end{proof}

Important subspaces of $\mathcal{S}^\rho_2$ are formed from sections $a$ for which the *-square $a^*a$ is defined.  We can characterise such sections as follows.

\begin{lem}\label{a*aDefined}
If $a\in\mathcal{S}^\rho$ then $a^*a$ is defined precisely when, for each $x\in\Gamma^0$,
\begin{equation}\label{a*a}
\lim_{F\subset\Gamma x}\|a_{\Gamma x\setminus F}\|_2=0,
\end{equation}
in which case $\|a\|_2=\sqrt{\|a^*a\|_\infty}=\sqrt{\|\Phi(a^*a)\|_\infty}$.
\end{lem}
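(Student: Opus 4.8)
The plan is to pivot on the observation that, for a fixed unit $x$, any section supported in the single source-fibre $\Gamma x$ has $2$-norm equal to the supremum of $\sqrt{\|\sum_{\beta\in E}a(\beta)^*a(\beta)\|}$ over its finite subsets $E$. In particular $\|a_{\Gamma x\setminus F}\|_2$ is the supremum of these tail partial-sum norms, so \eqref{a*a} at $x$ is precisely the Cauchy criterion for norm-convergence of the increasing net of positive partial sums $\sum_{\beta\in F}a(\beta)^*a(\beta)$ in the C*-algebra $B_x$; as $B_x$ is complete, this is equivalent to $\Phi(a^*a)(x)=\sum_{\beta\in\Gamma x}a(\beta)^*a(\beta)$ being defined. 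The easy half is then immediate: if $a^*a$ is defined everywhere then so is its $0$-restriction $\Phi(a^*a)=(a^*a)_{\Gamma^0}$, which is exactly \eqref{a*a}.

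For the converse I would first record the \emph{localisation identity}: for finite $E\subseteq\Gamma\mathsf{s}(\gamma)$ the value $a^*a_E(\gamma)$ depends only on $a$ restricted to $E$ and to $E\gamma^{-1}\subseteq\Gamma\mathsf{r}(\gamma)$, namely $a^*a_E(\gamma)=(a_{E\gamma^{-1}})^*a_E(\gamma)$. Both sections on the right are finitely supported, so \eqref{infty22} applies and yields the key estimate
\[\|a^*a_E(\gamma)\|\leq\|a_{E\gamma^{-1}}\|_2\,\|a_E\|_2.\]
Fixing $\gamma$ and $\varepsilon>0$, \eqref{a*a} at $\mathsf{s}(\gamma)$ gives a finite $F_0\subset\Gamma\mathsf{s}(\gamma)$ with $\|a_{\Gamma\mathsf{s}(\gamma)\setminus F_0}\|_2<\varepsilon$, hence $\|a_E\|_2<\varepsilon$ for every finite $E\subseteq\Gamma\mathsf{s}(\gamma)\setminus F_0$, while \eqref{a*a} at $\mathsf{r}(\gamma)$ makes $\|a_{E\gamma^{-1}}\|_2\leq\|a_{\Gamma\mathsf{r}(\gamma)}\|_2=:C_\gamma<\infty$. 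The estimate then bounds all such tails by $C_\gamma\varepsilon$, which is exactly the Cauchy criterion for the net $(a^*a_F(\gamma))_{F\subset\Gamma\mathsf{s}(\gamma)}$; completeness of $B_\gamma$ gives convergence, so $a^*a(\gamma)$ is defined for every $\gamma$.

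It remains to verify the norm identities (under either equivalent hypothesis). Squaring the definition and using that the supremum of the increasing partial-sum norms over $F\subset\Gamma x$ equals the norm of their limit $\Phi(a^*a)(x)$, one gets $\|a\|_2^2=\sup_{x}\|\Phi(a^*a)(x)\|=\|\Phi(a^*a)\|_\infty$. The inequality $\|\Phi(a^*a)\|_\infty\leq\|a^*a\|_\infty$ is trivial, and the reverse follows by passing to the limit $E\uparrow\Gamma\mathsf{s}(\gamma)$ in the key estimate:
\[\|a^*a(\gamma)\|\leq\|a_{\Gamma\mathsf{r}(\gamma)}\|_2\,\|a_{\Gamma\mathsf{s}(\gamma)}\|_2=\sqrt{\|\Phi(a^*a)(\mathsf{r}(\gamma))\|\,\|\Phi(a^*a)(\mathsf{s}(\gamma))\|}\leq\|\Phi(a^*a)\|_\infty,\]
so taking the supremum over $\gamma$ gives $\|a^*a\|_\infty=\|\Phi(a^*a)\|_\infty$ and hence $\|a\|_2=\sqrt{\|a^*a\|_\infty}=\sqrt{\|\Phi(a^*a)\|_\infty}$.

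The main obstacle is that \eqref{a*a} only delivers convergence \emph{fibre by fibre} over the units and gives no global bound on $\|a\|_2$, which may well be infinite; one therefore cannot simply invoke \eqref{infty22} with the whole section $a$. The localisation identity is what circumvents this, confining the relevant values of $a$ to the two source-fibres over $\mathsf{s}(\gamma)$ and $\mathsf{r}(\gamma)$, on each of which \eqref{a*a} does supply the finite $2$-norm bounds needed to run the Cauchy argument.
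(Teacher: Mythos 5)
Your proof is correct and takes essentially the same route as the paper's: the forward direction via the identification of \eqref{a*a} with fibre-wise convergence of the positive partial sums at units, the converse via the localisation identity $a^*a_E(\gamma)=(a_{E\gamma^{-1}})^*a_E(\gamma)$ together with \eqref{infty22}, using the hypothesis at $\mathsf{r}(\gamma)$ for finiteness and at $\mathsf{s}(\gamma)$ for tail smallness, and the norm identities by the same chain of (in)equalities. The only differences are presentational (you make the Cauchy-criterion equivalence at units and the localised form of the final estimate explicit, where the paper cites \eqref{infty22} wholesale), not mathematical.
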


\begin{proof}
If $a^*a$ is defined then, for each $x\in\Gamma^0$,
\[\lim_{F\subset\Gamma x}\|a_{\Gamma x\setminus F}\|_2=\lim_{F\subset\Gamma x}\|(a^*a-a^*a_F)(x)\|=0,\]
i.e. \eqref{a*a} holds.  Conversely, say \eqref{a*a} holds, for all $x\in X$, and take $\gamma\in\Gamma$.  Whenever $F\subseteq G\subset\Gamma\gamma$, \eqref{infty22} yields
\[\|(a^*a_G-a^*a_F)(\gamma)\|=\|a^*a_{G\setminus F}(\gamma)\|\leq\|a_{(G\setminus F)\gamma^{-1}}\|_2\|a_{G\setminus F}\|_2\leq\|a_{\Gamma\gamma^{-1}}\|_2\|a_{\Gamma\gamma\setminus F}\|_2.\]
As $\lim_{F\subset\Gamma x}\|a_{\Gamma x\setminus F}\|_2=0$, for $x=\mathsf{s}(\gamma)$ and $\mathsf{r}(\gamma)$, this shows that $(a^*a_F(\gamma))_{F\subset\Gamma\gamma}$ is Cauchy and hence converges, as $B_\gamma$ is a Banach space.  As $\gamma$ was arbitrary, this shows that $a^*a$ is defined.

Now if $a^*a$ is defined then, in particular, $(\|a^*a_F(x)\|)_{F\subset\Gamma x}$ increases to $\|a^*a(x)\|$, for each $x\in\Gamma$.  Together with \eqref{infty22} again, this implies
\[\|a\|_2^2=\sup_{F\subset\Gamma}\|\Phi(a^*a_F)\|_\infty=\|\Phi(a^*a)\|_\infty\leq\|a^*a\|_\infty\leq\|a\|_2^2.\qedhere\]
\end{proof}

Using this, we can relate the existence of *-squares with general products.

\begin{prp}\label{a*b}
If $a\in\mathcal{S}_2^\rho$ and $b^*b$ is defined then so is $a^*b$.
\end{prp}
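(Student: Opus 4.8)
The plan is to show directly that, for each $\gamma\in\Gamma$, the net of finite partial sums defining $a^*b(\gamma)$ is Cauchy and hence convergent in the (complete) fibre $B_\gamma$; this is exactly the strategy used for the converse direction in \autoref{a*aDefined}, only now the two factors play asymmetric roles. First I would fix $\gamma\in\Gamma$ and write $x=\mathsf{s}(\gamma)$, so that the relevant index set is $\Gamma x=\{\beta\in\Gamma:\mathsf{s}(\beta)=x\}$ and $a^*b_F(\gamma)=\sum_{\beta\in F}a(\beta\gamma^{-1})^*b(\beta)$ for $F\subset\Gamma x$.

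The key computation is then as follows. For finite $F\subseteq G\subset\Gamma x$, I would identify the tail as a product of \emph{finitely supported} sections,
\[
a^*b_{G\setminus F}(\gamma)=\big((a_{(G\setminus F)\gamma^{-1}})^*\,b_{G\setminus F}\big)(\gamma),
\]
where each $0$-restriction is finitely supported, so the product is automatically defined everywhere and the hypotheses of \eqref{infty22} are met. Applying \eqref{infty22} to these two sections and then enlarging the restriction sets gives
\[
\|(a^*b_G-a^*b_F)(\gamma)\|=\|a^*b_{G\setminus F}(\gamma)\|\leq\|a_{(G\setminus F)\gamma^{-1}}\|_2\,\|b_{G\setminus F}\|_2\leq\|a\|_2\,\|b_{\Gamma x\setminus F}\|_2,
\]
using that $0$-restriction only decreases the $2$-norm (since $\|a_\Delta\|_2=\sup_F\|a_{\Delta\cap F}\|_2\leq\|a\|_2$) together with $G\setminus F\subseteq\Gamma x\setminus F$.

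Finally, I would invoke the two hypotheses. Since $a\in\mathcal{S}^\rho_2$ the factor $\|a\|_2$ is finite, while since $b^*b$ is defined, \autoref{a*aDefined} (criterion \eqref{a*a} at the point $x=\mathsf{s}(\gamma)$) gives $\lim_{F\subset\Gamma x}\|b_{\Gamma x\setminus F}\|_2=0$. Hence the right-hand side above tends to $0$, so $(a^*b_F(\gamma))_{F\subset\Gamma x}$ is Cauchy and therefore converges in $B_\gamma$; as $\gamma$ was arbitrary, $a^*b$ is defined. I expect the only real care to be in the bookkeeping of the middle step -- verifying that the partial-sum tail factors correctly as the product of the two displayed finite restrictions evaluated at $\gamma$, so that \eqref{infty22} applies -- since everything else is a routine appeal to completeness of the fibre and to the $2$-norm estimates already established.
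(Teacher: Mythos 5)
Your proof is correct and follows essentially the same route as the paper: both factor the tail $a^*b_{G\setminus F}(\gamma)$ as $(a_{(G\setminus F)\gamma^{-1}})^*b_{G\setminus F}(\gamma)$, bound it via \eqref{infty22} and monotonicity of the $2$-norm under $0$-restriction, and then invoke criterion \eqref{a*a} of \autoref{a*aDefined} together with completeness of the fibre $B_\gamma$. The only differences are cosmetic (writing $\Gamma x$ with $x=\mathsf{s}(\gamma)$ for the paper's $\Gamma\gamma$, and spelling out the factorisation that the paper leaves implicit).
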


\begin{proof}
For all $\gamma\in\Gamma$ and $F\subseteq G\subset\Gamma\gamma$, \eqref{infty22} yields
\[\|a^*b_G(\gamma)-a^*b_F(\gamma)\|=\|a^*b_{G\setminus F}(\gamma)\|\leq\|a_{(G\setminus F)\gamma^{-1}}\|_2\|b_{G\setminus F}\|_2\leq\|a\|_2\|b_{\Gamma\gamma\setminus F}\|_2.\]
It follows that $(a^*b_F(\gamma))_{F\subset\Gamma\gamma}$ is Cauchy, as $\lim_{F\subset\Gamma\gamma}\|b_{\Gamma\gamma\setminus F}\|_2=0$, by \eqref{a*a}.  As $B_\gamma$ is a Banach space, it follows that $a^*b(\gamma)$ is defined, for all $\gamma\in\Gamma$.
\end{proof}

\subsection{The $\mathsf{b}$-Norm}

Next we consider another $[0,\infty]$-valued norm on $\mathcal{S}^\rho$ coming from the left regular representation on finitely supported sections.  Specifically, let
\[\|a\|_\mathsf{b}=\sup_{f\in\mathcal{F}^\rho}\frac{\|af\|_2}{\|f\|_2}\]
(taking $0/0=0$).  On the sections forming the reduced C*-algebra, the $\mathsf{b}$-norm will be none other than the usual reduced norm.  But again the $\mathsf{b}$-norm will be finite on many sections lying outside the reduced C*-algebra as well.

\begin{prp}\label{infty2bprp}
For all $a\in\mathcal{S}^\rho$,
\begin{equation}\label{infty2b}
\|a\|_2\leq\|a\|_\mathsf{b}=\|a^*\|_\mathsf{b}.
\end{equation}
If $\mathrm{supp}(a)$ is a slice of $\Gamma$ then $\|a\|_2=\|a\|_\mathsf{b}$.
\end{prp}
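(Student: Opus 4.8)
The plan is to treat the $\mathsf{b}$-norm as the operator norm of left convolution by $a$ acting on finitely supported sections normed by $\|\cdot\|_2$, so that the equality $\|a\|_\mathsf{b}=\|a^*\|_\mathsf{b}$ becomes the familiar coincidence of the norm of an operator with that of its adjoint, while $\|a\|_2\le\|a\|_\mathsf{b}$ records that the relevant ``unit vectors'' are witnessed by suitable test sections. The single tool driving everything will be the duality formula
\[\|c\|_2=\sup_{g\in\mathcal{F}^\rho}\frac{\|c^*g\|_\infty}{\|g\|_2}\qquad(c\in\mathcal{S}^\rho),\]
which is valid for \emph{every} section $c$ precisely because $c^*g$ is automatically defined when $g$ is finitely supported.

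First I would establish this formula. The bound $\le$ is immediate: since $c^*g$ is defined for finite $g$, \eqref{infty22} gives $\|c^*g\|_\infty\le\|c\|_2\|g\|_2$, so the supremum is at most $\|c\|_2$. For the reverse I would test against $g=c_F$ for finite $F\subset\Gamma x$. A direct computation in the fibre $B_x$ shows $(c^*c_F)(x)=\sum_{\gamma\in F}c(\gamma)^*c(\gamma)$, whence $\|c^*c_F\|_\infty\ge\|c_F\|_2^2$ and the ratio at $g=c_F$ is at least $\|c_F\|_2$; taking the supremum over $F$ and invoking \eqref{Concise2Defintion} recovers $\|c\|_2$.

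With the formula in hand, the heart of the matter is $\|a\|_\mathsf{b}=\|a^*\|_\mathsf{b}$. Applying the duality formula to $c=af$ and rewriting $(af)^*g=(f^*a^*)g=f^*(a^*g)$ by the dual form of the associativity identity \eqref{abcF}---which applies because $a^*g$ is defined for finite $g$---I would bound $\|f^*(a^*g)\|_\infty\le\|f\|_2\|a^*g\|_2$ via \eqref{infty22}. Dividing by $\|f\|_2$ and taking suprema yields $\|af\|_2\le\|f\|_2\|a^*\|_\mathsf{b}$, hence $\|a\|_\mathsf{b}\le\|a^*\|_\mathsf{b}$, and replacing $a$ by $a^*$ gives the reverse. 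For $\|a\|_2\le\|a\|_\mathsf{b}$ I would instead apply the duality formula to $a^*$, obtaining $\|a^*\|_2=\sup_g\|ag\|_\infty/\|g\|_2$, and then use $\|ag\|_\infty\le\|ag\|_2$ from \eqref{InfinityBelow2} to deduce $\|a^*\|_2\le\|a\|_\mathsf{b}$; swapping $a$ with $a^*$ and using the self-adjointness just proved gives $\|a\|_2\le\|a^*\|_\mathsf{b}=\|a\|_\mathsf{b}$.

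Finally, the slice case is quick: if $\mathrm{supp}(a)$ is a slice then \eqref{Infinity2} gives $\|af\|_2\le\|a\|_\infty\|f\|_2$ for every $f$, so $\|a\|_\mathsf{b}\le\|a\|_\infty$, while \autoref{21} gives $\|a\|_\infty=\|a\|_2$ since $\mathsf{s}$ is injective on a slice; together with $\|a\|_2\le\|a\|_\mathsf{b}$ this forces $\|a\|_2=\|a\|_\mathsf{b}$. The step I expect to be the main obstacle is the self-adjointness: the $2$-norm is genuinely not $*$-invariant, so the symmetry of the $\mathsf{b}$-norm must be extracted entirely from the associativity rearrangement $(af)^*g=f^*(a^*g)$ together with the duality formula, and care is needed to verify that every convolution appearing there is actually defined---which is exactly why keeping one factor finitely supported is essential throughout.
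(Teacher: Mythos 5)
Your proof is correct; every convolution you form keeps one factor finitely supported, all the cited inequalities (\eqref{infty22}, \eqref{InfinityBelow2}, \eqref{Infinity2}, \autoref{21}) precede the proposition, and the case distinctions (zero test sections, infinite norms) are harmless under the paper's conventions. Your route is organized differently from the paper's, though the two are close cousins. The paper introduces the two-variable auxiliary norm $\|a\|'_\mathsf{b}=\sup_{f,g\in\mathcal{F}^\rho}\|\Phi(g^*af)\|_\infty/(\|g\|_2\|f\|_2)$, proves $\|a\|'_\mathsf{b}=\|a\|_\mathsf{b}$ (testing against $g=(af)_F$, exactly your self-testing trick), and then reads off $*$-invariance from the manifest symmetry $g^*a^*f=(f^*ag)^*$; separately it gets $\|a\|_2\le\|a^*\|_\mathsf{b}$ by testing $a_F$ directly against the definition. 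You instead isolate a one-variable duality formula for the $2$-norm, $\|c\|_2=\sup_{g\in\mathcal{F}^\rho}\|c^*g\|_\infty/\|g\|_2$, and derive both the symmetry (via $(af)^*g=f^*(a^*g)$) and $\|a\|_2\le\|a\|_\mathsf{b}$ from it. These are essentially equivalent in content: composing your duality formula with the definition of the $\mathsf{b}$-norm yields $\|a\|_\mathsf{b}=\sup_{f,g}\|f^*(a^*g)\|_\infty/(\|f\|_2\|g\|_2)$, which is the paper's identity \eqref{b'} applied to $a^*$ (minus the $\Phi$), and both arguments run on the same two workhorses, the Cauchy--Schwarz-type bound \eqref{infty22} and one-sided associativity \eqref{abcF}. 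What the paper's packaging buys is that \eqref{b'} is recorded and reused later (for submultiplicativity \eqref{||ab||_b} and for the $\mathsf{i}$-norm bound, where the $\Phi$-version matters); what yours buys is a clean standalone variational description of the $2$-norm that makes the operator/adjoint analogy explicit and dispenses with the auxiliary norm entirely. One cosmetic point: you use $(af)^*=f^*a^*$ without comment; this is a finite-sum computation the paper also invokes silently, so it is not a gap, but a one-line verification would make the argument self-contained. The slice case is word-for-word the paper's.
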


\begin{proof}
For any $F\subset\Gamma$,
\[\|a^*a_F\|_\infty\leq\|a^*a_F\|_2\leq\|a^*\|_\mathsf{b}\|a_F\|_2\leq\|a^*\|_\mathsf{b}\|a\|_2.\]
Taking suprema, we get $\|a\|_2^2\leq\|a^*\|_\mathsf{b}\|a\|_2$ and hence $\|a\|_2\leq\|a^*\|_\mathsf{b}$.

Now let us temporarily define another $[0,\infty]$-valued norm on $\mathcal{S}^\rho$ by
\[\|a\|'_\mathsf{b}=\sup_{f,g\in\mathcal{F}^\rho}\frac{\|\Phi(g^*af)\|_\infty}{\|g\|_2\|f\|_2}.\]
We claim that $\|a\|'_\mathsf{b}=\|a\|_\mathsf{b}$, for all $a\in\mathcal{S}^\rho$.  To see this note that, for any $f\in\mathcal{F}^\rho$,
\[\|af\|_2^2=\sup_{F\subset\Gamma}\|\Phi((af)^*_Faf)\|_\infty\leq\sup_{F\subset\Gamma}\|(af)_F\|_2\|a\|'_\mathsf{b}\|f\|_2=\|af\|_2\|a\|'_\mathsf{b}\|f\|_2.\]
Thus $\|af\|_2\leq\|a\|'_\mathsf{b}\|f\|_2$ and hence $\|a\|_\mathsf{b}\leq\|a\|'_\mathsf{b}$.  Conversely, for any $f,g\in\mathcal{F}^\rho$,
\[\|\Phi(g^*af)\|_\infty\leq\|g^*af\|_\infty\leq\|g\|_2\|af\|_2\leq\|g\|_2\|a\|_\mathsf{b}\|f\|_2,\]
by \eqref{infty22}, and hence $\|a\|'_\mathsf{b}\leq\|a\|_\mathsf{b}$, proving the claim.

For all $a\in\mathcal{S}^\rho$, $\|a^*\|_\infty=\|a\|_\infty$ (as $\|b\|=\|b^*\|$, for all $b\in B$) and hence
\[\|a^*\|_\mathsf{b}=\sup_{f,g\in\mathcal{F}^\rho}\frac{\|\Phi(g^*a^*f)\|_\infty}{\|g\|_2\|f\|_2}=\sup_{g,f\in\mathcal{F}^\rho}\frac{\|\Phi(f^*ag)^*\|_\infty}{\|f\|_2\|g\|_2}=\|a\|_\mathsf{b},\]
which completes the proof of \eqref{infty2b}.

Lastly note that if $\mathrm{supp}(a)$ is a slice then \eqref{Infinity2} yields $\|af\|_2\leq\|a\|_\infty\|f\|_2$, for all $f\in\mathcal{F}^\rho$.  Thus $\|a\|_\mathsf{b}\leq\|a\|_\infty\leq\|a\|_2$, by \eqref{InfinityBelow2}, so $\|a\|_2=\|a\|_\mathsf{b}$.
\end{proof}

The proof above actually showed that the $\Phi$ in the definition of $\|\cdot\|'_\mathsf{b}$ is again unnecessary.  From now on we will freely use these characterisations of $\|\cdot\|_\mathsf{b}$, i.e.
\begin{equation}\label{b'}
\|a\|_\mathsf{b}=\sup_{f,g\in\mathcal{F}^\rho}\frac{\|g^*af\|_\infty}{\|g\|_2\|f\|_2}=\sup_{f,g\in\mathcal{F}^\rho}\frac{\|\Phi(g^*af)\|_\infty}{\|g\|_2\|f\|_2},
\end{equation}
for example in the following proof that the $\mathsf{b}$-norm is submultiplicative.

\begin{prp}
For any $a,b\in\mathcal{S}^\rho$ such that $ab$ is defined,
\begin{equation}\label{||ab||_b}
\|ab\|_\mathsf{b}\leq\|a\|_\mathsf{b}\|b\|_\mathsf{b}.
\end{equation}
\end{prp}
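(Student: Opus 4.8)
The plan is to run everything through the bilinear characterization of the $\mathsf{b}$-norm recorded in \eqref{b'}, namely $\|ab\|_\mathsf{b}=\sup_{f,g\in\mathcal{F}^\rho}\|g^*(ab)f\|_\infty/(\|g\|_2\|f\|_2)$. Thus it suffices to show $\|g^*(ab)f\|_\infty\le\|a\|_\mathsf{b}\|b\|_\mathsf{b}\|g\|_2\|f\|_2$ for all finitely supported $f,g$. The guiding idea is to re-bracket the triple product symmetrically as $g^*(ab)f=(a^*g)^*(bf)$, putting it into the exact shape $X^*Y$ needed to apply the Cauchy--Schwarz bound \eqref{infty22}, and then to bound the two resulting $2$-norms independently by the two $\mathsf{b}$-norms.

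First I would justify the re-bracketing using associativity. Since $ab$ is defined and $g^*$ is finitely supported, the second (dual) form of \eqref{abcF} gives $g^*(ab)=(g^*a)b$; since $(g^*a)b$ is then defined and $f$ is finitely supported, the first form of \eqref{abcF} gives $((g^*a)b)f=(g^*a)(bf)$. Combining these, $g^*(ab)f=(g^*a)(bf)$. Finally, because $g$ is finitely supported the product $a^*g$ is defined and $*$ reverses convolution (the same antimultiplicativity used in the proof of \eqref{infty2b}), so $g^*a=g^*a^{**}=(a^*g)^*$, whence $g^*(ab)f=(a^*g)^*(bf)$. The point of isolating this identity is that all intermediate products are legitimately defined, guaranteed throughout by the finite support of $f$ and $g$ together with the standing hypothesis that $ab$ is defined.

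With the splitting in hand the estimate is immediate. Setting $p=a^*g$ and $q=bf$, both defined, the product $p^*q=g^*(ab)f$ is defined, so \eqref{infty22} applies and yields $\|g^*(ab)f\|_\infty\le\|a^*g\|_2\,\|bf\|_2$. Each factor is then controlled straight from the definition of the $\mathsf{b}$-norm, whose defining supremum ranges precisely over finitely supported test sections: $\|a^*g\|_2\le\|a^*\|_\mathsf{b}\|g\|_2$ and $\|bf\|_2\le\|b\|_\mathsf{b}\|f\|_2$, and $\|a^*\|_\mathsf{b}=\|a\|_\mathsf{b}$ by \eqref{infty2b}. Multiplying gives $\|g^*(ab)f\|_\infty\le\|a\|_\mathsf{b}\|b\|_\mathsf{b}\|g\|_2\|f\|_2$, and taking the supremum over $f,g\in\mathcal{F}^\rho$ in \eqref{b'} produces \eqref{||ab||_b}.

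I expect no genuine obstacle here; once the symmetric splitting $g^*(ab)f=(a^*g)^*(bf)$ is established, \eqref{infty22}, the definition of $\|\cdot\|_\mathsf{b}$, and the $*$-invariance \eqref{infty2b} do all the work. The only step requiring care is the associativity bookkeeping of the first paragraph above, i.e. verifying that each intermediate product is defined so that the two forms of \eqref{abcF} genuinely apply. That care is what motivates keeping $f$ and $g$ finitely supported rather than attempting the bound against general test sections.
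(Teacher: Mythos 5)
Your proof is correct and follows essentially the same route as the paper: both rewrite $g^*(ab)f$ as $(a^*g)^*(bf)$ via \eqref{abcF}, apply the Cauchy--Schwarz bound \eqref{infty22} to get $\|g^*abf\|_\infty\leq\|a^*g\|_2\|bf\|_2\leq\|a\|_\mathsf{b}\|g\|_2\|b\|_\mathsf{b}\|f\|_2$ using the $*$-invariance from \eqref{infty2b}, and conclude by the characterisation \eqref{b'}. The only difference is that you spell out the associativity bookkeeping which the paper leaves as an implicit appeal to \eqref{abcF}.
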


\begin{proof}
Simply note that, for all $f,g\in\mathcal{F}^\rho$, \eqref{infty22} and \eqref{infty2b} yield
\[\|g^*abf\|_\infty\leq\|a^*g\|_2\|bf\|_2\leq\|a\|_\mathsf{b}\|g\|_2\|b\|_\mathsf{b}\|f\|_2\]
(here we are also implicitly using \eqref{abcF}) and hence $\|ab\|_\mathsf{b}\leq\|a\|_\mathsf{b}\|b\|_\mathsf{b}$, by \eqref{b'}.
\end{proof}

We can now show that the $\mathsf{b}$-norm satisfies the C*-norm condition.

\begin{prp}
If $a\in\mathcal{S}^\rho$ and $a^*a$ is defined then
\[\|a^*a\|_\mathsf{b}=\|a\|_\mathsf{b}^2.\]
\end{prp}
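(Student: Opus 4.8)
The plan is to prove the two inequalities $\|a^*a\|_\mathsf{b}\le\|a\|_\mathsf{b}^2$ and $\|a\|_\mathsf{b}^2\le\|a^*a\|_\mathsf{b}$ separately, mimicking the classical derivation of the C*-identity from submultiplicativity together with a Cauchy--Schwarz bound. The first inequality is immediate: since $a^*a$ is defined, submultiplicativity \eqref{||ab||_b} applies to the factors $a^*$ and $a$, giving $\|a^*a\|_\mathsf{b}\le\|a^*\|_\mathsf{b}\|a\|_\mathsf{b}$, and $\|a^*\|_\mathsf{b}=\|a\|_\mathsf{b}$ by the $*$-invariance recorded in \eqref{infty2b}. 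Hence $\|a^*a\|_\mathsf{b}\le\|a\|_\mathsf{b}^2$.

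For the reverse inequality I would work directly from the definition $\|a\|_\mathsf{b}=\sup_{f\in\mathcal{F}^\rho}\|af\|_2/\|f\|_2$, so that it suffices to show $\|af\|_2^2\le\|a^*a\|_\mathsf{b}\|f\|_2^2$ for every $f\in\mathcal{F}^\rho$. The idea is to identify $\|af\|_2^2$ with the norm of a genuine convolution product. First I would establish the algebraic identity $(af)^*(af)=f^*(a^*a)f$. Since $f$ is finitely supported, every product involving $f$ or $f^*$ is automatically defined, and $(af)^*=f^*a^*$; the remaining reassociations $(a^*a)f=a^*(af)$ and $f^*\big(a^*(af)\big)=(f^*a^*)(af)$ follow from \eqref{abcF} and its dual. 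In particular, because $a^*a$ is defined and $f$ is finite, the right-hand side $f^*(a^*a)f$ is defined, and therefore so is the $*$-square $(af)^*(af)$.

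With the $*$-square of $af$ in hand, \autoref{a*aDefined} gives $\|af\|_2^2=\|(af)^*(af)\|_\infty=\|f^*(a^*a)f\|_\infty$. Now \eqref{infty22}, applied to the product $f^*\cdot\big((a^*a)f\big)$, yields $\|f^*(a^*a)f\|_\infty\le\|f\|_2\,\|(a^*a)f\|_2$, and the definition of the $\mathsf{b}$-norm bounds $\|(a^*a)f\|_2\le\|a^*a\|_\mathsf{b}\|f\|_2$. Combining these gives $\|af\|_2^2\le\|a^*a\|_\mathsf{b}\|f\|_2^2$; dividing by $\|f\|_2^2$ and taking the supremum over $f\in\mathcal{F}^\rho$ delivers $\|a\|_\mathsf{b}^2\le\|a^*a\|_\mathsf{b}$. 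I expect the only delicate point to be the bookkeeping in the first step: one must verify that $(af)^*(af)$ really is defined and coincides with $f^*(a^*a)f$, since convolution is only partially defined and the order of association matters. Everything else is a direct application of the inequalities already established.
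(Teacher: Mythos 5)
Your proof is correct and follows essentially the same route as the paper: the inequality $\|a^*a\|_\mathsf{b}\le\|a\|_\mathsf{b}^2$ via submultiplicativity \eqref{||ab||_b} and $*$-invariance \eqref{infty2b}, and the reverse inequality by applying \autoref{a*aDefined} to $af$ and bounding $\|f^*(a^*a)f\|_\infty$ by $\|a^*a\|_\mathsf{b}\|f\|_2^2$ for each $f\in\mathcal{F}^\rho$. The only difference is that you spell out the definedness and reassociation bookkeeping behind $(af)^*(af)=f^*(a^*a)f$, which the paper's one-line proof leaves implicit.
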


\begin{proof}
For any $f\in\mathcal{F}^\rho$, \autoref{a*aDefined} yields $\|af\|_2^2=\|f^*a^*af\|_\infty\leq\|a^*a\|_\mathsf{b}\|f\|_2^2$ and hence $\|a\|_\mathsf{b}^2\leq\|a^*a\|_\mathsf{b}$.  On the other hand, $\|a^*a\|_\mathsf{b}\leq\|a^*\|_\mathsf{b}\|a\|_\mathsf{b}=\|a\|_\mathsf{b}^2$, by \eqref{infty2b} and \eqref{||ab||_b}, and hence $\|a\|_\mathsf{b}^2=\|a^*a\|_\mathsf{b}$.
\end{proof}

Again we denote the sections with finite $\mathsf{b}$-norm by
\[\mathcal{S}_\mathsf{b}^\rho=\{a\in\mathcal{S}^\rho:\|a\|_\mathsf{b}<\infty\}.\]

\begin{prp}\label{bBanach}
$(\mathcal{S}_\mathsf{b}^\rho,\|\cdot\|_\mathsf{b})$ is a Banach space.
\end{prp}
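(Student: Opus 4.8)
The plan is to follow the template of \autoref{2Banach}: the norm axioms are essentially already in hand (subadditivity of $\|\cdot\|_\mathsf{b}$ follows from \autoref{2subadditive}, since $b\mapsto bf$ is linear and $\|\cdot\|_2$ is subadditive, while definiteness follows from $\|a\|_\infty\leq\|a\|_2\leq\|a\|_\mathsf{b}$ via \eqref{InfinityBelow2} and \eqref{infty2b}), so only $\mathsf{b}$-completeness remains. I would start with a $\mathsf{b}$-Cauchy sequence $(a_n)\subseteq\mathcal{S}_\mathsf{b}^\rho$. Since $\|\cdot\|_2\leq\|\cdot\|_\mathsf{b}$ by \eqref{infty2b}, $(a_n)$ is also $2$-Cauchy, so \autoref{2Banach} immediately supplies a $2$-limit $a\in\mathcal{S}_2^\rho$ with $\|a-a_n\|_2\to0$. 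The remaining task is to upgrade this $2$-limit to a $\mathsf{b}$-limit and to verify $a\in\mathcal{S}_\mathsf{b}^\rho$.

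The crucial device is the reformulation \eqref{b'} of the $\mathsf{b}$-norm as a supremum over finitely supported $f,g$ of $\|\Phi(g^*cf)\|_\infty/(\|g\|_2\|f\|_2)$. The key observation I would establish first is that, for \emph{fixed} $f,g\in\mathcal{F}^\rho$, the map $c\mapsto\Phi(g^*cf)$ is continuous from $(\mathcal{S}_2^\rho,\|\cdot\|_2)$ into $(\mathcal{S}^\rho_\infty,\|\cdot\|_\infty)$. Indeed, $g^*cf$ is always defined (both $g^*$ and $f$ being finitely supported), and \eqref{infty22} gives $\|g^*cf\|_\infty\leq\|g\|_2\|cf\|_2$, while decomposing $f$ into its finitely many singleton-supported pieces and applying \eqref{2Infinity} together with subadditivity yields $\|cf\|_2\leq\|c\|_2\sum_{\beta\in\mathrm{supp}(f)}\|f(\beta)\|$. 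Hence $\|\Phi(g^*cf)\|_\infty\leq\|g\|_2\,\|c\|_2\sum_\beta\|f(\beta)\|$, a bound linear in $\|c\|_2$ with constant depending only on $f$ and $g$. In particular $\Phi(g^*a_mf)\to\Phi(g^*af)$ in $\|\cdot\|_\infty$ as $m\to\infty$. This continuity step is the main obstacle: the subtle point is that in bounding $\|cf\|_2$ I must use $\|c\|_2$ rather than $\|c\|_\mathsf{b}$ (the latter would be circular), which is exactly why I decompose $f$ into singleton pieces so as to appeal to \eqref{2Infinity}, whose constant $\sum_\beta\|f(\beta)\|$ depends on $f$ alone.

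With this in place I would run the same limiting argument as in \autoref{2Banach}, but through \eqref{b'}. Fix $f,g\in\mathcal{F}^\rho$ with $\|f\|_2,\|g\|_2>0$ and $n\in\mathbb{N}$. The continuity just established gives $\|\Phi(g^*(a-a_n)f)\|_\infty=\lim_m\|\Phi(g^*(a_m-a_n)f)\|_\infty$, and each term of the latter sequence is at most $\|a_m-a_n\|_\mathsf{b}\,\|g\|_2\|f\|_2$ by \eqref{b'}; therefore $\|\Phi(g^*(a-a_n)f)\|_\infty/(\|g\|_2\|f\|_2)\leq\liminf_m\|a_m-a_n\|_\mathsf{b}$. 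Since this bound is independent of $f,g$, taking the supremum over all such $f,g$ and invoking \eqref{b'} once more gives $\|a-a_n\|_\mathsf{b}\leq\liminf_m\|a_m-a_n\|_\mathsf{b}$.

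Finally, as $(a_n)$ is $\mathsf{b}$-Cauchy, the right-hand side tends to $0$ as $n\to\infty$, so $\|a-a_n\|_\mathsf{b}\to0$; and then $\|a\|_\mathsf{b}\leq\|a-a_n\|_\mathsf{b}+\|a_n\|_\mathsf{b}<\infty$ places $a\in\mathcal{S}_\mathsf{b}^\rho$. Thus $(a_n)$ $\mathsf{b}$-converges to $a\in\mathcal{S}_\mathsf{b}^\rho$, showing $\mathcal{S}_\mathsf{b}^\rho$ is $\mathsf{b}$-complete and hence a Banach space.
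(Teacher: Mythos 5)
Your proof is correct and follows essentially the same route as the paper's: reduce to $2$-completeness via \autoref{2Banach}, use the singleton decomposition of $f$ together with \eqref{2Infinity} and \autoref{2subadditive} to see that right multiplication by a fixed $f\in\mathcal{F}^\rho$ is $2$-continuous with constant depending only on $f$, pass the Cauchy estimate through this limit, and finish with subadditivity. The only difference is cosmetic: you detour through the characterisation \eqref{b'} (introducing the auxiliary $g$, $\Phi$ and one extra application of \eqref{infty22}), whereas the paper works directly with the defining formula $\|a\|_\mathsf{b}=\sup_{f\in\mathcal{F}^\rho}\|af\|_2/\|f\|_2$.
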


\begin{proof}
If $(a_n)\subseteq\mathcal{S}_\mathsf{b}^\rho$ is $\mathsf{b}$-Cauchy and hence $2$-Cauchy then it has a $2$-limit $a\in\mathcal{S}_2^\rho$, by \autoref{2Banach}.  For any $f\in\mathcal{F}^\rho$, \autoref{2subadditive} and \eqref{2Infinity} yield
\[\|(a-a_m)f\|_2\leq\sum_{\gamma\in\mathrm{supp}(f)}\|a-a_m\|_2\|f(\gamma)\|\rightarrow0\]
and hence $\|(a-a_n)f\|_2=\lim_{m\rightarrow\infty}\|(a_m-a_n)f\|_2\leq\lim_{m\rightarrow\infty}\|a_m-a_n\|_\mathsf{b}\|f\|_2$.  Thus $\|a-a_n\|_\mathsf{b}\leq\lim_{m\rightarrow\infty}\|a_m-a_n\|_\mathsf{b}$ and hence $\lim_{n\rightarrow\infty}\|a-a_n\|_\mathsf{b}=0$, as $(a_n)$ is $\mathsf{b}$-Cauchy.  By subadditivity of $\|\cdot\|_\mathsf{b}$, it follows that $a\in\mathcal{S}_\mathsf{b}^\rho$ is a $\mathsf{b}$-limit of $(a_n)$, showing that $\mathcal{S}_\mathsf{b}^\rho$ is $\mathsf{b}$-complete and hence a Banach space w.r.t. $\|\cdot\|_\mathsf{b}$.
\end{proof}

\begin{xpl}\label{NN}
If $\Gamma=\mathbb{N}\times\mathbb{N}$ is the discrete groupoid arising from full equivalence relation on $\mathbb{N}$, and $\rho:\mathbb{C}\times\Gamma\twoheadrightarrow\Gamma$ is the trivial complex line bundle over $\Gamma$ then every element of $\mathcal{S}^\rho$ is an $\mathbb{N}\times\mathbb{N}$ matrix with entries in $\mathbb{C}$.  In this case, $\mathcal{S}_\infty^\rho$ consists of the matrices whose entries form a bounded subset of $\mathbb{C}$, $\mathcal{S}_2^\rho$ consists of matrices whose columns form a bounded subset of the Hilbert space $\ell^2$, while $\mathcal{S}_\mathsf{b}^\rho$ consists of matrices which correspond to bounded operators on $\ell^2$.
\end{xpl}

We also have the following observations about the $\mathsf{b}$-norm of $0$-restrictions.  While the $\infty$-norm and $2$-norm of any $0$-restriction is bounded by the original $\infty$-norm or $2$-norm, with the $\mathsf{b}$-norm this only applies when restricting via the unit space.

\begin{prp}
For any $a\in\mathcal{S}^\rho$ and $X\subseteq\Gamma^0$,
\begin{equation}\label{bRestriction}
\|a_{\Gamma X}\|_\mathsf{b},\|a_{X\Gamma}\|_\mathsf{b}\leq\|a\|_\mathsf{b}.
\end{equation}
\end{prp}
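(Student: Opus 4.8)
The plan is to establish the source-side bound $\|a_{\Gamma X}\|_\mathsf{b}\leq\|a\|_\mathsf{b}$ directly from the definition of the $\mathsf{b}$-norm, and then deduce the range-side bound $\|a_{X\Gamma}\|_\mathsf{b}\leq\|a\|_\mathsf{b}$ by passing to the involution. The crucial observation is that $0$-restricting $a$ on the source side is the same as $0$-restricting the test section on the range side. Indeed, in any decomposition $\gamma=\alpha\beta$ we have $\mathsf{s}(\alpha)=\mathsf{r}(\beta)$, so $\alpha\in\Gamma X$ precisely when $\beta\in X\Gamma$. First I would use this to establish, for every $f\in\mathcal{F}^\rho$, the identity
\[a_{\Gamma X}f=af_{X\Gamma}.\]
This manipulation is legitimate because $f$, and hence its $0$-restriction $f_{X\Gamma}$, is finitely supported, so every convolution sum in sight is finite and well-defined.

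Second, I would record the general fact that $0$-restriction never increases the $2$-norm. For any $\Delta\subseteq\Gamma$ and $b\in\mathcal{S}^\rho$, each partial sum $\sum_{\gamma\in F}b_\Delta(\gamma)^*b_\Delta(\gamma)=\sum_{\gamma\in F\cap\Delta}b(\gamma)^*b(\gamma)$ arises from $\sum_{\gamma\in F}b(\gamma)^*b(\gamma)$ by discarding positive terms, hence is dominated in the C*-order of $B_{\mathsf{s}(\gamma)}$ and so has smaller norm; taking suprema over $F$ and over the unit space gives $\|b_\Delta\|_2\leq\|b\|_2$. Applying this with $\Delta=X\Gamma$ and $b=f$ yields $\|f_{X\Gamma}\|_2\leq\|f\|_2$. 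Combining with the identity above,
\[\|a_{\Gamma X}f\|_2=\|af_{X\Gamma}\|_2\leq\|a\|_\mathsf{b}\|f_{X\Gamma}\|_2\leq\|a\|_\mathsf{b}\|f\|_2,\]
and taking the supremum over $f\in\mathcal{F}^\rho$ gives $\|a_{\Gamma X}\|_\mathsf{b}\leq\|a\|_\mathsf{b}$.

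Finally, for the range-side restriction I would invoke the involution. A direct check shows
\[(a_{X\Gamma})^*=(a^*)_{\Gamma X},\]
since $\gamma^{-1}\in X\Gamma$ iff $\mathsf{s}(\gamma)\in X$ iff $\gamma\in\Gamma X$, while $a_{X\Gamma}(\gamma^{-1})^*=a^*(\gamma)$. Combining the source-side bound already proved (applied to $a^*$) with the $*$-invariance $\|a^*\|_\mathsf{b}=\|a\|_\mathsf{b}$ from \eqref{infty2b} then gives
\[\|a_{X\Gamma}\|_\mathsf{b}=\|(a^*)_{\Gamma X}\|_\mathsf{b}\leq\|a^*\|_\mathsf{b}=\|a\|_\mathsf{b}.\]
The only genuine subtlety I anticipate is verifying the convolution identity $a_{\Gamma X}f=af_{X\Gamma}$ carefully, i.e.\ tracking the source/range bookkeeping in each decomposition $\gamma=\alpha\beta$; once that is in place, everything else is immediate, and no convergence issues arise because restricting to a finitely supported $f$ keeps all convolutions finite.
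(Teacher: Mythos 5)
Your proposal is correct and follows essentially the same route as the paper's proof: the key identity $a_{\Gamma X}f=af_{X\Gamma}$, the bound $\|f_{X\Gamma}\|_2\leq\|f\|_2$, and then the involution trick $(a_{X\Gamma})^*=(a^*)_{\Gamma X}$ combined with $\|a^*\|_\mathsf{b}=\|a\|_\mathsf{b}$ from \eqref{infty2b}. The only difference is that you spell out the justifications (the source/range bookkeeping in decompositions $\gamma=\alpha\beta$ and the positivity argument for $2$-norm monotonicity under $0$-restriction) that the paper leaves implicit.
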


\begin{proof}
For any $f\in\mathcal{F}^\rho$, note that
\[\|a_{\Gamma X}f\|_2=\|af_{X\Gamma}\|_2\leq\|a\|_\mathsf{b}\|f_{X\Gamma}\|_2\leq\|a\|_\mathsf{b}\|f\|_2.\]
This shows that $\|a_{\Gamma X}\|_\mathsf{b}\leq\|a\|_\mathsf{b}$ and then \eqref{infty2b} yields
\[\|a_{X\Gamma}\|_\mathsf{b}=\|(a_{X\Gamma})^*\|_\mathsf{b}=\|(a^*)_{\Gamma X}\|_\mathsf{b}\leq\|a^*\|_\mathsf{b}=\|a\|_\mathsf{b}.\qedhere\]
\end{proof}

\begin{xpl}
To see that the $\mathsf{b}$-norm of a $0$-restriction can be bigger than the original $\mathsf{b}$-norm, consider the trivial $\mathbb{C}$-bundle over the discrete principal groupoid with 4 elements and $2$ units.  Then elements of $\mathcal{S}^\rho$ are just $2$ by $2$ complex matrices and the $\mathsf{b}$-norm is just the usual operator norm.  Elementary calculations show that
\[\biggl\|\begin{bmatrix}1&1\\1&-1\end{bmatrix}\biggr\|\ =\ \sqrt{2}\ <\ \frac{1+\sqrt{5}}{2}\ =\ \biggl\|\begin{bmatrix}1&1\\1&0\end{bmatrix}\biggr\|.\]\\
\end{xpl}

For any $a,b\in\mathcal{S}^\rho$ with $\mathrm{supp}(a),\mathrm{supp}(b)\subseteq\Gamma^0$, let us write $a\leq b$ to mean that $a(x)\leq b(x)$ in the C*-algebra fibre $B_x$, for all $x\in\Gamma^0$.  When $\Gamma$ is a category, we have characteristic functions $1_X\leq1_{\Gamma^0}$, for any $X\subseteq\Gamma^0$.  Moreover, $a_{\Gamma X}=a1_X$ and so the above result can then be seen as a special case of the following.

\begin{prp}\label{StarSquareInequality}
For any $a,b,c\in\mathcal{S}^\rho$ such that $\mathrm{supp}(b)$ and $\mathrm{supp}(c)$ are slices,
\[bb^*\leq cc^*\qquad\Rightarrow\qquad\|ab\|_\mathsf{b}\leq\|ac\|_\mathsf{b}.\]
\end{prp}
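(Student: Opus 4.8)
The plan is to collapse the whole statement to a single comparison of $2$-norms, where the hypothesis $bb^*\le cc^*$ can be applied fibrewise. The starting point is the $*$-invariance of the $\mathsf b$-norm from \eqref{infty2b}: since $(ab)^*=b^*a^*$ whenever $ab$ is defined (and $ab$, $ac$ are defined here because $\mathsf s$ is injective on the slices $\mathrm{supp}(b)$, $\mathrm{supp}(c)$), the very definition of the $\mathsf b$-norm applied to $(ab)^*$ gives
\[\|ab\|_\mathsf b=\|(ab)^*\|_\mathsf b=\sup_{g\in\mathcal F^\rho}\frac{\|b^*a^*g\|_2}{\|g\|_2},\]
and likewise with $c$ in place of $b$. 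Hence it suffices to prove, for each fixed $g\in\mathcal F^\rho$, the pointwise estimate $\|b^*a^*g\|_2\le\|c^*a^*g\|_2$. Writing $w=a^*g$ (defined since $g$ is finitely supported) and noting $b^*w$, $c^*w$ are defined because $\mathsf r$ is injective on $\mathrm{supp}(b^*)$ and $\mathrm{supp}(c^*)$, this single inequality is the crux.

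To attack it I would first exploit that $\mathrm{supp}(b)$ is a slice. This forces $bb^*$ to be supported on $\Gamma^0$: a nonzero $bb^*(\gamma)$ requires $\gamma=\alpha\alpha^{-1}$ for the unique $\alpha\in\mathrm{supp}(b)$ over $\mathsf r(\gamma)$, so $\gamma\in\Gamma^0$ and $bb^*(\mathsf r\alpha)=b(\alpha)b(\alpha)^*$; the same holds for $cc^*$, so $bb^*\le cc^*$ is literally an inequality of positive diagonal sections in each unit fibre $B_y$. The slice hypothesis also makes $b^*w$ a pointwise translate of $w$: for $\gamma\in\Gamma x$ there is at most one $\epsilon\in\mathrm{supp}(b^*)$ with $\mathsf r(\epsilon)=\mathsf r(\gamma)$, whence $(b^*w)(\gamma)=b^*(\epsilon)w(\epsilon^{-1}\gamma)$ and, setting $\delta=\epsilon^{-1}\gamma$, a short computation using $b^*(\epsilon)^*b^*(\epsilon)=b(\epsilon^{-1})b(\epsilon^{-1})^*=bb^*(\mathsf r\delta)$ yields
\[(b^*w)(\gamma)^*(b^*w)(\gamma)=w(\delta)^*\,bb^*(\mathsf r\delta)\,w(\delta).\]
The assignment $\gamma\mapsto\delta$ is injective on the relevant support (again because $\mathrm{supp}(b)$ is a slice), so finite truncations of the two sums transcribe into one another under this reindexing.

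With these identities in place the comparison will become fibrewise and routine. For any $\delta$ and $y=\mathsf r(\delta)$ the element $cc^*(y)-bb^*(y)\ge0$ admits a square root $r\in B_y$, and then
\[w(\delta)^*\,cc^*(y)\,w(\delta)-w(\delta)^*\,bb^*(y)\,w(\delta)=(rw(\delta))^*(rw(\delta))\ge0\]
in the C*-algebra $B_{\mathsf s(\delta)}$, which is exactly the positivity step used in \autoref{BelowNorm}. Summing over any finite $D\subset\Gamma x$ preserves this inequality; bounding the $c$-side sums by $\|c^*w\|_2^2$ through the reindexing for $c$, then taking norms and the supremum over $x$ and finite $F$ in the definition of the $2$-norm, will give $\|b^*w\|_2\le\|c^*w\|_2$. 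Feeding this back into the first display yields $\|ab\|_\mathsf b\le\|ac\|_\mathsf b$.

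The main obstacle lies entirely in the middle paragraph: the slice hypothesis must be used twice, once to see that $bb^*$ and $cc^*$ collapse to diagonal sections on $\Gamma^0$ (so that $bb^*\le cc^*$ is a genuine fibrewise inequality), and once to guarantee that $\gamma\mapsto\delta$ is a bijection, so that finite $2$-norm truncations of $b^*w$ rewrite cleanly as $\sum_\delta w(\delta)^*bb^*(\mathsf r\delta)w(\delta)$. Both the $*$-invariance reduction of the first paragraph and the fibrewise positivity of the last are immediate from the results already established.
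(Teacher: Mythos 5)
Your proposal is correct, and it takes a genuinely different route from the paper. You dualise the problem: using the $*$-invariance $\|ab\|_\mathsf{b}=\|(ab)^*\|_\mathsf{b}=\|b^*a^*\|_\mathsf{b}$ from \eqref{infty2b}, together with the associativity \eqref{abcF} to write $(b^*a^*)g=b^*(a^*g)$, you reduce everything to the exact inequality $\|b^*w\|_2\le\|c^*w\|_2$ for $w=a^*g$, and you prove that by reindexing the finite truncations of the sums defining $\|b^*w\|_2^2$ as $\sum_\delta w(\delta)^*\,bb^*(\mathsf{r}\delta)\,w(\delta)$ --- legitimate because the slice hypotheses make $\gamma\mapsto\delta=\epsilon^{-1}\gamma$ injective and force $bb^*$, $cc^*$ to be diagonal sections --- and then applying the fibrewise positivity step of \autoref{BelowNorm} termwise before taking norms and suprema. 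The paper instead works on the right of $a$ with a Douglas-type factorisation: for each $\beta\in\mathrm{supp}(b)$ and $\gamma\in\mathrm{supp}(c)$ over a common range unit it builds, via functional calculus, contractions $d_n=f_n(c_\gamma^*c_\gamma)c_\gamma^*b(\beta)$ with $c(\gamma)d_n\rightarrow b(\beta)$, assembles them into a slice-supported $d\in\mathcal{F}^\rho$ with $\|d\|_\infty\le1$ and $cdf$ close to $bf$, and concludes from $\|acdf\|_2\le\|ac\|_\mathsf{b}\|df\|_2\le\|ac\|_\mathsf{b}$ using \eqref{Infinity2}. Your route is more elementary and exact: no functional calculus beyond square roots of positive fibre elements, and no approximation or limit argument; it also disposes automatically of the degenerate case where $cc^*$ vanishes at some unit of $\mathsf{r}[\mathrm{supp}(b)]$ (in the paper's argument one must note separately that $bb^*\le cc^*$ then forces $b$ to vanish over that unit, so that the required $\gamma\in\mathrm{supp}(c)$ indeed exists). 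What the paper's argument buys in exchange is an explicit approximate factorisation $b\approx cd$ by a contractive slice-supported section --- a Fell-bundle version of the Douglas lemma with some independent interest --- whereas your argument is tailored precisely to the norm inequality being proved.
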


\begin{proof}
For each $n\in\mathbb{N}$, let $f_n$ be the function on $\mathbb{R}_+$ such that $f_n(x)=n^2x$ for $0\leq x\leq 1/n$ and $f_n(x)=1/x$ for $x\geq1/n$.  Take any $\beta\in\mathrm{supp}(b)$ and $\gamma\in\mathrm{supp}(c)$ with $\mathsf{r}(\beta)=\mathsf{r}(\gamma)$ and let $b_\beta=b(\beta)$, $c_\gamma=c(\gamma)$ and $d_n=f_n(c_\gamma^*c_\gamma)c_\gamma^*b_\beta$.

First we claim that $c_\gamma d_n\rightarrow b_\beta$.  To see this note that, as $b_\beta b_\beta^*\leq c_\gamma c_\gamma^*$,
\begin{align*}
\|c_\gamma d_n-b_\beta\|^2&=\|(1-c_\gamma f_n(c_\gamma^*c_\gamma)c_\gamma^*)b_\beta b_\beta^*(1-c_\gamma f_n(c_\gamma^*c_\gamma)c_\gamma^*)\|\\
&\leq\|(1-c_\gamma f_n(c_\gamma^*c_\gamma)c_\gamma^*)c_\gamma c_\gamma^*(1-c_\gamma f_n(c_\gamma^*c_\gamma)c_\gamma^*)\|\\
&=\|(1-f_n(c_\gamma c_\gamma^*)c_\gamma c_\gamma^*)^2c_\gamma c_\gamma^*\|\\
&=\|g_n(c_\gamma c_\gamma^*)\|\\
&\rightarrow0,
\end{align*}
where $g_n$ is the function on $\mathbb{R}_+$ defined by $g_n(x)=(1-n^2x^2)^2x$, for $0\leq x\leq 1/n$ and $g_n(x)=0$ for $x\geq1/n$, thus proving the claim.

We next claim that $\|c_\gamma d_n\|\leq1$, for all $n\in\mathbb{N}$.  To see this, note that
\[\|c_\gamma d_n\|^2=\|f_n(c_\gamma^*c_\gamma)c_\gamma^*b_\beta b_\beta^*c_\gamma^*f_n(c_\gamma^*c_\gamma)\|\leq\|f_n(c_\gamma^*c_\gamma)c_\gamma^*c_\gamma c_\gamma^*c_\gamma^*f_n(c_\gamma^*c_\gamma)\|=\|h_n(c_\gamma^*c_\gamma)\|\,\]
where $h_n$ is the function on $\mathbb{R}_+$ defined by $h_n(x)=n^4x^4$, for $0\leq x\leq 1/n$ and $g_n(x)=1$ for $x\geq1/n$.  As $h_n(x)\leq1$, for all $x$, this proves the claim.

Now, for any $r<\|ab\|_\mathsf{b}$, we have some $f\in\mathcal{F}^\rho$ with $\|f\|_2\leq1$ and $\|abf\|_2>r$.  For each $x\in\mathsf{r}[\mathrm{supp}(bf)]$, we have unique $\beta\in\mathrm{supp}(\beta)$ and $\gamma\in\mathrm{supp}(\gamma)$ with $x=\mathsf{r}(\beta)=\mathsf{r}(\gamma)$.  As above, we then have $(d_n)\subseteq B_{\gamma^{-1}\beta}$ with $\|d_n\|\leq1$, for all $n\in\mathbb{N}$, and $c(\gamma)d_n\rightarrow b(\beta)$.  As $\mathrm{supp}(b)$ is a slice and hence $\mathsf{r}[\mathrm{supp}(bf)]$ is finite, this means we have $d\in\mathcal{F}^\rho$ such that $\mathrm{supp}(d)$ is a slice, $\|d\|_\infty\leq1$ and
\[r<\|acdf\|_2\leq\|ac\|_\mathsf{b}\|df\|_2\leq\|ac\|_\mathsf{b}\|d\|_\infty\|f\|_2\leq\|ac\|_\mathsf{b},\]
thanks to \eqref{Infinity2}.  As $r<\|ab\|_\mathsf{b}$ was arbitrary, this shows that $\|ab\|_\mathsf{b}\leq\|ac\|_\mathsf{b}$.
\end{proof}

\subsection{The $\mathsf{i}$-Norm}

Lastly, we introduce a variant of Renault's $\mathsf{i}$-norm, defined to be the maximum of the $1$-norm given in \eqref{1Norm} and its dual, i.e. for all $a\in\mathcal{S}^\rho$,
\[\|a\|_\mathsf{i}=\|a\|_1\vee\|a^*\|_1.\]
The $\mathsf{i}$-norm will not be needed for any of the general theory we continue to develop in the next section -- its significance stems from the fact that it bounds the $\mathsf{b}$-norm and is often easier to compute.  Showing that the $\mathsf{i}$-norm is finite can thus provide a convenient way of verifying that a particular section lies in $\mathcal{S}^\rho_\mathsf{b}$.

First we need a lemma showing that we have a kind of square root function even on non-unit fibres of our bundle.  First note that, as each unit fibre is a C*-algebra, for any $b\in B$, we may define $|b|\in B_{\mathsf{s}(\rho(b))}$ in the standard way by
\[|b|=\sqrt{b^*b}.\]
Also note that we can `shift' applications of the continuous functional calculus in unit fibres as usual, i.e. if $f$ is any continuous function $[0,\infty)$ with $f(0)=0$ then
\begin{equation}\label{Shift}
f(bb^*)b=bf(b^*b).
\end{equation}
Indeed, \eqref{Shift} is certainly true if $f$ is a polynomial with no constant term and hence extends to arbitrary continuous $f$ with $f(0)=0$ via limits.

\begin{lem}
For every $b\in B$, we have $\underline{b}\in B_{\rho(b)}$ with
\[|b|=\underline{b}^*\underline{b},\qquad|b^*|=\underline{b}\,\underline{b}^*\qquad\text{and}\qquad b=\underline{b}\sqrt{|b|}=\sqrt{|b^*|}\underline{b}.\]
\end{lem}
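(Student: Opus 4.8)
The plan is to realise $\underline{b}$ as a norm limit of regularised ``polar parts'' $\underline{b}_n=bf_n(b^*b)$, where $f_n$ is the continuous function on $[0,\infty)$ given by $f_n(t)=(t+\tfrac1n)^{-1/4}$ and the functional calculus is taken in the unit C*-algebra fibre $B_{\mathsf{s}(\rho(b))}$. Each $\underline{b}_n$ lies in $B_{\rho(b)}$, since $f_n(b^*b)\in B_{\mathsf{s}(\rho(b))}$ multiplies $b\in B_{\rho(b)}$ back into $B_{\rho(b)}$. The entire argument rests on the C*-identity $\|c\|^2=\|c^*c\|$ valid for every $c\in B$ together with the shift formula \eqref{Shift}.

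First I would show $(\underline{b}_n)$ is Cauchy. Writing $\phi_n(t)=t^{1/2}f_n(t)=t^{1/2}(t+\tfrac1n)^{-1/4}$, the C*-identity gives
\[\|\underline{b}_n-\underline{b}_m\|^2=\|(f_n(b^*b)-f_m(b^*b))\,b^*b\,(f_n(b^*b)-f_m(b^*b))\|=\sup_{t\in\sigma(b^*b)}(\phi_n(t)-\phi_m(t))^2.\]
Since $\phi_n$ increases pointwise to the continuous function $t\mapsto t^{1/4}$ on the compact set $\sigma(b^*b)\subseteq[0,\|b\|^2]$, Dini's theorem upgrades this to uniform convergence, so $(\underline{b}_n)$ is Cauchy and, each fibre being complete, converges to some $\underline{b}\in B_{\rho(b)}$. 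The same Dini argument applied to $\eta_n(t)=t(t+\tfrac1n)^{-1/2}\uparrow t^{1/2}$ identifies the limit of $\underline{b}_n^*\underline{b}_n=\eta_n(b^*b)$ as $(b^*b)^{1/2}=|b|$, so $\underline{b}^*\underline{b}=|b|$.

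The genuinely delicate step is $b=\underline{b}\sqrt{|b|}=\underline{b}\,(b^*b)^{1/4}$. Here $\underline{b}_n(b^*b)^{1/4}=b\,\psi_n(b^*b)$ with $\psi_n(t)=t^{1/4}(t+\tfrac1n)^{-1/4}$, and $\psi_n$ converges only to the discontinuous indicator of $(0,\infty)$, so one cannot pass $\psi_n(b^*b)$ to a limit directly. Instead I would estimate $b$ itself: the C*-identity gives $\|b\,\psi_n(b^*b)-b\|^2=\sup_t(\psi_n(t)-1)^2t$, and I would split $[0,\|b\|^2]$ at a small $\varepsilon$, using $0\le\psi_n\le1$ (hence $(\psi_n-1)^2\le1$) to bound the part $t\le\varepsilon$ by $\varepsilon$, and the uniform convergence $\psi_n\to1$ on $[\varepsilon,\|b\|^2]$ to kill the rest. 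This yields $b\,\psi_n(b^*b)\to b$ and hence $b=\underline{b}\sqrt{|b|}$. This is the main obstacle: the natural phase $b(b^*b)^{-1/4}$ need not exist in the fibre, and the limit here is governed by a discontinuous function, so the monotone Dini trick used elsewhere must be replaced by a direct splitting estimate on $b$.

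The remaining two identities are then purely algebraic. From $b=\underline{b}\sqrt{|b|}$ and $\underline{b}^*\underline{b}=|b|$ one computes $(\underline{b}\,\underline{b}^*)^2=\underline{b}(\underline{b}^*\underline{b})\underline{b}^*=\underline{b}|b|\underline{b}^*=bb^*$, and as $\underline{b}\,\underline{b}^*\ge0$ in the C*-algebra $B_{\mathsf{r}(\rho(b))}$ its positive square root gives $\underline{b}\,\underline{b}^*=\sqrt{bb^*}=|b^*|$. Finally, applying the shift formula \eqref{Shift} to $\underline{b}$ with $f(t)=\sqrt t$ gives $\sqrt{|b^*|}\,\underline{b}=\sqrt{\underline{b}\,\underline{b}^*}\,\underline{b}=\underline{b}\sqrt{\underline{b}^*\underline{b}}=\underline{b}\sqrt{|b|}=b$, completing all the stated relations.
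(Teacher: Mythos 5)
Your proof is correct and follows essentially the same route as the paper: both construct $\underline{b}$ as the norm limit of $b$ times a regularized functional-calculus approximation of $(b^*b)^{-1/4}$ (the paper's $c_n=f_n(|b|)$ with $f_n$ increasing to $x^{-1/2}$ plays exactly the role of your $f_n(b^*b)$), establish the Cauchy property, $\underline{b}^*\underline{b}=|b|$ and $b=\underline{b}\sqrt{|b|}$ by C*-identity estimates in the fibre, and deduce $|b^*|=\underline{b}\,\underline{b}^*$ from $(\underline{b}\,\underline{b}^*)^2=bb^*$ and uniqueness of positive square roots. The one step you streamline is the final identity: you obtain $\sqrt{|b^*|}\,\underline{b}=b$ purely algebraically from the shift formula \eqref{Shift} applied to $\underline{b}$, whereas the paper runs a second, longer norm-limit computation for it.
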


\begin{proof}
Let $(f_n)$ be an increasing sequence of continuous functions on $[0,\infty]$ such that $f_n(x)\rightarrow1/\sqrt{x}$, for all $x>0$.  For all $n\in\mathbb{N}$, let $c_n=f_n(|b|)\in B_{\mathsf{s}(\rho(b))}$ so $|b|c_n=c_n|b|\rightarrow\sqrt{|b|}$.  For all $m,n\in\mathbb{N}$, it follows that
\begin{align*}
\|(bc_m-bc_n)\|^2&=\|(bc_m-bc_n)^*(bc_m-bc_n)\|\\
&\leq\||b|c_m\|\||b|(c_m-c_n)\|+\||b|c_n\|\||b|(c_m-c_n)\|\\
&\rightarrow0,
\end{align*}
as $m,n\rightarrow0$, i.e. $(bc_n)$ is Cauchy and hence has limit $\underline{b}\in B_b$.  Then
\begin{align*}
\|b-bc_n\sqrt{|b|}\|^2&=\|(b-bc_n\sqrt{|b|})^*(b-bc_n\sqrt{|b|})\|\\
&\leq\||b|^2-2|b|^\frac{5}{2}c_n+|b|^3c_n^2\|\\
&\rightarrow0,
\end{align*}
showing that $\underline{b}\sqrt{|b|}=\lim_nbc_n\sqrt{|b|}=b$.  Likewise,
\begin{align*}
\|b-\sqrt{|b^*|}bc_n\|^2&=\|(b-\sqrt{|b^*|}bc_n)^*(b-\sqrt{|b^*|}bc_n)\|\\
&\leq\||b|^2-b^*\sqrt{|b^*|}bc_n-c_nb^*\sqrt{|b^*|}b+c_nb^*|b^*|bc_n\|\\
&=\||b|^2-b^*(bb^*)^{\frac{1}{4}}bc_n-c_nb^*(bb^*)^{\frac{1}{4}}b+c_nb^*\sqrt{bb^*}bc_n\|\\
&=\||b|^2-(b^*b)^{\frac{1}{4}}b^*bc_n-c_n(b^*b)^{\frac{1}{4}}b^*b+c_n\sqrt{b^*b}b^*bc_n\|\\
&=\||b|^2-2|b|^\frac{5}{2}c_n+|b|^3c_n^2\|\\
&\rightarrow0,
\end{align*}
so $\sqrt{|b^*|}\underline{b}=\lim_n\sqrt{|b^*|}bc_n=b$.  Similarly, $\||b|-(bc_n)^*bc_n\|=\||b|-|b|^2c_n^2\|\rightarrow0$ so $\underline{b}^*\underline{b}=\lim_n(bc_n)^*bc_n=|b|$ and hence $|b^*|=\sqrt{bb^*}=\sqrt{\underline{b}|b|\underline{b}^*}=\sqrt{\underline{b}\,\underline{b}^*\underline{b}\,\underline{b}^*}=\underline{b}\,\underline{b}^*$.
\end{proof}

Now we can show that the $\mathsf{i}$-norm does indeed dominate the $\mathsf{b}$-norm.

\begin{prp}
For all $a\in\mathcal{S}^\rho$,
\[\|a\|_\mathsf{b}\leq\|a\|_\mathsf{i}.\]
\end{prp}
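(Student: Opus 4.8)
The plan is to argue directly from the sesquilinear characterisation \eqref{b'} of the $\mathsf{b}$-norm, so that it suffices to bound $\|\Phi(g^*af)(x)\|$ by $\|a\|_\mathsf{i}\|g\|_2\|f\|_2$ for every unit $x\in\Gamma^0$ and all $f,g\in\mathcal{F}^\rho$ (if $\|a\|_\mathsf{i}=\infty$ there is nothing to prove, so I assume it finite). Expanding the triple convolution and restricting to the unit $x$, the groupoid relations collapse everything to the finite double sum
\[\Phi(g^*af)(x)=\sum_{\delta,\beta}g(\delta\beta)^*a(\delta)f(\beta),\]
where $\beta$ ranges over $\Gamma x$ and $\delta$ over arrows with $\mathsf{s}(\delta)=\mathsf{r}(\beta)$, finiteness being guaranteed since $f,g\in\mathcal{F}^\rho$.

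The key idea is a Hilbert-module Schur test. I would use the preceding lemma to split each coefficient as $a(\delta)=\sqrt{|a(\delta)^*|}\,\underline{a(\delta)}$ and then set $G(\delta,\beta)=\sqrt{|a(\delta)^*|}\,g(\delta\beta)$ and $F(\delta,\beta)=\underline{a(\delta)}\,f(\beta)$, both lying in the fibre $B_{\delta\beta}$ (and $\delta\beta\in\Gamma x$). With these choices $G(\delta,\beta)^*F(\delta,\beta)=g(\delta\beta)^*a(\delta)f(\beta)$, so the double sum is exactly the $B_x$-valued inner product $\langle G,F\rangle=\sum_{\delta,\beta}G(\delta,\beta)^*F(\delta,\beta)$ in the (finite) direct sum of the fibre Hilbert $B_x$-modules indexed by the admissible pairs $(\delta,\beta)$. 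Cauchy--Schwarz for Hilbert modules (as in \cite[Lemma 2.5]{RaeburnWilliams1998}) then gives $\|\Phi(g^*af)(x)\|\le\|\langle G,G\rangle\|^{1/2}\|\langle F,F\rangle\|^{1/2}$.

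It remains to estimate the two diagonal terms. Using $\sqrt{|a(\delta)^*|}^*\sqrt{|a(\delta)^*|}=|a(\delta)^*|$ and $\underline{a(\delta)}^*\underline{a(\delta)}=|a(\delta)|$ together with \eqref{MidBelow} (\autoref{BelowNorm}), each summand is dominated by $g(\delta\beta)^*|a(\delta)^*|g(\delta\beta)\le\|a(\delta)\|\,g(\delta\beta)^*g(\delta\beta)$ and $f(\beta)^*|a(\delta)|f(\beta)\le\|a(\delta)\|\,f(\beta)^*f(\beta)$ respectively. Reindexing the $G$-sum by $\gamma=\delta\beta$ turns its coefficient into $\sum_{\delta}\|a(\delta)\|$ taken over all $\delta$ with fixed range $\mathsf{r}(\gamma)$, which is at most $\|a^*\|_1$; hence $\langle G,G\rangle\le\|a^*\|_1\sum_{\gamma\in\Gamma x}g(\gamma)^*g(\gamma)$ and $\|\langle G,G\rangle\|\le\|a^*\|_1\|g\|_2^2$. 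Symmetrically, the $F$-coefficient is $\sum_{\delta}\|a(\delta)\|$ over $\delta$ with fixed source $\mathsf{r}(\beta)$, bounded by $\|a\|_1$, giving $\|\langle F,F\rangle\|\le\|a\|_1\|f\|_2^2$. Multiplying yields $\|\Phi(g^*af)(x)\|\le\sqrt{\|a\|_1\|a^*\|_1}\,\|g\|_2\|f\|_2\le\|a\|_\mathsf{i}\|g\|_2\|f\|_2$, and taking suprema over $x,f,g$ finishes the argument.

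The main obstacle is making the Cauchy--Schwarz step land on $\|g\|_2^2$ and $\|f\|_2^2$ rather than on $\sum_\gamma\|g(\gamma)\|^2$: the naive scalar Schur test, obtained by applying the triangle inequality fibrewise before summing, fails because $\sum_\gamma\|g(\gamma)\|^2$ strictly exceeds $\|g\|_2^2=\|\sum_\gamma g(\gamma)^*g(\gamma)\|$ in a genuinely noncommutative fibre. Keeping the entire estimate at the level of the Hilbert $B_x$-module is what repairs this, and it is precisely the square-root decomposition isolated in the preceding lemma that makes the required factorisation of $a(\delta)$ available. The only remaining care is the bookkeeping: checking that the reindexings $\gamma=\delta\beta$ are bijections onto the appropriate range and source fibres, so that the two coefficient sums are correctly identified with $\|a^*\|_1$ and $\|a\|_1$.
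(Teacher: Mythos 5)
Your proof is correct and is essentially the paper's own argument: the paper likewise bounds $\|\Phi(g^*af)(x)\|$ via the characterisation \eqref{b'}, splits $a(\delta)$ using the square-root lemma ($b=\underline{b}\sqrt{|b|}=\sqrt{|b^*|}\,\underline{b}$), applies Hilbert-module Cauchy--Schwarz in a finite direct sum of fibre Hilbert $B_x$-modules, and reindexes the two diagonal terms to obtain the bounds $\|a^*\|_1\|g\|_2^2$ and $\|a\|_1\|f\|_2^2$. The only differences are cosmetic -- your parametrisation $(\delta,\beta)$ with $g$-argument $\delta\beta$ versus the paper's $(\alpha,\beta)$ with $a$-argument $\alpha^{-1}\beta^{-1}$, and which factor of the decomposition is grouped with $g$ versus $f$ -- and these yield the identical diagonal sums.
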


\begin{proof}
For any $f,g\in\mathcal{F}^\rho$, and $x\in\Gamma^0$, we can apply the Cauchy-Schwarz inequality to the right Hilbert $B_x$-module $\bigoplus_{(\alpha,\beta)\in x\Gamma\times\Gamma x}B_\beta$ to obtain
\begin{align*}
\|g^*af(x)\|^2&=\Big\|\sum_{\substack{\alpha\in x\Gamma\\\beta\in\Gamma x}}g^*(\alpha)a(\alpha^{-1}\beta^{-1})f(\beta)\Big\|^2\\
&=\Big\|\sum_{\substack{\alpha\in x\Gamma\\\beta\in\Gamma x}}g^*(\alpha)\underline{a(\alpha^{-1}\beta^{-1})}\sqrt{|a(\alpha^{-1}\beta^{-1})|}f(\beta)\Big\|^2\\
&\leq\Big\|\sum_{\substack{\alpha\in x\Gamma\\\beta\in\Gamma x}}g^*(\alpha)|a(\alpha^{-1}\beta^{-1})^*|g^*(\alpha)^*\Big\|\Big\|\sum_{\substack{\alpha\in x\Gamma\\\beta\in\Gamma x}}f(\beta)^*|a(\alpha^{-1}\beta^{-1})|f(\beta)\Big\|\\
&\leq\Big\|\sum_{\substack{\alpha\in x\Gamma\\\beta\in\Gamma x}}\|a(\alpha^{-1}\beta^{-1})\|g^*(\alpha)g^*(\alpha)^*\Big\|\Big\|\sum_{\substack{\alpha\in x\Gamma\\\beta\in\Gamma x}}\|a(\alpha^{-1}\beta^{-1})\|f(\beta)^*f(\beta)\Big\|\\
&\leq\Big\|\sum_{\alpha\in x\Gamma}\|a^*\|_1g^*(\alpha)g^*(\alpha)^*\Big\|\Big\|\sum_{\beta\in\Gamma x}\|a\|_1f(\beta)^*f(\beta)\Big\|\\
&=\|a^*\|_1\|g^*g(x)\|\|a\|_1\|f^*f(x)\|\\
&\leq\|a^*\|_\mathsf{i}^2\|g\|_2^2\|f\|_2^2.
\end{align*}
Thus $\|\Phi(g^*af)\|_\infty\leq\|a^*\|_\mathsf{i}\|g\|_2\|f\|_2$ and hence $\|a\|_\mathsf{b}\leq\|a\|_\mathsf{i}$.
\end{proof}

\section{Algebras and Modules}

Again we make the following standing assumption.
\begin{center}
\textbf{Throughout the rest of this section $\rho:B\twoheadrightarrow\Gamma$ is a Fell bundle.}
\end{center}

\subsection{The Big Hilbert Module}\label{TheBigHilbertModule}

Let us define
\begin{align*}
\mathcal{H}^\rho&=\{a\in\mathcal{S}_2^\rho:a^*a\text{ is defined}\}.\\
\mathcal{D}^\rho&=\{a\in\mathcal{S}_\infty^\rho:\mathrm{supp}(a)\subseteq\Gamma^0\}\\
&=\{\Phi(a):a\in\mathcal{S}_\infty^\rho\}.
\end{align*}
Identifying $0$-restrictions with restrictions, $\mathcal{D}^\rho$ just consists of all bounded sections of the C*-bundle defined by restricting $\rho$ to the preimage of the unit space $\rho^{-1}[\Gamma^0]$.  As convolution in $\mathcal{D}^\rho$ reduces to the pointwise product, $\mathcal{D}^\rho$ is a C*-algebra.

\begin{thm}\label{Hilbert}
$\mathcal{H}^\rho$ is a right Hilbert module over $\mathcal{D}^\rho$ with inner product
\[\langle a,b\rangle=\Phi(a^*b).\]
\end{thm}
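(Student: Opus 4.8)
The plan is to verify the Hilbert module axioms directly, leaning throughout on the characterisation of when $a^*a$ is defined from \autoref{a*aDefined}, namely that $\lim_{F\subset\Gamma x}\|a_{\Gamma x\setminus F}\|_2=0$ for every $x\in\Gamma^0$, together with the identity $\|a\|_2=\sqrt{\|\Phi(a^*a)\|_\infty}$ it supplies. The latter already shows that the prospective Hilbert norm $\|\langle a,a\rangle\|^{1/2}=\|\Phi(a^*a)\|_\infty^{1/2}$ coincides with the $2$-norm on $\mathcal{H}^\rho$, so completeness will reduce to showing $\mathcal{H}^\rho$ is a $2$-closed subspace of the Banach space $\mathcal{S}^\rho_2$ from \autoref{2Banach}. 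First, though, I would record that the inner product is genuinely $\mathcal{D}^\rho$-valued: $\Phi(a^*b)$ is supported on $\Gamma^0$ by construction and is bounded since $\|\Phi(a^*b)\|_\infty\leq\|a^*b\|_\infty\leq\|a\|_2\|b\|_2<\infty$ by \eqref{infty22} (here $a^*b$ is defined by \autoref{a*b}).

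Next I would establish the module structure. For closure under addition, subadditivity of the $2$-norm (\autoref{2subadditive}) gives $\|(a+b)_{\Gamma x\setminus F}\|_2\leq\|a_{\Gamma x\setminus F}\|_2+\|b_{\Gamma x\setminus F}\|_2$, so the criterion \eqref{a*a} for $(a+b)^*(a+b)$ to be defined passes from $a$ and $b$ to $a+b$; scalar multiples are immediate. For the right action of $d\in\mathcal{D}^\rho$, since $\mathrm{supp}(d)\subseteq\Gamma^0$ the map $\mathsf{s}$ is injective on $\mathrm{supp}(d)$, so \eqref{2Infinity} shows $ad$ is defined with $\|ad\|_2\leq\|a\|_2\|d\|_\infty$; moreover $(ad)(\gamma)=a(\gamma)d(\mathsf{s}(\gamma))$, whence $\|(ad)_{\Gamma x\setminus F}\|_2\leq\|d\|_\infty\|a_{\Gamma x\setminus F}\|_2$ and \eqref{a*a} again gives $ad\in\mathcal{H}^\rho$. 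The computational fact I would isolate here is that right multiplication by a diagonal section acts fibrewise as $\gamma\mapsto a(\gamma)d(\mathsf{s}(\gamma))$, a single term with no convergence issue; this also yields $\Phi(cd)=\Phi(c)d$ for any section $c$ for which $cd$ is defined, and makes the right action associative.

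The algebraic inner-product axioms then fall out. Sesquilinearity is the bilinearity of convolution composed with the linearity of $\Phi$; conjugate symmetry is $\Phi(a^*b)^*=\Phi((a^*b)^*)=\Phi(b^*a)$, using that $\Phi$ commutes with the involution because $\Gamma^0$ is fixed by $\gamma\mapsto\gamma^{-1}$. Positivity holds because $\Phi(a^*a)(x)=\sum_{\gamma\in\Gamma x}a(\gamma)^*a(\gamma)$ is a norm-limit of positive elements of the C*-algebra $B_x$, and definiteness follows from $\|a\|_\infty\leq\|a\|_2=\|\Phi(a^*a)\|_\infty^{1/2}$ via \eqref{InfinityBelow2}. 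For the module compatibility $\langle a,bd\rangle=\langle a,b\rangle d$ I would first check the associativity $a^*(bd)=(a^*b)d$: since $d$ is diagonal, $(bd)(\beta)=b(\beta)d(\mathsf{s}(\beta))$ and $d(\mathsf{s}(\gamma))$ can be pulled out of the convolution limit defining $(a^*b)(\gamma)$ because right multiplication by a fibre element is bounded; then $\Phi((a^*b)d)=\Phi(a^*b)d$ by the fibrewise formula above.

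The remaining point, and the one I expect to demand the most care, is completeness, because the defining limit in \eqref{a*a} ranges over the net of finite subsets of $\Gamma x$ and does not interact with a sequential $2$-limit automatically. Given a $2$-Cauchy sequence $(a_n)\subseteq\mathcal{H}^\rho$ with $2$-limit $a\in\mathcal{S}^\rho_2$, I must verify \eqref{a*a} for $a$, and the resolution is a two-parameter estimate. Fixing $\varepsilon>0$, choose $n$ with $\|a-a_n\|_2<\varepsilon$ and then $F_0\subset\Gamma x$ with $\|(a_n)_{\Gamma x\setminus F_0}\|_2<\varepsilon$ using $a_n\in\mathcal{H}^\rho$; since $0$-restriction never increases the $2$-norm, for every $F\supseteq F_0$ one gets $\|a_{\Gamma x\setminus F}\|_2\leq\|(a-a_n)_{\Gamma x\setminus F}\|_2+\|(a_n)_{\Gamma x\setminus F}\|_2<2\varepsilon$. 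Hence \eqref{a*a} holds and $a\in\mathcal{H}^\rho$, so $\mathcal{H}^\rho$ is $2$-closed and therefore complete in the Hilbert norm, completing the verification that it is a right Hilbert $\mathcal{D}^\rho$-module.
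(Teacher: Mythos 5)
Your proposal is correct and takes essentially the same route as the paper: well-definedness of the $\mathcal{D}^\rho$-valued inner product via \autoref{a*b}, closure of $\mathcal{H}^\rho$ under the module operations, and completeness by identifying the Hilbert norm with the $2$-norm through \autoref{a*aDefined} and then showing $\mathcal{H}^\rho$ is $2$-closed in the Banach space $\mathcal{S}^\rho_2$ of \autoref{2Banach}, with the same two-term estimate $\|a_{\Gamma x\setminus F}\|_2\leq\|(a-a_n)_{\Gamma x\setminus F}\|_2+\|(a_n)_{\Gamma x\setminus F}\|_2$. The only cosmetic differences are that the paper checks closure under addition and the right action by expanding $(a+b)^*(a+b)$ and $(bd)^*(bd)$ into products that are defined by \autoref{a*b}, where you instead invoke the criterion \eqref{a*a} directly, and the paper dismisses as immediate the algebraic axioms (sesquilinearity, positivity, the associativity behind $\Phi(a^*bd)=\Phi(a^*b)d$) that you spell out.
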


\begin{proof}
By \autoref{a*b}, the given inner product is well-defined with values in $\mathcal{D}^\rho$.  Moreover, for any $a,b\in\mathcal{H}^\rho$, $(a+b)^*(a+b)=a^*a+a^*b+b^*a+b^*b$ is defined, again by \autoref{a*b}, and hence $a+b\in\mathcal{H}^\rho$.  For any $d\in\mathcal{D}^\rho$, we see that $bd$ and $(bd)^*bd=d^*b^*bd$ are also defined and $\|bd\|_2\leq\|b\|_2\|d\|_\infty=\|b\|_2\|d\|_2$, which means that $bd\in\mathcal{H}^\rho$.  Moreover, as $\mathrm{supp}(d)\subseteq\Gamma^0$, we also see that
\[\langle a,bd\rangle=\Phi(a^*bd)=\Phi(a^*b)d=\langle a,b\rangle d.\]
The other properties of a right inner product $\mathcal{D}^\rho$-module are immediate.  It only remains to show that $\mathcal{H}^\rho$ is complete with respect to the inner product norm.

By \autoref{a*aDefined}, the inner product norm is the same as the $2$-norm, i.e.
\[\|a\|_2=\sqrt{\|\langle a,a\rangle\|}.\]
Thus it suffices to show that $\mathcal{H}^\rho$ is closed in the Banach space $\mathcal{S}_2^\rho$.  To see this, take $a\in\mathcal{S}_2^\rho$ in the closure of $\mathcal{H}^\rho$, so we have $(a_n)\subseteq\mathcal{H}^\rho$ with $\|a-a_n\|_2\rightarrow0$.  For any $x\in\Gamma^0$, $F\subset\Gamma x$ and $n\in\mathbb{N}$,
\[\|a_{\Gamma x\setminus F}\|_2\leq\|(a-a_n)_{\Gamma x\setminus F}\|_2+\|(a_n)_{\Gamma x\setminus F}\|_2\leq\|a-a_n\|_2+\|(a_n)_{\Gamma x\setminus F}\|_2.\]
Thus $\lim_{F\subset\Gamma x}\|a_{\Gamma x\setminus F}\|_2\leq\|a-a_n\|_2\rightarrow0$, for all $x\in\Gamma^0$, so $a^*a$ is defined, by \autoref{a*aDefined}.  This means $a\in\mathcal{H}^\rho$, showing that $\mathcal{H}^\rho$ is closed, as required.
\end{proof}

Convolution then turns any $a\in\mathcal{S}_\mathsf{b}^\rho$ into a bounded linear map from $\mathcal{H}^\rho$ to $\mathcal{S}_2^\rho$.

\begin{prp}\label{abinH}
If $a\in\mathcal{S}_\mathsf{b}^\rho$ and $b\in\mathcal{H}^\rho$ then $ab\in\mathcal{S}_2^\rho$, specifically
\begin{equation}\label{b2}
\|ab\|_2\leq\|a\|_\mathsf{b}\|b\|_2.
\end{equation}
If $a^*a$ is also defined, i.e. if $a\in\mathcal{H}^\rho$ too, then $ab\in\mathcal{H}^\rho$ as well.
\end{prp}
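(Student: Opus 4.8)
The plan is to first show that $ab$ is well-defined and satisfies \eqref{b2} by approximating $b$ with its finite restrictions, and then to upgrade to $ab\in\mathcal{H}^\rho$ by a closedness argument carried out one source-fibre at a time. The workhorse is the defining inequality of the $\mathsf{b}$-norm, namely $\|af\|_2\leq\|a\|_\mathsf{b}\|f\|_2$ for every $f\in\mathcal{F}^\rho$.

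First I would note that for any finite $F\subset\Gamma$ the restriction $b_F$ lies in $\mathcal{F}^\rho$, so $ab_F$ is defined with $\|ab_F\|_2\leq\|a\|_\mathsf{b}\|b_F\|_2\leq\|a\|_\mathsf{b}\|b\|_2$. To see that $ab$ itself is defined I would fix $\gamma$ and show the net $(ab_F(\gamma))_{F\subset\Gamma\gamma}$ is Cauchy: for $F\subseteq G$ one has $ab_G(\gamma)-ab_F(\gamma)=ab_{G\setminus F}(\gamma)$, and by \eqref{InfinityBelow2} together with the bound just recorded this is at most $\|a\|_\mathsf{b}\|b_{\Gamma\gamma\setminus F}\|_2$ in norm, which tends to $0$ by \autoref{a*aDefined} since $b^*b$ is defined. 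As $B_\gamma$ is complete, $ab(\gamma)$ exists. For the norm estimate I would fix a unit $x$ and a finite $H\subset\Gamma x$; since every $\gamma\in H$ has source $x$, the limits defining $ab(\gamma)$ all run over the common directed set $\Gamma x$, so $\sum_{\gamma\in H}ab(\gamma)^*ab(\gamma)=\lim_{F\subset\Gamma x}\sum_{\gamma\in H}ab_F(\gamma)^*ab_F(\gamma)$ in $B_x$. Each term on the right has norm at most $\|ab_F\|_2^2\leq\|a\|_\mathsf{b}^2\|b\|_2^2$ straight from the definition of the $2$-norm, so the same bound passes to the limit; taking the supremum over $H$ and $x$ yields $\|ab\|_2\leq\|a\|_\mathsf{b}\|b\|_2$, which is \eqref{b2}.

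The substance of the proposition is the final claim that $ab\in\mathcal{H}^\rho$ when $a^*a$ is defined, and here the main obstacle is that one cannot simply approximate $b$ by finitely supported sections in the $2$-norm: the tails $\|b_{\Gamma x\setminus F}\|_2$ vanish for each fixed $x$ but not uniformly in $x$, so $b_F\not\to b$ in general. I would circumvent this by localising to a single column. For a fixed unit $x$, put $b^{(x)}=b_{\Gamma x}$; as this section is supported on the one source-fibre $\Gamma x$, its $2$-norm collapses to a single fibre norm, and \autoref{a*aDefined} then shows both that $b^{(x)}\in\mathcal{H}^\rho$ and that its finite restrictions converge to it in the $2$-norm (the relevant tail being exactly $\|b_{\Gamma x\setminus F}\|_2$).

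Finally I would check that $ab_G\in\mathcal{H}^\rho$ for finite $G$: indeed $ab_G\in\mathcal{S}_2^\rho$ by \eqref{b2}, while $(ab_G)^*(ab_G)=b_G^*(a^*a)b_G$ — using that $a^*a$ is defined and the associativity identity \eqref{abcF} — is a finite convolution and so is defined. Applying \eqref{b2} to the differences $b_G-b^{(x)}\in\mathcal{H}^\rho$ gives $ab_G\to a\,b^{(x)}$ in the $2$-norm, and since $\mathcal{H}^\rho$ is closed in $\mathcal{S}_2^\rho$ by \autoref{Hilbert} we conclude $a\,b^{(x)}\in\mathcal{H}^\rho$. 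As $(ab)_{\Gamma x}=a\,b^{(x)}$ is supported in $\Gamma x$, the tail $\|(ab)_{\Gamma x\setminus F}\|_2=\|(a\,b^{(x)})_{\Gamma x\setminus F}\|_2$ vanishes for every unit $x$, so \autoref{a*aDefined} shows $(ab)^*(ab)$ is defined, i.e. $ab\in\mathcal{H}^\rho$.
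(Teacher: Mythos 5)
Your proof is correct and takes essentially the same route as the paper's: both arguments localise to a source fibre $\Gamma x$, approximate $b_{\Gamma x}$ by its finite restrictions $b_F$, use the defining $\mathsf{b}$-norm inequality $\|af\|_2\leq\|a\|_\mathsf{b}\|f\|_2$ for $f\in\mathcal{F}^\rho$ to obtain $2$-convergence of $ab_F$ to $a\,b_{\Gamma x}$, then invoke the $2$-closedness of $\mathcal{H}^\rho$ from \autoref{Hilbert} and conclude with the criterion \eqref{a*a} of \autoref{a*aDefined}. The only differences are presentational: you re-derive the relevant case of \autoref{a*b} (definedness of $ab$) and the bound \eqref{b2} by direct finite-sum estimates, and you spell out via \eqref{abcF} why $ab_G\in\mathcal{H}^\rho$ for finite $G$, steps the paper handles by citation or bare assertion.
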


\begin{proof}
By \autoref{a*b}, $ab$ is defined, as $\|a^*\|_2\leq\|a^*\|_\mathsf{b}=\|a\|_\mathsf{b}<\infty$.  Moreover, for all $x\in\Gamma^0$ and $F\subseteq G\subset\Gamma x$,
\[\|ab_G-ab_F\|_2=\|ab_{G\setminus F}\|_2\leq\|a\|_\mathsf{b}\|b_{G\setminus F}\|_2\leq\|a\|_\mathsf{b}\|b_{\Gamma x\setminus F}\|_2.\]
This shows $(ab_F)_{F\subset\Gamma x}$ is $2$-Cauchy, as $\lim_{F\subset\Gamma x}\|b_{\Gamma x\setminus F}\|_2=0$.  By \autoref{bBanach}, $(ab_F)_{F\subset\Gamma x}$ has a limit in $\mathcal{S}_2^\rho$, which must also be the pointwise limit $ab_{\Gamma x}$, i.e.
\begin{equation}\label{2limitF}
\lim_{F\subset\Gamma x}\|ab_{\Gamma x}-ab_F\|_2=0.
\end{equation}
Also $\|ab_{\Gamma x}\|_2=\lim_{F\subset\Gamma x}\|ab_F\|_2\leq\sup_{F\subset\Gamma x}\|a\|_\mathsf{b}\|b_F\|_2\leq\|a\|_\mathsf{b}\|b\|_2$ so
\[\|ab\|_2=\sup_{x\in\Gamma^0}\|ab_{\Gamma x}\|_2\leq\|a\|_\mathsf{b}\|b\|_2.\]
In particular, $\|ab\|_2<\infty$ so $ab\in\mathcal{S}_2^\rho$.

If $a\in\mathcal{H}^\rho$ too then $ab_F\in\mathcal{H}^\rho$, for all $F\subset\Gamma x$.  As $ab_{\Gamma x}$ is the $2$-limit of $(ab_F)_{F\subset\Gamma x}$ and $\mathcal{H}^\rho$ is $2$-complete, by \autoref{Hilbert}, it follows that $ab_{\Gamma x}\in\mathcal{H}^\rho$, for all $x\in\Gamma^0$.  Thus $ab\in\mathcal{H}^\rho$, by \autoref{a*aDefined}.
\end{proof}

We also have a corresponding associativity result.

\begin{prp}\label{AssociativeProduct}
If $a^*,c\in\mathcal{H}^\rho$ and $b\in\mathcal{S}_\mathsf{b}^\rho$ then $(ab)c=a(bc)$.
\end{prp}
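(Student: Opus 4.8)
The plan is to first settle that every product in sight is defined, then reduce the genuine associativity to the finitely supported identity \eqref{abcF}, and finally force the remaining limit to converge using that the hypothesis $a^*\in\mathcal{H}^\rho$ controls the tails on the $a$-side. First I would observe that $ab$ is defined: since $\|b^*\|_\mathsf{b}=\|b\|_\mathsf{b}<\infty$ and $a^*\in\mathcal{H}^\rho$, \autoref{abinH} gives $b^*a^*\in\mathcal{S}^\rho_2$, whence $ab=(b^*a^*)^*$ is defined; as $c^*c$ is defined, \autoref{a*b} applied to $b^*a^*\in\mathcal{S}^\rho_2$ then shows $(b^*a^*)^*c=(ab)c$ is defined, and \autoref{abinH} also yields $bc\in\mathcal{S}^\rho_2$ with $\|bc\|_2\leq\|b\|_\mathsf{b}\|c\|_2$. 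Fixing $x\in\Gamma^0$ and $\gamma\in\Gamma x$, the whole task is to show that the defining partial sums $(a(bc)_E)(\gamma)=\sum_{\beta\in E}a(\gamma\beta^{-1})(bc)(\beta)$, over finite $E\subset\Gamma x$, converge to $((ab)c)(\gamma)$.

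For finite $F\subset\Gamma x$, \eqref{abcF} gives $(ab)c_F=a(bc_F)$, and since $(ab)c$ is defined we have $((ab)c_F)(\gamma)\to((ab)c)(\gamma)$. Splitting the convergent convolution $a(bc_F)$ at $\gamma$ into the parts $\beta\in E$ and $\beta\notin E$, letting $F\uparrow\Gamma x$, and using that $(bc_F)(\beta)\to(bc)(\beta)$ pointwise while $E$ is finite, I would obtain the identity
\[(a(bc)_E)(\gamma)=((ab)c)(\gamma)-\lim_{F\subset\Gamma x}(a\,(bc_F)_{\Gamma x\setminus E})(\gamma).\]
To estimate the tail I set $t_E(\gamma)=\bigl\|\sum_{\beta\in\Gamma x\setminus E}a(\gamma\beta^{-1})a(\gamma\beta^{-1})^*\bigr\|^{1/2}$ and apply the Cauchy--Schwarz bound \eqref{gammaCSnorm} to the finite restriction of $a$ to $\{\gamma\beta^{-1}:\beta\in D\}$ and the finite $0$-restriction $(bc_F)_D$, for $D\subseteq\Gamma x\setminus E$, which gives
\[\|(a\,(bc_F)_D)(\gamma)\|\leq t_E(\gamma)\,\|bc_F\|_2\leq t_E(\gamma)\,\|b\|_\mathsf{b}\|c\|_2.\]
Through the bijection $\beta\mapsto\gamma\beta^{-1}$ of $\Gamma x$ onto $\{\alpha:\mathsf{r}(\alpha)=\mathsf{r}(\gamma)\}$, the quantity $t_E(\gamma)^2$ is precisely a tail of the norm-convergent sum $aa^*(\mathsf{r}(\gamma))=\sum_\alpha a(\alpha)a(\alpha)^*$, which converges because $a^*\in\mathcal{H}^\rho$; hence $t_E(\gamma)\to0$ as $E\uparrow\Gamma x$, by \autoref{a*aDefined} applied to $a^*$. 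Feeding this back into the displayed identity (the bound is uniform in $F$, so it survives the limit), I get $\|(a(bc)_E)(\gamma)-((ab)c)(\gamma)\|\leq t_E(\gamma)\|b\|_\mathsf{b}\|c\|_2\to0$, which simultaneously shows $a(bc)$ is defined at $\gamma$ and equals $(ab)c$ there; as $\gamma$ is arbitrary, $a(bc)=(ab)c$.

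The main obstacle, and the reason the hypothesis is $a^*\in\mathcal{H}^\rho$ rather than the more convenient $bc\in\mathcal{H}^\rho$, is that $bc$ need \emph{not} lie in $\mathcal{H}^\rho$: when $b\notin\mathcal{H}^\rho$ the column sums $\sum_\beta(bc)(\beta)^*(bc)(\beta)$ may converge only strongly and not in norm, so the defining net of $a(bc)$ cannot be made Cauchy by controlling a tail of $bc$. The convergence must instead be extracted from the decay of the rows of $a$, encoded in the norm-convergence of $aa^*$; transferring the tail onto the $a$-side via \eqref{gammaCSnorm} with a finite restriction of $a$ is the crux of the argument.
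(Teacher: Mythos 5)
Your proof is correct, but it runs on a different engine than the paper's own. Both arguments get the products defined via \autoref{abinH} and \autoref{a*b} and anchor everything at the finite-support identity \eqref{abcF}; the difference lies in which hypothesis drives the limit. The paper first establishes that $a(bc)$ is defined outright (applying \autoref{a*b} to $bc\in\mathcal{S}_2^\rho$ and $a^*$, then taking adjoints -- this is where $a^*\in\mathcal{H}^\rho$ enters there), and then lets the limit run over finite restrictions of $c$: both $\|(ab)c_{\Gamma x}-(ab)c_F\|_\infty$ and $\|a(bc_{\Gamma x})-a(bc_F)\|_\infty$ are bounded by $\|a^*\|_2\|b\|_\mathsf{b}\|c_{\Gamma x\setminus F}\|_2$ using \eqref{infty22} and \eqref{b2}, so the convergence driver is $\|c_{\Gamma x\setminus F}\|_2\rightarrow0$, i.e.\ the hypothesis $c\in\mathcal{H}^\rho$ via \eqref{a*a}. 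You instead let the limit run over finite restrictions of $bc$, never pre-establishing that $a(bc)$ is defined: the defining partial sums $(a(bc)_E)(\gamma)$ are forced to converge to $((ab)c)(\gamma)$ by your tail quantity $t_E(\gamma)$, which vanishes because $aa^*(\mathsf{r}(\gamma))$ is a norm-convergent sum of positives -- so your driver is $a^*\in\mathcal{H}^\rho$, fed through the raw Cauchy--Schwarz bound \eqref{gammaCSnorm} uniformly in $F$. The trade-off: the paper's route is shorter, stays at the level of global $\infty$-norm estimates on sections, and treats both sides symmetrically; yours delivers definedness of $a(bc)$ and the equality in a single stroke, avoids the adjoint-flip application of \autoref{a*b}, and makes the division of labour between the two hypotheses explicit -- your closing observation that $bc$ need not lie in $\mathcal{H}^\rho$, so the tail cannot be controlled on the $bc$-side, is exactly the obstruction the paper's flip is designed to sidestep. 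Both arguments are sound.
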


\begin{proof}
Note $bc,b^*a^*\in\mathcal{S}_2^\rho$, by \autoref{abinH}, so $(ab)c$ and $a(bc)$ are defined, by \autoref{a*b}.  Moreover, for any $x\in\Gamma^0$ and $F\subset\Gamma x$, \eqref{infty22} and \eqref{b2} yield
\begin{align*}
\|(ab)c_{\Gamma x}-(ab)c_F\|_\infty&=\|(ab)c_{\Gamma x\setminus F}\|_\infty\leq\|b^*a^*\|_2\|c_{\Gamma x\setminus F}\|_2\leq\|b^*\|_\mathsf{b}\|a^*\|_2\|c_{\Gamma x\setminus F}\|_2\\
\|a(bc_{\Gamma x})-a(bc_F)\|_\infty&=\|a(bc_{\Gamma x\setminus F})\|_\infty\leq\|a^*\|_2\|bc_{\Gamma x\setminus F}\|_2\leq\|a^*\|_2\|b\|_\mathsf{b}\|c_{\Gamma x\setminus F}\|_2.
\end{align*}
By \eqref{abcF}, $(ab)c_F=a(bc_F)$ so $\|(ab)c_{\Gamma x}-a(bc_{\Gamma x})\|_\infty\leq2\|a^*\|_2\|b\|_\mathsf{b}\|c_{\Gamma x\setminus F}\|_2$.  Then \eqref{a*a} yields $((ab)c)_{\Gamma x}=(ab)c_{\Gamma x}=a(bc_{\Gamma x})=(a(bc))_{\Gamma x}$.  As $x$ was arbitrary, it follows that $(ab)c=a(bc)$ everywhere on $\Gamma$.
\end{proof}

\subsection{The Big C*-Algebra}

Next we define
\[\mathcal{B}^\rho=\mathcal{S}_\mathsf{b}^\rho\cap\mathcal{H}^\rho\cap\mathcal{H}^{\rho*}=\{a\in\mathcal{S}_\mathsf{b}^\rho:a^*a\text{ and }aa^*\text{ are defined}\}.\]

\begin{thm}\label{BigCstar}
$\mathcal{B}^\rho$ is a C*-algebra with norm $\|\cdot\|_\mathsf{b}$.
\end{thm}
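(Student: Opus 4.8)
The plan is to verify the C*-algebra axioms one at a time, leaning on the structural results already assembled; the only genuine work is checking that convolution and involution keep us inside $\mathcal{B}^\rho$ and that $\mathcal{B}^\rho$ is $\mathsf{b}$-closed. First I would confirm that $\mathcal{B}^\rho$ is a normed $*$-subalgebra. The second description $\mathcal{B}^\rho=\{a\in\mathcal{S}_\mathsf{b}^\rho:a^*a,\,aa^*\text{ defined}\}$ is manifestly symmetric under $a\mapsto a^*$, and since $\|a^*\|_\mathsf{b}=\|a\|_\mathsf{b}$ by \eqref{infty2b}, $\mathcal{B}^\rho$ is closed under the (isometric) involution. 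For closure under addition, given $a,b\in\mathcal{B}^\rho$ the cross terms $a^*b$ and $b^*a$ are defined by \autoref{a*b} (as $a,b\in\mathcal{S}_2^\rho$ while $a^*a$ and $b^*b$ are defined), so bilinearity of the fibrewise product gives $(a+b)^*(a+b)=a^*a+a^*b+b^*a+b^*b$, and dually for $(a+b)(a+b)^*$; since $\mathcal{S}_\mathsf{b}^\rho$ is a vector space, $a+b\in\mathcal{B}^\rho$, with scalar multiples trivial.

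The key step is closure under convolution. Given $a,b\in\mathcal{B}^\rho$, applying \autoref{abinH} to $a\in\mathcal{S}_\mathsf{b}^\rho$ and $b\in\mathcal{H}^\rho$ (with $a\in\mathcal{H}^\rho$ too) yields $ab\in\mathcal{H}^\rho$, so that $(ab)^*(ab)$ is defined. Since $(ab)^*=b^*a^*$ — a reindexing identity I would record separately — applying \autoref{abinH} a second time, now to $b^*\in\mathcal{S}_\mathsf{b}^\rho$ and $a^*\in\mathcal{H}^\rho$ (with $b^*\in\mathcal{H}^\rho$), gives $(ab)^*=b^*a^*\in\mathcal{H}^\rho$, so that $(ab)(ab)^*$ is defined. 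Submultiplicativity \eqref{||ab||_b} gives $\|ab\|_\mathsf{b}\leq\|a\|_\mathsf{b}\|b\|_\mathsf{b}<\infty$, whence $ab\in\mathcal{B}^\rho$. Associativity is then immediate from \autoref{AssociativeProduct}, since for $a,b,c\in\mathcal{B}^\rho$ we have $a^*,c\in\mathcal{H}^\rho$ and $b\in\mathcal{S}_\mathsf{b}^\rho$. Combined with the anti-multiplicativity of $*$ and the isometry $\|a^*\|_\mathsf{b}=\|a\|_\mathsf{b}$, this exhibits $\mathcal{B}^\rho$ as a normed $*$-algebra, and the C*-identity $\|a^*a\|_\mathsf{b}=\|a\|_\mathsf{b}^2$ holds because $a^*a$ is defined for every $a\in\mathcal{B}^\rho$, which is precisely the case covered by the C*-norm equation established earlier.

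It remains to prove completeness, which I would obtain by showing that $\mathcal{B}^\rho$ is closed in the Banach space $(\mathcal{S}_\mathsf{b}^\rho,\|\cdot\|_\mathsf{b})$ of \autoref{bBanach}. If $(a_n)\subseteq\mathcal{B}^\rho$ with $\|a-a_n\|_\mathsf{b}\to0$, then $\|a-a_n\|_2\leq\|a-a_n\|_\mathsf{b}\to0$ by \eqref{infty2b}; since each $a_n\in\mathcal{H}^\rho$ and $\mathcal{H}^\rho$ is $2$-closed by \autoref{Hilbert}, we get $a\in\mathcal{H}^\rho$, so $a^*a$ is defined. The same argument applied to $a_n^*\to a^*$ (again using $\|\cdot\|_2\leq\|\cdot\|_\mathsf{b}$ and isometry of $*$) gives $a^*\in\mathcal{H}^\rho$, so $aa^*$ is defined; hence $a\in\mathcal{B}^\rho$, and $\mathcal{B}^\rho$ is $\mathsf{b}$-complete.

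The main obstacle I anticipate is the convolution-closure step: ensuring $ab\in\mathcal{B}^\rho$ forces one to show that \emph{both} one-sided square products $(ab)^*(ab)$ and $(ab)(ab)^*$ are defined, which is exactly where the double application of \autoref{abinH} — once to $ab$ and once to its adjoint $b^*a^*$ — is needed, together with the care that $(ab)^*=b^*a^*$ genuinely holds as convolution sections. Everything else reduces to bookkeeping with the inequalities already in hand.
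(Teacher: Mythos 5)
Your proof is correct and follows essentially the same route as the paper: closure under products via \eqref{||ab||_b} and \autoref{abinH}, associativity via \autoref{AssociativeProduct}, the C*-identity from the earlier C*-norm proposition, and completeness from the fact that $\mathcal{S}_\mathsf{b}^\rho$ and $\mathcal{H}^\rho$ are Banach spaces with $\|\cdot\|_2\leq\|\cdot\|_\mathsf{b}$. The only difference is that you spell out details the paper compresses, in particular the double application of \autoref{abinH} (to $ab$ and to $(ab)^*=b^*a^*$) needed to see that both starred squares are defined.
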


\begin{proof}
By \eqref{||ab||_b} and \autoref{abinH}, $\mathcal{B}^\rho$ is closed under products.  By \autoref{AssociativeProduct}, products are also associative on $\mathcal{B}^\rho$.  We also immediately see that $(ab)^*=b^*a^*$ and $a(b+c)=ab+ac$, for all $a,b,c\in\mathcal{B}^\rho$.  The $\mathsf{b}$-norm is submultiplicative, by \eqref{||ab||_b}, and satisfies the C*-norm condition, by \autoref{a*aDefined}.  As $\mathcal{S}_\mathsf{b}^\rho$ and $\mathcal{H}^\rho$ are Banach spaces, and the $\mathsf{b}$-norm dominates the $2$-norm, it follows that $\mathcal{B}^\rho$ is also a Banach space and hence a C*-algebra.
\end{proof}

We call $\mathcal{B}^\rho$ the \emph{big C*-algebra} of $\rho$ (not to be confused with the maximal/full C*-algebra, which is something else).  Indeed, $\mathcal{B}^\rho$ is immediately seen to be the biggest C*-algebra consisting of sections of $\rho$ under convolution and the $\mathsf{b}$-norm.

\begin{prp}\label{AssociativeProduct2}
If $a\in\mathcal{S}_\mathsf{b}^\rho$, $b\in\mathcal{B}^\rho$ and $c\in\mathcal{H}^\rho$ then $(ab)c=a(bc)$.
\end{prp}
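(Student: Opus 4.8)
The plan is to mimic the limit argument of \autoref{AssociativeProduct}, but to replace its $\infty$-norm tail estimates (which there rely on $\|a^*\|_2<\infty$, i.e. on $a^*\in\mathcal{H}^\rho$, and are unavailable here) with $2$-norm estimates that exploit the stronger hypothesis $b\in\mathcal{B}^\rho$. The payoff is that we need no control over $aa^*$, only $a\in\mathcal{S}_\mathsf{b}^\rho$.

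First I would verify that all four products in sight are well-defined. Since $b\in\mathcal{B}^\rho\subseteq\mathcal{H}^\rho$ and $c\in\mathcal{H}^\rho$, \autoref{abinH} gives $bc\in\mathcal{H}^\rho$, and a second application of \autoref{abinH} gives $a(bc)\in\mathcal{S}_2^\rho$. Likewise $ab\in\mathcal{S}_2^\rho$ is defined; the key observation is that submultiplicativity \eqref{||ab||_b} promotes this to $ab\in\mathcal{S}_\mathsf{b}^\rho$, which is exactly what lets \autoref{abinH} apply once more to yield $(ab)c\in\mathcal{S}_2^\rho$. This is where the hypothesis $a\in\mathcal{S}_\mathsf{b}^\rho$ (rather than $a^*\in\mathcal{H}^\rho$) does its work, and it is the crux that repairs the definedness of $(ab)c$ that the weaker assumption on $a$ would otherwise threaten.

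Next I would fix $x\in\Gamma^0$ and compare the two sections on $\Gamma x$ through finite truncations. By \eqref{abcF} we have the exact finite identity $(ab)c_F=a(bc_F)$ for every $F\subset\Gamma x$, so it suffices to show both sides converge in $\|\cdot\|_2$, to $(ab)c_{\Gamma x}$ and $a(bc_{\Gamma x})$ respectively. For the left side, \eqref{2limitF} applied to $ab\in\mathcal{S}_\mathsf{b}^\rho$ and $c\in\mathcal{H}^\rho$ gives $(ab)c_F\to(ab)c_{\Gamma x}$. For the right side, I would note $c_{\Gamma x\setminus F}\in\mathcal{H}^\rho$ (its $2$-norm is bounded by $\|c\|_2$ and its tails inherit those of $c$ via \autoref{a*aDefined}) and hence $bc_{\Gamma x\setminus F}\in\mathcal{H}^\rho$, so two applications of \eqref{b2} give $\|a(bc_{\Gamma x})-a(bc_F)\|_2=\|a(bc_{\Gamma x\setminus F})\|_2\le\|a\|_\mathsf{b}\|b\|_\mathsf{b}\|c_{\Gamma x\setminus F}\|_2$, which tends to $0$ since $\lim_{F\subset\Gamma x}\|c_{\Gamma x\setminus F}\|_2=0$ by \eqref{a*a}. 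As the two truncation nets are equal termwise and both converge in the Banach space $\mathcal{S}_2^\rho$, their limits coincide, giving $(ab)c_{\Gamma x}=a(bc_{\Gamma x})$.

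Finally, since for $\gamma\in\Gamma x$ the values $(ab)c(\gamma)$ and $a(bc)(\gamma)$ involve $c$ only on $\Gamma x$, I would identify $((ab)c)_{\Gamma x}=(ab)c_{\Gamma x}$ and $(a(bc))_{\Gamma x}=a(bc_{\Gamma x})$ exactly as in \autoref{AssociativeProduct}; the previous paragraph then yields equality on each $\Gamma x$, and letting $x$ range over $\Gamma^0$ gives $(ab)c=a(bc)$ everywhere. The main obstacle is the first step: realising that $ab$ lands back in $\mathcal{S}_\mathsf{b}^\rho$ (not merely in $\mathcal{S}_2^\rho$), which is what keeps $(ab)c$ defined and allows the $2$-norm convergence of $(ab)c_F$ to be invoked in place of the $\infty$-norm estimate used previously.
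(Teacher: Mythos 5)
Your proof is correct, and at its core it is the paper's own argument: the paper proves this proposition in one line, saying it goes exactly as in \autoref{AssociativeProduct} once one notes that $(ab)^*=b^*a^*$ is a well-defined element of $\mathcal{S}_\mathsf{b}^\rho\subseteq\mathcal{S}_2^\rho$ (by \autoref{a*b} and \eqref{||ab||_b}), and your ``key observation'' that \eqref{||ab||_b} promotes $ab$ into $\mathcal{S}_\mathsf{b}^\rho$ is precisely that note, up to $*$-invariance of the $\mathsf{b}$-norm; the truncation skeleton (the exact identity \eqref{abcF}, tail estimates, fibrewise limits over $\Gamma x$) is identical. The one real deviation is that you run the tail estimates in the $2$-norm via \eqref{b2} and \eqref{2limitF} (correctly checking that $c_{\Gamma x\setminus F}$, and hence $bc_{\Gamma x\setminus F}$, lies in $\mathcal{H}^\rho$), whereas the paper's template uses $\infty$-norm bounds via \eqref{infty22}. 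This is a valid alternative, but the reason you give for it is a misreading: you say the $\infty$-norm estimates ``rely on $\|a^*\|_2<\infty$, i.e. on $a^*\in\mathcal{H}^\rho$, and are unavailable here.'' In fact $\|a^*\|_2<\infty$ says only that $a^*\in\mathcal{S}_2^\rho$ (membership of $\mathcal{H}^\rho$ would additionally require $aa^*$ to be defined), and it does hold under the present hypothesis, since $\|a^*\|_2\leq\|a^*\|_\mathsf{b}=\|a\|_\mathsf{b}<\infty$ by \eqref{infty2b}. With $\|a^*\|_2$ and $\|b^*a^*\|_2$ both finite -- the latter being exactly what the paper's one-line note supplies -- the $\infty$-norm estimates of \autoref{AssociativeProduct} remain valid word for word, so your $2$-norm detour is an equally good route rather than a necessary repair. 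What the stronger hypothesis $b\in\mathcal{B}^\rho$ genuinely buys, and what your first paragraph correctly isolates, is definedness: $b^*b$ being defined makes $ab=(a^*)^*b$ defined via \autoref{a*b}, and $b\in\mathcal{S}_\mathsf{b}^\rho\cap\mathcal{H}^\rho$ makes $bc\in\mathcal{H}^\rho$, hence $a(bc)$ defined, via \autoref{abinH}.
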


\begin{proof}
This is proved exactly as in \autoref{AssociativeProduct}, once we note that $(ab)^*=b^*a^*$ is a well-defined element of $\mathcal{S}_\mathsf{b}^\rho\subseteq\mathcal{S}_2^\rho$, by \autoref{a*b} and \eqref{||ab||_b}.
\end{proof}

Say $M$ is a bimodule over an algebra $A$.  We say $M$ is a \emph{Banach bimodule} over $A$ if both $M$ and $A$ are also Banach spaces with $\|ab\|\leq\|a\|\|b\|$, whenever $a$ and/or $b$ are in $A$.  We say $M$ is a \emph{Banach *-bimodule} over $A$ if we also have involutions on both $M$ and $A$ such that $\|a^*\|=\|a\|$ and $(ab)^*=b^*a^*$, whenever $a$ and/or $b$ are in $A$.  In fact, we are interested in certain Banach *-bimodules over subspaces, i.e. where $A\subseteq M$ with the involution and Banach space structure on $A$ induced by $M$.

\begin{cor}
$\mathcal{S}_\mathsf{b}^\rho$ is a Banach *-bimodule over $\mathcal{B}^\rho$.
\end{cor}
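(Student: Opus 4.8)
The plan is to assemble the statement from results already established, since $\mathcal{S}_\mathsf{b}^\rho$ and $\mathcal{B}^\rho$ are already known to be Banach spaces (by \autoref{bBanach} and \autoref{BigCstar}), with the involution and norm on $\mathcal{B}^\rho$ visibly induced by those on $\mathcal{S}_\mathsf{b}^\rho$. What remains is to check that the two one-sided actions are well-defined and satisfy the requisite associativity, norm and involution conditions.

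First I would confirm that left and right multiplication by $\mathcal{B}^\rho$ genuinely map $\mathcal{S}_\mathsf{b}^\rho$ into itself. For the right action, given $b\in\mathcal{S}_\mathsf{b}^\rho$ and $a\in\mathcal{B}^\rho\subseteq\mathcal{H}^\rho$, \autoref{abinH} shows $ba$ is defined (and lies in $\mathcal{S}_2^\rho$), so \eqref{||ab||_b} gives $\|ba\|_\mathsf{b}\leq\|b\|_\mathsf{b}\|a\|_\mathsf{b}<\infty$ and hence $ba\in\mathcal{S}_\mathsf{b}^\rho$. The left action cannot be treated symmetrically, because a section of finite $\mathsf{b}$-norm need not have a defined $*$-square, so \autoref{abinH} does not apply to $ab$ directly; this detour is the one genuinely delicate point. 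Instead, since $a\in\mathcal{B}^\rho\subseteq\mathcal{H}^{\rho*}$ gives $a^*\in\mathcal{H}^\rho$ and $b^*\in\mathcal{S}_\mathsf{b}^\rho$, \autoref{abinH} shows $b^*a^*$ is defined, whence so is $ab=(b^*a^*)^*$; then \eqref{||ab||_b} again yields $\|ab\|_\mathsf{b}\leq\|a\|_\mathsf{b}\|b\|_\mathsf{b}<\infty$. This simultaneously supplies the submultiplicative bound required whenever one factor lies in $\mathcal{B}^\rho$.

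Next I would verify the three associativity identities for $a_1,a_2\in\mathcal{B}^\rho$ and $m\in\mathcal{S}_\mathsf{b}^\rho$. The right-module law $m(a_1a_2)=(ma_1)a_2$ is exactly \autoref{AssociativeProduct2} (taking $c=a_2\in\mathcal{H}^\rho$), and the compatibility law $(a_1m)a_2=a_1(ma_2)$ is \autoref{AssociativeProduct} (using $a_1^*\in\mathcal{H}^\rho$ and $a_2\in\mathcal{H}^\rho$). For the left-module law $(a_1a_2)m=a_1(a_2m)$ I would take adjoints: since $(ab)^*=b^*a^*$ and $\mathcal{B}^\rho$ is $*$-closed, this identity is equivalent to $m^*(a_2^*a_1^*)=(m^*a_2^*)a_1^*$, which is once more an instance of \autoref{AssociativeProduct2}.

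Finally, the Banach $*$-bimodule conditions $\|a^*\|_\mathsf{b}=\|a\|_\mathsf{b}$ and $(ab)^*=b^*a^*$ (whenever a factor lies in $\mathcal{B}^\rho$) come from \eqref{infty2b} and the standard behaviour of the convolution involution, valid whenever $ab$ is defined as noted in the proof of \autoref{BigCstar}; bilinearity of the actions is immediate from that of convolution. Thus the only step requiring real care is the well-definedness of the left action, with everything else being a direct appeal to the associativity and norm results already in hand.
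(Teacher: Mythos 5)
Your proof is correct and takes essentially the same approach as the paper: $\mathcal{S}_\mathsf{b}^\rho$ is a Banach space by \autoref{bBanach}, products with one factor in $\mathcal{B}^\rho$ are well-defined and $\mathsf{b}$-submultiplicative via the $*$-square results together with \eqref{||ab||_b}, and associativity is assembled from \autoref{AssociativeProduct} and \autoref{AssociativeProduct2}. The only (harmless) differences are that you cite \autoref{abinH} where the paper cites \autoref{a*b} (the former is itself a consequence of the latter), and you make explicit the adjoint trick both for the well-definedness of the left action and for the law $(a_1a_2)m=a_1(a_2m)$, steps which the paper's terser proof leaves implicit.
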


\begin{proof}
By \autoref{bBanach}, $\mathcal{S}_\mathsf{b}^\rho$ is a Banach space.  By \eqref{||ab||_b} and \autoref{a*b}, whenever $a\in\mathcal{S}_\mathsf{b}^\rho$ and $b\in\mathcal{B}^\rho$ or vice versa, $ab$ is a well-defined element of $\mathcal{S}_\mathsf{b}^\rho$ with $(ab)^*=b^*a^*$ and $\|ab\|_\mathsf{b}\leq\|a\|_\mathsf{b}\|b\|_\mathsf{b}$.  Also $a(bc)=(ab)c$ when at least two of these elements are in $\mathcal{B}^\rho$ and the other is in $\mathcal{S}_\mathsf{b}^\rho$, thanks to \autoref{AssociativeProduct} and \autoref{AssociativeProduct2}.  Distributivity of products over sums is immediate.
\end{proof}

For any Hilbert module $H$, we denote the adjointable operators on $H$ by $\mathsf{B}(H)$.  By \cite[Proposition 2.21]{RaeburnWilliams1998}, $\mathsf{B}(H)$ is a C*-algebra w.r.t. the operator norm.

\begin{thm}\label{LeftRegRep}
Each $a\in\mathcal{S}_\mathsf{b}^\rho$ defines a bounded operator $a_\mathsf{B}:\mathcal{H}^\rho\rightarrow\mathcal{S}_2^\rho$ by
\[a_\mathsf{B}(b)=ab.\]
Moreover, $a\mapsto a_\mathsf{B}$ restricts to an isomorphism on $\mathcal{B}^\rho$ to a C*-subalgebra of $\mathsf{B}(\mathcal{H}^\rho)$.
\end{thm}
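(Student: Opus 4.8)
The plan is to split the verification into the boundedness of $a_\mathsf{B}$ for arbitrary $a\in\mathcal{S}_\mathsf{b}^\rho$ and then the additional structure enjoyed by the restriction to $\mathcal{B}^\rho$. For the first part I would just quote \autoref{abinH}: for each $b\in\mathcal{H}^\rho$ it yields $ab\in\mathcal{S}_2^\rho$ with $\|ab\|_2\leq\|a\|_\mathsf{b}\|b\|_2$, so $a_\mathsf{B}$ is a well-defined map $\mathcal{H}^\rho\rightarrow\mathcal{S}_2^\rho$, linear by bilinearity of convolution, with $\|a_\mathsf{B}\|\leq\|a\|_\mathsf{b}$. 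When $a\in\mathcal{B}^\rho\subseteq\mathcal{H}^\rho$ the second assertion of \autoref{abinH} improves $ab\in\mathcal{S}_2^\rho$ to $ab\in\mathcal{H}^\rho$, so that $a_\mathsf{B}$ restricts to a bounded operator of $\mathcal{H}^\rho$ into itself.

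The crux is to show that, for $a\in\mathcal{B}^\rho$, this operator is adjointable with $(a_\mathsf{B})^*=(a^*)_\mathsf{B}$. Since $\mathcal{B}^\rho$ is a C*-algebra by \autoref{BigCstar}, we have $a^*\in\mathcal{B}^\rho$, so $(a^*)_\mathsf{B}$ is again a bounded operator on $\mathcal{H}^\rho$, and for $b,c\in\mathcal{H}^\rho$ I would compute
\[\langle a_\mathsf{B}(b),c\rangle=\Phi\big((ab)^*c\big)=\Phi\big((b^*a^*)c\big)=\Phi\big(b^*(a^*c)\big)=\langle b,(a^*)_\mathsf{B}(c)\rangle.\]
Here the middle equality uses the involution identity $(ab)^*=b^*a^*$, while the key step $(b^*a^*)c=b^*(a^*c)$ is exactly \autoref{AssociativeProduct}, whose hypotheses hold since the first factor $b^*$ has adjoint $b\in\mathcal{H}^\rho$, the last factor $c\in\mathcal{H}^\rho$, and the middle factor $a^*\in\mathcal{S}_\mathsf{b}^\rho$. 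This identifies the adjoint, so $a_\mathsf{B}\in\mathsf{B}(\mathcal{H}^\rho)$ and $a\mapsto a_\mathsf{B}$ is $*$-preserving; multiplicativity $(ab)_\mathsf{B}=a_\mathsf{B}b_\mathsf{B}$ then reduces to $(ab)c=a(bc)$ for $c\in\mathcal{H}^\rho$, which is precisely \autoref{AssociativeProduct2}.

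To finish I would show the $*$-homomorphism $a\mapsto a_\mathsf{B}$ is isometric on $\mathcal{B}^\rho$. The bound $\|a_\mathsf{B}\|\leq\|a\|_\mathsf{b}$ is already in hand, and the reverse follows by testing against finitely supported sections: every $f\in\mathcal{F}^\rho$ lies in $\mathcal{H}^\rho$, since finite support makes $\|f\|_2$ finite and $f^*f$ a finite (hence convergent) sum, so the operator norm of $a_\mathsf{B}$ dominates $\sup_{f\in\mathcal{F}^\rho}\|af\|_2/\|f\|_2=\|a\|_\mathsf{b}$. Thus $\|a_\mathsf{B}\|=\|a\|_\mathsf{b}$, and an isometric $*$-homomorphism out of the C*-algebra $\mathcal{B}^\rho$ is automatically injective with closed range, so $a\mapsto a_\mathsf{B}$ is an isomorphism onto the C*-subalgebra $a_\mathsf{B}[\mathcal{B}^\rho]\subseteq\mathsf{B}(\mathcal{H}^\rho)$.

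I expect no genuinely new estimate to be required, as all the analytic content sits in \autoref{abinH}, \autoref{AssociativeProduct} and \autoref{AssociativeProduct2}; the only delicate point is the bookkeeping in the adjointness computation, namely slotting the three factors into the associativity lemmas with exactly the membership ($\mathcal{H}^\rho$, $\mathcal{S}_\mathsf{b}^\rho$, or $\mathcal{B}^\rho$) that each lemma demands.
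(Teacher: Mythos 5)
Your proposal is correct and follows essentially the same route as the paper's own proof: boundedness and range containment via \autoref{abinH}, the adjointness computation $\langle ab,c\rangle=\Phi(b^*(a^*c))=\langle b,a^*c\rangle$ via \autoref{AssociativeProduct}, multiplicativity via \autoref{AssociativeProduct2}, and isometry by testing against $\mathcal{F}^\rho\subseteq\mathcal{H}^\rho$. The only (welcome) difference is that you spell out the hypothesis bookkeeping in the associativity lemmas slightly more explicitly than the paper does.
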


\begin{proof}
If $a\in\mathcal{S}_\mathsf{b}^\rho$ then, by \autoref{abinH}, $a_\mathsf{B}$ is indeed a well-defined $2$-bounded operator from $\mathcal{H}^\rho$ to $\mathcal{S}_2^\rho$ with $\|a_\mathsf{B}\|\leq\|a\|_\mathsf{b}$.  If $a\in\mathcal{B}^\rho$ then $\mathrm{ran}(a_\mathsf{B})\subseteq\mathcal{H}^\rho$, again by \autoref{abinH}.  For any $b,c\in\mathcal{H}^\rho$, \autoref{AssociativeProduct} then yields
\[\langle ab,c\rangle=(ab)^*c=(b^*a^*)c=b^*(a^*c)=\langle b,a^*c\rangle.\]
So if $a\in\mathcal{B}^\rho$ then $a_\mathsf{B}$ is adjointable, specifically $(a_\mathsf{B})^*=(a^*)_\mathsf{B}$, i.e. $a_\mathsf{B}\in\mathsf{B}(\mathcal{H}^\rho)$.

Next note that $\|a\|_\mathsf{b}\leq\|a_\mathsf{B}\|$, as $\mathcal{F}^\rho\subseteq\mathcal{H}^\rho$, so in fact $\|a_\mathsf{B}\|=\|a\|_\mathsf{b}$, i.e. $a\mapsto a_\mathsf{B}$ is an isometry.  Certainly $(a+b)_\mathsf{B}=a_\mathsf{B}+b_\mathsf{B}$, for any $a,b\in\mathcal{S}_\mathsf{b}^\rho$, and also $(ab)_\mathsf{B}=a_\mathsf{B}b_\mathsf{B}$, as long as $a\in\mathcal{B}^\rho$ or $b\in\mathcal{B}^\rho$, by \autoref{AssociativeProduct} and \autoref{AssociativeProduct2} respectively.  In particular, $a\mapsto a_\mathsf{B}$ is an isomorphism from $\mathcal{B}^\rho$ onto a C*-subalgebra of $\mathsf{B}(\mathcal{H}^\rho)$.
\end{proof}

By using the above representation, we can show that the $\mathsf{b}$-norm of any $c\in\mathcal{B}^\rho$ can be calculated just from $f\in\mathcal{F}^\rho$ taking values generated by the range of $c$.  This will be needed later in the proof of \autoref{betarho} below.

\begin{lem}
If $C\subseteq B$ be a *-subsemigroupoid of $B$ such that $C_\gamma=C\cap B_\gamma$ is a subspace of $B_\gamma$, for all $\gamma\in\Gamma$, then, for any $c\in\mathcal{B}^\rho$ with $\mathrm{ran}(c)\subseteq C$,
\begin{equation}\label{bC}
\|c\|_\mathsf{b}=\sup\{\|cf\|_2:f\in\mathcal{F}^\rho,\|f\|_2=1\text{ and }\mathrm{ran}(f)\subseteq C\}.
\end{equation}
\end{lem}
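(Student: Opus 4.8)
The inequality ``$\geq$'' in \eqref{bC} is immediate: every $f$ admissible on the right is in particular an element of $\mathcal{F}^\rho$, so $\|cf\|_2=\|cf\|_2/\|f\|_2\le\|c\|_\mathsf{b}$ by the very definition of the $\mathsf{b}$-norm. Writing $R$ for the supremum on the right and $\mathcal{F}^\rho_C=\{f\in\mathcal{F}^\rho:\mathrm{ran}(f)\subseteq C\}$, the whole content is the reverse inequality $\|c\|_\mathsf{b}\le R$. I would phrase everything through the representation of \autoref{LeftRegRep}: $\|c\|_\mathsf{b}=\|c_\mathsf{B}\|$, and, since $c_\mathsf{B}$ is $2$-bounded by \eqref{b2} and $\mathcal{F}^\rho_C$ is $2$-dense in its $2$-closure $N\subseteq\mathcal{H}^\rho$, we have $R=\|c_\mathsf{B}|_N\|$. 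Thus the goal becomes $\|c_\mathsf{B}|_N\|=\|c_\mathsf{B}\|$.

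The first step is to see that $N$ \emph{reduces} $c_\mathsf{B}$. Because $\mathrm{ran}(c)\subseteq C$ and $C=C^*$, both $c$ and $c^*$ are $C$-valued, and for a finite $C$-valued $f$ each coefficient $(cf)(\mu)=\sum_{\beta\in\mathrm{supp}(f)}c(\mu\beta^{-1})f(\beta)$ is a finite sum of products of elements of $C$, hence lies in $C_\mu$ since $C$ is a $*$-subsemigroupoid with subspace fibres; so $cf$ and $c^*f$ are again $C$-valued. Moreover each summand $c\,\delta_\beta^{f(\beta)}$ is supported on the single source-fibre $\Gamma\mathsf{s}(\beta)$, because $\mu\beta^{-1}\in\mathrm{supp}(c)$ forces $\mathsf{s}(\mu)=\mathsf{s}(\beta)$, and a section of $\mathcal{H}^\rho$ supported on a single $\Gamma x$ is the $2$-limit of its finite restrictions by \autoref{a*aDefined}; hence $cf,c^*f\in N$. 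Therefore $c_\mathsf{B}$ and $c^*_\mathsf{B}$ each map $N$ into $N$, and being mutually adjoint they reduce $N$, as does the positive operator $P:=(c^*c)_\mathsf{B}=c^*_\mathsf{B}c_\mathsf{B}$.

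It remains to prove $\|P|_N\|=\|P\|=\|c\|_\mathsf{b}^2$. The guiding idea is the C*-identity: in a C*-algebra $c^*$ is an exact maximiser, $\|cc^*\|/\|c^*\|=\|c\|$, so the natural test vectors are the $C$-valued sections $c^*_{\Gamma x}\in N$ together with their iterates $(c^*c)^n c^*_{\Gamma x}\in N$ under the $N$-preserving $P$. I would run this power iteration and show that the Rayleigh quotients $\|c\,(c^*c)^nc^*_{\Gamma x}\|_2/\|(c^*c)^nc^*_{\Gamma x}\|_2$ converge to $\|c\|_\mathsf{b}$, producing $C$-valued test vectors in $N$ that certify $R\ge\|c\|_\mathsf{b}-\varepsilon$ for every $\varepsilon>0$.

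The hard part, where the hypotheses on $C$ are really used, is to verify that this iteration reaches the \emph{full} norm, i.e. that the family $\{c^*_{\Gamma x}\}_{x\in\Gamma^0}$ meets the top of the spectrum of $P$; a single $c^*_{\Gamma x}$ need not (a rotated-basis example already shows the $n=0$ quotient can be strictly below $\|c\|_\mathsf{b}$), so the iteration is essential. I would argue by contradiction: if the top spectral projection $E=\mathbb{1}_{[\|P\|-\delta,\|P\|]}(P)$ annihilated every $c^*_{\Gamma x}$, then, using the disjoint-source decomposition $c^*=\bigsqcup_x c^*_{\Gamma x}$ together with the fact that $\langle\eta,c^*_{\Gamma x}\rangle$ is supported at the single unit $x$, one gets $Ec^*=0$; but the intertwining $c_\mathsf{B}\,\mathbb{1}_S(c^*c)=\mathbb{1}_S(cc^*)\,c_\mathsf{B}$ forces $c^*$ to carry spectral mass up to $\|P\|$, a contradiction. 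This spectral bookkeeping in the Hilbert module $\mathcal{H}^\rho$, combined with the non-closedness of the fibres $C_\gamma$ (which is precisely what forbids the naive move of testing with the infinitely-supported $c^*$ directly, and is circumvented by keeping every test vector inside $N$), is the technical heart of the argument.
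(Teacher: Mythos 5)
Your first half is sound and coincides exactly with the paper's own setup: the $2$-closure $N$ of the finitely supported $C$-valued sections (which, by \autoref{a*aDefined}, equals the $2$-closure of all $C$-valued elements of $\mathcal{H}^\rho$) is a closed submodule invariant under $c_\mathsf{B}$ and $c^*_\mathsf{B}$, and the lemma reduces to $\|c_\mathsf{B}|_N\|=\|c\|_\mathsf{b}$. The gap is everything after that. The entire burden of your argument rests on the claim that the vectors $c^*_{\Gamma x}$ and their iterates under $P=c^*_\mathsf{B}c_\mathsf{B}$ reach the full norm, and your justification -- that the intertwining relation ``forces $c^*$ to carry spectral mass up to $\|P\|$'' -- is not an argument but a restatement of the thing to be proved. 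What is needed is precisely that the \emph{vector} $c^*\in\mathcal{H}^\rho$ (equivalently, the canonical expectation $\Phi$) detects the spectral radius of $cc^*$, i.e.\ $\lim_m\|\Phi((cc^*)^m)\|_\infty^{1/m}=\|cc^*\|_\mathsf{b}$; the intertwining identity holds formally for any adjointable operator and its adjoint and cannot single out $c^*$ as a vector seeing the top of the spectrum (note also that you repeatedly conflate the vector $c^*\in\mathcal{H}^\rho$ with the operator $c^*_\mathsf{B}$, and ``$Ec^*_\mathsf{B}\neq0$'' is what range considerations give, not ``$E$ applied to the vector $c^*$ is nonzero''). Worse, the framework in which you phrase the contradiction is unavailable: $P$ acts on a Hilbert C*-module, where only the \emph{continuous} functional calculus makes sense; the Borel spectral projection of $P$ over $[\|P\|-\delta,\|P\|]$ need not exist as an adjointable operator on $\mathcal{H}^\rho$, and if you pass to a faithful Hilbert-space representation to manufacture it, you lose the source-fibre restrictions $a\mapsto a_{\Gamma x}$ on which your deduction ``$E$ kills every $c^*_{\Gamma x}$, hence $E$ kills $c^*$'' depends.

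The irony is that your setup already makes the hard part unnecessary, and this is what the paper does. Since $N$ is invariant under both $c_\mathsf{B}$ and $c^*_\mathsf{B}$, restriction $a\mapsto a_\mathsf{B}|_N$ is a *-homomorphism from $C^*(c)\subseteq\mathcal{B}^\rho$ into $\mathsf{B}(N)$, and an injective *-homomorphism between C*-algebras is automatically isometric, so only \emph{injectivity} is needed -- no spectral localisation at all. Injectivity is checked with single-point test vectors, which is where the hypotheses on $C$ genuinely enter a second time: if $a\in C^*(c)$ and $a(\alpha)\neq0$, then $a(\alpha)$ lies in the norm closure of $C_\alpha$ (every element of $C^*(c)$ is a $\mathsf{b}$-limit, hence $\infty$-limit, of *-polynomials in $c$, whose fibre values are norm limits of finite sums of products of elements of $C$), so the section $h$ supported at $\alpha^{-1}$ with $h(\alpha^{-1})=a(\alpha)^*$ lies in $N$, and $ah(\mathsf{r}(\alpha))=a(\alpha)a(\alpha)^*\neq0$, whence $a_\mathsf{B}|_N\neq0$. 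Your power-iteration programme, by contrast, cannot get off the ground without first proving the spectral-radius-detection fact above, which is the real content and is nowhere established in your proposal.
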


\begin{proof}
Let $A=C^*(c)$ be the C*-subalgebra of $\mathcal{B}^\rho$ generated by $c$.  Further let
\[H=\mathrm{cl}_2\{h\in\mathcal{H}^\rho:\mathrm{ran}(h)\subseteq C\}.\]
Note that $H$ is a Hilbert $D$-module, where $D=\mathcal{D}^\rho\cap H$, which is also invariant under multiplication by $c$ and $c^*$.  Thus $a\mapsto a_\mathsf{B}|_H$ is a representation of $A$ as a C*-subalgebra of $\mathsf{B}(H)$.  To see that this representation is faithful, take any $a\in A\setminus\{0\}$ so we have $\alpha\in\Gamma$ with $a(\alpha)\neq0$.  Defining $h\in H$ by $h(\alpha^{-1})=a(\alpha)^*$ and $h(\gamma)=0$, for all $\gamma\neq\alpha^{-1}$, we see that $ah(\mathsf{r}(\alpha))=a(\alpha)h(\alpha^{-1})=a(\alpha)a(\alpha)^*\neq0$ so, in particular, $a_\mathsf{B}|_H\neq0$, as required.  As faithful representations are isometric,
\[\|c\|_\mathsf{b}=\|c_\mathsf{B}|_H\|=\sup\{\|cf\|_2:f\in\mathcal{F}^\rho,\|f\|_2=1\text{ and }\mathrm{ran}(f)\subseteq C\}.\qedhere\]
\end{proof}

\subsection{The Reduced C*-Algebra}

Recall the definition of $\mathcal{C}_\mathsf{c}^\rho$ from \eqref{CcDef}.

\begin{thm}
We have a C*-subalgebra of the big C*-algebra $\mathcal{B}^\rho$ given by
\begin{equation}\label{ReducedDefinition}
\mathcal{C}^\rho_\mathsf{r}=\mathrm{cl}_\mathsf{b}(\mathcal{C}^\rho_\mathsf{c}).
\end{equation}
\end{thm}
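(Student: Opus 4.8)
The plan is to realise $\mathcal{C}^\rho_\mathsf{r}$ as the $\mathsf{b}$-norm closure of a $*$-subalgebra of the big C*-algebra $\mathcal{B}^\rho$ from \autoref{BigCstar}. Since $\mathcal{B}^\rho$ is already a C*-algebra, the $\mathsf{b}$-norm is submultiplicative by \eqref{||ab||_b}, and the involution is $\mathsf{b}$-isometric by \eqref{infty2b}, the norm closure of any $*$-subalgebra of $\mathcal{B}^\rho$ is automatically a C*-subalgebra. So everything reduces to two claims: (i) $\mathcal{C}^\rho_\mathsf{c}\subseteq\mathcal{B}^\rho$, and (ii) $\mathcal{C}^\rho_\mathsf{c}$ is closed under involution and convolution. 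Granting these, $\mathrm{cl}_\mathsf{b}(\mathcal{C}^\rho_\mathsf{c})$ is a closed $*$-subalgebra of $\mathcal{B}^\rho$, hence a C*-subalgebra.

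For (i) I would first note that $\mathcal{B}^\rho$ is a vector space and $\mathcal{C}^\rho_\mathsf{c}$ is its span of sections $a\in\mathcal{C}^\rho(K)$ with $K$ bicompact, so it is enough to place each such generator in $\mathcal{B}^\rho$. For such $a$, the support lies in an open slice, so $\mathsf{s}$ and $\mathsf{r}$ are injective on $\mathrm{supp}(a)$. Compactness of $K$ together with continuity of $a$ and upper semicontinuity of the norm force $\|a\|_\infty<\infty$, and then \autoref{infty2bprp} and \autoref{21} give $\|a\|_\mathsf{b}=\|a\|_2=\|a\|_\infty<\infty$, so $a\in\mathcal{S}_\mathsf{b}^\rho$. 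Because $\mathrm{supp}(a)$ is a slice, each fibre sum defining $a^*a$ and $aa^*$ collapses to a single term, so both $*$-squares are defined; thus $a\in\mathcal{B}^\rho$.

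For (ii), closure under involution is easy: if $a\in\mathcal{C}^\rho(K)$ then $a^*$ is supported on $K^{-1}$, which is again bicompact (it sits in the open slice $O^{-1}$ and has source and range images $\mathsf{r}[K]$ and $\mathsf{s}[K]$, both closed), and $a^*$ inherits continuity from $a$ through the continuous inverse and involution maps, so $a^*\in\mathcal{C}^\rho(K^{-1})\subseteq\mathcal{C}^\rho_\mathsf{c}$. For products I would take generators $a\in\mathcal{C}^\rho(K)$ and $b\in\mathcal{C}^\rho(L)$ with $K\subseteq O$ and $L\subseteq N$ for open slices $O,N$. Since $\mathsf{s}$ is injective on $\mathrm{supp}(b)$, the convolution $ab$ is defined by \eqref{2Infinity}, clearly $\mathrm{supp}(ab)\subseteq KL$, and $KL$ is bicompact by \autoref{CompactProducts}. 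The one remaining point, and the hard part, is continuity of $ab$ at every point of $KL$.

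This continuity is exactly where biclosedness is needed. Because $K$ is biclosed, $a$ is continuous not merely on $K$ but on the whole open slice $O$, and likewise $b$ on $N$ (as in the proof of \autoref{ComeagreContinuity}). On the open set $ON$ each $\gamma$ factors uniquely as $\gamma=\alpha(\gamma)\beta(\gamma)$ with $\alpha(\gamma)\in O$ and $\beta(\gamma)\in N$, and $\gamma\mapsto\alpha(\gamma)$ and $\gamma\mapsto\beta(\gamma)$ are continuous since $\mathsf{s}$ and $\mathsf{r}$ are homeomorphisms on slices. Hence $ab(\gamma)=a(\alpha(\gamma))\,b(\beta(\gamma))$ throughout $ON$, and joint continuity of the bundle product makes $ab$ continuous on the open set $ON\supseteq KL$. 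This yields $ab\in\mathcal{C}^\rho(KL)\subseteq\mathcal{C}^\rho_\mathsf{c}$, and distributing over finite spans shows $\mathcal{C}^\rho_\mathsf{c}$ is closed under convolution, completing (ii). The main obstacle throughout is this continuity step in the non-Hausdorff setting, which is handled precisely by upgrading continuity of $a,b$ from the bicompact sets to the ambient open slices.
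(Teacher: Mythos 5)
Your proposal is correct and follows essentially the same route as the paper: slice norm equalities (\autoref{21}, \autoref{infty2bprp}) to place the generators $\mathcal{C}^\rho(K)$ in $\mathcal{B}^\rho$, inverses of bicompact sets for *-invariance, and the unique-factorization-on-a-slice argument plus \autoref{CompactProducts} for closure under convolution. The only minor difference is in the continuity step: you first upgrade continuity of each factor from the bicompact support to the ambient open slice (using biclosedness, as in the proof of \autoref{ComeagreContinuity}) and then compose on the open set $ON$, whereas the paper proves the slightly more general claim $\mathcal{C}^\rho(\Delta)\mathcal{C}^\rho(K)\subseteq\mathcal{C}^\rho(\Delta K)$ by exhibiting the continuous factorization maps on $\Gamma O$ and invoking continuity of $a$ and $b$ only at the points of $\Delta$ and $K$ themselves, so biclosedness there is needed only to keep the product bicompact, not for continuity.
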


\begin{proof}
Any bicompact $K$ is a slice so, for any $a\in\mathcal{C}^\rho(K)$, certainly $a^*a$ and $aa^*$ are defined.  Also $\|a\|_\mathsf{b}=\|a\|_2=\|a\|_\infty<\infty$, by \autoref{C0} \eqref{C01}, \autoref{21} and \autoref{infty2bprp}, so $\mathcal{C}^\rho_\mathsf{c}\subseteq\mathcal{B}^\rho$.  As bicompact subsets are closed under taking inverses, $\mathcal{C}^\rho_\mathsf{c}$ and hence $\mathcal{C}^\rho_\mathsf{r}$ is a *-invariant subspace of $\mathcal{B}^\rho$.  It only remains to show that $\mathcal{C}^\rho_\mathsf{c}$ and hence $\mathcal{C}^\rho_\mathsf{r}$ is closed under products.

First we claim that, for any $\Delta\subseteq\Gamma$ and bicompact $K\subseteq\Gamma$,
\[\mathcal{C}^\rho(\Delta)\mathcal{C}^\rho(K)\subseteq\mathcal{C}^\rho(\Delta K)\]
To see this, take any $a\in\mathcal{C}^\rho(\Delta)$ and $b\in\mathcal{C}^\rho(K)$.  For any $\gamma\in\Delta K$, we have unique $\alpha_\gamma\in\Delta$ and $\beta_\gamma\in K$ with $\gamma=\alpha_\gamma\beta_\gamma$ and hence $ab(\gamma)=a(\alpha_\gamma)b(\beta_\gamma)$.  For any open slice $O$ containing $K$, we see that $\beta_\gamma=\mathsf{s}|_O^{-1}(\mathsf{s}(\gamma))$ and $\alpha_\gamma=\gamma\beta_\gamma^{-1}$, which are both seen to be continuous functions in $\gamma$ on the open set $\Gamma O$ containing $\Delta K$.  As $\Delta\subseteq\mathcal{C}(a)$ and $K\subseteq\mathcal{C}(b)$, it follows that $\Delta K\subseteq\mathcal{C}(ab)$, proving the claim.  In particular, this holds when $\Delta$ is also bicompact, in which case $\Delta K$ is bicompact too, by \autoref{CompactProducts}.  Thus $\mathcal{C}^\rho_\mathsf{c}\mathcal{C}^\rho_\mathsf{c}\subseteq\mathcal{C}^\rho_\mathsf{c}$ and hence $\mathcal{C}^\rho_\mathsf{r}\mathcal{C}^\rho_\mathsf{r}\subseteq\mathcal{C}^\rho_\mathsf{r}$.
\end{proof}

We call $\mathcal{C}_\mathsf{r}^\rho$ the \emph{reduced C*-algebra} of $\rho$.  It is indeed isomorphic to the more classical version of the reduced C*-algebra $\mathcal{C}_\mathsf{r}^*(\Gamma)$ obtained from the left regular representation of $\mathcal{C}^\rho_\mathsf{c}$ in \autoref{LeftRegRep} above, a fact we record here for future reference.

\begin{cor}
The reduced C*-algebra $\mathcal{C}_\mathsf{r}^\rho$ defined above is isomorphic to the classical reduced C*-algebra $\mathcal{C}_\mathsf{r}^*(\Gamma)=\mathrm{cl}(\{a_\mathsf{B}:a\in\mathcal{C}^\rho_\mathsf{c}\})\subseteq\mathsf{B}(\mathcal{H}^\rho)$.
\end{cor}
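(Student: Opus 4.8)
The plan is to simply transport the $\mathsf{b}$-norm closure defining $\mathcal{C}^\rho_\mathsf{r}$ across the isometric representation furnished by \autoref{LeftRegRep}. Write $\pi(a)=a_\mathsf{B}$ for the map of that theorem. Since the preceding theorem shows that $\mathcal{C}^\rho_\mathsf{r}$ is a C*-subalgebra of $\mathcal{B}^\rho$, and since \autoref{LeftRegRep} gives that $\pi$ restricts to an isometric *-isomorphism of $\mathcal{B}^\rho$ onto a C*-subalgebra of $\mathsf{B}(\mathcal{H}^\rho)$ with $\|\pi(a)\|=\|a\|_\mathsf{b}$, the restriction $\pi|_{\mathcal{C}^\rho_\mathsf{r}}$ is an isometric *-isomorphism onto its image $\pi[\mathcal{C}^\rho_\mathsf{r}]$.

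Next I would identify this image with $\mathcal{C}_\mathsf{r}^*(\Gamma)$ via two inclusions. On one hand, $\pi[\mathcal{C}^\rho_\mathsf{r}]$ is a C*-subalgebra of $\mathsf{B}(\mathcal{H}^\rho)$, hence operator-norm closed, and it contains $\pi[\mathcal{C}^\rho_\mathsf{c}]=\{a_\mathsf{B}:a\in\mathcal{C}^\rho_\mathsf{c}\}$; therefore $\mathcal{C}_\mathsf{r}^*(\Gamma)=\mathrm{cl}(\{a_\mathsf{B}:a\in\mathcal{C}^\rho_\mathsf{c}\})\subseteq\pi[\mathcal{C}^\rho_\mathsf{r}]$. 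On the other hand, because $\pi$ is isometric from $(\mathcal{B}^\rho,\|\cdot\|_\mathsf{b})$ into $(\mathsf{B}(\mathcal{H}^\rho),\|\cdot\|)$, it is a homeomorphism onto its image, so it carries the $\mathsf{b}$-dense subset $\mathcal{C}^\rho_\mathsf{c}$ of $\mathcal{C}^\rho_\mathsf{r}=\mathrm{cl}_\mathsf{b}(\mathcal{C}^\rho_\mathsf{c})$ onto an operator-norm-dense subset of $\pi[\mathcal{C}^\rho_\mathsf{r}]$. Thus every element of $\pi[\mathcal{C}^\rho_\mathsf{r}]$ is an operator-norm limit of elements of $\{a_\mathsf{B}:a\in\mathcal{C}^\rho_\mathsf{c}\}$, giving $\pi[\mathcal{C}^\rho_\mathsf{r}]\subseteq\mathcal{C}_\mathsf{r}^*(\Gamma)$. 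The two inclusions yield $\pi[\mathcal{C}^\rho_\mathsf{r}]=\mathcal{C}_\mathsf{r}^*(\Gamma)$, whence $\pi|_{\mathcal{C}^\rho_\mathsf{r}}$ is the desired isomorphism.

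There is essentially no obstacle to overcome here: every ingredient has already been assembled, and the only point to record carefully is that an isometric *-homomorphism sends norm-closures to norm-closures, so that the two distinct closure operations --- the $\mathsf{b}$-closure defining $\mathcal{C}^\rho_\mathsf{r}$ and the operator-norm closure defining $\mathcal{C}_\mathsf{r}^*(\Gamma)$ --- correspond under $\pi$. The statement is thus really a bookkeeping corollary confirming that the elementary, section-theoretic definition of the reduced C*-algebra agrees with the classical one obtained from the left regular representation.
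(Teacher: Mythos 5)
Your proposal is correct and follows exactly the paper's own route: the paper's proof is the one-line observation that, by \autoref{LeftRegRep}, $a\mapsto a_\mathsf{B}$ is an isometric isomorphism on $\mathcal{B}^\rho$, from which the identification of the two closures is immediate. Your write-up merely fills in the bookkeeping (the two inclusions and the fact that an isometric *-isomorphism carries closures to closures) that the paper leaves implicit.
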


\begin{proof}
By \autoref{LeftRegRep}, $a\mapsto a_\mathsf{B}$ is an isomorphism on $\mathcal{B}^\rho$ so this is immediate.
\end{proof}

In contrast to $\mathcal{C}_\mathsf{r}^*(\Gamma)$, the advantage of $\mathcal{C}_\mathsf{r}^\rho$ is that it is already a C*-algebra of sections under convolution from the outset -- there is no need to define any $j$-map from operators back to concrete sections and then verify that $j$ turns composition of operators into convolution of sections.

Another advantage is that we are free to consider other algebras and modules of concrete sections under convolution.

\begin{thm}
We have Banach a *-bimodule over $\mathcal{C}^\rho_\mathsf{r}$ given by
\[\mathcal{C}^\rho_\mathsf{b}=\mathcal{C}^\rho\cap\mathcal{S}^\rho_\mathsf{b}=\{a\in\mathcal{S}^\rho:a\text{ is continuous everywhere and }\|a\|_\mathsf{b}<\infty\}.\]
\end{thm}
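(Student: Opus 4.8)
The plan is to exhibit $\mathcal{C}^\rho_\mathsf{b}$ as a sub-*-bimodule of $\mathcal{S}^\rho_\mathsf{b}$, which is already a Banach *-bimodule over $\mathcal{B}^\rho$ by the corollary preceding \autoref{LeftRegRep}. Since $\mathcal{C}^\rho_\mathsf{r}$ is a C*-subalgebra of $\mathcal{B}^\rho$ and $\mathcal{C}^\rho_\mathsf{b}=\mathcal{C}^\rho\cap\mathcal{S}^\rho_\mathsf{b}$ is a subspace of $\mathcal{S}^\rho_\mathsf{b}$, every algebraic bimodule identity (bilinearity, associativity and $(ab)^*=b^*a^*$) together with the norm bounds $\|ab\|_\mathsf{b}\leq\|a\|_\mathsf{b}\|b\|_\mathsf{b}$ and $\|a^*\|_\mathsf{b}=\|a\|_\mathsf{b}$ will simply restrict from the ambient structure. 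Thus the only assertions genuinely needing proof are that (A) $\mathcal{C}^\rho_\mathsf{b}$ is $\mathsf{b}$-complete, (B) $\mathcal{C}^\rho_\mathsf{b}$ is $*$-invariant, and (C) $\mathcal{C}^\rho_\mathsf{b}$ is invariant under left and right convolution by $\mathcal{C}^\rho_\mathsf{r}$.

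Claims (A) and (B) are quick. For (A), the inequalities $\|\cdot\|_\infty\leq\|\cdot\|_2\leq\|\cdot\|_\mathsf{b}$ from \eqref{InfinityBelow2} and \eqref{infty2b} show that any $\mathsf{b}$-Cauchy sequence in $\mathcal{C}^\rho_\mathsf{b}$ is also uniformly Cauchy; as $\mathcal{S}^\rho_\mathsf{b}$ is $\mathsf{b}$-complete (\autoref{bBanach}) and $\mathcal{C}^\rho=\mathcal{C}^\rho(\Gamma)$ is uniformly closed by (the proof of) \autoref{C0}\eqref{C01}, the limit again lies in $\mathcal{C}^\rho\cap\mathcal{S}^\rho_\mathsf{b}$, so $\mathcal{C}^\rho_\mathsf{b}$ is a closed subspace of a Banach space. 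For (B), the section $a^*$ is the composite of the continuous inversion $\gamma\mapsto\gamma^{-1}$, the continuous $a$ and the continuous fibrewise involution on $B$, so $a\in\mathcal{C}^\rho$ implies $a^*\in\mathcal{C}^\rho$, while $\|a^*\|_\mathsf{b}=\|a\|_\mathsf{b}$ by \eqref{infty2b}.

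The substance lies in (C), which I would prove for the right action and then transfer to the left action by the involution. The first step is to show $\mathcal{C}^\rho_\mathsf{b}\,\mathcal{C}^\rho(K)\subseteq\mathcal{C}^\rho$ for every bicompact $K$. Taking $\Delta=\Gamma$ in the inclusion $\mathcal{C}^\rho(\Delta)\mathcal{C}^\rho(K)\subseteq\mathcal{C}^\rho(\Delta K)$ established in the proof of \eqref{ReducedDefinition}, any such product is continuous on $\Gamma K=\mathsf{s}^{-1}[\mathsf{s}[K]]$; but $\mathsf{s}[K]$ is closed because $K$ is biclosed, so $\Gamma K$ is closed and the product, being supported in $\Gamma K$, vanishes on the open complement where it is trivially continuous --- hence it is continuous everywhere, and it has finite $\mathsf{b}$-norm by \eqref{||ab||_b}. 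By linearity this yields $\mathcal{C}^\rho_\mathsf{b}\,\mathcal{C}^\rho_\mathsf{c}\subseteq\mathcal{C}^\rho_\mathsf{b}$. Next, for $a\in\mathcal{C}^\rho_\mathsf{b}$ and $b\in\mathcal{C}^\rho_\mathsf{r}=\mathrm{cl}_\mathsf{b}(\mathcal{C}^\rho_\mathsf{c})$, I would choose $b_n\in\mathcal{C}^\rho_\mathsf{c}$ with $\|b-b_n\|_\mathsf{b}\to0$; the products $ab$ and $ab_n$ are defined since $\mathcal{C}^\rho_\mathsf{c},\mathcal{C}^\rho_\mathsf{r}\subseteq\mathcal{B}^\rho\subseteq\mathcal{H}^\rho$ and $a\in\mathcal{S}^\rho_\mathsf{b}$ (\autoref{abinH}), the $ab_n$ lie in $\mathcal{C}^\rho_\mathsf{b}$ by the previous step, and $\|ab-ab_n\|_\mathsf{b}\leq\|a\|_\mathsf{b}\|b-b_n\|_\mathsf{b}\to0$ by \eqref{||ab||_b}. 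Since $\mathsf{b}$-convergence forces uniform convergence, $ab$ is a uniform limit of continuous sections and hence lies in $\mathcal{C}^\rho$; as $\|ab\|_\mathsf{b}<\infty$ we obtain $ab\in\mathcal{C}^\rho_\mathsf{b}$, i.e. $\mathcal{C}^\rho_\mathsf{b}\,\mathcal{C}^\rho_\mathsf{r}\subseteq\mathcal{C}^\rho_\mathsf{b}$. Finally $\mathcal{C}^\rho_\mathsf{r}\,\mathcal{C}^\rho_\mathsf{b}\subseteq\mathcal{C}^\rho_\mathsf{b}$ follows by writing $ba=(a^*b^*)^*$ and combining (B) with the right-action case.

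The main obstacle is precisely the first half of (C). Because a generic element of $\mathcal{C}^\rho_\mathsf{r}$ is only continuous on a comeagre set, one cannot argue continuity of a convolution product directly; the decisive observation is that convolving a globally continuous section with a \emph{bicompactly} supported one confines the product to the closed set $\mathsf{s}^{-1}[\mathsf{s}[K]]$ and renders it continuous there, forcing global continuity. This everywhere-continuity then survives the passage to the $\mathsf{b}$-limit only because the $\mathsf{b}$-norm dominates the uniform norm, which is exactly what lets the approximation argument close.
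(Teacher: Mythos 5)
Your proof is correct and follows essentially the same route as the paper: both reduce to products with a section supported in a bicompact $K$, obtain global continuity from continuity on the closed tube $\mathsf{s}^{-1}[\mathsf{s}[K]]$ (the paper treats the left action first, using $\mathsf{r}^{-1}[\mathsf{r}[K]]$, and dualises), and then extend by linearity and $\mathsf{b}$-limits using that $\mathcal{C}^\rho$ is uniformly closed, hence $\mathsf{b}$-closed, together with $\|ab\|_\mathsf{b}\leq\|a\|_\mathsf{b}\|b\|_\mathsf{b}$. The only cosmetic differences are that you recycle the inclusion $\mathcal{C}^\rho(\Delta)\mathcal{C}^\rho(K)\subseteq\mathcal{C}^\rho(\Delta K)$ with $\Delta=\Gamma$ from the proof of \eqref{ReducedDefinition} where the paper rederives the continuity argument directly, and that you make explicit the completeness, $*$-invariance and well-definedness points which the paper leaves implicit.
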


\begin{proof}
First we argue as in the proof of \cite[Theorem 2.10]{Bice2023} to show that a product of $a\in\mathcal{C}^\rho$ and $s\in\mathcal{C}^\rho(K)$ is again continuous when $K$ is bicompact.  To see this, take an open slice $O$ containing $K$ and note that $s$ is then continuous on $O$ -- by the definition of $\mathcal{C}^\rho(K)$, $s$ is continuous on $K$ and zero on $O\setminus K$ and hence continuous on $O\setminus K$, as $K$ is biclosed and hence closed in $O$, implying $O\setminus K$ is open.  The map $\gamma\mapsto\gamma_O=\mathsf{r}|_O^{-1}(\mathsf{r}(\gamma))$ is also continuous from $O\Gamma=\mathsf{r}^{-1}[\mathsf{r}[O]]$ to $O$.  Thus $sa$ is also continuous on $O\Gamma$, as $sa(\gamma)=s(\gamma_O)a(\gamma_O^{-1}\gamma)$, for all $\gamma\in O\Gamma$.  On the other hand, $K\Gamma=\mathsf{r}^{-1}[\mathsf{r}[K]]$ is closed subset of $O\Gamma$, as $K$ is biclosed and hence $\mathsf{r}[K]$ is closed.  As $sa$ takes zero values on the open set $\Gamma\setminus K\Gamma$, it is also continuous there, i.e. $sa\in\mathcal{C}^\rho$.  By a dual argument, $as\in\mathcal{C}^\rho$, which proves the claim.  As $\mathcal{C}^\rho$ is closed under linear combinations, this extends to $s\in\mathcal{C}^\rho_\mathsf{c}$.  By \eqref{||ab||_b} and the fact that $\mathcal{C}^\rho$ is also $\infty$-closed and hence $\mathsf{b}$-closed, this extends to $s\in\mathcal{C}^\rho_\mathsf{r}$, i.e. $\mathcal{C}_\mathsf{r}^\rho\mathcal{C}^\rho\cup\mathcal{C}^\rho\mathcal{C}_\mathsf{r}^\rho\subseteq\mathcal{C}^\rho$.  Again by \eqref{||ab||_b}, we can then replace $\mathcal{C}^\rho$ by $\mathcal{C}^\rho_\mathsf{b}$, i.e.
\[\mathcal{C}_\mathsf{r}^\rho\mathcal{C}_\mathsf{b}^\rho\cup\mathcal{C}_\mathsf{b}^\rho\mathcal{C}_\mathsf{r}^\rho\subseteq\mathcal{C}_\mathsf{b}^\rho.\]
As $\mathcal{C}_\mathsf{r}^\rho$ and $\mathcal{C}_\mathsf{b}^\rho$ are *-invariant, it follows that $\mathcal{C}_\mathsf{b}^\rho$ is a Banach *-bimodule over $\mathcal{C}_\mathsf{r}^\rho$.
\end{proof}

Let $\rho_0:\rho^{-1}[\Gamma^0]\twoheadrightarrow\Gamma^0$ be the C*-bundle obtained by restricting $\rho$ to the units in the base groupoid, i.e. $\rho_0=\rho|_{\rho^{-1}[\Gamma^0]}$ from which we obtain the C*-subalgebra
\[\mathcal{C}^{\rho_0}_\mathsf{r}=\mathrm{cl}_\mathsf{b}(\mathcal{C}^{\rho_0}_\mathsf{c})=\mathrm{cl}_\mathsf{b}(\mathrm{span}\{a\in\mathcal{C}^\rho(K):K=\mathrm{cl}(K)\cap\Gamma^0\text{ is compact}\})\subseteq\mathcal{C}^\rho_0(\Gamma^0)\cap\mathcal{C}_\mathsf{r}^\rho.\]
While $\mathcal{C}^{\rho_0}_\mathsf{r}$ is smaller than $\mathcal{C}_\mathsf{r}^\rho$, it will at least contain an approximate unit for $\mathcal{C}_\mathsf{r}^\rho$, i.e. a net $(z_\lambda)$ in the positive unit ball with $z_\lambda a\rightarrow a$, for all $a\in\mathcal{C}_\mathsf{r}^\rho$.

\begin{prp}
Any approximate unit for $\mathcal{C}^{\rho_0}_\mathsf{r}$ is an approximate unit for $\mathcal{C}^\rho_\mathsf{r}$.
\end{prp}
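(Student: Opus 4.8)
The plan is to run the standard approximate-unit argument inside the big C*-algebra $\mathcal{B}^\rho$, after reducing to the spanning sections of $\mathcal{C}^\rho_\mathsf{c}$, where the single new ingredient is that on a bicompact \emph{slice} the product $aa^*$ falls back into the unit-space algebra $\mathcal{C}^{\rho_0}_\mathsf{r}$.

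First I would set up the reduction. Let $(z_\lambda)$ be an approximate unit for $\mathcal{C}^{\rho_0}_\mathsf{r}$, so each $z_\lambda$ lies in the positive unit ball of $\mathcal{C}^{\rho_0}_\mathsf{r}$ and $z_\lambda d\to d$ for all $d\in\mathcal{C}^{\rho_0}_\mathsf{r}$. Since $\|z_\lambda\|_\mathsf{b}\leq1$ and $\mathcal{C}^\rho_\mathsf{c}$ is $\mathsf{b}$-dense in $\mathcal{C}^\rho_\mathsf{r}$, a routine $3\varepsilon$-estimate shows that it is enough to prove $z_\lambda a\to a$ for $a$ in the spanning set, i.e. for $a\in\mathcal{C}^\rho(K)$ with $K$ bicompact; linearity then passes this to all of $\mathcal{C}^\rho_\mathsf{c}$ and the density estimate to all of $\mathcal{C}^\rho_\mathsf{r}$. (Two-sided behaviour $az_\lambda\to a$ is then automatic by applying the one-sided statement to $a^*$ and taking adjoints, using $\|b^*\|_\mathsf{b}=\|b\|_\mathsf{b}$ from \eqref{infty2b}.)

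The core computation is then as follows. Fix $a\in\mathcal{C}^\rho(K)$ with $K$ bicompact. Since $K$ is a slice we have $KK^{-1}\subseteq\Gamma^0$, and combining $\mathcal{C}^\rho(K)\mathcal{C}^\rho(K^{-1})\subseteq\mathcal{C}^\rho(KK^{-1})$ (the product claim from the construction of $\mathcal{C}^\rho_\mathsf{r}$) with \autoref{CompactProducts}, the positive element $q=aa^*$ is a continuous section supported on the bicompact set $KK^{-1}\subseteq\Gamma^0$, whence $q\in\mathcal{C}^{\rho_0}_\mathsf{c}\subseteq\mathcal{C}^{\rho_0}_\mathsf{r}$. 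As $\mathcal{C}^{\rho_0}_\mathsf{r}$ is a C*-algebra, it is closed under continuous functional calculus, so $q^{1/2}\in\mathcal{C}^{\rho_0}_\mathsf{r}$. Working in $\mathcal{B}^\rho$ (where $z_\lambda$ is self-adjoint and the C*-identity $\|bb^*\|_\mathsf{b}=\|b\|_\mathsf{b}^2$ holds), I would expand, with $c=q^{1/2}-z_\lambda q^{1/2}\in\mathcal{C}^{\rho_0}_\mathsf{r}$,
\[(z_\lambda a-a)(z_\lambda a-a)^*=z_\lambda q z_\lambda-z_\lambda q-q z_\lambda+q=cc^*,\]
so that $\|z_\lambda a-a\|_\mathsf{b}^2=\|cc^*\|_\mathsf{b}=\|c\|_\mathsf{b}^2=\|q^{1/2}-z_\lambda q^{1/2}\|_\mathsf{b}^2$. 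Since $q^{1/2}\in\mathcal{C}^{\rho_0}_\mathsf{r}$ and $(z_\lambda)$ is an approximate unit there, the right-hand side tends to $0$, giving $z_\lambda a\to a$ and finishing the spanning case.

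The one genuinely bundle-theoretic point — and the only place any real obstacle lies — is the identification $aa^*\in\mathcal{C}^{\rho_0}_\mathsf{r}$, which is exactly why the reduction to a \emph{single} bicompact slice is necessary: for a general $a\in\mathcal{C}^\rho_\mathsf{r}$ the section $aa^*$ need not be supported on $\Gamma^0$, so the factorization through $q^{1/2}\in\mathcal{C}^{\rho_0}_\mathsf{r}$ would fail. Everything else is the textbook approximate-unit manipulation transplanted into $\mathcal{B}^\rho$.
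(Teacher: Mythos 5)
Your proposal is correct and follows essentially the same route as the paper's own proof: reduce to a spanning section $a\in\mathcal{C}^\rho(K)$ with $K$ bicompact, use the key fact that $aa^*\in\mathcal{C}^{\rho_0}_\mathsf{r}$, conclude $\|z_\lambda a-a\|_\mathsf{b}\to0$ via a C*-identity, and then extend by linearity and $\mathsf{b}$-density. The only difference is cosmetic -- the paper performs the final estimate in the unitisation, via $\|z_\lambda a-a\|^2=\|(1-z_\lambda)aa^*(1-z_\lambda)\|\leq\|(1-z_\lambda)aa^*\|\to0$, whereas you factor through $q^{1/2}$ with functional calculus; your explicit justification that $aa^*\in\mathcal{C}^{\rho_0}_\mathsf{c}$ via $\mathcal{C}^\rho(K)\mathcal{C}^\rho(K^{-1})\subseteq\mathcal{C}^\rho(KK^{-1})$ and \autoref{CompactProducts} merely fills in a detail the paper leaves implicit.
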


\begin{proof}
Take an approximate unit $(z_\lambda)$ for $\mathcal{C}^{\rho_0}_\mathsf{r}$.  For any $a\in\mathcal{C}^\rho(K)$, where $K$ is bicompact, $aa^*\in\mathcal{C}^{\rho_0}_\mathsf{r}$ and hence, letting $1$ be the unit in the unitisation of $\mathcal{C}^\rho_\mathsf{r}$,
\[\|z_\lambda a-a\|^2=\|(1-z_\lambda)aa^*(1-z_\lambda)\|\leq\|(1-z_\lambda)aa^*\|\rightarrow0.\]
This then immediately extends to linear combinations and their $\mathsf{b}$-limits.  In other words, $z_\lambda a\rightarrow a$, for all $a\in\mathcal{C}_\mathsf{r}^\rho$, showing that $(z_\lambda)$ is an approximate unit for $\mathcal{C}_\mathsf{r}^\rho$.
\end{proof}

Let us also define the (potentially larger) \emph{reduced diagonal} C*-subalgebra
\[\mathcal{D}^\rho_\mathsf{r}=\{a\in\mathcal{C}^\rho_\mathsf{r}:\mathrm{supp}(a)\subseteq\Gamma^0\},\]

\subsection{The Multiplier Algebras}

We define the \emph{left $\rho$-multipliers} by
\[\mathcal{L}^\rho=\{a\in\mathcal{S}^\rho:\forall z\in\mathcal{D}^\rho_\mathsf{r}\ (az\in\mathcal{C}_\mathsf{r}^\rho)\}.\]
To say more about these, we need to assume $\rho$ is `left-full'.

\begin{dfn}
We call $\rho$ \emph{left-full} or \emph{right-full} respectively if, for every $b\in B$,
\begin{align*}
\tag{Left-Full}\inf\{\|d(\mathsf{r}(\rho(b)))b-b\|:d\in\mathcal{D}^\rho_\mathsf{r}\}&=0.\\
\tag{Right-Full}\inf\{\|bd(\mathsf{s}(\rho(b)))-b\|:d\in\mathcal{D}^\rho_\mathsf{r}\}&=0.
\end{align*}
We call $\rho$ \emph{unit-full} if, for every $x\in\Gamma^0$, we have $d\in\mathcal{D}^\rho_\mathsf{r}$ with $d(x)\in B^0$.
\end{dfn}

\begin{prp}
If $\rho$ is left-full then
\begin{equation}\label{Lrho}
\mathcal{L}^\rho\mathcal{C}_\mathsf{r}^\rho\subseteq\mathcal{C}_\mathsf{r}^\rho\subseteq\mathcal{L}^\rho=\mathrm{cl}_\mathsf{b}(\mathcal{L}^\rho)\subseteq\mathcal{S}^\rho_\mathsf{b}.
\end{equation}
\end{prp}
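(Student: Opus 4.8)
The plan is to prove the four inclusions $\mathcal{C}^\rho_\mathsf{r}\subseteq\mathcal{L}^\rho$, $\mathcal{L}^\rho\subseteq\mathcal{S}^\rho_\mathsf{b}$, $\mathcal{L}^\rho\mathcal{C}^\rho_\mathsf{r}\subseteq\mathcal{C}^\rho_\mathsf{r}$ and $\mathcal{L}^\rho=\mathrm{cl}_\mathsf{b}(\mathcal{L}^\rho)$ separately, in an order letting each use the previous. The inclusion $\mathcal{C}^\rho_\mathsf{r}\subseteq\mathcal{L}^\rho$ is immediate: since $\mathcal{D}^\rho_\mathsf{r}\subseteq\mathcal{C}^\rho_\mathsf{r}$ and $\mathcal{C}^\rho_\mathsf{r}$ is closed under products, $az\in\mathcal{C}^\rho_\mathsf{r}$ whenever $a\in\mathcal{C}^\rho_\mathsf{r}$ and $z\in\mathcal{D}^\rho_\mathsf{r}$. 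Everything else rests on first proving $\mathcal{L}^\rho\subseteq\mathcal{S}^\rho_\mathsf{b}$, which is the heart of the statement and the only place left-fullness enters.

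For $\mathcal{L}^\rho\subseteq\mathcal{S}^\rho_\mathsf{b}$, fix $a\in\mathcal{L}^\rho$ and consider the linear map $z\mapsto az$ from the C*-algebra $\mathcal{D}^\rho_\mathsf{r}$ into $\mathcal{C}^\rho_\mathsf{r}$. I would first show this map has closed graph: if $z_n\to z$ and $az_n\to w$ in $\mathsf{b}$-norm, then since $\|\cdot\|_\infty\le\|\cdot\|_2\le\|\cdot\|_\mathsf{b}$ both sequences converge pointwise, and as $az_n(\gamma)=a(\gamma)z_n(\mathsf{s}(\gamma))\to a(\gamma)z(\mathsf{s}(\gamma))=az(\gamma)$ we get $w=az$. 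The closed graph theorem then yields a constant $C$ with $\|az\|_\mathsf{b}\le C\|z\|_\mathsf{b}$ for all $z\in\mathcal{D}^\rho_\mathsf{r}$. Next I would take an approximate unit $(z_\lambda)$ for the C*-algebra $\mathcal{D}^\rho_\mathsf{r}$ in the positive unit ball. The key observation is that left-fullness forces $z_\lambda b\to b$ for every $b\in B$: evaluation $\mathrm{ev}_x:\mathcal{D}^\rho_\mathsf{r}\to B_x$ at $x=\mathsf{r}(\rho(b))$ is a $*$-homomorphism (convolution on the diagonal is pointwise), so $(z_\lambda(x))$ is an approximate unit for the C*-subalgebra $A_x=\mathrm{ev}_x[\mathcal{D}^\rho_\mathsf{r}]\subseteq B_x$; since left-fullness says exactly that $b\in\overline{A_x b}$, a routine estimate gives $z_\lambda(x)b\to b$, i.e. $z_\lambda f\to f$ in $2$-norm for every finitely supported $f\in\mathcal{F}^\rho$. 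Using \eqref{abcF} to write $a(z_\lambda f)=(az_\lambda)f$ and \eqref{b2} to bound $\|(az_\lambda)f\|_2\le\|az_\lambda\|_\mathsf{b}\|f\|_2\le C\|f\|_2$, together with the pointwise convergence $a(z_\lambda f)\to af$ and the lower semicontinuity of $\|\cdot\|_2$ under pointwise limits, yields $\|af\|_2\le C\|f\|_2$ for all $f\in\mathcal{F}^\rho$, i.e. $\|a\|_\mathsf{b}\le C<\infty$.

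With $\mathcal{L}^\rho\subseteq\mathcal{S}^\rho_\mathsf{b}$ in hand, the remaining two inclusions follow by approximate-unit arguments. For $\mathcal{L}^\rho\mathcal{C}^\rho_\mathsf{r}\subseteq\mathcal{C}^\rho_\mathsf{r}$, take $a\in\mathcal{L}^\rho$ and $c\in\mathcal{C}^\rho_\mathsf{r}$; choosing an approximate unit $(z_\lambda)$ for $\mathcal{C}^{\rho_0}_\mathsf{r}$, which lies in $\mathcal{D}^\rho_\mathsf{r}$ and is an approximate unit for $\mathcal{C}^\rho_\mathsf{r}$, \autoref{AssociativeProduct2} gives $a(z_\lambda c)=(az_\lambda)c\in\mathcal{C}^\rho_\mathsf{r}$ (as $az_\lambda\in\mathcal{C}^\rho_\mathsf{r}$ by the definition of $\mathcal{L}^\rho$), while submultiplicativity of the $\mathsf{b}$-norm gives $\|a(z_\lambda c)-ac\|_\mathsf{b}=\|a(z_\lambda c-c)\|_\mathsf{b}\le\|a\|_\mathsf{b}\|z_\lambda c-c\|_\mathsf{b}\to0$; since $\mathcal{C}^\rho_\mathsf{r}$ is $\mathsf{b}$-closed, $ac\in\mathcal{C}^\rho_\mathsf{r}$. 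For the $\mathsf{b}$-closedness of $\mathcal{L}^\rho$, if $a_n\to a$ in $\mathsf{b}$-norm with $a_n\in\mathcal{L}^\rho$, then for each $z\in\mathcal{D}^\rho_\mathsf{r}$ we have $a_nz\in\mathcal{C}^\rho_\mathsf{r}$ and $\|a_nz-az\|_\mathsf{b}\le\|a_n-a\|_\mathsf{b}\|z\|_\mathsf{b}\to0$, so $az\in\mathcal{C}^\rho_\mathsf{r}$ and hence $a\in\mathcal{L}^\rho$.

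The main obstacle is the middle step, extracting finiteness of the $\mathsf{b}$-norm of a left multiplier. The delicate points there are the closed-graph argument, which needs the chain of norm dominations to pin down the graph limit pointwise, and, above all, upgrading left-fullness from its pointwise ``for each $b$'' form to the uniform statement that a single approximate unit $(z_\lambda)$ of $\mathcal{D}^\rho_\mathsf{r}$ satisfies $z_\lambda f\to f$ for every $f\in\mathcal{F}^\rho$; this is exactly where the $*$-homomorphism property of evaluation and the preservation of approximate units under $*$-homomorphisms are needed.
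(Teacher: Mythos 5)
Your proof is correct, and three of its four parts (the inclusion $\mathcal{C}^\rho_\mathsf{r}\subseteq\mathcal{L}^\rho$, the $\mathsf{b}$-closedness of $\mathcal{L}^\rho$, and the module property $\mathcal{L}^\rho\mathcal{C}^\rho_\mathsf{r}\subseteq\mathcal{C}^\rho_\mathsf{r}$ via an approximate unit for $\mathcal{C}^{\rho_0}_\mathsf{r}$) coincide with the paper's arguments. The genuine difference is the central inclusion $\mathcal{L}^\rho\subseteq\mathcal{S}^\rho_\mathsf{b}$. The paper argues by contradiction: if $\|l\|_\mathsf{b}=\infty$, it picks $(f_n)\subseteq\mathcal{F}^\rho$ with $\|f_n\|_2=1$ and $\|lf_n\|_2>2^n$, uses left-fullness (via the same fact you prove, that $d_\lambda f\rightarrow f$ for any approximate unit of $\mathcal{D}^\rho_\mathsf{r}$) to replace $f_n$ by $\sqrt{d_n}f_n$ so that $\|l\sqrt{d_n}\|_\mathsf{b}>2^n$, and then forms the single element $d=\sum_n2^{-n}d_n\in\mathcal{D}^\rho_\mathsf{r}$; \autoref{StarSquareInequality} then gives $\|l\sqrt{d}\|_\mathsf{b}\geq2^{-n/2}\|l\sqrt{d_n}\|_\mathsf{b}>2^{n/2}\rightarrow\infty$, contradicting $l\sqrt{d}\in\mathcal{C}^\rho_\mathsf{r}\subseteq\mathcal{S}^\rho_\mathsf{b}$. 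This is a condensation-of-singularities argument: the $\ell^1$-summed element $d$ does the work that Baire category does for you, at the price of the C*-inequality \autoref{StarSquareInequality} ($bb^*\leq cc^*\Rightarrow\|ab\|_\mathsf{b}\leq\|ac\|_\mathsf{b}$). Your route instead extracts the uniform bound $\|az\|_\mathsf{b}\leq C\|z\|_\mathsf{b}$ from the closed graph theorem (valid: $\mathcal{D}^\rho_\mathsf{r}$ and $\mathcal{C}^\rho_\mathsf{r}$ are Banach spaces, and since $\|\cdot\|_\infty\leq\|\cdot\|_2\leq\|\cdot\|_\mathsf{b}$ by \eqref{InfinityBelow2} and \eqref{infty2b}, $\mathsf{b}$-limits are pointwise limits, so the graph is indeed closed), and then transfers it to $\|af\|_2\leq C\|f\|_2$ through $z_\lambda f\rightarrow f$, \eqref{abcF}, \eqref{b2} and lower semicontinuity of $\|\cdot\|_2$ under pointwise limits; this avoids \autoref{StarSquareInequality} entirely but imports Baire category. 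Your explicit justification that left-fullness yields $z_\lambda f\rightarrow f$ (evaluation at a unit is a *-homomorphism, images of approximate units are approximate units, then a standard $\varepsilon/3$ estimate) is also a welcome elaboration of a step the paper asserts in one sentence. One small point to tighten: in the final part you should note, as the paper does, that $ac$ is defined in the first place, by \autoref{a*b} (applicable since $a^*\in\mathcal{S}^\rho_2$ and $c^*c$ is defined), before writing $\|a(z_\lambda c)-ac\|_\mathsf{b}=\|a(z_\lambda c-c)\|_\mathsf{b}$; the submultiplicative bound \eqref{||ab||_b} you invoke there likewise presupposes that the product is defined, which follows by the same token.
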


\begin{proof}
Assume $\rho$ is left-full and say we have $l\in\mathcal{L}^\rho$ with $\|l\|_\mathsf{b}=\infty$.  This means we have $(f_n)\subseteq\mathcal{F}^\rho$ with $\|f_n\|_2=1$ and $\|lf_n\|_2>2^n$, for all $n\in\mathbb{N}$.  As $\rho$ is left-full, $d_\lambda f\rightarrow f$ for any approximate unit $(d_\lambda)$ for $\mathcal{D}^\rho_\mathsf{r}$ and any $f\in\mathcal{F}^\rho$.  It follows that we have $(d_n)\in\mathcal{D}^{\rho1}_+$ with $\|l\sqrt{d_n}f_n\|_2>2^n$ and hence $\|l\sqrt{d_n}\|_b>2^n$, for all $n\in\mathbb{N}$.  Letting $d=\sum_{n\in\mathbb{N}}2^{-n}d_n\in\mathcal{D}^\rho_\mathsf{r}$, \autoref{StarSquareInequality} yields
\[\|l\sqrt{d}\|_\mathsf{b}\geq2^{-n/2}\|l\sqrt{d_n}\|_\mathsf{b}>2^{n/2}\rightarrow\infty,\]
contradicting the fact that $l\sqrt{d}\in\mathcal{C}_\mathsf{r}^\rho\subseteq\mathcal{S}_\mathsf{b}^\rho$.  This shows that $\mathcal{L}^\rho\subseteq\mathcal{S}^\rho_\mathsf{b}$.

To see that $\mathcal{L}^\rho$ is $\mathsf{b}$-closed, take any $(l_n)\subseteq\mathcal{L}^\rho$ with $\mathsf{b}$-limit $l\in\mathcal{S}^\rho_\mathsf{b}$.  For any $d\in\mathcal{D}^\rho_\mathsf{r}$, we see that $\|l_nd-ld\|_\mathsf{b}\leq\|l_n-l\|_\mathsf{b}\|d\|_\mathsf{b}\rightarrow0$.  Thus $ld\in\mathrm{cl}_\mathsf{b}(\mathcal{C}^\rho_\mathsf{r})=\mathcal{C}^\rho_\mathsf{r}$ and hence $l\in\mathcal{L}^\rho$, as required.

Finally, to see that $\mathcal{L}^\rho\mathcal{C}_\mathsf{r}^\rho\subseteq\mathcal{C}_\mathsf{r}^\rho$, take $l\in\mathcal{L}^\rho$ and $a\in\mathcal{C}_\mathsf{r}^\rho$.  First note $la$ is defined, by \autoref{a*b}.  Any approximate unit $(d_\lambda)$ for $\mathcal{D}^\rho_\mathsf{r}\supseteq\mathcal{C}^{\rho_0}_\mathsf{r}$ will be an approximate unit for $\mathcal{C}^\rho_\mathsf{r}$ so $la$ is the $\mathsf{b}$-limit of $ld_\lambda a\in\mathcal{C}^\rho_\mathsf{r}\mathcal{C}^\rho_\mathsf{r}\subseteq\mathcal{C}^\rho_\mathsf{r}$ and hence $la\in\mathcal{C}_\mathsf{r}^\rho$ too.
\end{proof}

A \emph{left multiplier} of a C*-algebra $A$ is a right $A$-module map $l:A\rightarrow A$, i.e. a linear map such that $l(ab)=l(a)b$, for all $a,b\in A$.  These are always bounded and form a Banach algebra $\mathsf{L}(A)$ under composition (see \cite[3.12]{Pedersen2018}).  By \autoref{AssociativeProduct2} and the above result, every $l\in\mathcal{L}^\rho$ defines a left multiplier $l^\mathsf{L}$ of $\mathcal{C}_\mathsf{r}^\rho$ where
\[l^\mathsf{L}(a)=la.\]
Conversely, as long as $\rho$ is unit-full, every left-multiplier of $\mathcal{C}_\mathsf{r}^\rho$ will be of this form.

\begin{thm}
If $\rho$ is unit-full then $lk$ is a well-defined element of $\mathcal{L}^\rho$, for all $l,k\in\mathcal{L}^\rho$, and then $l\mapsto l^\mathsf{L}$ is a Banach algebra isomorphism from $\mathcal{L}^\rho$ onto $\mathsf{L}(\mathcal{C}^\rho_\mathsf{r})$.
\end{thm}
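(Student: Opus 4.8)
The plan is to show that $l\mapsto l^\mathsf{L}$ is a bijective, isometric algebra homomorphism; since $\mathcal{L}^\rho$ is $\mathsf{b}$-complete by \eqref{Lrho} and $\mathsf{L}(\mathcal{C}^\rho_\mathsf{r})$ is a Banach algebra, this is precisely a Banach algebra isomorphism. First I would note that unit-fullness is stronger than both left- and right-fullness: if $d\in\mathcal{D}^\rho_\mathsf{r}$ has $d(x)\in B^0$ then $d(x)$ is the identity of the C*-algebra $B_x$, so $d(\mathsf{r}(\rho(b)))b=b=bd(\mathsf{s}(\rho(b)))$ for the relevant $b$, and hence all of \eqref{Lrho} is available. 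The first substantive step is that $\mathcal{L}^\rho\subseteq\mathcal{H}^\rho$: given $k\in\mathcal{L}^\rho$ and $x\in\Gamma^0$, choose $z\in\mathcal{D}^\rho_\mathsf{r}$ with $z(x)$ the unit of $B_x$; then $kz\in\mathcal{C}^\rho_\mathsf{r}\subseteq\mathcal{H}^\rho$, and since $z(x)$ acts as a right identity we get $(kz)_{\Gamma x}=k_{\Gamma x}$, so \autoref{a*aDefined} applied to $kz$ forces $\lim_{F\subset\Gamma x}\|k_{\Gamma x\setminus F}\|_2=0$; as $x$ is arbitrary, $k^*k$ is defined. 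Consequently, for $l,k\in\mathcal{L}^\rho$ the product $lk=(l^*)^*k$ is defined by \autoref{a*b}, with $\|lk\|_\mathsf{b}\le\|l\|_\mathsf{b}\|k\|_\mathsf{b}$ by \eqref{||ab||_b}.

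For the membership $lk\in\mathcal{L}^\rho$ and the homomorphism property I would exploit that products with diagonal sections are trivially associative: for $z\in\mathcal{D}^\rho_\mathsf{r}$ one has $(kz)(\beta)=k(\beta)z(\mathsf{s}(\beta))$, so pulling the fixed factor $z(\mathsf{s}(\gamma))$ out of the convergent sum defining $(lk)(\gamma)$ gives $(lk)z=l(kz)$ directly, with no appeal to the general associativity lemmas. Since $kz\in\mathcal{C}^\rho_\mathsf{r}$ and $l\in\mathcal{L}^\rho$, \eqref{Lrho} yields $(lk)z=l(kz)\in\mathcal{C}^\rho_\mathsf{r}$, so $lk\in\mathcal{L}^\rho$. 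The same identity shows $(lk)^\mathsf{L}$ and $l^\mathsf{L}k^\mathsf{L}$ agree on $\mathcal{D}^\rho_\mathsf{r}$, and two bounded left multipliers that agree on $\mathcal{D}^\rho_\mathsf{r}$ must coincide: from $S(za)=S(z)a$ and the fact that $\mathcal{D}^\rho_\mathsf{r}$ contains an approximate unit for $\mathcal{C}^\rho_\mathsf{r}$, they agree on the dense set $\mathcal{D}^\rho_\mathsf{r}\mathcal{C}^\rho_\mathsf{r}$ and hence everywhere. Thus $l\mapsto l^\mathsf{L}$ is multiplicative (and clearly linear, and contractive by \eqref{||ab||_b}).

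Injectivity is immediate from unit-fullness: if $la=0$ for all $a\in\mathcal{C}^\rho_\mathsf{r}$ then, choosing $z$ with $z(\mathsf{s}(\gamma))$ the unit of $B_{\mathsf{s}(\gamma)}$, we get $l(\gamma)=(lz)(\gamma)=0$. The crux is surjectivity. Given $T\in\mathsf{L}(\mathcal{C}^\rho_\mathsf{r})$ and an approximate unit $(d_\lambda)$ for $\mathcal{D}^\rho_\mathsf{r}$, I would first show the fibrewise limit $l(\gamma):=\lim_\lambda T(d_\lambda)(\gamma)$ exists: picking $z$ with $z(\mathsf{s}(\gamma))$ a unit, the module property gives $T(d_\lambda)(\gamma)=(T(d_\lambda)z)(\gamma)=T(d_\lambda z)(\gamma)$, while $d_\lambda z\to z$ in $\mathsf{b}$-norm forces $T(d_\lambda z)\to T(z)$, so $T(d_\lambda)(\gamma)\to T(z)(\gamma)$ in the norm of $B_\gamma$. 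This simultaneously defines $l$ and shows it is independent of the chosen $z$. Testing against finitely supported $f,g$, so that $g^*lf$ is a finite sum to which the fibrewise convergence of $T(d_\lambda)$ applies, and using \eqref{b'}, gives $\|l\|_\mathsf{b}\le\|T\|$, so $l\in\mathcal{S}^\rho_\mathsf{b}$. Pulling out the diagonal factor exactly as before yields $(lz)(\gamma)=l(\gamma)z(\mathsf{s}(\gamma))=\lim_\lambda T(d_\lambda z)(\gamma)=T(z)(\gamma)$, i.e. $lz=T(z)$ for every $z\in\mathcal{D}^\rho_\mathsf{r}$; hence $l\in\mathcal{L}^\rho$ and $l^\mathsf{L}$ agrees with $T$ on $\mathcal{D}^\rho_\mathsf{r}$, so $l^\mathsf{L}=T$ by the density argument above.

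Finally, for isometry, \eqref{||ab||_b} gives $\|l^\mathsf{L}\|\le\|l\|_\mathsf{b}$, while applying the surjective construction to $T=l^\mathsf{L}$ returns $l$ (by injectivity) together with the bound $\|l\|_\mathsf{b}\le\|T\|=\|l^\mathsf{L}\|$; hence $\|l^\mathsf{L}\|=\|l\|_\mathsf{b}$ and $l\mapsto l^\mathsf{L}$ is an isometric bijective homomorphism onto $\mathsf{L}(\mathcal{C}^\rho_\mathsf{r})$. I expect the genuine difficulty to lie entirely in surjectivity, namely in producing the concrete section $l$ from the abstract operator $T$. The key realisation is that unit-fullness lets one evaluate $T$ fibrewise by inserting a local unit, so that the approximate unit $(d_\lambda)$ yields a genuine norm limit in each fibre $B_\gamma$; without a full supply of units there is no evident way to convert $T$ back into a section, which is exactly why this direction requires the unit-full hypothesis.
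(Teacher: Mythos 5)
Your proof is correct, and its skeleton matches the paper's: local units give injectivity, surjectivity comes from the approximate-unit construction $l(\gamma)=\lim_\lambda T(d_\lambda)(\gamma)$ (the paper's $l^\rho$), and the two contractivity bounds $\|l^\mathsf{L}\|\leq\|l\|_\mathsf{b}$ and $\|l\|_\mathsf{b}\leq\|l^\mathsf{L}\|$ combine to give isometry. Where you genuinely differ is in the supporting machinery. The paper defines $lk$ slice-by-slice, setting $lk_{\Gamma x}=(l(kd))_{\Gamma x}$ for a local unit $d$ at $x$, whereas you first prove the global inclusion $\mathcal{L}^\rho\subseteq\mathcal{H}^\rho$ (via $(kz)_{\Gamma x}=k_{\Gamma x}$ and \autoref{a*aDefined}) and then obtain $lk$ in one stroke from \autoref{a*b}. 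More substantially, the paper proves multiplicativity by re-running the associativity argument of \autoref{AssociativeProduct}, and closes the surjectivity step by showing $l(c)=l^\rho c$ for slice-supported $c$ and extending through linear combinations and $\mathsf{b}$-limits; you replace both steps with a single uniqueness principle --- two (automatically bounded) left multipliers agreeing on $\mathcal{D}^\rho_\mathsf{r}$ agree on the dense set $\mathcal{D}^\rho_\mathsf{r}\mathcal{C}^\rho_\mathsf{r}$, hence everywhere --- fed by the elementary identity $(ak)z=a(kz)$ for diagonal $z$, which needs no general associativity lemma. This is a cleaner decomposition, and you also make explicit that unit-fullness implies left- and right-fullness, which the paper uses tacitly when invoking \eqref{Lrho} under the unit-full hypothesis. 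The one step you should cite rather than leave implicit is that your constructed section $l$ really does define a left multiplier $l^\mathsf{L}$, i.e. $(la)b=l(ab)$ for $a,b\in\mathcal{C}^\rho_\mathsf{r}$; this is supplied by \autoref{AssociativeProduct2} together with $\mathcal{L}^\rho\mathcal{C}^\rho_\mathsf{r}\subseteq\mathcal{C}^\rho_\mathsf{r}$ from \eqref{Lrho}, as in the paragraph preceding the theorem, and it is precisely what licenses applying your density argument to the pair $l^\mathsf{L}$ and $T$.
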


\begin{proof}
Assume $\rho$ is unit-full and take $l,k\in\mathcal{L}^\rho$.  For each $x\in\Gamma^0$, we have $d\in\mathcal{D}^\rho_\mathsf{r}$ with $d(x)\in B^0$ and hence $k_{\Gamma x}=kd_x=(kd)_{\Gamma x}$.  Then $lk_{\Gamma x}=l(kd)_{\Gamma x}=(l(kd))_{\Gamma x}$ is defined, by \autoref{a*b}, as $l\in\mathcal{L}^\rho\subseteq\mathcal{S}^\rho_\mathsf{b}$ and $kd\in\mathcal{C}^\rho_\mathsf{r}$.  As $x\in\Gamma^0$ was arbitrary, this shows that $lk$ is defined.  For any $d\in\mathcal{D}^\rho_\mathsf{r}$, we see that $(lk)d=l(kd)\in\mathcal{L}^\rho\mathcal{C}^\rho_\mathsf{r}\subseteq\mathcal{C}^\rho_\mathsf{r}$ and hence $lk\in\mathcal{L}^\rho$.  Arguing just like in the proof of \autoref{AssociativeProduct}, we then see that $(kl)a=k(la)$, for all $a\in\mathcal{C}^\rho_\mathsf{r}$, and hence $(lk)^\mathsf{L}=l^\mathsf{L}\circ k^\mathsf{L}$.  This shows that the map $l\mapsto l^\mathsf{L}$ is an algebra homomorphism on $\mathcal{L}^\rho$.  Moreover, $\|l^\mathsf{L}(a)\|_\mathsf{b}=\|la\|_\mathsf{b}\leq\|l\|_\mathsf{b}\|a\|_\mathsf{b}$ so $\|l^\mathsf{L}\|\leq\|l\|_\mathsf{b}$, for all $l\in\mathcal{L}^\rho$, i.e. the map $l\mapsto l^\mathsf{L}$ is also contractive.  It is also injective, as $l^\mathsf{L}=k^\mathsf{L}$ implies that $l_{\Gamma x}=l^\mathsf{L}(d)_{\Gamma x}=k^\mathsf{L}(d)_{\Gamma x}=k_{\Gamma x}$, for any $x\in\Gamma^0$ and $d\in\mathcal{D}^\rho_\mathsf{r}$ with $d(x)\in B^0$, and hence $l=k$.

Take a left multiplier $l:A\rightarrow A$ and let $(d_\lambda)$ be an approximate unit for $\mathcal{D}^\rho_\mathsf{r}$ and hence $\mathcal{C}^\rho_\mathsf{r}$.  For each $x\in\Gamma^0$ and $d\in\mathcal{D}^\rho_\mathsf{r}$ with $d(x)\in B^0$, note that
\[l(d)_{\Gamma x}=\lim\nolimits_\mathsf{b}l(d_\lambda d)_{\Gamma x}=\lim\nolimits_\mathsf{b}(l(d_\lambda)d)_{\Gamma x}=\lim\nolimits_\mathsf{b}l(d_\lambda)d_x=\lim\nolimits_\mathsf{b}l(d_\lambda)_{\Gamma x}.\]
Thus we can define $l^\rho\in\mathcal{S}^\rho$ by $l^\rho_{\Gamma x}=l(d)_{\Gamma x}=\lim_\mathsf{b}l(d_\lambda)_{\Gamma x}$, for all $x\in\Gamma^0$.  For any $d\in\mathcal{D}^\rho_\mathsf{r}$, we then again see that $l(d)_{\Gamma x}=\lim_\mathsf{b}l(d_\lambda)_{\Gamma x}d_x=l^\rho d_x=(l^\rho d)_{\Gamma x}$ and hence $l^\rho d=l(d)\in\mathcal{C}^\rho_\mathsf{r}$, showing that $l^\rho\in\mathcal{L}^\rho$.  Moreover, for all $f\in\mathcal{F}^\rho$, we see that $l^\rho f=\lim_\mathsf{b}l(d_\lambda)f$ and hence $\|l^\rho f\|_2\leq\sup\|l(d_\lambda)\|_\mathsf{b}\|f\|_2\leq\|l\|\|f\|_2$ so $\|l^\rho\|_\mathsf{b}\leq\|l\|$.

Next note that, for any $c\in\mathcal{C}^\rho_\mathsf{r}$ such that $\mathrm{supp}(c)$ is a slice and any $y\in\Gamma^0$, we have $x\in\Gamma^0$ such that $(ac)_{\Gamma y}=ac_{\Gamma y}=a_{\Gamma x}c$, for all $a\in\mathcal{S}^\rho$, and hence
\[l(c)_{\Gamma y}=\lim\nolimits_\mathsf{b}l(d_\lambda c)_{\Gamma y}=\lim\nolimits_\mathsf{b}l(d_\lambda)c_{\Gamma y}=\lim\nolimits_\mathsf{b}l(d_\lambda)_{\Gamma x}c=l^\rho_{\Gamma x}c=l^\rho c_{\Gamma y}.\]
Thus $l(c)=l^\rho c$, for all such $c$.  This then extends to linear combinations and their $\mathsf{b}$-limits (as we have already shown that $l^\rho\in\mathcal{S}^\rho_\mathsf{b}$) and hence to all $c\in\mathcal{C}^\rho_\mathsf{b}$, thus showing that $l=l^{\rho\mathsf{L}}$.  This completes the proof that $l\mapsto l^\mathsf{L}$ is indeed a Banach algebra isomorphism of $\mathcal{L}^\rho$ onto $\mathsf{L}(\mathcal{C}^\rho_\mathsf{r})$.
\end{proof}

When $\rho[B^0]=\Gamma^0$ is locally compact and Hausdorff, $\rho$ is automatically unit-full (and hence left-full and right-full), thanks to \cite[Corollary 2.10]{Lazar2018}.  When $\rho$ is also categorical (see \autoref{CategoricalChars}), we can characterise the left ($\rho$-)multipliers as those $\mathsf{b}$-bounded sections that look locally like elements of the reduced C*-algebra.

\begin{prp}\label{LeftLocallyReduced}
If $\rho$ is categorical and $\Gamma^0$ is locally compact and Hausdorff then
\begin{align*}
\mathcal{L}^\rho&=\{a\in\mathcal{S}^\rho_\mathsf{b}:\forall\text{ compact }K\subseteq\Gamma^0\ \exists c\in\mathcal{C}^\rho_\mathsf{r}\ (a_{\Gamma K}=c_{\Gamma K})\}\\
&=\{a\in\mathcal{S}^\rho_\mathsf{b}:\forall x\in\Gamma^0\ \exists\text{ open }O\ni x\ \exists c\in\mathcal{C}^\rho_\mathsf{r}\ (a_{\Gamma O}=c_{\Gamma O})\}.
\end{align*}
\end{prp}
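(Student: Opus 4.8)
The plan is to prove the cycle of inclusions $\mathcal{L}^\rho\subseteq A\subseteq B\subseteq\mathcal{L}^\rho$, where $A$ and $B$ denote the first and second sets on the right hand side. Since $\Gamma^0$ is locally compact Hausdorff, $\rho$ is unit-full and in particular left-full, so \eqref{Lrho} applies and gives $\mathcal{C}^\rho_\mathsf{r}\subseteq\mathcal{L}^\rho\subseteq\mathcal{S}^\rho_\mathsf{b}$. Since $\rho$ is categorical, \autoref{CategoricalChars} provides a continuous unit section $\mathbf{1}\colon\Gamma^0\to B^0$, $x\mapsto1_x$, so for any continuous compactly supported $u\colon\Gamma^0\to[0,1]$ the section $u\mathbf{1}$ lies in $\mathcal{C}^{\rho_0}_\mathsf{c}\subseteq\mathcal{D}^\rho_\mathsf{r}$. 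The computation used throughout is that for $d\in\mathcal{D}^\rho_\mathsf{r}$ the product is the pointwise right action $(ad)(\gamma)=a(\gamma)d(\mathsf{s}(\gamma))$, since $d$ is supported on $\Gamma^0$.

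For $\mathcal{L}^\rho\subseteq A$: given $a\in\mathcal{L}^\rho$ and compact $K\subseteq\Gamma^0$, Urysohn yields continuous compactly supported $u\colon\Gamma^0\to[0,1]$ with $u=1$ on $K$; set $d=u\mathbf{1}\in\mathcal{D}^\rho_\mathsf{r}$. Then $c=ad\in\mathcal{C}^\rho_\mathsf{r}$ by the definition of $\mathcal{L}^\rho$, and as $d(\mathsf{s}(\gamma))=1_{\mathsf{s}(\gamma)}$ whenever $\mathsf{s}(\gamma)\in K$, the right-action formula gives $c_{\Gamma K}=a_{\Gamma K}$. For $A\subseteq B$: given $x\in\Gamma^0$, local compactness furnishes a compact neighbourhood $K$ of $x$; applying the $A$-condition to $K$ and restricting the resulting equality $a_{\Gamma K}=c_{\Gamma K}$ to $\Gamma O$ with $O=\mathrm{int}(K)\ni x$ produces the $B$-condition.

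The substance is $B\subseteq\mathcal{L}^\rho$. Fix $a\in B$ and $z\in\mathcal{D}^\rho_\mathsf{r}$; I must show $az\in\mathcal{C}^\rho_\mathsf{r}$. First I would show every $z\in\mathcal{D}^\rho_\mathsf{r}$ \emph{vanishes at infinity along $\Gamma^0$}: writing $z=\lim_\infty w_n$ with $w_n\in\mathcal{C}^\rho_\mathsf{c}$ a finite sum of sections supported on bicompact sets $K_j$, the set $\mathrm{supp}_\varepsilon(z)\subseteq\Gamma^0$ satisfies $\mathrm{supp}_\varepsilon(z)=\mathsf{s}[\mathrm{supp}_\varepsilon(z)]\subseteq\bigcup_j\mathsf{s}[K_j]$, a compact subset of $\Gamma^0$ (each $\mathsf{s}[K_j]$ being compact and closed in $\Gamma^0$ by bicompactness). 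Crucially this needs no continuity of $z$ on $\Gamma^0$, which can genuinely fail when $\Gamma$ is non-Hausdorff. Given this, a Urysohn section $d=u\mathbf{1}$ with $u=1$ on $\mathrm{supp}_\varepsilon(z)$ yields $\|z-zd\|_\mathsf{b}=\|z-zd\|_\infty\le\varepsilon$ (the norms agreeing on the slice $\Gamma^0$ by \autoref{21} and \autoref{infty2bprp}), so it suffices to treat $z'=zd$, which is supported on the compact set $\mathrm{supp}(u)\subseteq\Gamma^0$.

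For such compactly supported $z'$, cover $\mathrm{supp}(z')$ by finitely many of the open sets $O_{x_1},\dots,O_{x_m}$ provided by the $B$-condition, with $a_{\Gamma O_{x_i}}=(c_{x_i})_{\Gamma O_{x_i}}$ and $c_{x_i}\in\mathcal{C}^\rho_\mathsf{r}$. Choosing a subordinate partition of unity $u_1,\dots,u_m$ on $\Gamma^0$ and setting $\hat u_i=u_i\mathbf{1}\in\mathcal{C}^{\rho_0}_\mathsf{c}$, we get $z'=\sum_i\hat u_i z'$, while the right-action formula shows $a\hat u_i=c_{x_i}\hat u_i\in\mathcal{C}^\rho_\mathsf{r}$ (they agree because $a\hat u_i$ is supported on $\Gamma O_{x_i}$, where $a$ and $c_{x_i}$ coincide). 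Associativity (\autoref{AssociativeProduct2}, with $z'\in\mathcal{H}^\rho$ via \autoref{abinH}) then gives $az'=\sum_i(a\hat u_i)z'\in\mathcal{C}^\rho_\mathsf{r}$, and finally $az=\lim_\mathsf{b}az'\in\mathcal{C}^\rho_\mathsf{r}$ since $\|az-az'\|_\mathsf{b}\le\|a\|_\mathsf{b}\|z-z'\|_\mathsf{b}$. The main obstacle is precisely this reduction to compact support: one must extract vanishing at infinity for diagonal elements of $\mathcal{C}^\rho_\mathsf{r}$ over the Hausdorff $\Gamma^0$ \emph{without} assuming continuity on $\Gamma^0$, and this is exactly where the bicompactness of the supporting sets (making $\mathsf{s}[K_j]$ compact in $\Gamma^0$) and the categorical hypothesis (supplying an honest continuous unit section for the Urysohn and partition-of-unity steps) do the work.
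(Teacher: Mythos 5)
Your proof is correct, and its first two inclusions ($\mathcal{L}^\rho\subseteq A$ via Urysohn plus the categorical unit section, and $A\subseteq B$ via compact neighbourhoods) coincide with the paper's; the difference lies in how the cycle is closed. The paper proves $A\subseteq\mathcal{L}^\rho$ directly (and then $B\subseteq A$ separately by partition of unity): for an \emph{arbitrary} generator $b\in\mathcal{C}^\rho(K)$ of $\mathcal{C}^\rho_\mathsf{c}$, with $K$ bicompact but not necessarily contained in $\Gamma^0$, it observes that $ab=a_{\Gamma\mathsf{r}[K]}b=c_{\Gamma\mathsf{r}[K]}b=cb\in\mathcal{C}^\rho_\mathsf{r}$, where $c$ comes from the $A$-condition applied to the compact set $\mathsf{r}[K]\subseteq\Gamma^0$; this gives $a\mathcal{C}^\rho_\mathsf{c}\subseteq\mathcal{C}^\rho_\mathsf{r}$, which extends by \eqref{||ab||_b} and $\mathsf{b}$-boundedness of $a$ to $a\mathcal{C}^\rho_\mathsf{r}\subseteq\mathcal{C}^\rho_\mathsf{r}$, and in particular to $a\mathcal{D}^\rho_\mathsf{r}\subseteq\mathcal{C}^\rho_\mathsf{r}$ since $\mathcal{D}^\rho_\mathsf{r}\subseteq\mathcal{C}^\rho_\mathsf{r}=\mathrm{cl}_\mathsf{b}(\mathcal{C}^\rho_\mathsf{c})$. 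This completely sidesteps the question of whether diagonal sections vanish at infinity, because the $\mathsf{b}$-approximants from $\mathcal{C}^\rho_\mathsf{c}$ are compactly supported by fiat. You instead prove $B\subseteq\mathcal{L}^\rho$ directly: you first establish the vanishing-at-infinity property of $z\in\mathcal{D}^\rho_\mathsf{r}$ (correctly, and the point you flag \textendash\ that this must work without continuity of $z$ on $\Gamma^0$, using bicompactness of the supports of the $\infty$-approximants \textendash\ is exactly the right subtlety in the non-Hausdorff setting), reduce to $z'=zd$ with compact support, and then run the partition-of-unity argument, which in your arrangement does double duty, subsuming the paper's separate proof of $B\subseteq A$. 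Both routes are sound; the paper's is shorter and yields the slightly stronger fact $a\mathcal{C}^\rho_\mathsf{r}\subseteq\mathcal{C}^\rho_\mathsf{r}$ in passing, while yours isolates a reusable property of $\mathcal{D}^\rho_\mathsf{r}$ (compactness of $\varepsilon$-supports inside $\Gamma^0$) that the paper never needs to make explicit. Two trivial quibbles: $z'\in\mathcal{H}^\rho$ follows simply from $z'\in\mathcal{C}^\rho_\mathsf{r}\subseteq\mathcal{B}^\rho\subseteq\mathcal{H}^\rho$ rather than from \autoref{abinH}; and unit-fullness of $\rho$ uses $\rho[B^0]=\Gamma^0$ (i.e.\ the categorical hypothesis) together with local compactness of $\Gamma^0$, via the paper's appeal to Lazar's result.
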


\begin{proof}
Take any $a\in\mathcal{L}^\rho\subseteq\mathcal{S}^\rho_\mathsf{b}$ and compact $K\subseteq\Gamma^0$.  As $\Gamma^0$ is locally compact and Hausdorff, Urysohn's lemma yields continuous $f:\Gamma^0\rightarrow[0,1]$ with $L=\mathrm{cl}(\mathrm{supp}(f))\cap\Gamma^0$ compact and $f(x)=1$, for all $x\in K$.  As $\rho$ is categorical, \autoref{CategoricalChars} (3) applies and we then have $c\in\mathcal{C}^\rho(L)\subseteq\mathcal{C}^\rho_\mathsf{r}$ with $c(x)=f(x)1_x$, for all $x\in\Gamma^0$.  In particular, $c(x)=1_x$, for all $x\in K$, and hence $a_{\Gamma K}=(ac)_{\Gamma K}$ and $ac\in\mathcal{C}^\rho_\mathsf{r}$, as $a\in\mathcal{L}^\rho$.  This proves the $\subseteq$ part of the first line.

For the reverse inclusion, take $a\in\mathcal{S}^\rho_\mathsf{b}$ such that, for all compact $K\subseteq\Gamma^0$, we have $c\in\mathcal{C}^\rho_\mathsf{r}$ with $a_{\Gamma K}=c_{\Gamma K}$.  In particular, for any compact $K\subseteq\Gamma$ and $b\in\mathcal{C}^\rho(K)$, we see that $\mathsf{r}[K]$ is compact so we have $c\in\mathcal{C}^\rho_\mathsf{r}$ with $a_{\Gamma\mathsf{r}[K]}=c_{\Gamma\mathsf{r}[K]}$ and hence $ab=a_{\Gamma\mathsf{r}[K]}b=c_{\Gamma\mathsf{r}[K]}b=cb\in\mathcal{C}^\rho_\mathsf{r}$.  This immediately extends to linear combinations, showing that $a\mathcal{C}^\rho_\mathsf{c}\subseteq\mathcal{C}^\rho_\mathsf{r}$.  As $a$ is $\mathsf{b}$-bounded, this further extends to $\mathsf{b}$-limits and hence $a\mathcal{D}^\rho_\mathsf{r}\subseteq a\mathcal{C}^\rho_\mathsf{r}\subseteq\mathcal{C}^\rho_\mathsf{r}$, showing that $a\in\mathcal{L}^\rho$, completing the proof of the first line.

The $\subseteq$ of the second line is immediate from fact that each $x\in\Gamma^0$ has a compact neighbourhood.  Conversely, say we have $a\in\mathcal{S}^\rho_\mathsf{b}$ such that, for all $x\in\Gamma^0$, we have open neighbourhood $O\subseteq\Gamma^0$ and $c\in\mathcal{C}^\rho_\mathsf{r}$ with $a_{\Gamma O}=c_{\Gamma O}$.  Any compact $K$ can be covered by finitely many such $O$'s.  Using a subordinate partition of unity, we can then combine the corresponding $c$'s into a single $c\in\mathcal{C}^\rho_\mathsf{r}$ with $a_{\Gamma K}=c_{\Gamma K}$, thus completing the proof the second equality.
\end{proof}

We can also consider \emph{right multipliers} of a C*-algebra $A$, i.e. linear $r:A\rightarrow A$ with $r(ab)=ar(b)$, for all $a,b\in A$.  The right multiplier algebra $\mathsf{R}(A)$ is anti-isomorphic to the left multiplier algebra $\mathsf{L}(A)$ via $m\mapsto m^*$, where $m^*(a)=m(a^*)^*$.

We also have \emph{right $\rho$-multipliers} given by
\[\mathcal{R}^\rho=\{a\in\mathcal{S}^\rho:\forall z\in\mathcal{D}^\rho_\mathsf{r}\ (za\in\mathcal{C}_\mathsf{r}^\rho)\},\]
which again correspond bijectively to left $\rho$-multipliers under the map $a\mapsto a^*$.  As above, any $r\in\mathcal{R}^\rho$ defines a right multiplier $r^\mathsf{R}$ of $\mathcal{C}^\rho_\mathsf{r}$ given by $r^\mathsf{R}(a)=ar$.  Noting that $l^{\mathsf{L}*}=l^{*\mathsf{R}}$, for all $l\in\mathcal{L}^\rho$, we can then obtain dual results for right ($\rho$-)multipliers directly from the above results for left ($\rho$-)multipliers.

A \emph{multiplier} of a C*-algebra $A$ is a pair $(r,l)\in\mathsf{R}(A)\times\mathsf{L}(A)$ which satisfies $r(a)b=al(b)$, for all $a,b\in A$.  These form a C*-algebra denoted by $\mathsf{M}(A)$ where $(q,k)(r,l)=(r\circ q,k\circ l)$, $(r,l)^*=(l^*,r^*)$ and $\|(r,l)\|=\|r\|=\|l\|$.

Likewise, we have the \emph{$\rho$-multipliers} given by
\[\mathcal{M}^\rho=\mathcal{R}^\rho\cap\mathcal{L}^\rho=\{a\in\mathcal{S}^\rho:\forall z\in\mathcal{D}^\rho_\mathsf{r}\ (za,az\in\mathcal{C}_\mathsf{r}^\rho)\}.\]

\begin{cor}
If $\rho$ is unit-full then $\mathcal{M}^\rho$ is isomorphic to $\mathsf{M}(\mathcal{C}^\rho_\mathsf{r})$ via the map
\[m\mapsto(m^\mathsf{R},m^\mathsf{L}).\]
\end{cor}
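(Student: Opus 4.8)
The plan is to show that $\Psi\colon m\mapsto(m^\mathsf{R},m^\mathsf{L})$ is a well-defined bijective $*$-homomorphism; since its target $\mathsf{M}(\mathcal{C}^\rho_\mathsf{r})$ is a C*-algebra, this makes $\Psi$ automatically isometric, hence a C*-isomorphism. I would begin with well-definedness. For $m\in\mathcal{M}^\rho=\mathcal{R}^\rho\cap\mathcal{L}^\rho$, the remarks above already record that $m\in\mathcal{L}^\rho$ defines a left multiplier $m^\mathsf{L}\in\mathsf{L}(\mathcal{C}^\rho_\mathsf{r})$ and $m\in\mathcal{R}^\rho$ a right multiplier $m^\mathsf{R}\in\mathsf{R}(\mathcal{C}^\rho_\mathsf{r})$, so the only genuine condition is the multiplier compatibility $m^\mathsf{R}(a)b=a\,m^\mathsf{L}(b)$, i.e. $(am)b=a(mb)$ for all $a,b\in\mathcal{C}^\rho_\mathsf{r}$. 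This is exactly \autoref{AssociativeProduct}, applied with $a^*,b\in\mathcal{H}^\rho$ (using that $\mathcal{C}^\rho_\mathsf{r}\subseteq\mathcal{B}^\rho$ is $*$-closed) and $m\in\mathcal{S}^\rho_\mathsf{b}$.

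Next I would check that $\Psi$ is a $*$-homomorphism. Linearity is immediate. For multiplicativity, first note $\mathcal{M}^\rho$ is closed under convolution: unit-fullness guarantees $mn$ is defined (as in the left-multiplier theorem above), and $m,n\in\mathcal{L}^\rho$ forces $mn\in\mathcal{L}^\rho$ while $m,n\in\mathcal{R}^\rho$ forces $mn\in\mathcal{R}^\rho$, the product being unambiguous since convolution is unique where defined. The identity $(mn)^\mathsf{L}=m^\mathsf{L}\circ n^\mathsf{L}$ (established in the left-multiplier theorem) together with its right-handed dual $(mn)^\mathsf{R}=n^\mathsf{R}\circ m^\mathsf{R}$ then matches the product in $\mathsf{M}(\mathcal{C}^\rho_\mathsf{r})$, so $\Psi(mn)=\Psi(m)\Psi(n)$. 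Since $*$ interchanges $\mathcal{L}^\rho$ and $\mathcal{R}^\rho$, the set $\mathcal{M}^\rho$ is $*$-closed, and a one-line computation gives $(m^*)^\mathsf{R}=(m^\mathsf{L})^*$ and $(m^*)^\mathsf{L}=(m^\mathsf{R})^*$ --- precisely the relation $l^{\mathsf{L}*}=l^{*\mathsf{R}}$ recorded above --- whence $\Psi(m^*)=((m^\mathsf{L})^*,(m^\mathsf{R})^*)=\Psi(m)^*$.

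Injectivity is inherited from the left-multiplier isomorphism: $\Psi(m)=0$ gives $m^\mathsf{L}=0$, whence $m=0$. For surjectivity I would start from an arbitrary $(r,l)\in\mathsf{M}(\mathcal{C}^\rho_\mathsf{r})$ and invoke the left- and right-multiplier theorems to produce $a\in\mathcal{L}^\rho$ with $a^\mathsf{L}=l$ and $b\in\mathcal{R}^\rho$ with $b^\mathsf{R}=r$; everything then reduces to proving $a=b$, after which $a\in\mathcal{M}^\rho$ and $\Psi(a)=(r,l)$. Specializing the compatibility relation $r(c)d=c\,l(d)$, i.e. $(cb)d=c(ad)$, to $c,d\in\mathcal{D}^\rho_\mathsf{r}$ (which act by pointwise multiplication since they are supported on $\Gamma^0$) and evaluating at an arbitrary $\gamma\in\Gamma$ yields $c(\mathsf{r}(\gamma))\,b(\gamma)\,d(\mathsf{s}(\gamma))=c(\mathsf{r}(\gamma))\,a(\gamma)\,d(\mathsf{s}(\gamma))$. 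Here unit-fullness is the crucial ingredient: it lets me choose $c,d\in\mathcal{D}^\rho_\mathsf{r}$ with $c(\mathsf{r}(\gamma)),d(\mathsf{s}(\gamma))\in B^0$, so that these factors act as the left and right identities on $B_\gamma$, collapsing the equation to $b(\gamma)=a(\gamma)$. As $\gamma$ was arbitrary, $a=b$.

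I expect the surjectivity step, and within it the identity $a=b$, to be the main obstacle. A priori $l$ and $r$ are reconstructed as genuinely different sections --- one through the source-fibre restrictions $a_{\Gamma x}$ and the other through the range-fibre restrictions $b_{x\Gamma}$ --- and it is only the pointwise argument above, powered by unit-fullness, that forces them to coincide; without a full supply of units there is no reason for the two reconstructions to agree. Finally, since $\|m\|_\mathsf{b}=\|m^\mathsf{L}\|=\|\Psi(m)\|$ by the left-multiplier isometry (and the definition $\|(r,l)\|=\|l\|$), the map $\Psi$ is isometric, so $\mathcal{M}^\rho$ is a C*-algebra under $\|\cdot\|_\mathsf{b}$ and $\Psi$ is the desired C*-isomorphism.
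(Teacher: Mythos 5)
Your proof is correct and takes essentially the same approach as the paper: the crux --- surjectivity, reduced to showing the two reconstructed sections coincide by evaluating the compatibility relation $(cb)d=c(ad)$ at each $\gamma\in\Gamma$ with $c(\mathsf{r}(\gamma)),d(\mathsf{s}(\gamma))\in B^0$ supplied by unit-fullness --- is exactly the paper's argument. The extra verifications you spell out (well-definedness via associativity, the $*$-homomorphism identities, injectivity, isometry) are the routine steps the paper leaves implicit.
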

 
\begin{proof}
If $\rho$ is unit-full then we already know that every multiplier is of the form $(r^\mathsf{R},l^\mathsf{L})$, for some $r\in\mathcal{R}^\rho$ and $l\in\mathcal{L}^\rho$.  It only remains to show that $r=l$.  To see this, take any $\gamma\in\Gamma$.  For any $c,d\in\mathcal{D}^\rho_\mathsf{r}$ with $c(\mathsf{r}(\gamma)),d(\mathsf{s}(\gamma))\in B^0$, we then see that
\[r(\gamma)=crd(\gamma)=(r^\mathsf{R}(c)d)(\gamma)=(cl^\mathsf{L}(d))(\gamma)=cld(\gamma)=l(\gamma).\]
As $\gamma$ was arbitrary, this shows that $l=r$.
\end{proof}

\subsection{The Essential C*-Algebra}

The essential C*-algebra is a quotient of the reduced C*-algebra designed to `mod out' any discontinuities.  Exel and Pitts first defined such a quotient in \cite{ExelPitts2022} for topologically principal groupoids, which was then extended to Fell bundles over more general \'etale groupoids by Kwa\'sniewski and Meyer in \cite{KwasniewskiMeyer2021}.  We basically follow the Kwa\'sniewski-Meyer approach, although we avoid the need to consider any kind of generalised expectations because we are already dealing with concrete sections of the Fell bundle.

At first it will again be convenient to consider more general classes of sections defined as follows.
\begin{align*}
\mathcal{S}^\rho_\sigma&=\{a\in\mathcal{S}^\rho_\mathsf{b}:\mathrm{supp}(a)\subseteq K,\text{ for some $\sigma$-bicompact }K\subseteq\Gamma\}.\\
\mathcal{S}^\rho_\mathsf{m}&=\{a\in\mathcal{S}^\rho_\sigma:\mathrm{supp}(a)\text{ is meagre}\}.\\
\mathcal{B}^\rho_\sigma&=\mathcal{B}^\rho\cap\mathcal{S}^\rho_\sigma.\\
\mathcal{B}^\rho_\mathsf{m}&=\mathcal{B}^\rho\cap\mathcal{S}^\rho_\mathsf{m}.
\end{align*}

\begin{prp}
$\mathcal{B}^\rho_\sigma$ is a C*-algebra and $\mathcal{S}^\rho_\mathsf{m}$ is a Banach *-bimodule over $\mathcal{B}^\rho_\sigma$.
\end{prp}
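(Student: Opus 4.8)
The plan is to realise $\mathcal{B}^\rho_\sigma$ as a closed *-subalgebra of the C*-algebra $\mathcal{B}^\rho$ from \autoref{BigCstar}, and $\mathcal{S}^\rho_\mathsf{m}$ as a $\mathsf{b}$-closed *-invariant subspace of $\mathcal{S}^\rho_\mathsf{b}$ over which the bimodule structure is simply inherited from the fact that $\mathcal{S}^\rho_\mathsf{b}$ is already a Banach *-bimodule over $\mathcal{B}^\rho$. The only genuinely new content is bookkeeping on supports, for which the key tools are \autoref{CompactProducts} (bicompact sets are closed under products) and the meagreness transfer \eqref{sMeagre}.

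First I would check $\mathcal{B}^\rho_\sigma$ is a *-subalgebra. Given $a,b\in\mathcal{B}^\rho_\sigma$ with $\mathrm{supp}(a)\subseteq K$ and $\mathrm{supp}(b)\subseteq L$ for $\sigma$-bicompact $K,L$, we have $\mathrm{supp}(a+b)\subseteq K\cup L$, while $\mathrm{supp}(ab)\subseteq\mathrm{supp}(a)\mathrm{supp}(b)\subseteq KL$, and $KL$ is $\sigma$-bicompact by \autoref{CompactProducts} (writing $K=\bigcup_mK_m$, $L=\bigcup_nL_n$ and noting $KL=\bigcup_{m,n}K_mL_n$). As $\mathcal{B}^\rho$ is itself closed under these operations, $a+b,ab\in\mathcal{B}^\rho_\sigma$; likewise $\mathrm{supp}(a^*)=\mathrm{supp}(a)^{-1}\subseteq K^{-1}$, which is again $\sigma$-bicompact since the inverse map is a homeomorphism carrying bicompact sets to bicompact sets, so $a^*\in\mathcal{B}^\rho_\sigma$. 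For closedness, take a $\mathsf{b}$-Cauchy sequence $(a_n)\subseteq\mathcal{B}^\rho_\sigma$ with $\mathrm{supp}(a_n)\subseteq K_n$ and $\mathsf{b}$-limit $a\in\mathcal{B}^\rho$. Since $\|\cdot\|_\infty\leq\|\cdot\|_\mathsf{b}$ by \eqref{InfinityBelow2} and \eqref{infty2b}, the $a_n$ converge to $a$ pointwise, so $a$ vanishes off $\bigcup_nK_n$, giving $\mathrm{supp}(a)\subseteq\bigcup_nK_n$, a $\sigma$-bicompact set. Thus $a\in\mathcal{B}^\rho_\sigma$, and a closed *-subalgebra of a C*-algebra is a C*-algebra.

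For the bimodule, I would first verify $\mathcal{S}^\rho_\mathsf{m}$ is a $\mathsf{b}$-closed *-invariant subspace of the Banach space $\mathcal{S}^\rho_\mathsf{b}$ (\autoref{bBanach}). Being a subspace and *-invariance follow as above, using that the inverse map preserves both $\sigma$-bicompactness and, being a homeomorphism, meagreness. For closedness, if $(a_n)\subseteq\mathcal{S}^\rho_\mathsf{m}$ is $\mathsf{b}$-Cauchy with limit $a$, then pointwise convergence gives $\mathrm{supp}(a)\subseteq\bigcup_n\mathrm{supp}(a_n)$ (any $\gamma$ with $a(\gamma)\neq0$ lies in $\mathrm{supp}(a_n)$ for all large $n$), a countable union of meagre sets and hence meagre; combined with $\mathrm{supp}(a)\subseteq\bigcup_nK_n$ this gives $a\in\mathcal{S}^\rho_\mathsf{m}$.

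The crux is that the module products stay in $\mathcal{S}^\rho_\mathsf{m}$. Given $a\in\mathcal{B}^\rho_\sigma$ and $m\in\mathcal{S}^\rho_\mathsf{m}$, both $am$ and $ma$ are well-defined elements of $\mathcal{S}^\rho_\mathsf{b}$ because $\mathcal{S}^\rho_\mathsf{b}$ is a Banach *-bimodule over $\mathcal{B}^\rho$; their supports lie in $\mathrm{supp}(a)\mathrm{supp}(m)$, resp.\ $\mathrm{supp}(m)\mathrm{supp}(a)$, which are contained in $\sigma$-bicompact sets exactly as in the first paragraph. The obstacle is showing these supports are meagre. Here I would exploit \eqref{sMeagre}: setting $S=\mathrm{supp}(a)$ (contained in $\sigma$-bicompact $K$) and $N=\mathrm{supp}(m)$ (meagre, contained in $\sigma$-bicompact $L$), every $\gamma\in SN$ factors as $\gamma=\alpha\beta$ with $\alpha\in S$ and $\beta\in N$, so $\mathsf{s}(\gamma)=\mathsf{s}(\beta)$ and hence $\mathsf{s}[SN]\subseteq\mathsf{s}[N]$. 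Since $N$ is meagre inside the $\sigma$-compact $L$, \eqref{sMeagre} makes $\mathsf{s}[N]$ meagre, so $\mathsf{s}[SN]$ is meagre; as $SN\subseteq KL$ is $\sigma$-compact, the reverse implication in \eqref{sMeagre} returns that $SN$ is meagre. The symmetric argument with $\mathsf{r}$ (or the involution, using *-invariance of both $\mathcal{B}^\rho_\sigma$ and $\mathcal{S}^\rho_\mathsf{m}$) handles $ma$, so $am,ma\in\mathcal{S}^\rho_\mathsf{m}$. Finally, the remaining bimodule axioms — distributivity, the bounds $\|am\|_\mathsf{b}\leq\|a\|_\mathsf{b}\|m\|_\mathsf{b}$ via \eqref{||ab||_b}, the identities $(am)^*=m^*a^*$ and $\|m^*\|_\mathsf{b}=\|m\|_\mathsf{b}$ from \eqref{infty2b}, and associativity whenever two of the three factors lie in $\mathcal{B}^\rho_\sigma\subseteq\mathcal{B}^\rho$ — are inherited verbatim from the ambient Banach *-bimodule $\mathcal{S}^\rho_\mathsf{b}$ over $\mathcal{B}^\rho$, completing the proof.
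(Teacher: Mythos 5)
Your proposal is correct and takes essentially the same approach as the paper: support bookkeeping via \autoref{CompactProducts} (closure of $\sigma$-bicompact sets under unions, inverses and products) makes $\mathcal{B}^\rho_\sigma$ a $\mathsf{b}$-closed *-subalgebra of $\mathcal{B}^\rho$, and the two-way use of \eqref{sMeagre} shows products of $\mathcal{B}^\rho_\sigma$ with $\mathcal{S}^\rho_\mathsf{m}$ again have meagre support. The only cosmetic difference is that the paper applies the range-map version of \eqref{sMeagre} with the meagre factor on the left, whereas you apply the source-map version with it on the right and handle the other side by symmetry.
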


\begin{proof}
For any $a,b\in\mathcal{S}^\rho$ and $\lambda\in\mathbb{C}$, note that $\mathrm{supp}(a+b)\subseteq\mathrm{supp}(a)\cup\mathrm{supp}(b)$, $\mathrm{supp}(\lambda a)\subseteq\mathrm{supp}(a)=\mathrm{supp}(a^*)^{-1}$ and $\mathrm{supp}(ab)\subseteq\mathrm{supp}(a)\mathrm{supp}(b)$.  As $\sigma$-bicompact subsets are closed under pairwise unions, inverses and products (by \autoref{CompactProducts}), it follows that $\mathcal{B}^\rho_\sigma$ is a *-subalgebra of $\mathcal{B}^\rho$.  Also, whenever $a=\lim_\mathsf{b}a_n$ in $\mathcal{S}^\rho_\mathsf{b}$, we see that $\mathrm{supp}(a)\subseteq\bigcup\mathrm{supp}(a_n)$.  As $\sigma$-bicompact subsets are closed under countable unions, it follows that $\mathcal{B}^\rho_\sigma$ is $\mathsf{b}$-closed too and hence a C*-subalgebra of $\mathcal{B}^\rho$.

Likewise, we see that $\mathcal{S}^\rho_\mathsf{m}$ is a $\mathsf{b}$-closed *-invariant subspace of $\mathcal{S}^\rho_\sigma$.  To see that it is also closed under products with $\mathcal{B}^\rho_\sigma$, take any $a\in\mathcal{S}^\rho_\mathsf{m}$ and $b\in\mathcal{B}^\rho_\sigma$.  Then $\mathsf{r}(\mathrm{supp}(ab))\subseteq\mathsf{r}(\mathrm{supp}(a))$ is meagre, by \eqref{sMeagre}, and hence $\mathrm{supp}(ab)$ is meagre, again by \eqref{sMeagre}, so $ab\in\mathcal{S}^\rho_\mathsf{m}$.  Thus $\mathcal{S}^\rho_\mathsf{m}$ is a sub-*-bimodule of $\mathcal{S}^\rho_\sigma$ over $\mathcal{B}^\rho_\sigma$.
\end{proof}

It follows that $\mathcal{B}^\rho_\mathsf{m}$ a closed ideal of $\mathcal{B}^\rho_\sigma$.  Taking the quotient, we obtain the \emph{essentially big C*-algebra}, which we denote by
\[\mathcal{B}^\rho_\mathsf{e}=\mathcal{B}^\rho_\sigma/\mathcal{B}^\rho_\mathsf{m}.\]
Let $\pi_\mathsf{e}:\mathcal{B}_\sigma\rightarrow\mathcal{B}_\mathsf{e}$ be the canonical homomorphism and let $\|\cdot\|_\mathsf{e}$ denote the seminorm on $\mathcal{B}_\sigma$ coming from the quotient norm on $\mathcal{B}^\rho_\mathsf{e}$, i.e.
\[\|a\|_\mathsf{e}=\|\pi_\mathsf{e}(a)\|=\inf_{b\in\mathcal{B}^\rho_\mathsf{m}}\|a-b\|_\mathsf{b}.\]
In particular, for any $G\subseteq\Gamma$ such that $\mathrm{supp}(a)\setminus G$ is meagre,
\[\|a\|_\mathsf{e}\leq\|a-a_{\mathrm{supp}(a)\setminus G}\|_\mathsf{b}=\|a_G\|_\mathsf{b}.\]
We can even obtain equality here for certain special $G$.

\begin{prp}\label{ComeagreEssentialNorm}
For every $a\in\mathcal{B}^\rho_\sigma$ and meagre $M\subseteq\Gamma$, we have $\sigma$-bicompact $K\subseteq\Gamma$ and a subgroupoid $G\subseteq K$ such that $G\cap M=\emptyset$, $\mathrm{supp}(a)\setminus G$ is meagre and
\[\|a\|_\mathsf{e}=\|a_G\|_\mathsf{b}.\]
\end{prp}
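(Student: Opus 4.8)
The plan is to prove the two inequalities separately. One direction, $\|a\|_\mathsf{e}\leq\|a_G\|_\mathsf{b}$, will come for free from the displayed inequality recorded just before the statement, as soon as $G$ is a subgroupoid with $\mathrm{supp}(a)\setminus G$ meagre. The real work is to engineer $G$ so that also $\|a_G\|_\mathsf{b}\leq\|a\|_\mathsf{e}$, and the key idea is that $G$ should avoid not just the given $M$ but also a meagre set of \emph{units} on which $a$ carries more than its essential norm. Accordingly, I would first isolate such a meagre unit set and then feed it, together with $M$, into \autoref{ComeagreSubgroupoids}.

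First I would establish the auxiliary identity
\[\|a\|_\mathsf{e}=\inf\{\|a_{\Gamma(\Gamma^0\setminus Z)}\|_\mathsf{b}:Z\subseteq\Gamma^0\text{ meagre}\}.\]
For ``$\leq$'', given meagre $Z$ the restriction $a_{\Gamma Z}$ lies in $\mathcal{B}^\rho_\mathsf{m}$: its support has source units in $Z$, hence is meagre by \eqref{sMeagre}, and $(a_{\Gamma Z})^*(a_{\Gamma Z})$ and $(a_{\Gamma Z})(a_{\Gamma Z})^*$ are defined because the defining sums at each unit are subsums of the (already convergent, positive) sums for $a^*a$ and $aa^*$, so \autoref{a*aDefined} applies; as $a-a_{\Gamma Z}=a_{\Gamma(\Gamma^0\setminus Z)}$ this yields $\|a\|_\mathsf{e}\leq\|a_{\Gamma(\Gamma^0\setminus Z)}\|_\mathsf{b}$. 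For ``$\geq$'', given $b\in\mathcal{B}^\rho_\mathsf{m}$ put $Z=\mathsf{s}[\mathrm{supp}(b)]\cup\mathsf{r}[\mathrm{supp}(b)]$, meagre by \eqref{sMeagre}; then $b_{\Gamma(\Gamma^0\setminus Z)}=0$, so $a_{\Gamma(\Gamma^0\setminus Z)}=(a-b)_{\Gamma(\Gamma^0\setminus Z)}$ and \eqref{bRestriction} gives $\|a_{\Gamma(\Gamma^0\setminus Z)}\|_\mathsf{b}\leq\|a-b\|_\mathsf{b}$, whence the infimum is $\leq\|a\|_\mathsf{e}$. Taking meagre $Z_n$ with $\|a_{\Gamma(\Gamma^0\setminus Z_n)}\|_\mathsf{b}<\|a\|_\mathsf{e}+\tfrac1n$ and setting $Z^*=\bigcup_nZ_n$ (still meagre), the monotonicity in \eqref{bRestriction} forces $\|a_{\Gamma(\Gamma^0\setminus Z^*)}\|_\mathsf{b}=\|a\|_\mathsf{e}$; so the infimum is attained at the \emph{single} meagre set $Z^*$.

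Now I would apply \autoref{ComeagreSubgroupoids} to $S=\mathrm{supp}(a)$ and the meagre set $M\cup Z^*$ (here $Z^*$ is meagre in $\Gamma$ since, being a subset of the open set $\Gamma^0$, meagreness is unaffected by the ambient space, as in \eqref{sNvsN}), obtaining $\sigma$-bicompact $L$ and subgroupoids $G,H\subseteq L$ with $\mathrm{supp}(a)\subseteq G\cup H$, $G\cap(M\cup Z^*)=\emptyset$, $H$ meagre, and $(\mathsf{s}[G]\cup\mathsf{r}[G])\cap(\mathsf{s}[H]\cup\mathsf{r}[H])=\emptyset$. Set $K=L$ and write $X_G=\mathsf{s}[G]\cup\mathsf{r}[G]$. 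The separation condition forces $a_G=a_{\Gamma X_G}$: any $\gamma\in\mathrm{supp}(a)$ with $\mathsf{s}(\gamma)\in X_G$ cannot lie in $H$ (else $\mathsf{s}(\gamma)\in\mathsf{s}[H]$, contradicting disjointness), hence lies in $G$. Since $G$ is a subgroupoid, $G\cap Z^*=\emptyset$ gives $X_G\subseteq\Gamma^0\setminus Z^*$, so \eqref{bRestriction} yields $\|a_G\|_\mathsf{b}=\|a_{\Gamma X_G}\|_\mathsf{b}\leq\|a_{\Gamma(\Gamma^0\setminus Z^*)}\|_\mathsf{b}=\|a\|_\mathsf{e}$. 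The reverse inequality $\|a\|_\mathsf{e}\leq\|a_G\|_\mathsf{b}$ is exactly the displayed inequality preceding the statement, valid since $\mathrm{supp}(a)\setminus G\subseteq H$ is meagre. As $G\cap M=\emptyset$ as well, all the assertions follow.

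The main obstacle is conceptual rather than computational: one must realise that avoiding the given $M$ alone is insufficient, since a subgroupoid avoiding only $M$ may still run through units where $a$ overshoots its essential norm, making $\|a_G\|_\mathsf{b}$ too large. The substance of the argument is therefore the reduction in the second paragraph, pinning $\|a\|_\mathsf{e}$ down to a restriction over a single meagre unit set $Z^*$, after which \autoref{ComeagreSubgroupoids} performs the geometric work of producing a subgroupoid that simultaneously avoids $M$, avoids $Z^*$, and captures $\mathrm{supp}(a)$ off a meagre set. The only technical points to check are that the various $0$-restrictions stay inside $\mathcal{B}^\rho$, which I would handle uniformly via \autoref{a*aDefined} and \autoref{a*b} since restriction by unit sets only passes to subsums of convergent positive sums.
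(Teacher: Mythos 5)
Your proof is correct, and it runs on the same underlying machinery as the paper's own: \autoref{ComeagreSubgroupoids} as the geometric engine, the restriction inequality \eqref{bRestriction}, the displayed inequality $\|a\|_\mathsf{e}\leq\|a_G\|_\mathsf{b}$ preceding the statement, and the key trick that a section with meagre support vanishes on $\Gamma X$ once $X$ avoids the source image of its support. The difference is in the decomposition. The paper takes $(b_n)\subseteq\mathcal{B}^\rho_\mathsf{m}$ with $\|a-b_n\|_\mathsf{b}\rightarrow\|a\|_\mathsf{e}$, feeds $N=\bigcup_n\mathrm{supp}(b_n)$ into \autoref{ComeagreSubgroupoids} both as part of the set to be covered (it applies the lemma to $S=\mathrm{supp}(a)\cup N$) and as part of the meagre set to be avoided ($M\cup N$), and then concludes via $\|a_G\|_\mathsf{b}=\|(a-b_n)_{\Gamma G}\|_\mathsf{b}\leq\|a-b_n\|_\mathsf{b}$ for all $n$. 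You instead first condense the approximating sequence into a single meagre unit set $Z^*$ through the attained-infimum identity $\|a\|_\mathsf{e}=\inf_Z\|a_{\Gamma(\Gamma^0\setminus Z)}\|_\mathsf{b}$ (infimum over meagre $Z\subseteq\Gamma^0$, attained at $Z^*$), after which the lemma only needs to cover $\mathrm{supp}(a)$ while avoiding $M\cup Z^*$. What your route buys is a clean, reusable characterisation of the essential seminorm as a minimum over restrictions to comeagre unit sets, and a slightly lighter application of the subgroupoid lemma; what it costs is the extra verification that $a_{\Gamma Z}\in\mathcal{B}^\rho_\mathsf{m}$ (your subsum argument for the $*$-squares via \autoref{a*aDefined}, plus \eqref{sMeagre} for meagreness of the support, is sound, and your remark that meagreness of $Z^*\subseteq\Gamma^0$ passes to $\Gamma$ because $\Gamma^0$ is open is exactly the point that needs saying). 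The paper avoids this layer by only ever restricting the differences $a-b_n$, whose relevant memberships are given; in substance, though, the two arguments exploit the identical mechanism, just packaged at different levels of generality.
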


\begin{proof}
Take $(b_n)\subseteq\mathcal{B}^\rho_\mathsf{m}$ with $\|a-b_n\|_\mathsf{b}\rightarrow\|a\|_\mathsf{e}$ and let $N=\bigcup_{n\in\mathbb{N}}\mathrm{supp}(b_n)$.  By \autoref{ComeagreSubgroupoids}, we have $\sigma$-bicompact $L\subseteq\Gamma$ and subgroupoids $G,H\subseteq L$ with $H$ meagre, $\mathrm{supp}(a)\cup N\subseteq G\cup H$, $G\cap(M\cup N)=\emptyset$,  and $(\mathsf{s}[G]\cup\mathsf{r}[G])\cap(\mathsf{s}[H]\cup\mathsf{r}[H])=\emptyset$.  Note $\mathrm{supp}(a)\setminus G\subseteq H$ is also meagre and hence $\|a\|_\mathsf{e}\leq\|a_G\|_\mathsf{b}$.  Moreover, $a_G=a_{\Gamma G}$ because $\mathsf{s}[\mathrm{supp}(a)\setminus G]\subseteq\mathsf{s}[H]\subseteq\Gamma^0\setminus\mathsf{s}[G]$.  And for all $n\in\mathbb{N}$, $a_{\Gamma G}=(a-b_n)_{\Gamma G}$ because $N\subseteq H$ so $\mathsf{s}[\mathrm{supp}(b_n)]\subseteq\mathsf{s}[N]\subseteq\mathsf{s}[H]\subseteq\Gamma^0\setminus\mathsf{s}[G]$.  It follows that
\[\|a_G\|_\mathsf{b}=\|a_{\Gamma G}\|_\mathsf{b}=\|(a-b_n)_{\Gamma G}\|_\mathsf{b}\leq\|a-b_n\|_\mathsf{b},\]
for all $n\in\mathbb{N}$, by \eqref{bRestriction} (noting $\Gamma G=\Gamma\mathsf{s}[G]$), and hence $\|a_G\|_\mathsf{b}=\|a\|_\mathsf{e}$.
\end{proof}

The \emph{essential C*-algebra} of $\rho$ is the quotient of the reduced C*-algebra $\mathcal{C}^\rho_\mathsf{r}$ by the \emph{singular ideal}  $\mathcal{C}^\rho_\mathsf{m}=\mathcal{B}^\rho_\mathsf{m}\cap\mathcal{C}^\rho_\mathsf{r}$, which we denote by
\[\mathcal{C}^\rho_\mathsf{e}=\mathcal{C}^\rho_\mathsf{r}/\mathcal{C}^\rho_\mathsf{m}\approx\pi_\mathsf{e}[\mathcal{C}^\rho_\mathsf{r}].\]
The following shows that we can view any element of $\mathcal{C}^\rho_\mathsf{e}$ as a section $a$ on a subgroupoid of $\Gamma$ obtained by ignoring the points on which $a$ and $a_\infty$ are discontinuous (where again $a_\infty:\Gamma\rightarrow\mathbb{R}_+$ is the norm function defined by $a_\infty(\gamma)=\|a(\gamma)\|$).

\begin{thm}
For every $a\in\mathcal{C}^\rho_\mathsf{r}$, we have $\sigma$-bicompact $K\subseteq\Gamma$ and a subgroupoid $G\subseteq K$ contained in $\mathcal{C}(a)\cap\mathcal{C}(a_\infty)$ with $\mathrm{supp}(a)\setminus G$ meagre and $\|a\|_\mathsf{e}=\|a_G\|_\mathsf{b}$.
\end{thm}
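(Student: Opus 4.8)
The plan is to obtain $K$ and $G$ from a single application of \autoref{ComeagreEssentialNorm}, fed with the set $M=\Gamma\setminus(\mathcal{C}(a)\cap\mathcal{C}(a_\infty))$. Provided this $M$ is meagre and $a\in\mathcal{B}^\rho_\sigma$, that proposition yields $\sigma$-bicompact $K$ and a subgroupoid $G\subseteq K$ with $G\cap M=\emptyset$, $\mathrm{supp}(a)\setminus G$ meagre and $\|a\|_\mathsf{e}=\|a_G\|_\mathsf{b}$; and $G\cap M=\emptyset$ says exactly that $G\subseteq\mathcal{C}(a)\cap\mathcal{C}(a_\infty)$. So the whole theorem reduces to verifying that $a\in\mathcal{B}^\rho_\sigma$ and that $\mathcal{C}(a)\cap\mathcal{C}(a_\infty)$ is comeagre. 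The membership is routine: each element of $\mathcal{C}^\rho_\mathsf{c}$ has support in a bicompact set, and a $\mathsf{b}$-limit $a=\lim_\mathsf{b}a_n$ has $\mathrm{supp}(a)\subseteq\bigcup_n\mathrm{supp}(a_n)$, which is $\sigma$-bicompact, so $\mathcal{C}^\rho_\mathsf{r}\subseteq\mathcal{B}^\rho\cap\mathcal{S}^\rho_\sigma=\mathcal{B}^\rho_\sigma$. Comeagreness of $\mathcal{C}(a)$ itself is immediate from \autoref{ComeagreContinuity}, since the $\mathsf{b}$-norm dominates the $\infty$-norm by \eqref{InfinityBelow2} and \eqref{infty2b}, whence $\mathcal{C}^\rho_\mathsf{r}=\mathrm{cl}_\mathsf{b}(\mathcal{C}^\rho_\mathsf{c})\subseteq\mathrm{cl}_\infty(\mathcal{C}^\rho_\mathsf{c})$.

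The substance of the argument is therefore the comeagreness of $\mathcal{C}(a_\infty)$, which I would prove by approximation together with the stability of continuity points under uniform limits. Pick $(a_n)\subseteq\mathcal{C}^\rho_\mathsf{c}$ with $\|a-a_n\|_\mathsf{b}\to0$; then $\|a-a_n\|_\infty\to0$, so by the reverse triangle inequality in each fibre $(a_n)_\infty\to a_\infty$ uniformly as functions on $\Gamma$. Each $a_n$ is continuous on an open set $U_n$ with nowhere dense complement --- exactly as in the proof of \autoref{ComeagreContinuity}, $\Gamma\setminus U_n$ sits inside a finite union of boundaries of open slices. Viewing $a_n$ as a continuous section of the Banach bundle obtained by restricting $\rho$ to the open base $U_n$, \autoref{ComeagreNormContinuity} shows the continuity points of $(a_n)_\infty$ inside $U_n$ are comeagre in $U_n$; as $U_n$ is open with nowhere dense complement, and nowhere density is unchanged on passing to an open subspace (compare \eqref{sNvsN}), these are comeagre in $\Gamma$, i.e. $\mathcal{C}((a_n)_\infty)$ is comeagre. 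A routine $\varepsilon/3$ estimate then gives $\mathcal{C}(a_\infty)\supseteq\bigcap_n\mathcal{C}((a_n)_\infty)$, a countable intersection of comeagre sets and hence comeagre.

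The main obstacle is the localization just described: \autoref{ComeagreNormContinuity} is stated for globally continuous sections, whereas the approximants $a_n$ are continuous only off a nowhere dense set, so one must restrict the bundle to $U_n$ and then transport comeagreness from $U_n$ back to $\Gamma$. Once $\mathcal{C}(a)\cap\mathcal{C}(a_\infty)$ is known to be comeagre, the theorem follows by feeding $M=\Gamma\setminus(\mathcal{C}(a)\cap\mathcal{C}(a_\infty))$ into \autoref{ComeagreEssentialNorm} as described.
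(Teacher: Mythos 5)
Your proposal is correct and follows essentially the same route as the paper: both reduce the theorem to the meagreness of $M=\Gamma\setminus(\mathcal{C}(a)\cap\mathcal{C}(a_\infty))$ and then invoke \autoref{ComeagreEssentialNorm}. The one point where you go beyond the paper is worth noting: the paper's proof simply cites \autoref{ComeagreNormContinuity} for the comeagreness of $\mathcal{C}(a_\infty)$, even though that proposition is stated only for globally continuous sections while elements of $\mathcal{C}^\rho_\mathsf{r}$ are in general continuous only off a meagre set, so your extra step --- restricting the bundle to the open dense sets $U_n$ on which the approximants $a_n\in\mathcal{C}^\rho_\mathsf{c}$ are continuous, transporting comeagreness back to $\Gamma$, and passing to the uniform limit $(a_n)_\infty\rightarrow a_\infty$ --- is exactly the patch needed to make that citation rigorous rather than a genuinely different argument.
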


\begin{proof}
By \autoref{ComeagreNormContinuity} and \autoref{ComeagreContinuity}, $\mathcal{C}(a_\infty)$ and $\mathcal{C}(a)$ are comeagre and hence so is their intersection.  In other words, $M=\Gamma\setminus(\mathcal{C}(a_\infty)\cap\mathcal{C}(a))$ is meagre so we can just apply \autoref{ComeagreEssentialNorm} to obtain the result.
\end{proof}

\section{Morphisms}

Morphisms of section algebras/modules of Fell bundles can come from morphisms of the total spaces or base groupoids, or indeed a combination of the two.

\subsection{Base Morphisms}

First we consider groupoid morphisms.

\begin{dfn}
A functor $\phi:\Gamma\rightarrow\Gamma'$ between groupoids $\Gamma$ and $\Gamma'$ is \emph{star-bijective} if, whenever $\phi(x)=\mathsf{s}(\gamma')$, there is precisely one $\gamma\in\Gamma$ with $\mathsf{s}(\gamma)=x$ and $\phi(\gamma)=\gamma'$.
\end{dfn}

More symbolically, this can be written as
\[\tag{Star-Bijective}\phi(x)=\mathsf{s}(\gamma')\qquad\Rightarrow\qquad|\mathsf{s}^{-1}\{x\}\cap\phi^{-1}\{\gamma'\}|=1.\]
See \cite[\S2]{Bice2022} for several equivalent characterisations of star-bijectivity.

Recall that a continuous map $\phi:X\rightarrow Y$ between Hausdorff topological spaces $X$ and $Y$ is \emph{proper} if preimages of compact subsets of $Y$ are compact in $X$.

\begin{dfn}
An \emph{\'etale morphism} is a proper continuous star-bijective functor $\phi:\Gamma\rightarrow\Gamma'$ from an \'etale groupoid $\Gamma$ to another \'etale groupoid $\Gamma'$.
\end{dfn}

Given a Fell bundle $\rho:B\twoheadrightarrow\Gamma'$ and an \'etale morphism $\phi:\Gamma\rightarrow\Gamma'$, let
\[\phi^\rho B=\{(\gamma,b)\in\Gamma\times B:\phi(\gamma)=\rho(b)\}.\]
We consider $\phi^\rho B$ as a topological subspace of $\Gamma\times B$ with operations
\begin{align*}
\lambda(\gamma,b)&=(\gamma,\lambda b).\\
(\gamma,a)+(\gamma,a)&=(\gamma,a+b).\\
(\beta,a)(\gamma,b)&=(\beta\gamma,ab).\\
(\gamma,b)^*&=(\gamma^{-1},b^*).\\
\|(\gamma,b)\|&=\|b\|.\\
\rho_\phi(\gamma,b)&=\gamma.
\end{align*}
Then $\rho_\phi:\phi^\rho B\twoheadrightarrow\Gamma$ is also a Fell bundle known as the \emph{pullback bundle} of $\rho$ induced by $\phi$.  The pullback-section map $\phi^\rho:\mathcal{S}^\rho\rightarrow\mathcal{S}^{\rho_\phi}$ is then given by
\[\phi^\rho(a)(\gamma)=(\gamma,a(\phi(\gamma))).\]

\begin{prp}\label{phirho}
The map $\phi^\rho$ restricts to a C*-algebra homomorphism from $\mathcal{B}^\rho$ to $\mathcal{B}^{\rho_\phi}$ which in turn restricts to a C*-algebra homomorphism from $\mathcal{C}^\rho_\mathsf{r}$ to $\mathcal{C}^{\rho_\phi}_\mathsf{r}$.
\end{prp}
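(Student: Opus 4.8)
The plan is to establish the two assertions in turn, deriving the reduced-algebra statement from the big-algebra statement together with $\mathsf{b}$-continuity. The single structural fact driving everything is that star-bijectivity forces $\phi$ to restrict to a \emph{bijection} of each source fibre $\mathsf{s}^{-1}\{x\}$ onto $\mathsf{s}^{-1}\{\phi(x)\}$ (and, using that groupoid functors preserve inverses, of each range fibre onto $\mathsf{r}^{-1}\{\phi(x)\}$). Equivalently, for each $\gamma\in\Gamma$ the assignment $(\alpha,\beta)\mapsto(\phi(\alpha),\phi(\beta))$ is a bijection from the factorisations $\gamma=\alpha\beta$ onto the factorisations $\phi(\gamma)=\alpha'\beta'$; I would isolate this as the first step, since both the algebra and the norm estimates reduce to it.

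First I would verify the purely algebraic properties. Linearity and antilinearity of $*$ are immediate from the fibrewise operations on $\phi^\rho B$, with $\phi^\rho(a^*)=\phi^\rho(a)^*$ using $\phi(\gamma^{-1})=\phi(\gamma)^{-1}$. For multiplicativity, given $a,b\in\mathcal{S}^\rho$ with $ab$ defined, the factorisation bijection identifies the net of finite partial sums defining $(\phi^\rho(a)\phi^\rho(b))(\gamma)$ in $(\phi^\rho B)_\gamma=B_{\phi(\gamma)}$ with the net defining $ab(\phi(\gamma))$; as the latter converges so does the former, giving $\phi^\rho(ab)=\phi^\rho(a)\phi^\rho(b)$ whenever $ab$ is defined. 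In particular $\phi^\rho(a)^*\phi^\rho(a)=\phi^\rho(a^*a)$ and $\phi^\rho(a)\phi^\rho(a)^*=\phi^\rho(aa^*)$ are defined for every $a\in\mathcal{B}^\rho$.

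The main obstacle is the estimate $\|\phi^\rho(a)\|_\mathsf{b}\le\|a\|_\mathsf{b}$, since the $\mathsf{b}$-norm on the pullback side is a supremum over $f\in\mathcal{F}^{\rho_\phi}$ with no evident counterpart over $\Gamma'$. Here I would work one source fibre at a time. Fixing $x\in\Gamma^0$ and writing $f(\gamma)=(\gamma,\tilde f(\gamma))$ with $\tilde f(\gamma)\in B_{\phi(\gamma)}$, I would push $f$ forward along the bijection $\phi\colon\mathsf{s}^{-1}\{x\}\to\mathsf{s}^{-1}\{\phi(x)\}$ to a genuine section $f'_x\in\mathcal{F}^\rho$, namely $f'_x(\beta')=\tilde f(\phi|_{\mathsf{s}^{-1}\{x\}}^{-1}(\beta'))$ on $\mathsf{s}^{-1}\{\phi(x)\}$ and $0$ elsewhere; this is legitimate because $\tilde f(\beta)\in B_{\phi(\beta)}$. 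A short computation using the factorisation bijection then gives, for $\gamma\in\mathsf{s}^{-1}\{x\}$, the identity $(\phi^\rho(a)f)(\gamma)=(\gamma,(af'_x)(\phi(\gamma)))$, where $af'_x$ is defined since $f'_x$ is finitely supported. As the $B$-parts match along the source-fibre bijection, $\|(\phi^\rho(a)f)_{\mathsf{s}^{-1}\{x\}}\|_2=\|(af'_x)_{\mathsf{s}^{-1}\{\phi(x)\}}\|_2\le\|af'_x\|_2\le\|a\|_\mathsf{b}\|f'_x\|_2=\|a\|_\mathsf{b}\|f_{\mathsf{s}^{-1}\{x\}}\|_2\le\|a\|_\mathsf{b}\|f\|_2$; taking suprema over $x$ and then over $f$ yields $\|\phi^\rho(a)\|_\mathsf{b}\le\|a\|_\mathsf{b}$ via \eqref{b'}. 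Combined with the previous paragraph this shows $\phi^\rho[\mathcal{B}^\rho]\subseteq\mathcal{B}^{\rho_\phi}$ and, since $\mathcal{B}^{\rho_\phi}$ is a C*-algebra by \autoref{BigCstar}, that $\phi^\rho|_{\mathcal{B}^\rho}$ is a (contractive) $*$-homomorphism.

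For the reduced algebra I would first show $\phi^\rho[\mathcal{C}^\rho_\mathsf{c}]\subseteq\mathcal{C}^{\rho_\phi}_\mathsf{c}$. Given $a\in\mathcal{C}^\rho(K)$ with $K\subseteq\Gamma'$ bicompact, choose an open slice $O\supseteq K$; star-bijectivity shows $\phi^{-1}[O]$ is again an open slice (both $\mathsf{s}$ and $\mathsf{r}$ are injective on it), $\phi^{-1}[K]$ is biclosed because $\mathsf{s}[\phi^{-1}[K]]=(\phi|_{\Gamma^0})^{-1}[\mathsf{s}[K]]$ and similarly for $\mathsf{r}$, and it is compact because $\phi$ is proper; hence $\phi^{-1}[K]$ is bicompact. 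Since $a$ is continuous on $O$ and $\phi$ is continuous, $\phi^\rho(a)$ is continuous on $\phi^{-1}[O]\supseteq\mathrm{supp}(\phi^\rho(a))=\phi^{-1}[\mathrm{supp}(a)]$, so $\phi^\rho(a)\in\mathcal{C}^{\rho_\phi}(\phi^{-1}[K])\subseteq\mathcal{C}^{\rho_\phi}_\mathsf{c}$. Taking linear spans gives $\phi^\rho[\mathcal{C}^\rho_\mathsf{c}]\subseteq\mathcal{C}^{\rho_\phi}_\mathsf{c}$, and since $\phi^\rho$ is $\mathsf{b}$-contractive it maps $\mathrm{cl}_\mathsf{b}(\mathcal{C}^\rho_\mathsf{c})$ into $\mathrm{cl}_\mathsf{b}(\mathcal{C}^{\rho_\phi}_\mathsf{c})$, i.e. $\phi^\rho[\mathcal{C}^\rho_\mathsf{r}]\subseteq\mathcal{C}^{\rho_\phi}_\mathsf{r}$; restricting the $*$-homomorphism of the previous step then completes the argument.
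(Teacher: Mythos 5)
Your proposal is correct and follows essentially the same route as the paper's proof: the factorisation bijection coming from star-bijectivity gives the algebraic $*$-homomorphism property, $\mathsf{b}$-contractivity then gives $\phi^\rho[\mathcal{B}^\rho]\subseteq\mathcal{B}^{\rho_\phi}$, and properness together with star-bijectivity gives bicompactness of $\phi^{-1}[K]$, hence $\phi^\rho[\mathcal{C}^\rho_\mathsf{c}]\subseteq\mathcal{C}^{\rho_\phi}_\mathsf{c}$ and the reduced statement by taking $\mathsf{b}$-closures. The only (cosmetic) difference is in the key norm estimate: where the paper asserts the identity $\|\phi^\rho(a)\|_\mathsf{b}=\|a_{\phi[\Gamma]}\|_\mathsf{b}$ and then applies the restriction inequality \eqref{bRestriction} to the source-saturated set $\phi[\Gamma]$, you verify the same bound directly by pushing test sections $f\in\mathcal{F}^{\rho_\phi}$ forward along the source-fibre bijections, which is a more explicit rendering of the computation the paper leaves implicit.
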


\begin{proof}
Certainly $\phi^\rho$ is a *-preserving vector space homomorphism on $\mathcal{S}^\rho$.  To see that it also preserves products whenever they are defined, take $a,b\in\mathcal{S}^\rho$ such that $ab$ is defined.  As $\phi$ is star-bijective, whenever $\phi(\gamma)=\alpha'\beta'$, there exist unique $\alpha,\beta\in\Gamma$ with $\phi(\alpha)=\alpha'$, $\phi(\beta)=\beta'$ and $\gamma=\alpha\beta$ (first take $\beta\in\Gamma$ with $\phi(\beta)=\beta'$ and $\mathsf{s}(\beta)=\mathsf{s}(\gamma)$ and then let $\alpha=\gamma\beta^{-1}$).  It follows that, for any $\gamma\in\Gamma$,
\[ab(\phi(\gamma))=\sum_{\phi(\gamma)=\alpha'\beta'}a(\alpha')b(\beta')=\sum_{\gamma=\alpha\beta}a(\phi(\alpha))b(\phi(\beta)).\]
As fibres of $\rho_\phi$ are linearly isometric to the corresponding fibres of $\rho$, infinite sums that are valid in one are also valid in the other and hence
\begin{align*}
\phi^\rho(ab)(\gamma)&=(\gamma,ab(\phi(\gamma)))\\
&=(\gamma,\sum_{\gamma=\alpha\beta}a(\phi(\alpha))b(\phi(\beta)))\\
&=\sum_{\gamma=\alpha\beta}(\alpha,a(\phi(\alpha)))(\beta,b(\phi(\beta)))\\
&=\sum_{\gamma=\alpha\beta}\phi^\rho(a)(\alpha)\phi^\rho(b)(\beta)\\
&=(\phi^\rho(a)\phi^\rho(b))(\gamma)
\end{align*}
This shows that indeed $\phi^\rho(ab)=\phi^\rho(a)\phi^\rho(b)$.

We immediately see that $\phi^\rho$ is $\infty$-contractive on $\mathcal{S}^\rho$, as
\[\|\phi^\rho(a)\|_\infty=\|a_{\phi[\Gamma]}\|_\infty\leq\|a\|_\infty.\]
As $\phi$ respects all the various operations, it likewise follows that
\[\|\phi^\rho(a)\|_2=\|a_{\phi[\Gamma]}\|_2\leq\|a\|_2,\]
i.e. $\phi^\rho$ is $2$-contractive on $\mathcal{S}^\rho$.  As $\phi$ is star-bijective, $\phi[\Gamma]=\Gamma\mathsf{s}[\phi[\Gamma]]$ and hence
\[\|\phi^\rho(a)\|_\mathsf{b}=\|a_{\phi[\Gamma]}\|_\mathsf{b}\leq\|a\|_\mathsf{b},\]
i.e. $\phi^\rho$ is also $\mathsf{b}$-contractive on $\mathcal{S}^\rho$.  It follows that $\phi^\rho[\mathcal{S}^\rho_\mathsf{b}]\subseteq\mathcal{S}^{\rho_\phi}_\mathsf{b}$ and therefore $\phi^\rho[\mathcal{B}^\rho]\subseteq\mathcal{B}^{\rho_\phi}$, as we already saw that $\phi^\rho(a)^*\phi^\rho(a)$ is defined whenever $a^*a$ is.  Thus $\phi^\rho$ does indeed restrict to a C*-algebra homomorphism from $\mathcal{B}^\rho$ to $\mathcal{B}^{\rho_\phi}$.

As $\phi$ is continuous, $\phi^\rho[\mathcal{C}^\rho(K)]\subseteq\mathcal{C}^{\rho_\phi}(\phi^{-1}[K])$, for any $K\subseteq\Gamma'$.  As $\phi$ is also proper and star-bijective, $\phi^{-1}[K]$ is bicompact whenever $K$ is and so $\phi^\rho[\mathcal{C}^\rho_\mathsf{c}]\subseteq\mathcal{C}^{\rho_\phi}_\mathsf{c}$.  As $\phi^\rho$ is also $\mathsf{b}$-contractive, $\phi^\rho[\mathcal{C}_\mathsf{r}^\rho]\subseteq\mathcal{C}_\mathsf{r}^{\rho_\phi}$ and hence $\phi^\rho$ does indeed restrict to a C*-algebra homomorphism from $\mathcal{C}_\mathsf{r}^\rho$ to $\mathcal{C}_\mathsf{r}^{\rho_\phi}$.
\end{proof}

\subsection{Fibre Morphisms}

Next we consider morphisms of fibres.

First let us call a subgroupoid $\Gamma$ of a \'etale groupoid $\Gamma'$ \emph{nice} if it is open and, moreover, every compact closed subset of $\Gamma^0$ is still closed in $\Gamma'^0$ (which is automatic if $\Gamma'^0$ is Hausdorff, for example).  Then every bicompact $K\subseteq\Gamma$ is bicompact in $\Gamma'$.

\begin{dfn}
Let $\rho:B\rightarrow\Gamma$ and $\rho':B'\rightarrow\Gamma'$ be Fell bundles where $\Gamma$ is a nice subgroupoid of $\Gamma'$.  A \emph{fibre morphism} from $\rho$ to $\rho'$ is a continuous fibre-wise linear *-homomorphism $\beta:B\rightarrow B'$, i.e. when $(a,b)\in B^2$, $\lambda\in\mathbb{C}$ and $\rho(b)=\rho(c)$,
\begin{align*}
\rho'(\beta(a))&=\rho(a).\\
\beta(a^*)&=\beta(a)^*.\\
\beta(ab)&=\beta(a)\beta(b)\\
\beta(\lambda a)&=\lambda\beta(a).\\
\beta(b+c)&=\beta(b)+\beta(c).
\end{align*}
\end{dfn}

\begin{prp}
Every fibre morphism is contractive on fibres.
\end{prp}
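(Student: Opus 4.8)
The plan is to reduce the statement to the standard fact that a *-homomorphism between C*-algebras is automatically norm-decreasing, applied fibrewise over the unit space, and then to propagate the estimate to arbitrary fibres using the C*-condition \eqref{Fell3}. The only genuinely structural point to check is that $\beta$ carries unit fibres to unit fibres; everything else is formal.

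First I would verify that the units of $\Gamma$ remain units of $\Gamma'$, so that $\beta$ restricts to a *-homomorphism between C*-algebras on each unit fibre. If $x\in\Gamma^0$ then $x$ is idempotent, and since $\Gamma$ is a subgroupoid of the groupoid $\Gamma'$, the element $x$ is invertible in $\Gamma'$; as in the proof that every groupoid is a *-category, an idempotent in a groupoid satisfies $\mathsf{r}(x)=xx^{-1}=(xx)x^{-1}=x\,\mathsf{r}(x)=x$, so $x\in\Gamma'^0$. Hence both $B_x$ and $B'_x$ are C*-algebras. For $c\in B_x$ we have $\rho'(\beta(c))=\rho(c)=x$, so $\beta(c)\in B'_x$, and the defining properties of a fibre morphism make $\beta|_{B_x}\colon B_x\to B'_x$ a *-homomorphism of C*-algebras. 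The classical result (see, e.g., \cite{Blackadar2017}) then yields $\|\beta(c)\|\le\|c\|$ for every $c$ lying in a unit fibre. Note that continuity of $\beta$ is not needed for this.

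Next I would treat an arbitrary $b\in B$. The product $b^*b$ is always defined and lies in the unit fibre $B_{\mathsf{s}(\rho(b))}$, so the previous step applies to it. Using that $\beta$ is *-preserving and multiplicative, together with the C*-condition \eqref{Fell3} for the Fell bundle $\rho'$ applied to $\beta(b)\in B'$, I would compute
\[
\|\beta(b)\|^2=\|\beta(b)^*\beta(b)\|=\|\beta(b^*)\beta(b)\|=\|\beta(b^*b)\|\le\|b^*b\|=\|b\|^2,
\]
and conclude $\|\beta(b)\|\le\|b\|$, as desired. I do not anticipate any serious obstacle: the whole argument is purely formal once one confirms that $\beta$ restricts to a genuine C*-algebra *-homomorphism on each unit fibre, the mild subtlety there being the identification $\Gamma^0\subseteq\Gamma'^0$ so that $B'_x$ really is a C*-algebra.
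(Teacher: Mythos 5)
Your proof is correct and follows essentially the same route as the paper: restrict $\beta$ to unit fibres, invoke automatic contractivity of *-homomorphisms between C*-algebras, then extend to arbitrary $b\in B$ via the C*-identity $\|\beta(b)\|^2=\|\beta(b^*b)\|\leq\|b^*b\|=\|b\|^2$. The only difference is that you explicitly verify $\Gamma^0\subseteq\Gamma'^0$ (so that $B'_x$ really is a C*-algebra), a point the paper's proof takes for granted; this is a worthwhile detail but not a different argument.
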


\begin{proof}
Say $\beta:B\rightarrow B'$ is a bundle morphism from $\rho:B\rightarrow\Gamma$ to $\rho':B'\rightarrow\Gamma'$.  In particular, for any $x\in\Gamma^0$, the restriction of $\beta$ to $B_x$ is a C*-algebra homomorphism to $B'_x$ and is thus contractive.  For any $b\in B$ with $\mathsf{s}(\rho(b))=x$, it follows that
\[\|\beta(b)\|^2=\|\beta(b)^*\beta(b)\|=\|\beta(b^*b)\|\leq\|b^*b\|=\|b\|^2.\]
Thus $\|\beta(b)\|\leq\|b\|$, showing that $\beta$ is contractive.
\end{proof}

If $\beta:B\rightarrow B'$ is a bundle morphism from $\rho:B\rightarrow\Gamma$ to $\rho':B'\rightarrow\Gamma'$, where $\Gamma$ is an open subgroupoid of $\Gamma'$, then we define $\beta_\rho^{\rho'}:\mathcal{S}^\rho\rightarrow\mathcal{S}^{\rho'}$ by
\begin{equation}\label{BetaDef}
\beta_\rho^{\rho'}(a)(\gamma')=\begin{cases}\beta(a(\gamma'))&\text{if }\gamma'\in\Gamma\\0_{\gamma'}&\text{if }\gamma'\in\Gamma'\setminus\Gamma.\end{cases}
\end{equation}

\begin{thm}\label{betarho}
The map $\beta_\rho^{\rho'}$ restricts to a C*-algebra homomorphism from $\mathcal{B}^\rho$ to $\mathcal{B}^{\rho'}$ which in turn restricts to a C*-algebra homomorphism from $\mathcal{C}^\rho_\mathsf{r}$ to $\mathcal{C}^{\rho'}_\mathsf{r}$.
\end{thm}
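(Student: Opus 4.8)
The plan is to imitate the proof of \autoref{phirho}: first check that $\beta_\rho^{\rho'}$ is a $*$-preserving linear map respecting convolution, then that it is contractive for the $\infty$-, $2$- and $\mathsf{b}$-norms, and finally deduce the two stated restrictions. Linearity and $*$-preservation follow at once from the fibrewise linearity and $*$-homomorphism properties of $\beta$, using that $\Gamma$ is a subgroupoid so that zero-extension commutes with the involution. For convolution, suppose $ab$ is defined and fix $\gamma'\in\Gamma'$: since $\Gamma$ is a subgroupoid of $\Gamma'$, the factorisations $\gamma'=\alpha'\beta'$ contributing nonzero terms to $(\beta_\rho^{\rho'}(a)\beta_\rho^{\rho'}(b))(\gamma')$ are exactly those with $\alpha',\beta'\in\Gamma$, and these force $\gamma'\in\Gamma$; so both sides vanish off $\Gamma$, while on $\Gamma$ they match the factorisations computing $ab(\gamma')$. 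As $\beta$ is contractive on fibres (by the preceding proposition) it is bounded and hence commutes with the convergent fibre sums, and multiplicativity of $\beta$ then yields $\beta_\rho^{\rho'}(ab)=\beta_\rho^{\rho'}(a)\beta_\rho^{\rho'}(b)$.

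Contractivity for $\|\cdot\|_\infty$ is immediate from fibrewise contractivity of $\beta$. For $\|\cdot\|_2$, note that for $x'\in\Gamma'^0$ the sum $\sum_{\gamma'\in F}\beta_\rho^{\rho'}(a)(\gamma')^*\beta_\rho^{\rho'}(a)(\gamma')$ is supported on $x'\in\Gamma^0$, where it equals $\beta(\sum_{\gamma\in F\cap\Gamma}a(\gamma)^*a(\gamma))$; since $\beta$ restricts to a contractive C*-homomorphism on the unit fibre $B_{x'}$, taking norms and suprema gives $\|\beta_\rho^{\rho'}(a)\|_2\leq\|a\|_2$.

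The main obstacle is contractivity for the $\mathsf{b}$-norm, since $\beta$ may genuinely shrink fibre norms and so a test section $f'\in\mathcal{F}^{\rho'}$ cannot simply be pulled back isometrically. This is exactly what the lemma yielding \eqref{bC} is designed to circumvent. Let $C\subseteq B'$ be $\beta[B]$ together with all the zero vectors $0_{\gamma'}$, $\gamma'\in\Gamma'$; since $\beta$ is a fibrewise-linear $*$-homomorphism, $C$ is a $*$-subsemigroupoid of $B'$ with each $C\cap B'_{\gamma'}$ a subspace, and $\mathrm{ran}(\beta_\rho^{\rho'}(a))\subseteq C$. For $a\in\mathcal{B}^\rho$, \eqref{bC} then computes $\|\beta_\rho^{\rho'}(a)\|_\mathsf{b}$ as a supremum over $f'\in\mathcal{F}^{\rho'}$ with $\|f'\|_2=1$ and $\mathrm{ran}(f')\subseteq C$; any such $f'$ is of the form $\beta_\rho^{\rho'}(f)$ for some $f\in\mathcal{F}^\rho$, and the already-established homomorphism property gives $\beta_\rho^{\rho'}(a)f'=\beta_\rho^{\rho'}(af)$. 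The crucial point is that, viewing $a_\mathsf{B}\in\mathsf{B}(\mathcal{H}^\rho)$ as in \autoref{LeftRegRep} with $\|a_\mathsf{B}\|=\|a\|_\mathsf{b}$, the operator inequality $a_\mathsf{B}^*a_\mathsf{B}\leq\|a\|_\mathsf{b}^2$ yields the $\mathcal{D}^\rho$-valued inequality $(af)^*(af)\leq\|a\|_\mathsf{b}^2f^*f$, i.e.\ a pointwise inequality in each unit fibre $B_x$. Applying the positive C*-homomorphism $\beta|_{B_x}$ and taking norms gives
\[\|\beta_\rho^{\rho'}(af)\|_2^2=\sup_x\|\beta(f^*a^*af(x))\|\leq\|a\|_\mathsf{b}^2\sup_x\|\beta(f^*f(x))\|=\|a\|_\mathsf{b}^2\|f'\|_2^2,\]
so that $\|\beta_\rho^{\rho'}(a)f'\|_2\leq\|a\|_\mathsf{b}$ whenever $\|f'\|_2=1$; hence $\|\beta_\rho^{\rho'}(a)\|_\mathsf{b}\leq\|a\|_\mathsf{b}$.

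It remains to assemble the conclusions. Since $\beta_\rho^{\rho'}$ preserves $*$ and convolution, $a^*a$ and $aa^*$ defined imply the same for $\beta_\rho^{\rho'}(a)$, and $\mathsf{b}$-contractivity keeps the image $\mathsf{b}$-finite, so $\beta_\rho^{\rho'}$ restricts to a C*-homomorphism $\mathcal{B}^\rho\to\mathcal{B}^{\rho'}$. For the reduced algebra, given bicompact $K\subseteq\Gamma$ and $s\in\mathcal{C}^\rho(K)$, the section $\beta_\rho^{\rho'}(s)$ is continuous on the open slice containing $K$ (as $\beta$ is continuous) and vanishes off $K$, which is bicompact in $\Gamma'$ because $\Gamma$ is nice; hence $\beta_\rho^{\rho'}(s)\in\mathcal{C}^{\rho'}(K)$ and $\beta_\rho^{\rho'}[\mathcal{C}^\rho_\mathsf{c}]\subseteq\mathcal{C}^{\rho'}_\mathsf{c}$. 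As $\beta_\rho^{\rho'}$ is $\mathsf{b}$-contractive, it maps the $\mathsf{b}$-closure $\mathcal{C}^\rho_\mathsf{r}$ into $\mathcal{C}^{\rho'}_\mathsf{r}$, as required.
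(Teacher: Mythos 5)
Your proof is correct, and apart from one step it follows the same skeleton as the paper's: $*$-linearity, product preservation (using that the fibrewise contraction $\beta$ commutes with convergent sums), $\infty$- and $2$-contractivity, an appeal to \eqref{bC} with $C=\beta[B]\cup\{0_{\gamma'}:\gamma'\in\Gamma'\}$ for $\mathsf{b}$-contractivity on $\mathcal{B}^\rho$, and finally $\beta_\rho^{\rho'}[\mathcal{C}^\rho_\mathsf{c}]\subseteq\mathcal{C}^{\rho'}_\mathsf{c}$ plus $\mathsf{b}$-closure. Where you genuinely diverge is in how the test section $f'$ produced by \eqref{bC} gets used. The paper first proves the unit-ball identity \eqref{2ContractiveInverse} by a functional-calculus truncation argument (the sections $d_n(x)=f_n(h^*h(x))$), precisely so that $f'$ can be lifted to some $f\in\mathcal{F}^\rho$ with $\|f\|_2=1$, after which $2$-contractivity gives $\|\beta_\rho^{\rho'}(af)\|_2\leq\|af\|_2\leq\|a\|_\mathsf{b}$. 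You instead lift $f'$ to an arbitrary preimage $f$, with no control on $\|f\|_2$, and compensate by pushing the Hilbert-module inequality $\Phi((af)^*(af))\leq\|a\|_\mathsf{b}^2\,\Phi(f^*f)$ (valid because $a_\mathsf{B}$ is adjointable with $\|a_\mathsf{B}\|=\|a\|_\mathsf{b}$, by \autoref{LeftRegRep}) through $\beta$, which is order-preserving on each unit C*-fibre; taking norms then gives $\|\beta_\rho^{\rho'}(a)f'\|_2=\|\beta_\rho^{\rho'}(af)\|_2\leq\|a\|_\mathsf{b}\|f'\|_2$. Your route is shorter and more elementary: it eliminates the truncation argument (the technical heart of the paper's proof) at the price of the standard fact $\langle Tb,Tb\rangle\leq\|T\|^2\langle b,b\rangle$ for adjointable $T$, which is an immediate consequence of $\mathsf{B}(\mathcal{H}^\rho)$ being a C*-algebra and so is available in the paper's framework. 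What the paper's longer route buys is the statement \eqref{2ContractiveInverse} itself, an exact description of the image of the unit ball of $\mathcal{H}^\rho$, which is stronger than what the theorem needs. One shared fine point: \eqref{bC} is stated for elements of the big C*-algebra, so both you and the paper formally invoke it for $\beta_\rho^{\rho'}(a)$ before $\|\beta_\rho^{\rho'}(a)\|_\mathsf{b}<\infty$ is known; your argument neither introduces nor removes this wrinkle, since, like the paper, you bound the restricted supremum in \eqref{bC} by $\|a\|_\mathsf{b}$.
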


\begin{proof}
We immediately see that $\beta_\rho^{\rho'}$ is a *-preserving vector space homomorphism on $\mathcal{S}^\rho$.  Next note that, as $\beta$ is contractive, $\beta$ preserves infinite sums, i.e. if $\gamma\in\Gamma$, $(b_\lambda)_{\lambda\in\Lambda}\subseteq B_\gamma$ and $\sum_{\lambda\in\Lambda}b_\lambda$ is defined then $\beta(\sum_{\lambda\in\Lambda}b_\lambda)=\sum_{\lambda\in\Lambda}\beta(b_\lambda)$.  Thus if $a,b\in\mathcal{S}^\rho$ and $ab$ is defined then
\begin{align*}
\beta_\rho^{\rho'}\!(ab)(\gamma)&=\beta\Big(\sum_{\zeta\in\Gamma\gamma}a(\gamma\zeta^{-1})b(\zeta)\Big)\\
&=\sum_{\zeta\in\Gamma\gamma}\beta(a(\gamma\zeta^{-1}))\beta(b(\zeta))\\
&=\sum_{\zeta\in\Gamma'\gamma}\beta_\rho^{\rho'}\!(a)(\gamma\zeta^{-1})\beta_\rho^{\rho'}\!(b)(\zeta)\\
&=(\beta_\rho^{\rho'}\!(a)\beta_\rho^{\rho'}\!(b))(\gamma).
\end{align*}
In other words, $\beta_\rho^{\rho'}$ also preserves products whenever they are defined.

Also note that $\beta_\rho^{\rho'}$ respects restrictions, i.e. $\beta_\rho^{\rho'}\!(a_Y)=\beta_\rho^{\rho'}\!(a)_Y$, for any $Y\subseteq\Gamma$.  As $\beta$ is contractive, it follows immediately that $\beta_\rho^{\rho'}$ is $\infty$-contractive.  As $\beta$ preserves the relevant operations, $\beta_\rho^{\rho'}$ is then $2$-contractive, i.e. for all $a\in\mathcal{S}^\rho$,
\[\|\beta_\rho^{\rho'}\!(a)\|_2^2=\sup_{F\subset\Gamma}\|\beta_\rho^{\rho'}\!(a)^*\beta_\rho^{\rho'}\!(a)_F\|_\infty=\sup_{F\subset\Gamma}\|\beta_\rho^{\rho'}(a^*a_F)\|_\infty\leq\sup_{F\subset\Gamma}\|a^*a_F\|_\infty=\|a\|_2^2.\]

We further claim that
\begin{equation}\label{2ContractiveInverse}
\beta_\rho^{\rho'}[(\mathcal{H}^\rho)^1_2]=\beta_\rho^{\rho'}[\mathcal{H}^\rho]^1_2,
\end{equation}
where $H^1_2=\{h\in H:\|h\|_2\leq1\}$.  Indeed, $\beta_\rho^{\rho'}[(\mathcal{H}^\rho)^1_2]\subseteq\beta_\rho^{\rho'}[\mathcal{H}^\rho]^1_2$ is immediate from $2$-contractivity.  Conversely, take $h'\in\beta_\rho^{\rho'}[\mathcal{H}^\rho]^1_2$, so we have $h\in\mathcal{H}^\rho$ with $\beta_\rho^{\rho'}(h)=h'$.  For each $x\in\Gamma^0$, $h^*h(x)$ is a positive element of the C*-algebra $B_x$.  For each $n\in\mathbb{N}$, let $f_n$ be the function on $\mathbb{R}_+$ that maps $[0,1/n]$ linearly to $[0,1]$, has constant value $1$ on $[1/n,1]$ and takes any $r\geq1$ to $1/\sqrt{r}$.  Applying the continuous functional calculus, we then define a function $d_n\in\mathcal{D}^\rho$ of norm at most one by
\[d_n(x)=f_n(h^*h(x)).\]
As $|f_m(r)-f_n(r)|\leq1$, for all $r\in\mathbb{R}_+$, and $f_m(r)-f_n(r)=0$, for all $r\geq1/(m\wedge n)$,
\begin{align*}
\|h(d_m-d_n)\|_2^2&=\|\Phi((d_m-d_n)h^*h(d_m-d_n))\|_\infty\\
&=\sup_{x\in\Gamma^0}\|(f_m-f_n)^2(h^*h(x))h^*h(x)\|\\
&\leq1/(m\wedge n).
\end{align*}
Thus $hd_n$ is $2$-Cauchy and has a $2$-limit $g\in\mathcal{H}^\rho$.  As $f_n(r)^2r\leq1$, for all $r\in\mathbb{R}_+$,
\[\|hd_n\|_2^2=\sup_{x\in\Gamma^0}\|f_n(h^*h(x))^2h^*h(x)\|\leq1,\]
for all $n\in\mathbb{N}$, and hence $\|g\|_2\leq1$.  Also, for all $x\in\Gamma^0$, $\beta$ is a C*-algebra homomorphism from $B_x$ to $B'_x$ and so respects the continuous functional calculus.  As $(1-f_n)(r)^2r\leq1/n$, for all $r\leq1$, and $\|\beta(h^*h(x))\|\leq\|\beta_\rho^{\rho'}\!(h)\|_2^2=\|h'\|_2^2=1$,
\begin{align*}
\|h'-\beta_\rho^{\rho'}\!(hd_n)\|_2^2&=\|\beta_\rho^{\rho'}\!(h-hd_n)\|_2^2\\
&=\sup_{x\in\Gamma^0}\|\beta((h-hd_n)^*(h-hd_n)(x))\|\\
&=\sup_{x\in\Gamma^0}\|(1-f_n)(\beta(h^*h(x)))^2\beta(h^*h(x))\|\\
&\leq1/n\rightarrow0.
\end{align*}
Thus $h'=\beta_\rho^{\rho'}\!(g)$, as they are both the $2$-limit of $(\beta_\rho^{\rho'}\!(hd_n))$ (because $g$ is the $2$-limit of $(hd_n)$ and $\beta_\rho^{\rho'}$ is $2$-contractive).  This completes the proof of claim \eqref{2ContractiveInverse}.

For any $a\in\mathcal{B}^\rho$ and $\varepsilon>0$, \eqref{bC} yields $f'\in\beta_\rho^{\rho'}[\mathcal{F}^\rho]$ such that $\|f'\|_2=1$ and
\[\|\beta_\rho^{\rho'}(a)\|_\mathsf{b}\leq\|\beta_\rho^{\rho'}(a)f'\|_2+\varepsilon\]
By what we just proved, we have $f\in\mathcal{H}^\rho$ (in fact $f\in\mathcal{F}^\rho$) such that $\|f\|_2=1$ and $f'=\beta_\rho^{\rho'}(f)$.  As $\beta_\rho^{\rho'}$ is $2$-contractive, it follows that
\[\|\beta_\rho^{\rho'}(a)\|_\mathsf{b}\leq\|\beta_\rho^{\rho'}(af)\|_2+\varepsilon\leq\|af\|_2+\varepsilon\leq\|a\|_\mathsf{b}+\varepsilon.\]
This shows that $\beta_\rho^{\rho'}$ is $\mathsf{b}$-contractive on $\mathcal{B}^\rho$.  It follows that $\beta_\rho^{\rho'}[\mathcal{B}^\rho]\subseteq\mathcal{B}^{\rho'}$ and hence $\beta_\rho^{\rho'}$ does indeed restrict to a C*-algebra homomorphism from $\mathcal{B}^\rho$ to $\mathcal{B}^{\rho'}$.

Next note $\beta_\rho^{\rho'}[\mathcal{C}_\mathsf{c}^\rho]\subseteq\mathcal{C}_\mathsf{c}^{\rho'}$, as $\Gamma$ is nice and $\mathrm{supp}(\beta_\rho^{\rho'}(a))\subseteq\mathrm{supp}(a)$, for all $a\in\mathcal{S}^\rho$.  Thus $\beta_\rho^{\rho'}[\mathcal{C}_\mathsf{r}^\rho]=\beta_\rho^{\rho'}[\mathrm{cl}_\mathsf{b}(\mathcal{C}_\mathsf{c}^\rho)]\subseteq\mathrm{cl}_\mathsf{b}(\beta_\rho^{\rho'}[\mathcal{C}_\mathsf{c}^\rho])\subseteq\mathrm{cl}_\mathsf{b}(\mathcal{C}_\mathsf{c}^{\rho'})=\mathcal{C}_\mathsf{r}^{\rho'}$ and hence $\beta_\rho^{\rho'}$ further restricts to a C*-algebra homomorphism from $\mathcal{C}^\rho_\mathsf{r}$ to $\mathcal{C}^{\rho'}_\mathsf{r}$.
\end{proof}

\subsection{Fell Morphisms}

We now consider a category of \'etale-fibre morphism pairs between Fell bundles over \'etale groupoids.

\begin{dfn}
Let $\mathbf{Fell}$ denote quadruples $(\rho',\beta,\phi,\rho)$ where
\begin{enumerate}
\item $\rho:B\twoheadrightarrow\Gamma$ and $\rho':B'\twoheadrightarrow\Gamma'$ are Fell bundles.
\item $\phi$ is an \'etale morphism from a nice subgroupoid of $\Gamma'$ to $\Gamma$.
\item $\beta$ is a fibre morphism from $\rho_\phi$ to $\rho'$.
\end{enumerate}
When $(\rho',\beta,\phi,\rho)\in\mathbf{Fell}$, we say the pair $(\beta,\phi)$ is a \emph{Fell morphism} from $\rho$ to $\rho'$.
\end{dfn}

These are the natural extension to Fell bundles of the C*-bundle morphisms considered in \cite[Definition 3.1]{Kwasniewski2016} and \cite[Definition 4.3]{Varela1974}.

First we show that they do indeed turn Fell bundles into a category.

\begin{prp}
$\mathbf{Fell}$ forms a category under the product
\[(\rho'',\beta',\phi',\rho')(\rho',\beta,\phi,\rho)=(\rho'',\beta'\bullet\beta,\phi\circ\phi',\rho)\]
where $\beta'\bullet\beta=\beta'\bullet_{\phi'}\beta$ is the fibre morphism from $\rho_{\phi\circ\phi'}$ to $\rho''$ defined by
\[\beta'\bullet\beta(\gamma'',b)=\beta'(\gamma'',\beta(\phi'(\gamma''),b)).\]
\end{prp}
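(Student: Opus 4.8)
The plan is to verify the three category axioms — that the product of two composable Fell morphisms is again a Fell morphism, that composition is associative, and that each object carries a two-sided identity — treating the base component $\phi$ and the fibre component $\beta$ separately, since they essentially decouple. For closure, consider the base component first. The composite $\phi\circ\phi'$ is defined on $\phi'^{-1}[\mathrm{dom}(\phi)]$, which is an open subgroupoid of $\Gamma''$ as the preimage of an open subgroupoid under the continuous functor $\phi'$. Functoriality, continuity and properness pass to composites by the usual arguments (for properness, preimages of compacta under $\phi\circ\phi'$ are preimages under $\phi'$ of preimages under $\phi$, hence compact). Star-bijectivity composes by a two-step lift: given $\phi(\phi'(x''))=\mathsf{s}(\gamma)$, first lift $\gamma$ uniquely through $\phi$ to some $\gamma'$ with $\mathsf{s}(\gamma')=\phi'(x'')$, then lift $\gamma'$ uniquely through $\phi'$ to $\gamma''$ with $\mathsf{s}(\gamma'')=x''$, uniqueness at each stage forcing uniqueness of $\gamma''$.

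For the fibre component I would exhibit the canonical isomorphism $\rho_{\phi\circ\phi'}\cong(\rho_\phi)_{\phi'}$ identifying $(\gamma'',b)$ with $(\gamma'',(\phi'(\gamma''),b))$, and observe that $\beta'\bullet\beta$ factors as $\beta'$ precomposed with the $\phi'$-pullback of $\beta$, namely the map $(\gamma'',b)\mapsto(\gamma'',\beta(\phi'(\gamma''),b))$. Being a fibre morphism then follows from the stability of fibre morphisms under pullback along an \'etale morphism and under composition, each a routine check of continuity, fibre-wise linearity, $*$-preservation and multiplicativity (using that $\beta$, $\beta'$ preserve infinite sums and that $\phi'$ is a functor).

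Associativity splits along the same lines. The base component reduces to associativity of ordinary function composition, $\phi\circ(\phi'\circ\phi'')=(\phi\circ\phi')\circ\phi''$. For the fibre component, unwinding both bracketings of the triple bullet product shows each equals the single nested expression $\beta''(\gamma,\beta'(\phi''(\gamma),\beta(\phi'(\phi''(\gamma)),b)))$, so they coincide. For identities, on $\rho:B\twoheadrightarrow\Gamma$ I would take $(\iota,\mathrm{id}_\Gamma)$ with $\iota(\gamma,b)=b$, the canonical isomorphism $\rho_{\mathrm{id}_\Gamma}\cong\rho$; since $\Gamma$ is trivially a nice subgroupoid of itself and $\mathrm{id}_\Gamma$ is a proper continuous star-bijective functor, this lies in $\mathbf{Fell}$. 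The bullet formula then collapses: $\beta\bullet_\phi\iota(\gamma',b)=\beta(\gamma',\iota(\phi(\gamma'),b))=\beta(\gamma',b)$ gives the right unit law, while $\iota'\bullet_{\mathrm{id}_{\Gamma'}}\beta(\gamma',b)=\iota'(\gamma',\beta(\gamma',b))=\beta(\gamma',b)$ gives the left unit law.

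The main obstacle is the one place where the base component is not purely formal: showing that the composite domain $\phi'^{-1}[\mathrm{dom}(\phi)]$ is again a \emph{nice} subgroupoid of $\Gamma''$, which is delicate precisely because no Hausdorff or local compactness assumption is available on the unit spaces. I would reduce this to a closedness statement: given a compact $K$ closed in the unit space of $\phi'^{-1}[\mathrm{dom}(\phi)]$, the niceness of $\mathrm{dom}(\phi)$ in $\Gamma'$ upgrades closedness of the compact set $\phi'[K]$ in $\mathrm{dom}(\phi)^0$ to closedness in $\Gamma'^0$, after which pulling back along the continuous $\phi'$, intersecting, and invoking the niceness of $\mathrm{dom}(\phi')$ in $\Gamma''$ recovers that $K$ is closed in $\Gamma''^0$. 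The crux is therefore establishing that $\phi'[K]$ is closed in $\mathrm{dom}(\phi)^0$, for which I expect to lean on the properness of $\phi'$ together with the transitivity of niceness; this is the step that genuinely uses the hypotheses rather than formal bookkeeping.
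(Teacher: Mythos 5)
On the two points the paper's proof actually verifies, your proposal is correct and takes the same route: your associativity check is the same unwinding of both bracketings into the single nested expression $\beta''(\gamma,\beta'(\phi''(\gamma),\beta(\phi'(\phi''(\gamma)),b)))$ that appears in the paper, and your identity morphism $(\iota,\mathrm{id}_\Gamma)$ with $\iota(\gamma,b)=b$ is precisely the paper's unit $(\rho,\mathsf{p}_B,\mathrm{id}_\Gamma,\rho)$, where $\mathsf{p}_B$ is the right projection.

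The divergence is that the paper's proof stops there: it never verifies that the product of two Fell morphisms lies in $\mathbf{Fell}$ (well-definedness is folded into the wording of the statement, where $\beta'\bullet\beta$ is simply asserted to be a fibre morphism from $\rho_{\phi\circ\phi'}$ to $\rho''$). You attempt this closure check, and most of it is sound: the two-step lift for star-bijectivity, properness and continuity of the composite, the factorisation of $\beta'\bullet\beta$ through the pullback of $\beta$ along $\phi'$, and even the outer reduction for niceness (properness makes $\phi'^{-1}[\phi'[K]]$ compact, star-bijectivity puts it inside the unit space, writing $K$ as the intersection of a relatively closed set with this preimage, then invoking niceness of $\mathrm{dom}(\phi')$). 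However, the step you yourself isolate as the crux --- that $\phi'[K]$ is closed in $\mathrm{dom}(\phi)^0$ --- is not proved, and your sketch for it has two concrete problems. First, it is circular: you propose to establish it by ``leaning on \ldots the transitivity of niceness,'' but transitivity of niceness is exactly the statement being proved. Second, the claim does not follow from the obvious argument: given a net $(k_\lambda)\subseteq K$ with $\phi'(k_\lambda)\rightarrow y$, passing to a subnet converging to some $k\in K$ and pushing forward only shows that $y$ and $\phi'(k)\in\phi'[K]$ are limits of a \emph{common} net; since the unit spaces are not assumed Hausdorff, limits are not unique, so one cannot conclude $y=\phi'(k)$ --- indeed, compact subsets of non-Hausdorff spaces need not be closed, which is the entire reason ``nice'' was introduced. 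So, judged as a complete proof, your write-up has a genuine gap here: it needs an actual proof that niceness of the composite domain follows from properness, star-bijectivity and niceness of the two constituent domains (or a counterexample forcing an adjustment of the definition of $\mathbf{Fell}$). In fairness, the paper's own proof is silent on this entire issue, so on this point your proposal is more thorough than the paper --- it just stops short of resolving the difficulty it correctly identifies.
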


\begin{proof}
For associativity, take $(\rho''',\beta'',\phi'',\rho''),(\rho'',\beta',\phi',\rho'),(\rho',\beta,\phi,\rho)\in\mathbf{Fell}$ and note that, for all $\gamma'''\in\mathrm{dom}(\phi\circ\phi'\circ\phi'')=\phi''^{-1}[\phi'^{-1}[\phi^{-1}[\Gamma]]]$ and $b\in B$,
\begin{align*}
(\beta''\bullet(\beta'\bullet\beta))(\gamma''',b)&=\beta''(\gamma''',(\beta'\bullet\beta)(\phi''(\gamma'''),b))\\
&=\beta''(\gamma''',\beta'(\phi''(\gamma'''),\beta(\phi'(\phi''(\gamma''')),b)))\\
&=(\beta''\bullet\beta')(\gamma''',\beta(\phi'\circ\phi''(\gamma'''),b))\\
&=((\beta''\bullet\beta')\bullet\beta)(\gamma''',b).
\end{align*}
This shows that $\beta''\bullet(\beta'\bullet\beta)=(\beta''\bullet\beta')\bullet\beta$, as required.

We immediately see that every $(\rho',\beta,\phi,\rho)\in\mathbf{Fell}$ has a source unit $(\rho,\mathsf{p}_B,\mathrm{id}_\Gamma,\rho)$ and range unit $(\rho',\mathsf{p}_{B'},\mathrm{id}_{\Gamma'},\rho')$ satisfying \eqref{CategoryDef}, where $\mathrm{id}_\Gamma$ denotes the identity on $\Gamma$ and $\mathsf{p}_B:\Gamma\times B\rightarrow B$ denotes the right projection (where $\rho:B\twoheadrightarrow\Gamma$).  Thus $\mathbf{Fell}$ forms a category under the given product.
\end{proof}

Let $\mathbf{C^*}$ denote triples $(A',\pi,A)$ where $A$ and $A'$ are C*-algebras and $\pi:A\rightarrow A'$ is a C*-algebra homomorphism, which forms a category in the usual way where
\[(A'',\pi',A')(A',\pi,A)=(A',\pi'\circ\pi,A).\]

\begin{thm}\label{Afunctor}
We have a functor $\mathsf{A}:\mathbf{Fell}\rightarrow\mathbf{C^*}$ given by
\[\mathsf{A}(\rho',\beta,\phi,\rho)=(\mathcal{C}_\mathsf{r}^{\rho'},\beta_{\rho_\phi}^{\rho'}\!\circ\phi^\rho_\mathsf{r},\mathcal{C}_\mathsf{r}^\rho)\]
\end{thm}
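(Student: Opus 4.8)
The plan is to check the two functor axioms---preservation of units and of composition---working throughout at the level of arbitrary sections, where the two section-level operations $\phi^\rho$ and $\beta^{\rho'}_\rho$ are everywhere defined, and only afterwards restricting to $\mathcal{C}^\rho_\mathsf{r}$. First note that the morphism component is legitimate: by \autoref{phirho}, $\phi^\rho$ restricts to a C*-algebra homomorphism $\phi^\rho_\mathsf{r}\colon\mathcal{C}^\rho_\mathsf{r}\to\mathcal{C}^{\rho_\phi}_\mathsf{r}$, and since the domain of $\phi$ is by hypothesis a nice subgroupoid of $\Gamma'$, \autoref{betarho} applies to the fibre morphism $\beta$ and yields a C*-algebra homomorphism $\beta^{\rho'}_{\rho_\phi}\colon\mathcal{C}^{\rho_\phi}_\mathsf{r}\to\mathcal{C}^{\rho'}_\mathsf{r}$. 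Hence $\beta^{\rho'}_{\rho_\phi}\circ\phi^\rho_\mathsf{r}$ is a C*-algebra homomorphism $\mathcal{C}^\rho_\mathsf{r}\to\mathcal{C}^{\rho'}_\mathsf{r}$ and $\mathsf{A}(\rho',\beta,\phi,\rho)\in\mathbf{C^*}$.

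For units, the source unit of $(\rho',\beta,\phi,\rho)$ is $(\rho,\mathsf{p}_B,\mathrm{id}_\Gamma,\rho)$. Since $\mathrm{id}_\Gamma^\rho(a)(\gamma)=(\gamma,a(\gamma))$ and $(\mathsf{p}_B)^\rho_{\rho_{\mathrm{id}_\Gamma}}$ then applies the right projection fibrewise over all of $\Gamma$, the composite sends $a$ back to $a$; thus $\mathsf{A}$ of the source unit is the identity on $\mathcal{C}^\rho_\mathsf{r}$, and the range unit is identical. So $\mathsf{A}$ preserves units.

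The substance is composition. Write $\Sigma'=\mathrm{dom}(\phi)\subseteq\Gamma'$ and $\Sigma''=\mathrm{dom}(\phi')\subseteq\Gamma''$, so that $\mathrm{dom}(\phi\circ\phi')=\phi'^{-1}[\Sigma']\subseteq\Sigma''$. I would fix $a\in\mathcal{S}^\rho$ and $\tau\in\Gamma''$ and unwind the composite $\beta'^{\rho''}_{\rho'_{\phi'}}\circ\phi'^{\rho'}_\mathsf{r}\circ\beta^{\rho'}_{\rho_\phi}\circ\phi^\rho$ from the inside out: successively this produces $(\sigma,a(\phi(\sigma)))$ on $\Sigma'$, applies $\beta$ (and zeroes off $\Sigma'$), pulls back along $\phi'$, and applies $\beta'$ (zeroing off $\Sigma''$). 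For $\tau\in\Sigma''$ with $\phi'(\tau)\in\Sigma'$, i.e.\ $\tau\in\phi'^{-1}[\Sigma']$, the value is
\[\beta'\bigl(\tau,\beta(\phi'(\tau),a(\phi\circ\phi'(\tau)))\bigr),\]
and it is $0_\tau$ otherwise, the intermediate zeroing conventions forcing the value to vanish exactly off $\phi'^{-1}[\Sigma']$. On the other hand $(\beta'\bullet\beta)^{\rho''}_{\rho_{\phi\circ\phi'}}\circ(\phi\circ\phi')^\rho$ sends $a$ to the section whose value at $\tau\in\phi'^{-1}[\Sigma']$ is $(\beta'\bullet\beta)(\tau,a(\phi\circ\phi'(\tau)))$, which by the defining formula $(\beta'\bullet\beta)(\gamma'',b)=\beta'(\gamma'',\beta(\phi'(\gamma''),b))$ equals the displayed expression, and is $0_\tau$ elsewhere. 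The two section maps therefore agree on all of $\mathcal{S}^\rho$, and restricting gives $\mathsf{A}$ of the $\mathbf{Fell}$-composite equal to the $\mathbf{C^*}$-composite of the two images.

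The only real difficulty is the bookkeeping in this last step: aligning the partial domains $\Sigma'$, $\Sigma''$ and $\phi'^{-1}[\Sigma']$ against the three separate ``zero outside the subgroupoid'' conventions built into $\phi^\rho$, $\beta^{\rho'}_\rho$ and their pullbacks, so that the vanishing regions of the two composites coincide. Once the domains are matched, the remaining algebraic identity is nothing more than the definition of $\bullet$, so no estimate beyond \autoref{phirho} and \autoref{betarho} is required.
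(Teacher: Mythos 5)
Your proposal is correct and follows essentially the same route as the paper: well-definedness via \autoref{phirho} and \autoref{betarho}, unit preservation by unwinding the pullback along $\mathrm{id}_\Gamma$ followed by the projection $\mathsf{p}_B$, and composition by evaluating both four-fold composites pointwise, matching them on $\mathrm{dom}(\phi\circ\phi')=\phi'^{-1}[\Sigma']$ via the defining formula for $\bullet$ and observing that both vanish elsewhere. The paper's proof is the same computation written as a chain of equalities at a fixed $\gamma''$, so no substantive difference.
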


\begin{proof}
By \autoref{phirho}, $\phi^\rho|_{\mathcal{C}_\mathsf{r}^\rho}$ is a C*-algebra homomorphism to $\mathcal{C}_\mathsf{r}^{\rho_\phi}$ and, by \autoref{betarho}, $\beta_{\rho_\phi}^{\rho'}|_{\mathcal{C}_\mathsf{r}^{\rho_\phi}}$ is a C*-algebra homomorphism to $\mathcal{C}_\mathsf{r}^{\rho'}$.   Thus their composition $\beta_{\rho_\phi}^{\rho'}\!\circ\phi^\rho$ is also C*-algebra homomorphism so $\mathsf{A}$ indeed maps $\mathbf{Fell}$ to $\mathbf{C^*}$.

Take $(\rho'',\beta',\phi',\rho'),(\rho',\beta,\phi,\rho)\in\mathbf{Fell}$, where $\rho:B\twoheadrightarrow\Gamma$.  Then $\mathsf{A}$ takes their product $(\rho'',\beta'\bullet\beta,\phi\circ\phi',\rho)$ to $(\mathcal{C}_\mathsf{r}^{\rho''},(\beta'\bullet\beta)_{\rho_{\phi\circ\phi'}}^{\rho'}\!\circ(\phi\circ\phi')^\rho,\mathcal{C}_\mathsf{r}^\rho)$.  For any $a\in\mathcal{C}_\mathsf{r}^\rho$ and $\gamma''\in\mathrm{dom}(\phi\circ\phi')=\phi'^{-1}[\phi^{-1}[\Gamma]]$, we see that
\begin{align*}
((\beta'\bullet\beta)_{\rho_{\phi\circ\phi'}}^{\rho'}\!\circ(\phi\circ\phi')^\rho)(a)(\gamma'')&=(\beta'\bullet\beta)_{\rho_{\phi\circ\phi'}}^{\rho'}\!((\phi\circ\phi')^\rho(a))(\gamma'')\\
&=(\beta'\bullet\beta)(\gamma'',a(\phi\circ\phi'(\gamma'')))\\
&=\beta'(\gamma'',\beta(\phi'(\gamma''),a(\phi(\phi'(\gamma'')))))\\
&=\beta'(\gamma'',\beta(\phi^\rho(a)(\phi'(\gamma''))))\\
&=\beta'(\gamma'',\beta_{\rho_\phi}^{\rho'}\!\circ\phi^\rho(a)(\phi'(\gamma'')))\\
&=\beta'(\phi'^{\rho'}\circ\beta_{\rho_\phi}^{\rho'}\!\circ\phi^\rho(a)(\gamma''))\\
&=(\beta'^{\rho''}_{\rho'_{\phi'}}\!\circ\phi'^{\rho'}\circ\beta_{\rho_\phi}^{\rho'}\!\circ\phi^\rho(a))(\gamma'').
\end{align*}
For any $\gamma''\in\Gamma''\setminus\mathrm{dom}(\phi\circ\phi')$, we also see that
\[((\beta'\bullet\beta)_{\rho_{\phi\circ\phi'}}^{\rho'}\!\circ(\phi\circ\phi')^\rho)(a)(\gamma'')=0_{\gamma''}=(\beta'^{\rho''}_{\rho'_{\phi'}}\!\circ\phi'^{\rho'}\circ\beta_{\rho_\phi}^{\rho'}\!\circ\phi^\rho(a))(\gamma'').\]
This shows that $(\beta'\bullet\beta)_{\rho_{\phi\circ\phi'}}^{\rho'}\!\circ(\phi\circ\phi')^\rho=\beta'^{\rho''}_{\rho'_{\phi'}}\!\circ\phi'^{\rho'}\circ\beta_{\rho_\phi}^{\rho'}\!\circ\phi^\rho$ and hence
\begin{align*}
\mathsf{A}((\rho'',\beta',\phi',\rho')(\rho',\beta,\phi,\rho))&=\mathsf{A}(\rho'',\beta'\bullet\beta,\phi\circ\phi',\rho)\\
&=(\mathcal{C}_\mathsf{r}^{\rho''}),(\beta'\bullet\beta)_{\rho_{\phi\circ\phi'}}^{\rho'}\!\circ(\phi\circ\phi')^\rho,\mathcal{C}_\mathsf{r}^\rho)\\
&=(\mathcal{C}_\mathsf{r}^{\rho''},\beta'^{\rho''}_{\rho'_{\phi'}}\!\circ\phi'^{\rho'}\circ\beta_{\rho_\phi}^{\rho'}\!\circ\phi^\rho,\mathcal{C}_\mathsf{r}^\rho)\\
&=(\mathcal{C}_\mathsf{r}^{\rho''},\beta'^{\rho''}_{\rho'_{\phi'}}\!\circ\phi'^{\rho'},\mathcal{C}_\mathsf{r}^{\rho'})(\mathcal{C}_\mathsf{r}^{\rho'},\beta_{\rho_\phi}^{\rho'}\!\circ\phi^\rho,\mathcal{C}_\mathsf{r}^\rho)\\
&=\mathsf{A}(\rho'',\beta',\phi',\rho')\mathsf{A}(\rho',\beta,\phi,\rho).
\end{align*}
Also $\mathsf{A}$ takes units in $\mathbf{Fell}$ to units in $\mathbf{C^*}$ and hence $\mathsf{A}$ is a functor.
\end{proof}

We might call $\mathsf{A}$ the \emph{abstraction} or \emph{algebraisation} functor.  If we restrict to certain Fell bundles (e.g. over locally compact Hausdorff \'etale groupoids), $\mathsf{A}$ will be faithful but certainly not full.  Indeed, C*-algebra homomorphisms resulting from Fell morphisms automatically preserve other structures arising from the Fell bundle, structures that are not determined by the C*-algebra alone, like the semigroups of slice-supported sections.  This suggests that, if we want an abstract counterpart to $\mathbf{Fell}$, we should take these additional structures as an intrinsic part of the C*-algebras forming the abstract category.  This leads to the notion of a `structured C*-algebra', as discussed in \cite{Bice2021DHKR} and as we intend to elaborate on in future work.

\bibliography{maths}{}

\begin{thebibliography}{BFPR21}

\bibitem[BBH94]{BergelsonBlassHindman1994}
Vitaly Bergelson, Andreas Blass, and Neil Hindman.
\newblock Partition theorems for spaces of variable words.
\newblock {\em Proc. London Math. Soc. (3)}, 68(3):449--476, 1994.
\newblock \href {http://dx.doi.org/10.1112/plms/s3-68.3.449}
  {\path{doi:10.1112/plms/s3-68.3.449}}.

\bibitem[BFPR21]{BrownFullerPittsReznikoff2021}
Jonathan~H. Brown, Adam~H. Fuller, David~R. Pitts, and Sarah~A. Reznikoff.
\newblock Graded {{\(C^*\)}}-algebras and twisted groupoid
  {{\(C^*\)}}-algebras.
\newblock {\em New York J. Math.}, 27:205--252, 2021.
\newblock URL: \url{nyjm.albany.edu/j/2021/27-7.html}.

\bibitem[Bic21]{Bice2021DHKR}
Tristan Bice.
\newblock {D}auns-{H}ofmann-{K}umjian-{R}enault duality for {F}ell bundles and
  structured {C}*-algebras, 2021.
\newblock \href {http://dx.doi.org/10.48550/ARXIV.2104.12415}
  {\path{doi:10.48550/ARXIV.2104.12415}}.

\bibitem[Bic22]{Bice2022}
Tristan Bice.
\newblock Representing structured semigroups on \'etale groupoid bundles.
\newblock {\em Journal of the Australian Mathematical Society}, pages 1--49,
  2022.
\newblock \href {http://dx.doi.org/10.1017/S1446788722000064}
  {\path{doi:10.1017/S1446788722000064}}.

\bibitem[Bic23]{Bice2023}
Tristan Bice.
\newblock Noncommutative {Pierce} duality between {Steinberg} rings and ample
  ringoid bundles.
\newblock {\em J. Pure Appl. Algebra}, 227(11):65, 2023.
\newblock Id/No 107407.
\newblock \href {http://dx.doi.org/10.1016/j.jpaa.2023.107407}
  {\path{doi:10.1016/j.jpaa.2023.107407}}.

\bibitem[Bla17]{Blackadar2017}
Bruce Blackadar.
\newblock {\em Operator algebras: Theory of {C}*-algebras and von Neumann
  algebras}.
\newblock 2017.
\newblock URL: \url{http://wolfweb.unr.edu/homepage/bruceb/Cycr.pdf}.

\bibitem[BS19]{BiceStarling2018}
Tristan Bice and Charles Starling.
\newblock General non-commutative locally compact locally {H}ausdorff {S}tone
  duality.
\newblock {\em Adv. Math.}, 314:40--91, 2019.
\newblock \href {http://dx.doi.org/10.1016/j.aim.2018.10.031}
  {\path{doi:10.1016/j.aim.2018.10.031}}.

\bibitem[DF88]{DoranFell1988}
R.~S. Doran and J.~M.~G. Fell.
\newblock {\em Representations of {$^*$}-algebras, locally compact groups, and
  {B}anach {$^*$}-algebraic bundles. {V}ol. 1}, volume 125 of {\em Pure and
  Applied Mathematics}.
\newblock Academic Press, Inc., Boston, MA, 1988.
\newblock Basic representation theory of groups and algebras.

\bibitem[DWZ22]{DuwenigWilliamsZimmerman2022}
Anna Duwenig, Dana~P. Williams, and Joel Zimmerman.
\newblock Renault's {{\(j\)}}-map for {F}ell bundle {{\(C^\ast\)}}-algebras.
\newblock {\em J. Math. Anal. Appl.}, 516(2):15, 2022.
\newblock Id/No 126530.
\newblock \href {http://dx.doi.org/10.1016/j.jmaa.2022.126530}
  {\path{doi:10.1016/j.jmaa.2022.126530}}.

\bibitem[EP22]{ExelPitts2022}
Ruy Exel and David Pitts.
\newblock {\em Characterizing groupoid {{\(C^*\)}}-algebras of non-{Hausdorff}
  {\'e}tale groupoids}, volume 2306 of {\em Lect. Notes Math.}
\newblock Cham: Springer, 2022.
\newblock \href {http://dx.doi.org/10.1007/978-3-031-05513-3}
  {\path{doi:10.1007/978-3-031-05513-3}}.

\bibitem[Exe11]{Exel2011}
Ruy Exel.
\newblock Noncommutative {C}artan subalgebras of {$C^*$}-algebras.
\newblock {\em New York J. Math.}, 17:331--382, 2011.
\newblock URL: \url{http://nyjm.albany.edu:8000/j/2011/17_331.html}.

\bibitem[Gie82]{Gierz1982}
Gerhard Gierz.
\newblock {\em Bundles of topological vector spaces and their duality}, volume
  955 of {\em Lecture Notes in Mathematics}.
\newblock Springer-Verlag, Berlin-New York, 1982.
\newblock With an appendix by the author and Klaus Keimel, Queen's Papers in
  Pure and Applied Mathematics, 57.

\bibitem[KM20]{KwasniewskiMeyer2020}
Bartosz~Kosma Kwa\'{s}niewski and Ralf Meyer.
\newblock Noncommutative {C}artan {$\rm C^*$}-subalgebras.
\newblock {\em Trans. Amer. Math. Soc.}, 373(12):8697--8724, 2020.
\newblock \href {http://dx.doi.org/10.1090/tran/8174}
  {\path{doi:10.1090/tran/8174}}.

\bibitem[KM21]{KwasniewskiMeyer2021}
Bartosz~Kosma Kwa{\'s}niewski and Ralf Meyer.
\newblock Essential crossed products for inverse semigroup actions: simplicity
  and pure infiniteness.
\newblock {\em Doc. Math.}, 26:271--335, 2021.
\newblock \href {http://dx.doi.org/10.25537/dm.2021v26.271-335}
  {\path{doi:10.25537/dm.2021v26.271-335}}.

\bibitem[Kum86]{Kumjian1986}
Alexander Kumjian.
\newblock On {$C^\ast$}-diagonals.
\newblock {\em Canad. J. Math.}, 38(4):969--1008, 1986.
\newblock \href {http://dx.doi.org/10.4153/CJM-1986-048-0}
  {\path{doi:10.4153/CJM-1986-048-0}}.

\bibitem[Kum98]{Kumjian1998}
Alex Kumjian.
\newblock Fell bundles over groupoids.
\newblock {\em Proc. Amer. Math. Soc.}, 126(4):1115--1125, 1998.
\newblock \href {http://dx.doi.org/10.1090/S0002-9939-98-04240-3}
  {\path{doi:10.1090/S0002-9939-98-04240-3}}.

\bibitem[Kwa16]{Kwasniewski2016}
B.~K. Kwa{\'s}niewski.
\newblock Crossed products by endomorphisms of {{\(C_{0}(X)\)}}-algebras.
\newblock {\em J. Funct. Anal.}, 270(6):2268--2335, 2016.
\newblock \href {http://dx.doi.org/10.1016/j.jfa.2016.01.015}
  {\path{doi:10.1016/j.jfa.2016.01.015}}.

\bibitem[Laz18]{Lazar2018}
Aldo~J. Lazar.
\newblock A selection theorem for {B}anach bundles and applications.
\newblock {\em J. Math. Anal. Appl.}, 462(1):448--470, 2018.
\newblock \href {http://dx.doi.org/10.1016/j.jmaa.2018.02.008}
  {\path{doi:10.1016/j.jmaa.2018.02.008}}.

\bibitem[Ped18]{Pedersen2018}
Gert~K. Pedersen.
\newblock {\em {$C^*$}-algebras and their automorphism groups}.
\newblock Pure and Applied Mathematics (Amsterdam). Academic Press, London,
  2018.
\newblock Second edition of [ MR0548006], Edited and with a preface by S\o ren
  Eilers and Dorte Olesen.

\bibitem[Ren80]{Renault1980}
Jean Renault.
\newblock {\em A groupoid approach to {$C^{\ast} $}-algebras}, volume 793 of
  {\em Lecture Notes in Mathematics}.
\newblock Springer, Berlin, 1980.

\bibitem[Ren08]{Renault2008}
Jean Renault.
\newblock Cartan subalgebras in {$C^*$}-algebras.
\newblock {\em Irish Math. Soc. Bull.}, 61:29--63, 2008.

\bibitem[RW98]{RaeburnWilliams1998}
Iain Raeburn and Dana~P. Williams.
\newblock {\em Morita equivalence and continuous-trace {C}*-algebras},
  volume~60 of {\em Mathematical Surveys and Monographs}.
\newblock American Mathematical Society, Providence, RI, 1998.

\bibitem[Tay23]{Taylor2023}
Jonathan Taylor.
\newblock Functoriality for groupoid and {F}ell bundle {$C^*$}-algebras, 2023.
\newblock \href {http://arxiv.org/abs/2310.03126} {\path{arXiv:2310.03126}}.

\bibitem[Var74]{Varela1974}
Januario Varela.
\newblock Duality of {$C^{\ast} $}-algebras.
\newblock In {\em Recent advances in the representation theory of rings and
  {$C^{\ast} $}-algebras by continuous sections ({S}em., {T}ulane {U}niv.,
  {N}ew {O}rleans, {L}a., 1973)}, pages 97--108. Mem. Amer. Math. Soc., No.
  148. 1974.

\end{thebibliography}
\bibliographystyle{alphaurl}

\end{document}